\documentclass[a4paper,8pt]{article}
\usepackage{framed,enumitem} 
\usepackage[arrow, matrix, curve]{xy}
\usepackage{pdfpages}
\usepackage{floatrow}
\usepackage[utf8]{inputenc}
\usepackage{a4wide}
\usepackage[english,ngerman]{babel}
\usepackage[abs]{overpic}
\PassOptionsToPackage{english}{babel}
\usepackage{mathtools}
\usepackage{amsmath, amssymb, amsthm, amsfonts}
\usepackage{physics}
\usepackage{graphicx}
\usepackage{layout}
\usepackage{fancyhdr}
\usepackage{setspace}
\usepackage{hyperref}

\usepackage{ dsfont }
\usepackage{ wasysym }
\usepackage{siunitx}
\usepackage{tikz}
\usepackage{tkz-euclide}
\usepackage{cite}
\usetikzlibrary{quotes,angles}
\usepackage{verbatim}
\usepackage{color}
\usepackage{cite}
\usepackage[toc,page]{appendix}
\usepackage{multirow} 
\usepackage{url}
\usepackage{bm}
\usepackage{mwe}
\usepackage{aligned-overset}

\usepackage{algorithm}
\usepackage{algpseudocode}
\usepackage{aliascnt}
\usepackage{placeins}

\newcommand{\Arg}{\operatorname {Arg}}
\newcommand{\RE}{\operatorname {Re}}

\newcommand{\RR}{{\mathbb R}}

\newcommand{\CC}{\mathbb{C}}

\algnewcommand\Load{\textbf{Load} }
\algnewcommand\Init{\textbf{Init}}
\algnewcommand\Break{\textbf{break}}

\usepackage{authblk}

\theoremstyle{plain}
\newtheorem{theorem}{Theorem}[section]

\newtheorem{dexa}[theorem]{Data Example}
\newtheorem{rem}[theorem]{Remark}

\newaliascnt{lemma}{theorem}
\newtheorem{lemma}[lemma]{Lemma}
\aliascntresetthe{lemma}
\newtheorem{corollary}[theorem]{Corollary}
\newtheorem{definition}[theorem]{Definition}
\newtheorem{assumption}[theorem]{Assumption}

\title{Drift Models on Complex Projective Space for Electron-Nuclear Double Resonance}
\author{Henrik Wiechers$^1$,  Markus Zobel$^1$, Marina Bennati$^{2,3}$, Igor Tkach$^2$,\\ Benjamin Eltzner$^2$,  Stephan Huckemann$^1$, Yvo Pokern$^{4}$}
\affil{{ 
\normalsize{$^1$Felix-Bernstein-Institute for Mathematical Statistics, Georg-August-University,}\\
\normalsize{G\"ottingen, 37077 G\"ottingen, Germany.}\\
\normalsize{$^2$Max Planck Institute for Multidisciplinary Sciences,}\\
\normalsize{37077 G\"ottingen, Germany.}\\ 
\normalsize{$^3$Department of Chemistry, Georg-August University of Göttingen,}\\
\normalsize{Tammannstr. 2, Göttingen, Germany}\\
\normalsize{$^4$Department of Statistical Science, University College London,}\\
\normalsize{London WC1E 6BT, United Kingdom.}\\
}}
\date{}

\newcommand*\diff{\mathop{}\!\mathrm{d}}

\renewcommand{\appendixname}{SI}
\renewcommand{\appendixtocname}{Supplementary Information }
\newcommand\numberthis{\addtocounter{equation}{1}\tag{\theequation}}
\newcommand{\Ffun}{\mathcal{F}} 
\newcommand{\Scomp}{\mathcal{S}^{2N-1}} 
\newcommand{\rhofun}{\rho} 
\newcommand{\bul}{\bullet} 
\newcommand{\dia}{\diamond} 

\newcommand{\Mf}{{M}} 
\newcommand{\vf}{\mathrm{vec}} 
\newcommand{\vft}[1]{\vf \left( #1 \right)}

\newcommand{\vect}[1]{\mathrm{vec} \left( #1 \right)}
\newcommand{\vectn}[1]{\widetilde{\mathrm{vec}} \left( #1 \right)}
\newcommand{\comp}[1]{\mathfrak{c} \left( #1 \right)}
\newcommand{\compn}[1]{\tilde{\mathfrak{c}} \left( #1 \right)}
\newcommand\normp[1]{\left\lVert#1\right\rVert_P}
\newcommand{\spr}[2]{\left\langle #1, #2 \right\rangle}

\newcommand{\Covmat}[2]{\text{Cov}\qty[#1,#2]} 

\DeclareMathOperator*{\argmax}{arg\,max}

\newcommand*{\defeq}{\mathrel{\vcenter{\baselineskip0.5ex \lineskiplimit0pt \hbox{\scriptsize.}\hbox{\scriptsize.}}}=}
\newcommand*{\eqdef}{=\mathrel{\vcenter{\baselineskip0.5ex \lineskiplimit0pt \hbox{\scriptsize.}\hbox{\scriptsize.}}}}
\newcommand{\Var}[1]{\text{Var}\qty[#1]}  
\makeatletter
\newcommand\thefontsize{{ The current font size is: \f@size pt\par}}
\makeatother

\addto\extrasenglish{%
  \renewcommand{\appendixname}{SI}
}

\begin{document}

\selectlanguage{english}
\setlength{\headheight}{28pt}

\lhead{\fancyplain{}{\thepage}}
\chead{}
\rhead{\fancyplain{}{\textit{\leftmark}}}
\lfoot{}
\cfoot{}
\rfoot{}
\onehalfspacing

\maketitle

\begin{abstract}
ENDOR spectroscopy is an important tool to determine the complicated three-dimensional structure of biomolecules and in particular enables measurements of intramolecular distances.
Usually, spectra are determined by averaging the data matrix, which does not take into account the significant thermal drifts that occur in the measurement process. 
In contrast, we present an asymptotic analysis for the homoscedastic drift model, a pioneering parametric model that achieves striking model fits in practice and allows both hypothesis testing and confidence intervals for spectra. 
The ENDOR spectrum and an orthogonal component are modeled as an element of complex projective space, and formulated in the framework of generalized Fr\'echet means. 
To this end, two general formulations of strong consistency for set-valued Fr\'echet means are extended and subsequently applied to the homoscedastic drift model to prove strong consistency. Building on this, central limit theorems for the ENDOR spectrum are shown.
Furthermore, we extend applicability by taking into account a phase noise contribution leading to the heteroscedastic drift model. Both drift models offer improved signal-to-noise ratio over pre-existing models. 
\end{abstract}
\newpage
\tableofcontents
\newpage
\maketitle

\section{Introduction}

One of the main objectives of structural biology is to understand the complicated three-dimensional structure of 
biomolecules, and thus provide meaningful links between structure and functionality.
In particular, this information can be used in the field of structure-based drug design, see for example  \cite{Anderson_drug_design}.
There is a wide range of different methods to determine the structure, such as \emph{X-ray crystallography} (X-ray), \emph{cryogenic electron microscopy} (cryo-EM) and \emph{spectroscopic} methods. 
\emph{Nuclear magnetic resonance} (NMR) spectroscopy is possibly the most widely used spectroscopic method: it studies the interactions between the nuclei of a molecule using radio frequency (RF) pulses. \emph{Electron paramagnetic resonance} (EPR), on the other hand, studies the local environment and different kinds of interactions of the spins of unpaired electrons using microwave (MW) pulses. It can be more selective than NMR in that it targets only the tiny minority of unpaired electrons among the large number of electrons present in a biomolecule. Additionally, the larger gyromagnetic ratio of the electron compared to any magnetic nucleus usually leads to higher detection sensitivity and thus to better signal-to-noise ratio (SNR).
Electron Nuclear Double Resonance (ENDOR) spectroscopy \cite{Feher, Gemperle, Harmer} seeks to combine the advantages of EPR and NMR by interacting with both, nuclei and radical electrons, using both MW and RF pulses in a single experiment (see \autoref{sec: homoscedastic drift} for an accessible exposition of how this works). It should be emphasized that NMR, EPR and ENDOR differ in their domain of applicability, in particular in the range of distances between interacting spins, rather than one method being generally superior to another. 
Roughly, ENDOR's double resonance approach yields information on how the unpaired electron interacts with magnetic nuclei of a chosen kind (e.g. protons, deuterium nuclei or fluorine nuclei) and explores their environment.
Artificially inserting \emph{labels}, i.e. magnetic nuclei rarely present in biomolecules such as fluorine or deuterium, as well as radicals containing unpaired electrons that do not naturally occur in the biomolecule under study such as nitroxide radicals, allows highly specific measurements of intramolecular distances and orientations between selectable parts of the biomolecule, see \cite{Meyer2020}.

Prior to \cite{Pokern2021}, the standard approach \cite{Epel, Rizzato2014} for extracting ENDOR spectra from the recorded echo signals was equivalent to the \emph{averaging model} \cite{Pokern2021} whereby echo responses are simply averaged across a large number of replications of the ENDOR experiment and only the average response is processed further. However, as ENDOR experiments typically run for several hours and at low temperatures, significant thermal drifts over time occur in practice.
In \cite{Pokern2021}, the \emph{homoscedastic drift model} was introduced for ENDOR experiments at a microwave frequency of \SI{263}{GHz}, which uses the echo signals at each of the $N+1$ (with $N\in\mathbb{N}$) RF frequencies recorded in $B\in\mathbb{N}$ batches over time in a data matrix $Y\in\mathbb{C}^{B\times (N+1)}$. This model accounts for thermal drift by decomposing the data matrix accounting separately for signal drift and spectrum. It is the first of its kind in the field of ENDOR spectroscopy and, relative to common practice in applied statistics, achieves surprisingly good model fit that is maintained across a number of chemical compounds in follow-up studies,  cf. \cite{Pokern2021, Hiller2022, Wiechers2023}, yielding improved SNRs relative to the averaging model.
The homoscedastic drift model enables the application of the parametric bootstrap, which in turn enables hypothesis testing  and confidence intervals for the spectra:
In \cite{Pokern2021}, a flatness and a difference test were introduced and performed, which together confirmed unequivocally the presence of broad features that were suspected on visual inspection.
\cite{Wiechers2023} utilizes the spectral uncertainties provided by the drift model to determine stochastic errors in the estimation of physical parameters from which intramolecular distances can be determined.
The parameter of greatest applied interest in the homoscedastic drift model, $\kappa$, is complex-valued and contains both the ENDOR spectrum as well as an orthogonal component containing a resonance artefact.  It is standardized so that $\sum_{\nu=0}^N \kappa_\nu = 0$ and $\sum_{\nu=0}^N |\kappa_\nu|^2=1$. Additionally, the spectrum $I$ is extracted in a step following MLE estimation of $\kappa$ by selecting a direction in the complex plane that contains the spectrum rather than the resonance artefact based on application-driven criteria so that $I=\Re{\exp\left(i\lambda^\mathrm{opt} \right)\kappa }$ holds for some $\lambda^\mathrm{opt}\in[0,2\pi]$ which is determined from $\kappa$ alone. Indeed, we will show that rotation of $\kappa$ in the complex plane leaves the spectrum $I$ invariant and, thus, it is the application that drives us to consider the complex projective space $\mathbb{C}P^{N-1}$ as the appropriate parameter space in this estimation problem.

This paper addresses two main challenges:  

Firstly, in order to justify the use of the above methods, we will address the asymptotic theory of ENDOR spectra in this paper. More precisely, both strong consistency  and a central limit theorem (CLT) for the parameter $\kappa$ are proved in the limit of large numbers of batches $B$.

To this end, the theory of strong consistency of generalized Fr\'echet means is extended in \autoref{sec: strong consistency} and applied in \autoref{sec: Drift model: Strong consistency and central limit theorem}. Fr\'echet means (introduced by \cite{frechet1948elements}) take the notion of arithmetic mean to the non-Euclidean setting, and generalized Fr\'echet means are non-Euclidean data descriptors that do not necessarily live in the data space, that arises naturally in our application and create challenges arising from their implicit definition and potentially set-valued nature. 

We furthermore establish a CLT for the ENDOR spectrum $I$ justifying the construction of confidence intervals for the ENDOR spectra at least in the case of known noise covariance and comment on the case of unknown noise covariance in \autoref{sec: Consistency Drift model unknown Sigma}.

Secondly, in \autoref{sec: heteroscedastic drift}, we extend the homoscedastic drift model to cover other microwave frequencies such as \SI{94}{GHz} for which EPR spectrometers with an ENDOR capability are more widely available. This necessitates generalizing the drift model to the heteroscedastic case. Given the presence of boundary maxima and the unsatisfactory performance of penalized methods, a carefully devised parametric extension of the homoscedastic drift model is found to work best yielding fairly good fit to the data and notable improvements in SNR.

\subsection{Merging Complex and Real Notation and Complex Projective Space}\label{sec: notation}
Switching conveniently between complex-valued and real-valued matrices, vectors and scalars, the following notation is used throughout the paper.  

For a complex number $z = x+iy \in \CC$ and a complex vector $(z_1,\ldots,z_N)^T \in \CC^N$ define
$$ \vect{z} \coloneqq \begin{pmatrix} x\\ y\end{pmatrix}\in \mathbb{R}^2
\,,\quad
\Mf(z)  \coloneqq \begin{pmatrix} x & - y \\ y & x \end{pmatrix} 
\,, \quad
\vect{\begin{array}{c} z_1\\ \vdots \\ z_N\end{array}} \coloneqq \begin{pmatrix} \vft{w_1}\\ \vdots \\ \vft{w_N}\end{pmatrix}\in \mathbb{R}^{2N}
$$ 
Conversely for real vectors $(x,y)^T \in \RR$ and $(r_1,\ldots,r_{2N})^T \in \RR^{2N}$, define
$$\comp{\begin{array}{c}x\\ y\end{array}}\coloneqq x + i y \in \mathbb{C}
\,,\quad 
\compn{\begin{array}{c}r_1\\ r_{2N}\end{array}}\coloneqq \begin{pmatrix} r_1 + i r_2 \\ \vdots \\ r_{2N-1} + i r_{2N}\end{pmatrix}  \in \mathbb{C}^N 
$$

The following Lemma summarizes basic rules, verified at once.

\begin{lemma}
    For $z,w \in \mathbb{C}$ we have
    \begin{enumerate}
        \item $\Mf(z)^T = \Mf(\bar{z}) $
        \item $\vf(zw) = \Mf(z) \vf(w) = \Mf(w) \vf(z)$
    \end{enumerate}
\end{lemma}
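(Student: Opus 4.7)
The plan is to verify both identities by direct computation from the definitions, since all the relevant objects are $2\times 2$ matrices and $2$-vectors built explicitly from the real and imaginary parts of the complex scalars involved. There is no structural trick needed; the identities are pure bookkeeping and the author's remark that they are ``verified at once'' signals that this is a direct calculation rather than a conceptual argument.

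For part (1), I would write $z = x+iy$ and simply compute both sides. The matrix $\mathcal{M}(z) = \begin{pmatrix} x & -y \\ y & x \end{pmatrix}$ has transpose $\begin{pmatrix} x & y \\ -y & x \end{pmatrix}$, while $\bar{z} = x - iy$ produces $\mathcal{M}(\bar z) = \begin{pmatrix} x & -(-y) \\ -y & x \end{pmatrix} = \begin{pmatrix} x & y \\ -y & x \end{pmatrix}$. The two matrices coincide entry-by-entry, which finishes the claim.

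For part (2), I would write $z = x+iy$ and $w = u+iv$, so that $zw = (xu - yv) + i(xv+yu)$ and hence $\mathrm{vec}(zw) = (xu-yv,\, xv+yu)^T$. Evaluating $\mathcal{M}(z)\,\mathrm{vec}(w) = \begin{pmatrix} x & -y \\ y & x \end{pmatrix}\begin{pmatrix} u \\ v \end{pmatrix}$ gives exactly $(xu-yv,\, yu+xv)^T$, matching $\mathrm{vec}(zw)$. The second equality $\mathcal{M}(w)\,\mathrm{vec}(z) = \mathrm{vec}(zw)$ follows either by the same direct computation with the roles of $z$ and $w$ exchanged, or simply from the commutativity $zw = wz$ in $\mathbb{C}$ combined with the first equality applied to $wz$.

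There is essentially no obstacle here: the only thing to be careful about is the sign convention in the off-diagonal entries of $\mathcal{M}$ and the ordering of real/imaginary parts in $\mathrm{vec}$, which must be handled consistently with the definitions in \autoref{sec: notation}. If desired, one can also note a cleaner conceptual reason for (2): the map $z \mapsto \mathcal{M}(z)$ is precisely the standard $\mathbb{R}$-algebra isomorphism from $\mathbb{C}$ onto the subring $\{aI + bJ : a,b\in\mathbb{R}\}$ of $\mathbb{R}^{2\times 2}$ with $J = \begin{pmatrix} 0 & -1 \\ 1 & 0 \end{pmatrix}$, under which complex multiplication corresponds to matrix multiplication acting on the coordinate vector $\mathrm{vec}(\cdot)$; (1) is then the statement that complex conjugation corresponds to transposition, and (2) is the statement that this isomorphism intertwines multiplication with matrix-vector action.
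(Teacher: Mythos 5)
Your computation is correct and matches the paper's intent exactly: the paper states this lemma without proof, noting only that the rules are ``verified at once,'' which is precisely the direct entry-by-entry verification you carry out. Both parts check out (including the sign conventions), and the closing observation that $z\mapsto \Mf(z)$ is the standard $\mathbb{R}$-algebra embedding of $\CC$ into $\RR^{2\times 2}$ is a valid, if optional, conceptual summary.
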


Further, for $z, w\in\mathbb{C}^N$ and $A\in \mathrm{SPD}(2)$ we define
\begin{align*}
   z  \dia_A w &\coloneqq \sum_{i=1}^N \Mf(z_i)^TA\, \Mf(w_i)\in\mathbb{R}^{2\times 2}, \qquad &z\bul_Aw &\coloneqq \sum_{n=1}^N\Mf(z_n)^TA\,\vf(w_n)\in \mathbb{R}^2,
\end{align*}
as well as a Mahalanobis inner product, norm and distance,
\begin{align*}
   \spr{z}{w}_A &\coloneqq \sum_{i=1}^N\vf(z_i)^TA\,\vf(w_i)\in \mathbb{R}, \qquad &||z||_A &\coloneqq \sqrt{\spr{z}{z}_A}\in \mathbb{R}\\
   d_A(z, w) &\coloneqq ||z-w||_A\in \mathbb{R}.\\
\end{align*}

Next, we introduce complex projective space. It is the space of \emph{complex directions} in $\CC^N$ that can be viewed as the space of real directions modulo the \emph{phase}
$$ \lambda = \Arg(re^{i\lambda}) \in [0,2\pi)$$ of a complex number $z=re^{i\lambda} \in \CC$.

For a complex column vector $z\in  \CC^N$, $z^T=(z_1,\ldots,z_N) $, its Hermitian conjugate is the row vector
$$ z^*:= (\overline{z_1},\ldots,\overline{z_N})\,.$$
With the unit sphere 
$$\Scomp := \{\kappa\in \CC^N: \kappa^*\kappa = 1\} $$
of real dimension $2N-1$, the complex projective space of complex dimension $N-1$ and real dimension $2N-2$ is
$$ \CC P^{N-1} := \Scomp/\sim\,,$$
where "$\sim$" denotes the equivalence relation 
$$\kappa\sim \tilde{\kappa}  \quad \Leftrightarrow \quad \exists \lambda \in \mathbb{R}, \quad e^{i\lambda}\tilde{\kappa}=\kappa. $$ Furthermore we define the equivalence class of $\kappa$ by $[\kappa]$.
The distance between $[\kappa],[\tilde{\kappa}] \in \CC P^{N-1}$ is defined by 
\begin{align*}
    d([\kappa],[\tilde{\kappa}] ) = \min_{\lambda \in \mathbb{R}} ||\kappa-e^{i\lambda}\tilde{\kappa} ||
\end{align*}
where $\kappa \in [\kappa], \tilde{\kappa}\in [\tilde{\kappa}]$ are arbitrary representatives. 

We say that $\kappa,\tilde{\kappa} \in \Scomp$ are in optimal position if $$d\left([\kappa],\left[\tilde{\kappa}\right])\right)= \|\kappa -\tilde{\kappa}\|\,.$$

\begin{lemma}\label{lem: opt-pos}
    For arbitrary $\kappa,\tilde{\kappa} \in \Scomp$ we have that they are in optimal position if $\tilde{\kappa}^* \kappa=0$, or else,
    $$ \kappa,\quad \frac{\tilde{\kappa}^* \kappa}{|\tilde{\kappa}^* \kappa|}\, \tilde{\kappa}$$ 
    are in optimal position. 
\end{lemma}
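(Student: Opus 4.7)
The plan is to reduce the constrained optimization problem defining $d([\kappa],[\tilde{\kappa}])$ to a one-dimensional maximization over $\lambda\in[0,2\pi)$, for which the optimizer can be read off explicitly.

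First I would expand the squared norm using the Hermitian inner product on $\mathbb{C}^N$: since $\kappa^*\kappa=\tilde{\kappa}^*\tilde{\kappa}=1$, a direct computation gives
\[
\|\kappa - e^{i\lambda}\tilde{\kappa}\|^2 \;=\; 2 \;-\; 2\,\Re\bigl(e^{-i\lambda}\,\tilde{\kappa}^*\kappa\bigr),
\]
using $\kappa^*\tilde{\kappa} = \overline{\tilde{\kappa}^*\kappa}$ to combine the cross terms. Minimizing the left-hand side in $\lambda$ is therefore equivalent to maximizing $\Re(e^{-i\lambda}\tilde{\kappa}^*\kappa)$.

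Next I write $\tilde{\kappa}^*\kappa = r e^{i\theta}$ with $r = |\tilde{\kappa}^*\kappa|\geq 0$ and, if $r>0$, $\theta = \Arg(\tilde{\kappa}^*\kappa)$. Then $\Re(e^{-i\lambda}\tilde{\kappa}^*\kappa) = r\cos(\theta-\lambda)$, which is manifestly maximized for $\lambda = \theta$, with maximal value $r$.

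Finally I would split into the two cases stated. If $\tilde{\kappa}^*\kappa = 0$, the function $\lambda\mapsto \|\kappa - e^{i\lambda}\tilde{\kappa}\|^2$ is the constant $2$, so in particular $\lambda=0$ is a minimizer and the pair $\kappa,\tilde{\kappa}$ itself realizes $d([\kappa],[\tilde{\kappa}])=\sqrt{2}$, i.e.\ they are in optimal position. Otherwise, $e^{i\theta} = \tilde{\kappa}^*\kappa/|\tilde{\kappa}^*\kappa|$, so replacing $\tilde{\kappa}$ by $e^{i\theta}\tilde{\kappa} = \tfrac{\tilde{\kappa}^*\kappa}{|\tilde{\kappa}^*\kappa|}\tilde{\kappa}$ realizes the minimum, establishing the optimal position statement.

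There is no real obstacle here beyond bookkeeping; the only subtlety is to remember that $\lambda$ is defined modulo $2\pi$ and to handle the degenerate case $\tilde{\kappa}^*\kappa=0$ separately so that the expression $\tilde{\kappa}^*\kappa/|\tilde{\kappa}^*\kappa|$ remains well-defined in the generic case quoted in the lemma.
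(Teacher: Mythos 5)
Your proposal is correct and follows essentially the same route as the paper: both expand $\|\kappa-e^{i\lambda}\tilde{\kappa}\|^2 = 2-2\Re(e^{-i\lambda}\tilde{\kappa}^*\kappa)$ and read off the optimal phase, the paper merely leaving the identification of the maximizer implicit where you spell out the polar decomposition $\tilde{\kappa}^*\kappa=re^{i\theta}$ and the degenerate case $r=0$. No gaps.
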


\begin{proof}
The assertion follows at once from
$$d\left([\kappa],\left[\tilde{\kappa}\right])\right)=\min_{\lambda\in \mathbb{R}} \left(\kappa-e^{i\lambda}\tilde{\kappa}\right)^* \left(\kappa-e^{i\lambda}\tilde{\kappa}\right) = \min_{\lambda\in \mathbb{R}}\left( 2- 2\RE\left(e^{i\lambda}\kappa^*\tilde{\kappa}\right)\right)\,.$$ 
\end{proof}

\section{Homoscedastic Drift Model} \label{sec: homoscedastic drift}
In this section, we selectively review those aspects of the ENDOR experiment that are necessary for the present work with more background available in \cite{Gemperle} and full experimental details in \cite{Pokern2021}. We then introduce the setting for the homoscedastic drift model from \cite{Pokern2021} in preparation for its asymptotic analysis.

In the ENDOR experiment, a sequence of MW and RF pulses is sent into a chemical sample that is placed in an external magnetic field with field strength $B_0$. The magnetic field strength $B_0$, as well as the MW frequency $\nu_{MW}$ and MW pulse lengths together determine the set of orientations relative to the external magnetic field of those molecules in the chemical sample that participate in the resonance experiment.
Typically, five different field strengths $B_0$ are used to select five different sets of orientations denoted as $g_x,g_{xy},g_y,g_{yz}$ and $g_z$. The microwave echo signal returned by the participating molecules in the chemical sample is recorded in two separate components: a component that is in phase with a reference MW signal constitutes the real part and a component whose phase is shifted by 90 degrees, known as 'in quadrature', constitutes the imaginary part. 
This echo signal is influenced by a RF pulse that is part of the pulse sequence. While the MW frequency is constant throughout the ENDOR experiment (we report measurements for $\nu_{MW}=263$\,GHz and, in \autoref{sec: heteroscedastic drift}, $\nu_{MW}=94$\,GHz), the RF frequency is varied in a pseudo-random sequence covering each of the RF frequencies $\{f_\nu:\,\nu\in \qty{0,\ldots,N}\}$, $N\in\mathbb{N}$ once. This is known as a \emph{scan}. 
Since the SNR in a single scan is very low, a number $S\in\mathbb{N}$ of scans are performed in succession which constitute a batch of measurements. The batches are enumerated by $b \in\{1,\ldots,B\}$. The resulting echo signals $X_{s,b,\nu}\in\mathbb{C}$ are summed up to form $Y_{b,\nu}=\sum_{s=1}^S X_{s,b,\nu}$. 
Here, $S$ is chosen large enough to yield a SNR sufficient to allow adjustment of experimental parameters based on a single batch $Y_{b,:}\defeq \left(Y_{b,0},\ldots,Y_{b,N}\right)^T$ but small enough for the thermal drift that affects 
phase and amplitude of the echo signal to be negligible. 
Thus, we obtain the data matrix $Y \in \mathbb{C}^{B\times (N+1)}$, and a sample data matrix is illustrated in Figure \ref{fig: raw data} of the \appendixtocname (\appendixname).
Prior to \cite{Pokern2021}, the standard approach \cite{Rizzato2014,Epel} to extract ENDOR spectra from the echo signal $Y$ was the \emph{averaging model}:
\begin{definition}[Averaging Model]\label{def: averaging}
In the averaging model, the batches are averaged according to 
\begin{align}
Z_\nu = \frac{1}{B} \sum_{b=1}^BY_{b, \nu}.\label{eq: Z}
\end{align}
In a second step, a \emph{phase correction}, i.e. a complex multiplication by $e^{i\lambda}$ with a manually tuned $\lambda\in [0, 2\pi)$ to obtain a real valued non-normalized spectrum $\tilde{I}=\Re(e^{i\lambda}Z)$ is applied followed by normalization to obtain the spectrum
\begin{align}
    I_\nu&=\frac{\tilde{I}_\nu - \min_{\nu'\in\{0,\ldots,N\}}\tilde{I}_{\nu'}}{\max_{\nu'\in\{0,\ldots,N\}}\tilde{I}_{\nu'} -\min_{\nu'\in\{0,\ldots,N\}}\tilde{I}_{\nu'}}.
\end{align}   
\end{definition}
In \cite{Pokern2021}, the statistical flaws of this approach were addressed. Firstly, normalization via $Z_\nu = \psi + \phi \kappa_\nu$ with $\psi \in\mathbb{C}, \, \phi\in\mathbb{R}_{\geq 0}, \kappa_\nu \in\mathbb{C}$ and imposing \begin{align}
\sum_{\nu=0}^{N} \kappa_\nu&\stackrel{!}{=}0 \label{eq: meanSpecZero}\\
\sum_{\nu=0}^{N} |\kappa_\nu|^2&\stackrel{!}{=}1 \label{eq: normSpecOne}
\end{align}
is less sensitive to outliers. Note that the condition \ref{eq: meanSpecZero} removes a complex degree of freedom, motivating our choice of $N+1$ rather than $N$ RF frequencies. Secondly, various algorithms for phase correction without potentially biased operator intervention were studied to obtain the spectrum that is now given by $I_{\nu} = \Re(\exp(i\lambda_\mathrm{opt})\kappa_{\nu})$. In this paper, we exclusively utilize the maximum method \cite{Wiechers2023}, in which $\lambda_\mathrm{opt} \in \argmax_{\lambda \in [0, \pi)} \norm{\Re(\exp(i\lambda )\kappa)}$ is chosen so that the norm of $I$ is maximal. In measurements where the spectrum $\hat{I}$ consists of little else than the central peak which carries no conformational information, the minimum method minimizing deviation of $\hat{\omega}$ from a parametric model of the wave has proven to be very effective in \cite{Pokern2021,Hiller2022}. 
In both methods, additionally, a sign flip is performed when required to ensure that the spectrum's central peak points in the positive direction, effectively optimizing $\lambda$ over $[0,2\pi]$. 

However, as ENDOR experiments often run for hours, in practice the aforementioned thermal drift can be substantial, see Figure \ref{fig: raw data} of the \appendixname.
Thus, in \cite{Pokern2021} the drift model was introduced, which allows for thermal drift of $\psi$ and $\phi$, decomposing the data matrix according to the homoscedastic drift model:
\begin{definition}[Homoscedastic Drift Model]\label{def: homosced}
The homoscedastic drift model is given by
\begin{align}
    Y_{b,\nu} = \psi_b + \phi_b \kappa_\nu + \epsilon_{b, \nu},\quad \vect{\epsilon_{b, \nu}} \stackrel{i.i.d.}{\sim} \mathcal{N}(0,\Sigma),~ 1\leq \nu \leq N,~1\leq b \leq B. \label{eq: homoModel}
\end{align}
\end{definition}
By way of interpretation, $\psi \in \mathbb{C}^B$ represents the signal from
electron paramagnetic resonance (i.e. what the echo signal would be if the RF pulse were absent) as well as a possible offset of the measurement apparatus, $\phi\in \mathbb{C}^B$ represents the magnitude and phase of the ENDOR effect, $\kappa\in \mathbb{C}^{N+1}$ comprises the ENDOR spectrum $I$ as well as an orthogonal component $\omega$ which we call the wave (see panels A and B in Figure \ref{fig:DriftModelFit}) and $\epsilon_{b,\nu}$ represents the experimental noise. 
We use the notation $\vect{\epsilon_{b, \nu}}= \begin{bmatrix}\Re{\epsilon_{b,\nu}}\\ \Im{\epsilon_{b,\nu}}\end{bmatrix}$ so that the noise components follow a bivariate normal distribution with positive definite symmetric covariance matrix $\Sigma \in \mathrm{SPD}(2)$. 

The condition \ref{eq: meanSpecZero} serves to eliminate non-identifiability due to 
$\tilde{\kappa}=\kappa+c$, $\tilde{\psi}=\psi-c\phi$ with $\tilde{\psi},\phi,\tilde{\kappa},\Sigma$ yielding the same $Y_{\nu,b}$ 
as $\psi,\phi,\kappa,\Sigma$ for any $c \in\mathbb{C}$.
Similarly, the condition \ref{eq: normSpecOne} eliminates non-identifiability due to $\tilde{\kappa}=r \kappa$, $\tilde{\phi}=r^{-1}\phi$ with $\psi,\tilde{\phi},\tilde{\kappa},\Sigma$ yielding the same $Y_{\nu,b}$ 
as $\psi,\phi,\kappa,\Sigma$ for any $r\in\mathbb{R}_{>0}$.

Maximum likelihood estimators $\hat{\kappa}, \hat{\psi}, \hat{\phi}, \hat{\Sigma}$ are calculated (see \cite{Pokern2021} and \autoref{sec: maximum likelihood estimators} for details) and in a second step, the estimated spectrum $\hat{I}=\Re(e^{i\lambda_\mathrm{opt}}\hat{\kappa})$ and the orthogonal component $\hat{\omega}=\Im(e^{i\lambda_\mathrm{opt}}\hat{\kappa})$ are extracted from $\hat{\kappa}$ using the maximum method. 
Additionally to the above mentioned size non-identifiability of $\kappa$, the maximum (minimum) method and optional sign-flip eliminate the  phase non-identifiability due to $\tilde{\kappa}=\alpha \kappa$, $\tilde{\phi}=\alpha^{-1} \phi$ yielding the same data distribution for $\psi,\phi,\kappa,\Sigma$ and $\psi,\tilde{\phi},\tilde{\kappa},\Sigma$ for all $\alpha \in \mathbb{C}$ with $|\alpha|=1$.

The following data example illustrates that the homoscedastic drift model, Definition \ref{def: homosced}, exhibits unusually good fit to experimental data at $\nu_\mathrm{MW}=263$\,GHz, and yields improved SNR compared to the averaging model, Definition \ref{def: averaging}. Confidence regions are computed and will be justified via the $B\rightarrow \infty$ asymptotics developed in \autoref{sec: CLT Drift model}. It also prepares for extension to the heteroscedastic drift model, Definition \ref{def: hetero}, for $\nu_\mathrm{MW}=94$\,GHz. 

\begin{dexa}[Homoscedastic Drift Model for \SI{263}{\GHz} dataset]
The maximum likelihood estimates obtained using \autoref{alg:hom} for the orientation $g_y$ from a chemical sample of the D2-$Y_{122}^{\bullet}$ E.~coli ribonucleotide reductase using the Davies pulse sequence, see \cite{Davies1974}, studied in \cite{Pokern2021} are presented in Figure \ref{fig:DriftModelFit}. This also includes point-wise confidence bands obtained via parametric bootstrap using $10000$ bootstrap samples. In simulating data for the bootstrap, an additive bias correction for $\hat{\Sigma}$ and a multiplicative bias correction for $\hat{\phi}$ were used owing to substantial bias in these estimators. This bias, which does not disappear with increasing batch number $B$, likely arises from omitting the randomness in $\phi$ from the model as will be set out in detail in Section \ref{sec: Consistency Drift model unknown Sigma}.
\begin{figure}[ht!]
  \centering
    \begin{minipage}{1\textwidth}
    \includegraphics[width=\textwidth]{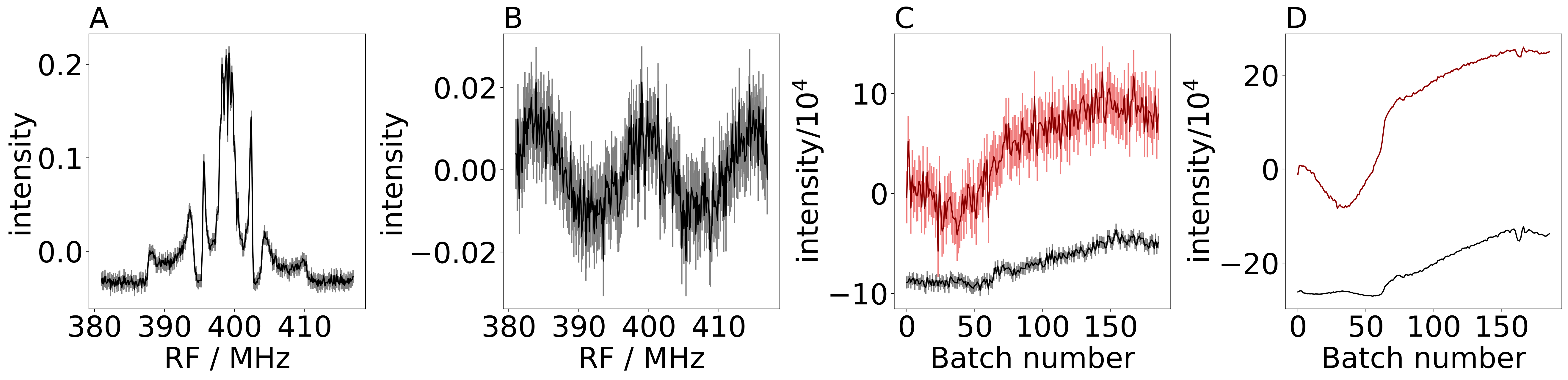}
  \end{minipage}
    \begin{minipage}{0.75\textwidth}
    \includegraphics[width=\textwidth]{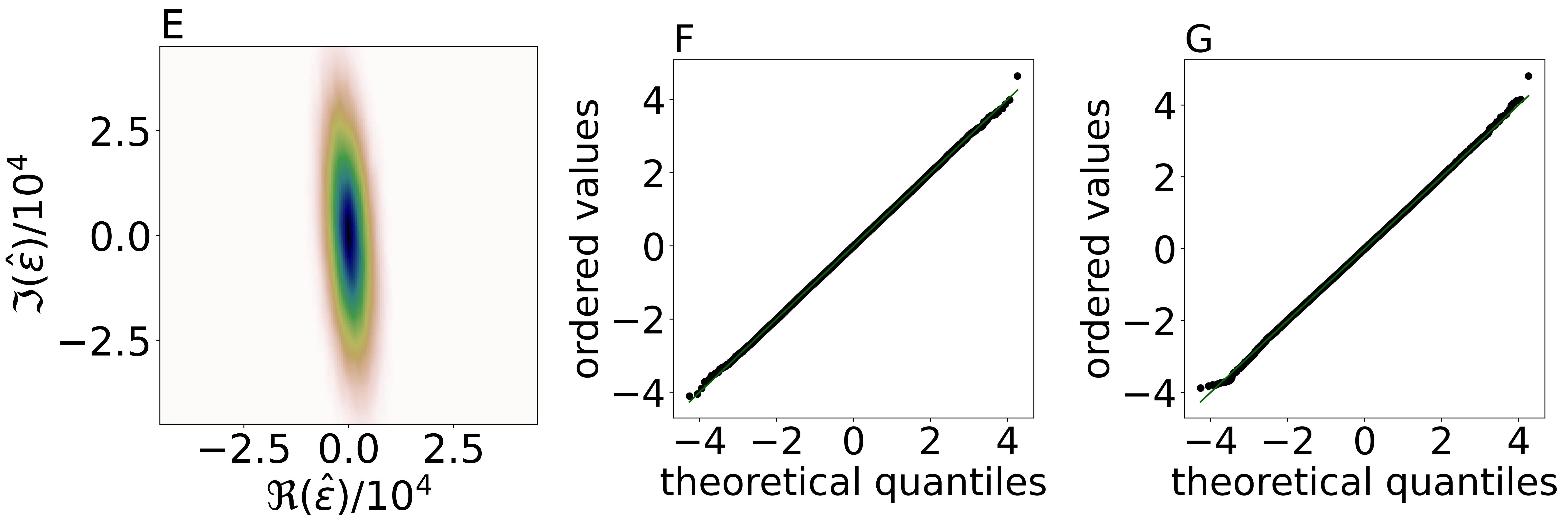}
  \end{minipage}
\caption{Applying the homoscedastic drift model (\ref{eq: homoModel}) to an ENDOR example measurement. 
Panel A displays the estimated spectrum $\hat{I}$, while panel B  displays the component $\hat{\omega}$ that is orthogonal to the estimated spectrum $\hat{I}$.
C and D show the real (black) and imaginary (red) components of $\hat{\phi}$ and $\hat{\psi}$, respectively.
Panel E displays the kernel-density-estimation of the complex residuals $\hat{\epsilon}_{b,\nu}$, while panels F and G depict q-q-plots for the real and imaginary components of the standardized residuals, respectively.
The corresponding raw data are plotted in Figure \ref{fig: raw data} in the \appendixname. 
}
\label{fig:DriftModelFit}
\end{figure}
A detailed analysis of all D2-$Y_{122}^{\bullet}$ measurements is presented in \autoref{sec: drift model plots}. The real and imaginary parts of the residuals $\hat{\epsilon}_{b,\nu}=Y_{b,\nu}-\hat{\psi}_b - \hat{\phi}_b\hat{\kappa}_\nu$ (shown in panels E, F, and G of Figure \ref{fig:DriftModelFit} for orientation $g_y$) for all orientations are examined for goodness of fit using a Kolmogorov-Smirnov test, see \cite{Ford2007}.
The resulting p-values are provided in Table \ref{tab: pvalues drift model} of the \appendixname~and are all clearly above the Bonferroni-corrected critical value of 0.05/10=0.005. Additionally, a comparison of the SNR of the averaging and drift models is performed. The drift model exhibits a better SNR than the averaging model in 4 out of 5 measurements, as shown in Table \ref{tab: drift vs average} and Figure \ref{fig: average vs drift}. This is attributable to partial cancellation in (\ref{eq: Z}) of ENDOR signal components when $\phi_b$ changes significantly over batches. As an extreme example, $\phi_b = \exp(ib/B)$ would lead to the extracted spectrum from the averaging model being nothing but noise. Indeed, the larger the drift of $\mathrm{arg}\phi_b$ shown in Figure \ref{fig: angle phi}, the more pronounced the SNR advantage of the drift model over the averaging model in Table \ref{tab: drift vs average}. This drift in $\phi$ is positively correlated with the drift observed in $\psi$, see panels C and D of Figure \ref{fig:DriftModelFit}. While the correlation is not perfect, which argues against including it as a fixed component of the model, it points to the dominant source of drift for both $\psi$ and $\phi$ originating from phase and amplitude changes due to thermal drift of the MW coupling to the ENDOR resonator containing the chemical sample. 

\end{dexa}

\subsection{Maximum Likelihood Estimation and Parameter Space}\label{sec: maximum likelihood estimators}
Based on the statistical model \ref{eq: homoModel}, the log likelihood is easily found to be
\begin{align}\label{eq: log likelihood}
    \ell_Y (\psi, \phi, \kappa, \Sigma) = -\frac{B(N+1)}{2} \log \left( (2\pi)^2\det(\Sigma) \right) - \frac{1}{2}\sum_{b=1}^B\left\| \tilde{Y}_{b,:}-\phi_b\kappa\right\|^2_P,
\end{align}
where the precision matrix $P\coloneqq\Sigma^{-1}$ and the centered data matrix $\tilde{Y}_{b,\nu}\coloneqq Y_{b,\nu}-\hat{\psi}_b$ have been used. 
Note that, contrary to rank one principal component analysis (PCA) where interest is in the direction $\kappa$ of greatest variability \emph{across repeated measurements}, our interest is in a measure of central tendency for $\kappa$ that shows greatest variability \emph{across frequencies}. Hence, we centre the data to achieve zero empirical row mean (removing $\hat{\psi}_b=\frac{1}{N} \sum_{\nu=0}^N Y_{b,\nu}$) rather than zero empirical column mean.
The interest in the direction of greatest variability across frequencies manifests itself in the use of the maximum method to estimate $\lambda_\mathrm{opt}$ which, given a direction $\hat{\kappa}\in\mathbb{C}^{N+1}$, selects the phase in the complex plane in which the variability across frequencies is greatest. However, we will see shortly that the proposed model is not equivalent to PCA of the transpose of the data matrix.

For each parameter, the MLE when assuming all other parameters known is available in closed form (see \cite{Pokern2021}). Note that $\phi \dia_P \phi $ and $\kappa \dia_P\kappa$ are invertible due to \autoref{lem: det lemma} assuming $\|\phi\|>0$.
\begin{align}
    \hat{\kappa}_\nu(\phi,P, \tilde{Y}) &= \comp{\left(\phi \dia_P \phi  \right)^{-1}  \left(\phi\bul_P \tilde{Y}_{:, \nu}\right)} \label{eq: kappa hat} \\
    \hat{\phi}_b(\kappa,P, \tilde{Y}) &= \comp{\left(\kappa \dia_P \kappa \right)^{-1} \left(\kappa\bul_P \tilde{Y}_{b, :}\right)} \label{eq: phi hat} \\
    \hat{\Sigma}(\phi,\kappa, \tilde{Y}) &=\frac{1}{B(N+1)} \sum_{b=1}^B\sum_{\nu=0}^N\Big(\mathrm{vec}(\tilde{Y}_{b,\nu})-M(\phi_b)\mathrm{vec}(\kappa_\nu)\Big) \Big(\mathrm{vec}(\tilde{Y}_{b,\nu})-M(\phi_b)\mathrm{vec}(\kappa_\nu)\Big)^T. \label{eq: Sigma hat}
\end{align}
In the special case when $\Sigma=r \mathrm{Id}_2$ for $r\in\mathbb{R}_{>0}$ is known, this reduces to a rank one singular value decomposition (SVD) of the centered data matrix $\tilde{Y}$ with $\frac{\hat{\phi}}{\norm{\hat{\phi}}}$ and $\bar{\hat{\kappa}}$ being left and right singular vectors and $\norm{\hat{\phi}}$ the leading singular value, respectively. 
Therefore, an iterative method imitating the standard power iteration method \cite{Trefethen1997} would be a natural algorithm to solve this problem.
Indeed, \cite{Pokern2021} iterate the formulae \ref{eq: kappa hat}, \ref{eq: phi hat}, \ref{eq: Sigma hat} to numerically compute the MLE even though they solve the more general and practically relevant case involving given correlated, non-isotropic noise $\epsilon$.
In this more general case, there is no simple analogy to the SVD and its well-established asymptotic theory, see e.g. \cite{Anderson2003}, is not applicable. 

While the entries of $\tilde{Y}$ are complex numbers, the metric implied by the presence of the $\|\cdot\|_P$-norm in the log likelihood and the fact that the $2\times 2$ matrix $\kappa \dia_P\kappa$ occurring in the conditional MLE \ref{eq: phi hat} cannot generally be written as the matrix representation $M(c)$ of any complex number $c\in\mathbb{C}$, suggest a different approach. It is possible to conceive of the entries of $\tilde{Y}$ as $2\times 2$ matrices $M(\tilde{Y}_{b,\nu})$ so that computation takes place on the ring $R$ of $2\times 2$ real matrices. $\kappa$ is then an element of the Hilbert module $(R^{N+1},\langle\cdot,\cdot\rangle_P)$. A Cauchy-Schwartz type inequality is available on this Hilbert module \cite{Bultheel1982} which would facilitate some of our analysis but we ultimately perceive this algebraic sophistication as a hindrance rather than as a simplification.

Instead, we initially view $\kappa\in\mathbb{C}^{N+1}$ subject to the constraints \ref{eq: meanSpecZero} and \ref{eq: normSpecOne} as an element of an $N+1$ dimensional complex sphere intersected with the hyperplane defined by \ref{eq: meanSpecZero}. Removing an additional phase factor (since $\kappa$ and $\alpha\kappa$ lead to equivalent models for $\alpha\in\mathbb{C}$ with $|\alpha|=1$ as previously noted), we are naturally lead to identifying those $\kappa$ that differ only by a root of unity and hence arrive at the complex projective space $\mathfrak{P}=\mathbb{C}P^{N-1}$, a Riemannian manifold of real dimension $2(N-1)$ as the relevant parameter space for $\kappa$, where the Riemannian metric tensor is implied by the natural quotient embedding in $\mathbb{C}^N$.

We additionally choose a new basis that deals with the constraint \ref{eq: meanSpecZero} by re-writing the noise $\epsilon$ according to $\tilde{Y}_{b,\nu}=\phi_b\kappa_\nu+\tilde{\epsilon}_{b,\nu}$ where 
\begin{align*}
    \tilde{\epsilon}_{b, \nu}\coloneqq \frac{N}{N+1}\epsilon_{b, \nu} - \frac{1}{N+1}\sum_{\tilde{\nu}=0, \tilde{\nu}\neq \nu}^N \epsilon_{b, \tilde{\nu}}.
\end{align*}
Now, we transition from the standard basis vectors $e_k \in\mathbb{R}^{N+1}$ to the Helmert orthonormal basis vectors 
\begin{align*}
    h_j \coloneqq \frac{1}{\sqrt{j(j+1)}}\left(\left(\sum_{k=1}^je_k \right)-je_{j+1}\right), \quad j=1,\dots,N
\end{align*}
to form the Helmert sub-matrix $H=(h_1,\ldots,h_N)^T\in\mathbb{R}^{(N+1)\times N}$ which is in turn used to Helmertize the data matrix $\tilde{Y}^H \coloneqq H \tilde{Y}$, error $\tilde{\epsilon}^H \coloneqq H \tilde{\epsilon}$ and spectral parameter $\kappa^H\coloneqq H \kappa$, see \cite{dryden1998statistical} for details on this standard approach. While the covariance structure of $\tilde{\epsilon}$ is slightly cumbersome, that of the Helmertized error is simply $\vect{\tilde{\epsilon}^H_{b,\nu}}\stackrel{i.i.d.}{\sim} \mathcal{N}(0,\Sigma)$ as shown in Lemma \ref{lem: epsilon dist helmert} in the \appendixname.

\newpage

\section{Extending strong consistency for generalized Fr\'echet means} \label{sec: strong consistency}

For our purpose in Section \ref{sec: Drift model: Strong consistency and central limit theorem} to infer geometric parameters of the drift model, in this section we extend the strong law of large numbers for \emph{generalized Fr\'echet means}, which is usually called \emph{strong consistency} in this context. Let us first introduce this underlying concept.

Noting that expected values of a random variable in a linear space are equivalently described as minimizers of expected squared distance, Fr\'echet \cite{frechet1948elements} used this latter geometric property as a definition for a \emph{mean location} (cf. \cite{HL98}) on a metric space, which was soon called the \emph{Fr\'echet mean} in his honor \cite{karcher2014riemannian}. Further, medians, as minimizers of expected distance lead to \emph{Fr\'echet medians} on metric spaces \cite{fletch9}, and more generally, any $L^p$ mean can thus be generalized \cite{Afsari10}. For Fr\'echet means, two version of set-valued strong consistency under rather broad conditions have been shown \cite{Ziezold1977,bhattacharya2003large}, followed by more versions of strong consistency for Fr\'echet $L^p$ means by \cite{evans2020strong,schotz2022strong}. The general formulation in terms of set valued Fr\'echet means is necessary for data on non-Euclidean spaces, since for example for a sphere with equal point masses on the north and south pole the Fr\'echet mean set is the whole equator, cf. \cite{H_meansmeans_12}. The generalized consistency results also apply to \emph{generalized Fr\'echet means} introduced by \cite{Huckemann2011} to extend and model geometric data descriptors beyond location, such as principal components of the covariance. For instance in a geodesic space, the first principal component can be generalized to a best approximating geodesic. Notably, then minimization has to be conducted no longer over the data space, but over a descriptor space, in case of geodesics, this is the space of geodesics. Likewise, parameters of a parametric model can be viewed as generalized Fr\'echet means.

Curiously, to the best knowledge of the authors, available strong consistency results for generalized Fr\'echet means (\cite{schotz2022strong,Huckemann2011}) always assume a loss function which is bounded from below and thus do not cover the simple case of maximum likelihood parameters of a univariate or multivariate Gaussian. Such a generalization is typically necessary to cover cases where a generalized Fr\'echet mean is estimated along with a (co-)variance-like quantity. This is for example the case for diffusion means with simultaneously estimated variance, see \cite{Eltzner2022}, and for a generalization of the asymptotic theory of the drift model to include the covariance of the noise, as discussed in Section~\ref{sec: Consistency Drift model unknown Sigma}. As it turns out such a generalization of strong consistency results is possible with moderate effort.

For all of the following, let $X_1, X_2, \dots \sim X$ be i.i.d. be random elements mapping from a probability space $(\Omega, \mathcal{A}, \mathcal{P})$ into a topological space $\mathfrak{Q}$ equipped with its Borel $\sigma$-algebra, called the \emph{data space}. 
Moreover, let $(\mathfrak{P}, d)$ be a separable metric space, called the \emph{parameter space}.

\begin{definition}[Sample and Population and Fr\'echet $\rho$-mean]\label{def: Sample and Population and Frechet mean}
  With a function $\rhofun: \mathfrak{Q} \times \mathfrak{P} \mapsto \mathbb{R}$ which is continuous in $\mathfrak{P}$  for all fixed $q\in\mathfrak{Q}$ and  measurable in $\mathfrak{Q}$ for all fixed $p\in\mathfrak{P}$, define, if existent, 
  \begin{align*}
  &\Ffun_n^{(\rhofun)}(\omega, p) \coloneqq \frac{1}{n} \sum_{i=1}^n\rho (X_i(\omega),p),   &&\Ffun^{(\rho)}(p) \coloneqq \mathbb{E}\left(\rho (X,p) \right),\\
  &\ell_n(\omega) \coloneqq \inf_{p\in \mathfrak{P}} \Ffun_n(\omega, p),   &&\ell \coloneqq \inf_{p\in \mathfrak{P}} \Ffun(p),\\
  &E_n^{(\rho)}(\omega) \coloneqq \{p\in \mathfrak{P}\mid \Ffun_n(p)=\ell_n(\omega)\},   &&E^{(\rho)} \coloneqq \{p\in \mathfrak{P}\mid \Ffun^{(\rho)}(p)=\ell\}\,.
  \end{align*}
  The functions $\Ffun^{(\rho)}$ and $\Ffun_n^{(\rho)}$ are called the \emph{population and sample Fr\'echet $\rho$-functions}, respectively, and $E^{(\rho)}$ and $E_n^{(\rho)}$ are the sets of \emph{sample and population Fr\'echet $\rho$-means}, respectively.
\end{definition}
Definition \ref{def: Sample and Population and Frechet mean} is a generalization of a mean  originally introduced for the case $\mathfrak{P}=\mathfrak{Q}$ and $\rho=d^2$ by \cite{frechet1948elements} which is called the \emph{Fr\'echet mean}, see above. 
Due to continuity of $\rho$, $E^{(\rho)}$ is a closed set and $E^{(\rho)}_n(\omega)$ is a random closed set, introduced and studied by \cite{choquet1954theory,kendall1974foundations,matheron1974random}, see also \cite{Molch05}.

\begin{definition}[Two versions of set strong consistency]
  We say that the estimator $E^{(\rho)}_n(\omega)$ for $E^{(\rho)}$ is
  \begin{description}
    \item[ZC:] \emph{Ziezold strongly consistent} if 
    \begin{align*}
    \bigcap_{n=1}^\infty\overline{\bigcup_{k=n}^\infty E^{(\rho)}_k(\omega)} \subseteq E^{(\rho)} \mbox{ for all }\omega\in \Omega \mbox{ almost surely},
    \end{align*}
    \item[BPC:] \emph{Bhattacharya and Patrangenaru strongly consistent}  if $E^{(\rho)}\neq \emptyset$ and if for every $\epsilon>0$ and almost surely for all  $\omega \in \Omega$ there is a number $n=n(\epsilon, \omega)>0$ such that
    \begin{align*}
    \bigcup_{k=n}^\infty E^{(\rho)}_k(\omega) \subseteq \{ p\in \mathfrak{P}: d(E^{(\rho)}, p) \leq \epsilon\}.
    \end{align*}
  \end{description}
\end{definition}

\begin{rem}
  ZC was originally introduced by \cite{Ziezold1977} and established in case of $\mathfrak{P}=\mathfrak{Q}$ and $\rho$ a squared quasi-metric. BP  was originally introduced by  \cite{bhattacharya2003large} and established for Fr\'echet means on Heine-Borel spaces under the additional condition that $E^{(\rho)}$ be not empty. More generally, \cite{evans2020strong} put the two concepts of strong consistency into the more general context of Kuratowski limits, see also  \cite{schotz2022strong} (ZC corresponds to outer limits there and BPC to limits in one-sided Hausdorff distance). 
\end{rem}

As noted above, Fr\'echet $\rho$-means for nonnegative $\rho$ have been introduced by \cite{Huckemann2011}, studying both versions of consistency under a  uniform continuity and a coercivity assumption on $\rho$.  \cite{schotz2022strong} relaxed these assumptions, among others to lower semicontinuity and some assumtpions on bounds. We show ZC and BPC under even weaker assumptions, namely a modulus of continuity along with its prefactor for ZC and using non-emptiness of $E^{\rho}$ for BPC. 

\begin{assumption}\label{ass: rho ziezold}
  In the setup of Definition \ref{def: Sample and Population and Frechet mean} there are
  \begin{enumerate}
    \item $\dot{\rho}: \mathfrak{Q} \times \mathfrak{P} \mapsto [0, \infty)$ which is continuous in $\mathfrak{P}$ for all fixed $q\in\mathfrak{Q}$ and  measurable in $\mathfrak{Q}$ for all fixed $p\in\mathfrak{P}$, with $\mathbb{E}\left[ \dot{\rho}(X,p) \right]< \infty$ for all $p\in\mathfrak{P}$,
    \item $h:[0,\infty) \to [0,\infty)$ continuous with $h(0)=0$, and
    \item $\delta>0$, such that for every $p,p' \in \mathfrak{P}$ with $d(p, p')< \delta$  
    \begin{align}\label{eq: assumption rho}
    \left|\rho(q,p)-\rho(q,p')\right|\leq\dot{\rho}(q,p) ~h\big(d(p,p')\big)\,.
    \end{align}
  \end{enumerate}
  
  Further, assume that $\mathbb{E}\left(\rho (X,p) \right)$ exists  for all $p\in\mathfrak{P}$.
\end{assumption}

\begin{definition}\label{def: Ffun_dot}
  For $\omega \in \Omega, p \in \mathfrak{P}$, under Assumption \ref{ass: rho ziezold}, define 
  \begin{align*}
  \dot{\Ffun}_n(\omega, p) \coloneqq \frac{1}{n} \sum_{i=1}^n\dot{\rho} (X_i(\omega),p),   \qquad \dot{\Ffun}(p) \coloneqq \mathbb{E}\left(\dot{\rho }(X,p) \right).
  \end{align*}
\end{definition}

\begin{lemma}
  \label{lem: P Tilde pointwise}
  Under Assumption \ref{ass: rho ziezold} there is a dense countable subset $\tilde{\mathfrak{P}}\subset \mathfrak{P}$ and measurable $A \subset \Omega$ with $\mathcal{P}(A)=1$ such that for all $\tilde{p}\in \tilde{\mathfrak{P}}$ and all $\omega \in A$ the following hold:
  \begin{itemize}
    \item[(i)]
    $\Ffun_n(\omega, \tilde{p}) \overset{n\to \infty}{\to} \Ffun(\tilde{p}) \quad \text{and}\quad \dot{\Ffun}_n(\omega, \tilde{p}) \overset{n\to \infty}{\to} \dot{\Ffun}(\tilde{p})$,
    \item[(ii)] for all $p\in \mathfrak{P}$ with $d(p,\tilde{p})<\delta/2$ and $(p_n)_{n=1}^\infty \subset \mathfrak{P}$ with $ p_n \to p$, 
    \begin{align*}
    \Ffun(\tilde{p}) - h\big(d(\tilde{p},p)\big)\, \dot{\Ffun}(\tilde{p}) \leq  \liminf_{n\to \infty} \Ffun_n(\omega, p_n) \leq  \limsup_{n\to \infty} \Ffun_n(\omega, p_n)\leq  \Ffun(\tilde{p}) +  h\big(d(\tilde{p},p)\big)\,\dot{\Ffun}(\tilde{p}).
    \end{align*}
  \end{itemize}
  
\end{lemma}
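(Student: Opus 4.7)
The plan is to exploit separability to reduce everything to a countable dense subset, apply the classical strong law of large numbers there, and then bootstrap to arbitrary parameter values via the modulus-of-continuity inequality in Assumption \ref{ass: rho ziezold}. Concretely, since $(\mathfrak{P},d)$ is separable, pick any countable dense subset $\tilde{\mathfrak{P}} \subset \mathfrak{P}$. For each $\tilde p \in \tilde{\mathfrak{P}}$, the random variables $\rho(X_i,\tilde p)$ and $\dot\rho(X_i,\tilde p)$ are i.i.d.\ with finite expectations $\mathcal{F}(\tilde p)$ and $\dot\mathcal{F}(\tilde p)$ (for $\dot\rho$ this is given, for $\rho$ this reads the ``exists'' clause as finiteness, which is the only interpretation compatible with the Lipschitz-type bound combined with integrability of $\dot\rho(X,p)$). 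Kolmogorov's SLLN then furnishes a $\mathcal{P}$-null set $N_{\tilde p} \in \mathcal{A}$ off which both $\mathcal{F}_n(\omega,\tilde p)\to \mathcal{F}(\tilde p)$ and $\dot\mathcal{F}_n(\omega,\tilde p)\to \dot\mathcal{F}(\tilde p)$. Setting $A \coloneqq \Omega\setminus \bigcup_{\tilde p\in \tilde{\mathfrak{P}}} N_{\tilde p}$ yields a measurable set with $\mathcal{P}(A)=1$, as a countable union of null sets, on which (i) holds for every $\tilde p\in\tilde{\mathfrak{P}}$ simultaneously.

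For (ii), fix $\omega\in A$, $\tilde p\in \tilde{\mathfrak{P}}$, $p\in\mathfrak{P}$ with $d(p,\tilde p)<\delta/2$, and a sequence $p_n \to p$. The triangle inequality gives $d(p_n,\tilde p)<\delta$ for all sufficiently large $n$, so the bound in Assumption \ref{ass: rho ziezold}, applied to the pair $(p_n,\tilde p)$ and to each sample $X_i(\omega)$, yields
\begin{align*}
\left|\rho(X_i(\omega),p_n) - \rho(X_i(\omega),\tilde p)\right| \leq \dot\rho(X_i(\omega),\tilde p)\, h\big(d(p_n,\tilde p)\big).
\end{align*}
Averaging over $i=1,\ldots,n$ and using the triangle inequality on the average produces
\begin{align*}
\left|\Ffun_n(\omega,p_n) - \Ffun_n(\omega,\tilde p)\right| \leq h\big(d(p_n,\tilde p)\big)\, \dot{\Ffun}_n(\omega,\tilde p).
\end{align*}
Taking $\liminf$ and $\limsup$ as $n\to\infty$ and invoking (i) for $\Ffun_n(\omega,\tilde p)\to \Ffun(\tilde p)$ and $\dot{\Ffun}_n(\omega,\tilde p)\to\dot{\Ffun}(\tilde p)$, continuity of $d(\cdot,\tilde p)$ for $d(p_n,\tilde p)\to d(p,\tilde p)$, and continuity of $h$ (with $h(0)=0$) for $h(d(p_n,\tilde p))\to h(d(p,\tilde p))$, deliver the two-sided bound claimed in (ii).

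I do not anticipate any deep obstacle; the argument is essentially an $\varepsilon$-net plus uniform-Lipschitz routine in disguise. The main care lies in the quantifier ordering: the null set $A$ must be fixed \emph{before} any choice of $\tilde p$, $p$, or $p_n$, which is exactly why one passes through a single countable dense subset and then exploits that Assumption \ref{ass: rho ziezold} gives a local Lipschitz-type inequality with prefactor controlled by the already-mastered $\dot{\Ffun}_n$. The $\delta/2$-safety margin is the technical ingredient guaranteeing that the inequality applies along the entire tail of the approximating sequence $p_n$.
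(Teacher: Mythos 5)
Your proposal is correct and follows essentially the same route as the paper: separability gives the countable dense subset, the classical SLLN yields a single full-measure set $A$ (your complement of a countable union of null sets is the paper's countable intersection of full-measure sets), and part (ii) is obtained exactly as in the paper by the $\delta/2$-margin triangle-inequality argument, the averaged modulus-of-continuity bound with prefactor $\dot{\Ffun}_n(\omega,\tilde p)$, and passage to the limit using (i) together with continuity of $d$ and $h$.
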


\begin{proof}
  Since $\mathfrak{P}$ is a separable space, there is a countable subset $\tilde{\mathfrak{P}}=\{\tilde{p}_i\}_{i=1}^\infty \subset \mathfrak{P}$ that is dense in $\mathfrak{P}$. 
  For every $\tilde{p}_i \in \tilde{\mathfrak{P}}$ there is, due to the classical strong law of large numbers, a measurable set $A_i \in \mathcal{A}$ with $\mathcal{P}(A_i)=1$ such that
  \begin{align*}
  \Ffun_n(\omega, \tilde{p}_i) \overset{n\to \infty}{\to} \Ffun(\tilde{p}_i) \quad \text{and} \quad \dot{\Ffun}_n(\omega, \tilde{p}_i) \overset{n\to \infty}{\to} \dot{\Ffun}(\tilde{p}_i) \quad \text{for every} \quad i=1,2,\dots \quad \text{and} \quad \omega \in A_i.
  \end{align*}
  Thus, for $A \coloneqq \bigcap_{i=1}^\infty A_i$ we have Assertion (i).
  
  In order to see Assertion (ii), consider $\omega \in A$, $p,p_n \in \mathfrak{P}$ with $p_n \to p$ and $\tilde{p} \in \tilde{\mathfrak{P}}$  with $d(p,\tilde{p})<\delta/2$ and $\delta >0$ from Assumption \ref{ass: rho ziezold}. Then, there is $n_0\in \mathbb{N}$ with $d(p,p_n)<\delta/2$ for all $n \geq n_0$, and hence $d(p_n, \tilde{p})<\delta$  for all $n\geq n_0$ (illustrated in the left panel of Figure \ref{fig:lemma}). Thus
  \begin{align}\label{eq:proof-grand-thm-2}
  \Ffun_n(\omega, \tilde{p}) -  |\Ffun_n(\omega, \tilde{p})-\Ffun_n(\omega, p_n)| \leq  \Ffun_n(\omega, p_n) \leq  \Ffun_n(\omega, \tilde{p}) + |\Ffun_n(\omega, \tilde{p})-\Ffun_n(\omega, p_n)|\,,
  \end{align}
  and from Assumption \ref{ass: rho ziezold} we have for all $n \geq n_0$ 
  \begin{align}\label{eq:proof-grand-thm-3} \nonumber
  |\Ffun_n(\omega, \tilde{p})-\Ffun_n(\omega, p_n)| &\leq \frac{1}{n}\sum_{i=1}^n\left|\rho (X_i(\omega),\tilde{p})-\rho (X_i(\omega),p_n) \right|\\
  &\leq h\big(d(p_n,\tilde{p})\big)\, \frac{1}{n}\sum_{i=1}^n\dot{\rho} (X_i(\omega),\tilde{p}) = h\big(d(p_n,\tilde{p})\big)\,\dot{\Ffun}_n(\tilde{p}).
  \end{align}
  Letting $n \to \infty$ in (\ref{eq:proof-grand-thm-2}), exploiting (\ref{eq:proof-grand-thm-3}), continuity of $d$, continuity of $h$, $h(0)=0$, $h\geq 0$ and Assertion (i) yield at once Assertion (ii).
\end{proof}

\begin{theorem}\label{theo: Ziezold}
  Under Assumption \ref{ass: rho ziezold}, ZC holds for the set of Fr\'echet $\rho$-means on $\mathfrak{P}$.
\end{theorem}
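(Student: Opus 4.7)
The idea is to show that every cluster point of the sample Fr\'echet mean sets is a population Fr\'echet mean. Fix $\omega$ in the full-measure set $A$ furnished by Lemma~\ref{lem: P Tilde pointwise} and take any $p^*\in\bigcap_{n\geq 1}\overline{\bigcup_{k\geq n} E^{(\rho)}_k(\omega)}$. A standard diagonal construction produces strictly increasing indices $k_1<k_2<\cdots$ and sample minimizers $p_m\in E^{(\rho)}_{k_m}(\omega)$ with $p_m\to p^*$, so that $\Ffun_{k_m}(\omega,p_m)=\ell_{k_m}(\omega)$. The plan is to sandwich the limiting behavior of $\ell_{k_m}(\omega)$ between $\Ffun(p^*)$ from below and $\ell$ from above.

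For the \emph{upper bound}, minimality gives $\ell_{k_m}(\omega)\leq \Ffun_{k_m}(\omega,q)$ for every $q\in\tilde{\mathfrak{P}}$, and Lemma~\ref{lem: P Tilde pointwise}(i) implies $\Ffun_{k_m}(\omega,q)\to \Ffun(q)$. Taking expectations in the modulus of continuity from Assumption~\ref{ass: rho ziezold} yields $|\Ffun(q)-\Ffun(q')|\leq h(d(q,q'))\dot\Ffun(q)$ for $d(q,q')<\delta$, so $\Ffun$ is continuous on $\mathfrak{P}$, and density of $\tilde{\mathfrak{P}}$ then gives $\inf_{q\in\tilde{\mathfrak{P}}}\Ffun(q)=\ell$. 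Hence $\limsup_m\ell_{k_m}(\omega)\leq \ell$.

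For the \emph{lower bound}, I apply Lemma~\ref{lem: P Tilde pointwise}(ii) (whose proof transfers verbatim when the sequence $(p_n)$ is replaced by a subsequence) with $p=p^*$ and the sequence $(p_m)$: for every $\tilde{p}\in\tilde{\mathfrak{P}}$ with $d(\tilde{p},p^*)<\delta/2$,
\[
\Ffun(\tilde{p})-h(d(\tilde{p},p^*))\,\dot\Ffun(\tilde{p}) \leq \liminf_m\ell_{k_m}(\omega).
\]
Combining with the Lipschitz-type estimate $\Ffun(\tilde{p})\geq \Ffun(p^*)-h(d(\tilde{p},p^*))\,\dot\Ffun(p^*)$ (Assumption~\ref{ass: rho ziezold} applied with the roles of $p,p'$ swapped, then averaged), this rearranges to
\[
\Ffun(p^*)\leq \liminf_m\ell_{k_m}(\omega)+h(d(\tilde{p},p^*))\bigl(\dot\Ffun(\tilde{p})+\dot\Ffun(p^*)\bigr).
\]
Letting $\tilde{p}\to p^*$ through $\tilde{\mathfrak{P}}$, the residual on the right is argued to vanish, yielding $\Ffun(p^*)\leq \liminf_m\ell_{k_m}(\omega)\leq \ell$. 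Since the reverse inequality $\Ffun(p^*)\geq \ell$ is immediate from the definition of $\ell$, this gives $p^*\in E^{(\rho)}$ and ZC follows.

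The \textbf{main technical hurdle} is this final limit passage: while $h(d(\tilde{p},p^*))\,\dot\Ffun(p^*)\to 0$ is trivial since $\dot\Ffun(p^*)<\infty$ is fixed and $h(0)=0$, the term $h(d(\tilde{p},p^*))\,\dot\Ffun(\tilde{p})$ only vanishes if $\dot\Ffun(\tilde{p})$ does not blow up faster than $1/h(d(\tilde{p},p^*))$ as $\tilde{p}\to p^*$. This is automatic if $\dot\Ffun$ is locally bounded near $p^*$, which follows from continuity of $\dot\rho(q,\cdot)$ combined with a dominated convergence step; otherwise one must carefully select the approximating sequence $\tilde{p}_j\in\tilde{\mathfrak{P}}$ (exploiting denseness and the freedom to use either $\dot\rho(q,p)$ or $\dot\rho(q,p')$ as the prefactor) so that the product still vanishes in the limit.
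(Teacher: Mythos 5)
Your argument is correct and follows essentially the same route as the paper's proof: both sandwich the optimal values along a cluster-point subsequence between $\Ffun(p^*)$ and $\ell$ using Lemma \ref{lem: P Tilde pointwise} together with approximation from the countable dense set $\tilde{\mathfrak{P}}$, your upper bound via $\inf_{q\in\tilde{\mathfrak{P}}}\Ffun(q)=\ell$ being only a harmless variant of the paper's comparison with a fixed $p\in E^{(\rho)}$. The technical hurdle you flag --- making $h\big(d(\tilde p,p^*)\big)\,\dot{\Ffun}(\tilde p)$ vanish as $\tilde p\to p^*$ --- is precisely the point where the paper's proof invokes continuity of $\dot{\Ffun}$ in the limit passage following (\ref{eq: lim p_k and lim p_n to p}), so your attempt is no less complete than the published argument on this point.
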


\begin{proof}
  We follow the steps originally introduced by \cite{Ziezold1977} and adopted by \cite{Huckemann2011}.  With $A\subset \Omega$ of full measure and the dense countable subset $\tilde{\mathfrak{P}}$ of  $\mathfrak{P}$, both from Lemma \ref{lem: P Tilde pointwise}, fix $p\in \mathfrak{P}$ and $(p_n)_{n=1}^\infty \subset \mathfrak{P}$ with $ p_n \to p$. We first show that
  \begin{align}\label{eq: Fn(pn) to F}
  \Ffun_n(\omega, p_n) \overset{n\to \infty}{\to} \Ffun(p)\,, 
  \end{align}
  for all $\omega \in A$. 
  
  To this end, with $\delta >0$ from Assumption \ref{ass: rho ziezold}, let $\tilde{p}\in \tilde{\mathfrak{P}}$ with $d(p,\tilde{p})<\delta/2$. Then, due to Assertion (ii) from Lemma \ref{lem: P Tilde pointwise},
  \begin{align}\label{eq: Liminf Limsup Fn}
  \Ffun(\tilde{p}) - h\big(d(\tilde{p},p)\big)\, \dot{\Ffun}(\tilde{p}) \leq  \liminf_{n\to \infty} \Ffun_n(\omega, p_n) \leq  \limsup_{n\to \infty} \Ffun_n(\omega, p_n)\leq  \Ffun(\tilde{p}) +  h\big(d(\tilde{p},p)\big)\,\dot{\Ffun}(\tilde{p}),
  \end{align}
  for all $\omega\in A$. 
  
  Letting $(\tilde{p}_k)_{k=1}^\infty \subset \tilde{\mathfrak{P}}$ with $\tilde{p}_k \overset{k\to \infty}{\to} p$, there is $k_0\in \mathbb{N}$ with $d(p,\tilde{p}_k)<\delta/2$ for all $k \geq k_0$ (illustrated in the right panel of Figure \ref{fig:lemma}). Plugging these in, into (\ref{eq: Liminf Limsup Fn}) we obtain for all $\omega \in A$,
  \begin{align}\label{eq: lim p_k and lim p_n to p}
  \lim_{k\to \infty}\left(\Ffun(\tilde{p}_k) - h\big(d(\tilde{p}_k,p)\big)\,\dot{\Ffun}(\tilde{p}_k)\right) 
  &\leq  \liminf_{n\to \infty} \Ffun_n(\omega, p_n) \\
  &\leq  \limsup_{n\to \infty} \Ffun_n(\omega, p_n)\nonumber
  \leq  \lim_{k\to \infty} \left(\Ffun(\tilde{p}_k) + h\big(d(\tilde{p}_k,p)\big)\,\dot{\Ffun}(\tilde{p}_k) \right).
  \end{align}
  This yields (\ref{eq: Fn(pn) to F}), as, due to continuity of $\Ffun,\dot{\Ffun}$ and $h$, as well as $h(0)=0$,  
  \begin{align*}\
  \lim_{k\to \infty}\left(\Ffun(\tilde{p}_k) - h\big(d(\tilde{p}_k,p)\big)\,\dot{\Ffun}(\tilde{p}_k)\right) 
  = \Ffun(p) = \lim_{k\to \infty}\left(\Ffun(\tilde{p}_k) + h\big(d(\tilde{p}_k,p\big)\,)\dot{\Ffun}(\tilde{p}_k)\right) \,.
  \end{align*}    
  
  Next we show the assertion of the theorem. Since it is trivial in case of $\bigcap_{n=1}^\infty\overline{\bigcup_{k=n}^\infty E_k^{(\rho)}(\omega)} =\emptyset$, it is sufficient to show that 
  \begin{align*}
  \text{if }\bigcap_{n=1}^\infty\overline{\bigcup_{k=n}^\infty E_k^{(\rho)}(\omega)} \neq \emptyset \quad \text{then} \quad \ell_n(\omega) \to \ell \quad \text{for} \quad \omega \in A.
  \end{align*}
  To see this, we show the following two inequalities for all $\omega \in A$
  \begin{align}\label{eq:l_n-geq-l}
  \liminf_{n\to \infty} \ell_n(\omega)  &\geq \ell,\\ \label{eq:l_n-leq-l}
  \limsup_{n\to \infty} \ell_n(\omega)  &\leq \ell \,.
  \end{align}
  Noting that 
  \begin{align*}
  \text{if}\quad p\in \bigcap_{n=1}^\infty\overline{\bigcup_{k=n}^\infty E_k^{(\rho)}(\omega)} \quad \text{then} 
  \quad p\in \overline{\bigcup_{k=j}^\infty E_{n_k}^{(\rho)}(\omega)} \quad \text{for all} \quad j\in\mathbb{N} \,,
  \end{align*}
  where $n_j \to \infty$ is a sequence with
  $$ \lim_{j \to \infty} \ell_{n_j}(\omega) = \liminf_{n\to \infty} \ell_n(\omega)\,,  $$ 
  and recalling that the closure of a set in a metric space is given by all cluster points of sequences in it, 
  there is a sequence $\{p_i\}_{i=1}^\infty$ with $p_i \to p$ and $p_i \in E_{k_i}^{(\rho)}(\omega)$ for a subsequence $\{k_i\}_{i=1}^\infty$ of $n_j$. Using (\ref{eq: Fn(pn) to F}) we obtain
  \begin{align*}
  \liminf_{n\to \infty}\Ffun_{k_n}(\omega, p_n) =  \lim_{i\to \infty} \ell_{k_i}(\omega) = \Ffun(p) \geq \ell.
  \end{align*}
  for all $\omega \in A$, yielding (\ref{eq:l_n-geq-l}). 
  
  To see (\ref{eq:l_n-leq-l}), set $p_n:=p$ for some $p\in E^{(\rho)}$, so that with (\ref{eq: Fn(pn) to F}) there is a nonnegative random sequence $\{\epsilon_n(\omega)\}_{n=1}^\infty $, converging to zero for all $\omega \in A$, with 
  \begin{align*}
  \ell= \Ffun(p) \geq \Ffun_n(\omega, p) - \epsilon_n(\omega) \geq \ell_n(\omega) - \epsilon_n(\omega)\,,
  \end{align*} 
  for all $\omega \in A$, yielding at once (\ref{eq:l_n-leq-l}). This completes the proof.
\end{proof}

\begin{figure}[ht!]
  \centering
  \includegraphics[width = 0.4\textwidth]{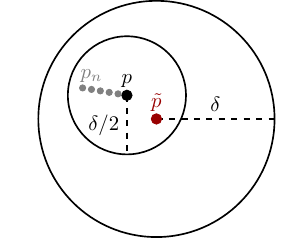}
  \includegraphics[width = 0.4\textwidth]{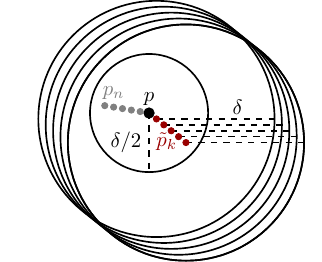}
  \caption{Left: Illustration of Lemma \ref{lem: P Tilde pointwise}. For all $\tilde{p}\in \tilde{\mathfrak{P}}$ (red point) with distance $d(p, \tilde{p})<\delta/2$ to $p\in \mathfrak{P}$ (black point), the inequality (\ref{eq:proof-grand-thm-2}) holds for  $(p_n)_{n=1}^\infty \subset \mathfrak{P}$ with $ p_n \to p$ (gray points). Right: Illustration of (\ref{eq: lim p_k and lim p_n to p}) in the proof of Theorem \ref{theo: Ziezold}. 
    Compared to the left panel, where $\tilde{p}\in \mathfrak{P}$ is fixed, the sequence $(\tilde{p}_k)_{k=1}^\infty \subset \tilde{\mathfrak{P}}$ (red dots) converges also to $p$ (black dot) for $k\to \infty$.}
  \label{fig:lemma}
\end{figure}

\begin{assumption}\label{ass: rho BPC rho to C}
  The population Fr\'echet $\rho$-mean is not empty: $E^{(\rho)}\neq \emptyset$ and
  for all random sequences $\{p_n\}_{n\in\mathbb{N}}$ without accumulation points in $ \mathfrak{P}$, there is a constant $\ell < C \leq \infty$ such that a.s.
  \begin{align}\label{eq: F inf condition}
  \liminf_{n \to \infty} \limits \rho (X, p_n) \geq C.
  \end{align}
\end{assumption}

\begin{theorem}\label{theo: BPC}
  Under Assumptions \ref{ass: rho ziezold}, and \ref{ass: rho BPC rho to C} BPC holds for the set of Fr\'echet $\rho$-means on $\mathfrak{P}$.
\end{theorem}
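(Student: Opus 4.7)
The plan is to argue by contradiction, combining the ZC established in Theorem \ref{theo: Ziezold} with the coercivity imposed by Assumption \ref{ass: rho BPC rho to C}. Suppose BPC fails: then there exist $\epsilon_0 > 0$ and an event of positive probability on which, along a subsequence $n_k \to \infty$, I can choose $p_k := p_{n_k}(\omega) \in E_{n_k}^{(\rho)}(\omega)$ with $d(E^{(\rho)}, p_k) > \epsilon_0$ for every $k$. The argument then splits according to whether $(p_k)$ possesses an accumulation point in $\mathfrak{P}$ or escapes to infinity.

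First I would dispose of the case where $p_{k_j} \to p^{\star}$ along some subsubsequence. In that case Theorem \ref{theo: Ziezold} puts $p^{\star} \in \bigcap_n \overline{\bigcup_{k \geq n} E_k^{(\rho)}(\omega)} \subseteq E^{(\rho)}$, whereas continuity of $p \mapsto d(E^{(\rho)}, p)$ together with $d(E^{(\rho)}, p_{k_j}) > \epsilon_0$ forces $d(E^{(\rho)}, p^{\star}) \geq \epsilon_0 > 0$, contradicting $p^{\star} \in E^{(\rho)}$. Next, for the main case of no accumulation, the upper bound is immediate: fixing a reference point $p^{\star} \in E^{(\rho)}$ (which exists by Assumption \ref{ass: rho BPC rho to C}), the classical SLLN yields $\Ffun_{n_k}(\omega, p^{\star}) \to \Ffun(p^{\star}) = \ell$ almost surely, and since $p_k$ is a minimiser,
\begin{equation*}
\limsup_{k \to \infty}\,\Ffun_{n_k}(\omega, p_k)\;=\;\limsup_{k \to \infty}\,\ell_{n_k}(\omega)\;\leq\;\ell.
\end{equation*}
On the other hand, Assumption \ref{ass: rho BPC rho to C} applied to the random sequence $(p_k)$, combined with the i.i.d.~property of the $X_i$ and intersection over the countably many coordinates, gives on a common full-measure event that $\liminf_k \rho(X_i(\omega), p_k) \geq C > \ell$ for every $i \in \mathbb{N}$.

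The main obstacle is to upgrade this pointwise liminf bound into the matching bound $\liminf_k \Ffun_{n_k}(\omega, p_k) \geq C$, which would contradict the displayed inequality. My proposed tool is a Fatou-type inequality for empirical averages: for random measurable maps $f_k : \mathfrak{Q} \to \mathbb{R}$ dominated from below by a $\mathcal{P}$-integrable function, one has almost surely
\begin{equation*}
\liminf_{k \to \infty}\,\frac{1}{n_k}\sum_{i=1}^{n_k} f_k(X_i)\;\geq\;\mathbb{E}\bigl[\liminf_{k \to \infty} f_k(X)\bigr],
\end{equation*}
proved by introducing the monotone envelopes $g_j := \inf_{k \geq j} f_k$, applying the classical SLLN to each fixed $g_j$, and letting $j \to \infty$ by monotone convergence. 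I would apply this to $f_k := \rho(\cdot, p_k) \wedge (C+1)$, a truncation that preserves the pointwise liminf at $C$ while allowing an integrable lower bound to be supplied by Assumption \ref{ass: rho ziezold}(3): locally comparing to reference points of the dense set $\tilde{\mathfrak{P}}$ and exploiting integrability of $\dot{\rho}$ yields the needed envelope. The conclusion $\liminf_k \Ffun_{n_k}(\omega, p_k) \geq C > \ell$ contradicts the upper bound, finishing the proof.
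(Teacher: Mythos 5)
Your overall skeleton coincides with the paper's: argue by contradiction, use ZC (Theorem \ref{theo: Ziezold}) to show the offending subsequence $(p_k)$ can have no accumulation points, and obtain $\limsup_k \Ffun_{n_k}(\omega,p_k)=\limsup_k\ell_{n_k}(\omega)\le\ell$ from the strong law applied at a fixed $p^\star\in E^{(\rho)}$. The divergence --- and the gap --- lies in how you obtain the matching lower bound $\liminf_k\Ffun_{n_k}(\omega,p_k)\ge C$. The Fatou-type inequality you propose is false for \emph{random}, data-adapted integrands $f_k$: the envelopes $g_j=\inf_{k\ge j}f_k$ are then themselves random functions of the entire sample, and the classical SLLN gives $\frac1n\sum_i g(X_i)\to\mathbb{E}[g(X)]$ only for a fixed measurable $g$. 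A minimal counterexample: with the law of $X$ atomless and $f_k(x)=-\mathbf{1}\{x\in\{X_1,\dots,X_{n_k}\}\}$ one has $\frac{1}{n_k}\sum_{i=1}^{n_k}f_k(X_i)=-1$ for every $k$, while $\liminf_k f_k(X)=0$ almost surely for an independent generic $X$, so the claimed inequality would read $-1\ge 0$. Your application sits exactly in this adversarial regime: $p_k$ is by definition chosen to make the empirical average $\Ffun_{n_k}(\omega,\cdot)$ as small as possible, so a per-coordinate $\liminf$ bound cannot be upgraded to a bound on the empirical mean by a generic Fatou argument. A secondary problem is that the integrable lower envelope you want to extract from Assumption \ref{ass: rho ziezold}(3) is only available locally, for $d(p,p')<\delta$, whereas your $p_k$ eventually leave every such neighbourhood, so even the domination hypothesis of your lemma is not secured.

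The paper avoids this interchange of limit and empirical average altogether. Assumption \ref{ass: rho BPC rho to C} is deliberately formulated for arbitrary \emph{random} sequences precisely so that it may be fed the data-dependent sequence $\tilde p_k$; the paper applies it coordinatewise to the $X_j$ and concludes that, on a full-measure set and for $n_k(\omega)$ large, every single summand satisfies $\rho\big(X_j(\omega),p_{n_k(\omega)}(\omega)\big)\ge C$, whence $\Ffun_{n_k(\omega)}(\omega,p_{n_k(\omega)}(\omega))\ge C$ termwise. To repair your route you would need to replace the random envelopes by deterministic ones (e.g.\ infima over complements of an exhausting family of sets, which requires structure $\mathfrak{P}$ is not assumed to have) or simply invoke the assumption termwise as the paper does; as written, the key lemma you rely on is not a theorem.
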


\begin{proof}
  It suffices to show that for random sequence  $p_n(\omega)\in E_n^{(\rho)}(\omega)$ with underlying random one-sided Hausdorff distance $r_n(\omega)$, i.e.
  \begin{align*}
  p_n(\omega) \in \argmax_{p \in  E_n^{(\rho)}(\omega)}\left(\min_{p'\in  E^{(\rho)}} d(p, p')\right),\qquad r_n(\omega) = \max_{p \in  E_n^{(\rho)}(\omega)}\left(\min_{p'\in  E^{(\rho)}} d(p, p')\right)\,, n\in \mathbb N
  \end{align*}
  $r_n(\omega) \to 0$ for all $\omega \in \Omega$ a.s.
  
  If this was not the case, then there would be  $\tilde{A}\subset \Omega$ with $\mathcal{P}(\tilde{A})>0$ such that for all $\omega \in \tilde{A}$, there is a subsequence $n_k(\omega)$ with $r_{n_k(\omega)}(\omega) \geq r_0(\omega) > 0$ and $r_0(\omega)>0$. We now derive a contradiction.  
  
  Due to Theorem \ref{theo: Ziezold}, we have ZC, so that with a null set $B$, all cluster points of $p_{n_k(\omega)}(\omega)$ lie in $E^{(\rho)}$ for all $\omega \in \tilde{A} \setminus B$. In consequence, $p_{n_k(\omega)}(\omega)$ has no cluster points for all $\omega \in \tilde{A} \setminus B$. Fixing $\omega_0 \in \tilde{A} \setminus B$, set
  $$ \tilde{p}_k(\omega) := \left\{\begin{array}{rcl}
  p_{n_k(\omega)}(\omega) &\mbox{ if} &\omega\in \tilde{A} \setminus B\\
  p_{n_k(\omega_0)}(\omega_0)&\mbox{ if} &\omega\in \Omega \setminus(\tilde{A} \setminus B)
  \end{array}\right.\,,k \in \mathbb N\,,
  $$  
  to obtain a sequence $\tilde{p}_k(\omega)$ without any cluster points for all $\omega\in \Omega$, so that in consequence of Assumption \ref{ass: rho BPC rho to C}, there is $C > \ell$ and a.s. $m(\omega) \in \mathbb N$ such that
  $$ \rho\left(X(\omega),\tilde{p}_{k}(\omega)\right) \geq C$$
  for all $k \geq m(\omega)$, almost surely. Hence, by construction, with a null set $\tilde B\subset \Omega$,
  \begin{eqnarray}\nonumber\label{eq:proof-BPC-1}
  \mathcal{F}_{n_k(\omega)}\big(\omega,p_{n_k(\omega)}(\omega)\big) 
  &=& \frac{1}{n_k(\omega)} \sum_{j=1}^{n_k(\omega)} \rho\left(X_j(\omega),p_{n_k(\omega)}(\omega)\right) \\& \geq& C \mbox{ for all $\omega \in \tilde{A} \setminus \tilde B$, if $n_k(\omega) > m(\omega)$.} 
  \end{eqnarray}
  By hypothesis there is $p \in  E^{(\rho)}$ with, due to the strong law of large numbers, $\mathcal F_{n_k(\omega)}(\omega,p) \stackrel{a.s.}{\to} \mathcal F(p) = \ell < C$, by construction, i.e. there is $n(\omega) \in \mathbb N$ such that 
  $\mathcal F_{n_k(\omega)}(\omega,p) < C$ for all $n_k(\omega) > n(\omega)$, a.s. In conjunction, with (\ref{eq:proof-BPC-1}), letting $n_k(\omega) > \max\{n(\omega),m(\omega)\}$ we have thus
  $$ \mathcal F_{n_k(\omega)}(\omega,p) < C \leq \mathcal{F}_{n_k(\omega)}\big(\omega,p_{n_k(\omega)}(\omega)\big)$$
  on a set of positive measure, a contradiction to $p_{n_k(\omega)}(\omega) \in E^{\rho}_{n_k(\omega)}(\omega)$, completing the proof.
\end{proof}

\begin{rem}
  The BPC version of the strong law in the literature usually requires that a Heine-Borel property, e.g. \cite{bhattacharya2003large, evans2020strong,schotz2022strong}. If $(\mathfrak{P},d)$ satisfies the Heine-Borel property, all sequences without accumulation points diverge, so Assumption \ref{ass: rho BPC rho to C} holds for all $\omega \in \Omega$. If $\mathfrak{P}$ is 
  compact, then Assumption \ref{ass: rho BPC rho to C} holds and BPC follows immediately from ZC.
\end{rem}

\newpage
\section{Strong consistencies and CLTs for the homoscedastic drift model}\label{sec: Drift model: Strong consistency and central limit theorem}

In this section, the theory developed in Section \ref{sec: strong consistency} is applied to prove strong consistency for the homoscedastic drift model as the number $B$ of batches tends to infinity.  For this purpose, we reformulate in Definition \ref{ass: drift model} 
the homoscedastic drift model from Definition \ref{def: homosced}
making $\phi$ explicitly stochastic and assume it to be i.i.d. 
Without loss of generality, we consider $Y$ centered, i.e.  $\psi$ has been subtracted and the basis has been transformed with the Helmert sub-matrix as described in Section \ref{sec: maximum likelihood estimators}. For ease of notation, we assume that the original random variable is $N+1$ dimensional, so that $Y$ is $N$ dimensional, and we omit writing the tilde and the superscript $^H$ symbol from Section \ref{sec: maximum likelihood estimators}. 

Then, below in Section \ref{sec: Strong Consistency drift model} the modulus of continuity $h$ with its prefactor from Assumption \ref{ass: rho ziezold} is explicitly calculated and ZC is proven. With a little more effort  it is shown that the population Fr\'echet mean $E^{\rho}=\{[\kappa^{(0)}]\}$ is unique, in order to establish BPC. Finally, a central limit theorem for $\hat{\kappa}$ is shown in Section \ref{sec: CLT Drift model} and one for $\hat{I}$ in Section \ref{sec: clt I}. 

\begin{definition}[Centered Homoscedastic Drift Model]\label{ass: drift model} 
The complex $N$-dimensional random vector $Y$ is given by 
\begin{align}\label{eq: homo-asympt} Y=\phi\kappa^{(0)} +\epsilon\end{align}
where $\kappa^{(0)}\in \Scomp \coloneqq \{\kappa \in \mathbb{C}^N: ||\kappa||=1\}$ comprises the true but unknown ENDOR spectrum, $\phi$ is  a complex random value and $\epsilon = (\epsilon_1,\ldots,\epsilon_N)$ is a complex $N$-dimensional random vector independent of $\phi$ 
with 
\begin{align*}
    0 < \mathbb{E}[\vect{\phi}^T\vect{\phi}]=c_\phi < \infty, \qquad \vect{\epsilon_{\nu}} \stackrel{i.i.d.}{\sim} \mathcal{N}(0,\Sigma), 1\leq \nu \leq N .
\end{align*}
Moreover, we assume that the precision matrix $P=\Sigma^{-1}$ has two positive eigenvalues $\lambda_1>\lambda_2>0$.

Thus, $\mathfrak{Q} = \mathbb{C}^N$ is the data space, as descriptor space we choose $\mathfrak{P} = \mathbb{C}P^{N-1}$ and for the loss we choose
\begin{align}\label{eq: definition rho}
    \rhofun: \mathfrak{Q} \times \mathfrak{P} \to \RR, \quad (Y, [\kappa]) \mapsto d_P\left(Y,  \hat{\phi}(\kappa,P,Y)\kappa\right)^2,
\end{align}
with
\begin{align}\label{eq: phi-hat-asympt-model}
    \hat{\phi}(\kappa,P,Y) &= \comp{\left(\kappa \dia_P \kappa  \right)^{-1} \left(\kappa\bul_P Y\right)} \in \mathbb{C}.
\end{align}

Thus, for a sample $Y(1), Y(2), \dots \stackrel{i.i.d.}{\sim} Y$ from (\ref{eq: homo-asympt}) we have the following sample and population Fr\'echet functions
\begin{align*}
    \Ffun_B (\omega, [\kappa]) = \frac{1}{B}\sum_{b=1}^B\rhofun(Y(b), [\kappa]), \qquad \Ffun([\kappa]) = \int\rhofun(Y, [\kappa])\diff\mathbb{P}\left(\phi,\epsilon\right).
\end{align*}

\end{definition}

\begin{rem}
Note that (\ref{eq: phi-hat-asympt-model}) is the MLE (\ref{eq: phi hat}) for a single $b\in \{1,\ldots,B\}$ of the homoscedastic drift model (\ref{eq: homoModel}).
 
Further, note that (\ref{eq: definition rho}) is well defined, since for $\kappa, \tilde{\kappa} \in [\kappa]$ there is $\lambda \in \mathbb{R}$ such that $\tilde{\kappa} = e^{i\lambda}\kappa$, whence
\begin{align*}
    \hat{\phi}(\tilde{\kappa},P,Y) = e^{-i\lambda}\comp{\left(\kappa \dia_P \kappa  \right)^{-1} \left(\kappa\bul_P Y\right)} 
\end{align*}
yielding
\begin{align*}
    \hat{\phi}(\tilde{\kappa},P,Y)\tilde{\kappa} = \hat{\phi}(\kappa,P,Y)\kappa.
\end{align*}
\end{rem}

\subsection{Strong consistencies for the centered homoscedastic drift model}\label{sec: Strong Consistency drift model}
Here, we establish that, as more data is accumulated and hence $B\rightarrow \infty$, the estimator $[\hat{\kappa}]$ arising from the centered homoscedastic drift model in Definition \ref{ass: drift model}, i.e. the generalized Fr\'echet mean, is strongly consistent in the sense of ZC and BPC.

\begin{theorem}\label{the: inequality rho}
Under Assumption \ref{ass: drift model} ZC holds for the centered homoscedastic drift model.
In particular, in Assumption \ref{ass: rho ziezold} the modulus of continuity can be chosen as
$$ h([\kappa],[\kappa'])=  d([\kappa],[\kappa'])$$
with prefactor 
\begin{align*}
    \dot{\rho}\left( Y, P\right) \coloneqq \sqrt{\lambda_1}\left(\frac{\lambda_1^2+\lambda_2^2}{\lambda_1 \lambda_2}\right)\left((\lambda_1 +2)\sqrt{2N}+8\sqrt{2}N+\frac{32\sqrt{2}N\left(\lambda_1^2+\lambda_2^2\right)}{\lambda_1\lambda_2}\right)  \left|\left|Y \right|\right|^2 .
\end{align*}
\end{theorem}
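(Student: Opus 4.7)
The plan is to verify the three conditions of Assumption \ref{ass: rho ziezold} for the loss $\rho$ defined in (\ref{eq: definition rho}) and then invoke Theorem \ref{theo: Ziezold} to obtain ZC. The choice $h(t)=t$ means the inequality (\ref{eq: assumption rho}) becomes a genuine Lipschitz estimate on $[\kappa]\mapsto \rho(Y,[\kappa])$, and the real work is producing such an estimate with a $P$-integrable prefactor.

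I would begin with two structural observations. First, for any $\kappa\in\Scomp$, the $2\times 2$ matrix $\kappa\dia_P\kappa=\sum_n M(\kappa_n)^T P\,M(\kappa_n)$ has eigenvalues trapped in $[\lambda_2,\lambda_1]$: this follows from the identity $M(\kappa_n)^T M(\kappa_n)=|\kappa_n|^2 I_2$, the inequality $\lambda_2 I_2 \preceq P \preceq \lambda_1 I_2$, and $\sum_n|\kappa_n|^2=1$. In particular, $\kappa\dia_P\kappa$ is invertible with operator norm of its inverse bounded by $1/\lambda_2$. Second, from the closed form (\ref{eq: phi-hat-asympt-model}) combined with Cauchy--Schwarz, one obtains the uniform bound $|\hat{\phi}(\kappa,P,Y)|\leq (\lambda_1/\lambda_2)\|Y\|$. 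These two facts alone already supply finiteness of $\mathbb{E}[\rho(Y,[\kappa])]$: taking $\phi=0$ in the defining minimization yields $\rho(Y,[\kappa])\leq \|Y\|_P^2\leq \lambda_1\|Y\|^2$, and $\mathbb{E}\|Y\|^2\leq 2c_\phi+2\mathbb{E}\|\epsilon\|^2<\infty$ under Assumption \ref{ass: drift model}.

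For the modulus-of-continuity estimate, given $[\kappa],[\kappa']$ close in $\CC P^{N-1}$ I would fix representatives in optimal position via Lemma \ref{lem: opt-pos}, so that $\|\kappa-\kappa'\|=d([\kappa],[\kappa'])$. A difference-of-squares expansion gives
\begin{align*}
\rho(Y,[\kappa])-\rho(Y,[\kappa']) = \big\langle \hat{\phi}(\kappa')\kappa'-\hat{\phi}(\kappa)\kappa,\ 2Y-\hat{\phi}(\kappa)\kappa-\hat{\phi}(\kappa')\kappa'\big\rangle_P,
\end{align*}
and Cauchy--Schwarz in $\langle\cdot,\cdot\rangle_P$ decouples the two factors. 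The second factor is bounded uniformly in $[\kappa]$ by $\sqrt{\lambda_1}\,(2+2\lambda_1/\lambda_2)\|Y\|$ thanks to the $\hat{\phi}$ bound above. For the first factor, I split
\begin{align*}
\hat{\phi}(\kappa')\kappa'-\hat{\phi}(\kappa)\kappa=\hat{\phi}(\kappa')(\kappa'-\kappa)+\big(\hat{\phi}(\kappa')-\hat{\phi}(\kappa)\big)\kappa
\end{align*}
and bound $|\hat{\phi}(\kappa')-\hat{\phi}(\kappa)|$ using the matrix identity $A^{-1}-B^{-1}=A^{-1}(B-A)B^{-1}$ with $A=\kappa\dia_P\kappa$, $B=\kappa'\dia_P\kappa'$. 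The key perturbation bounds are $\|A-B\|\leq 2\lambda_1\|\kappa-\kappa'\|$, obtained by expanding $A-B=\sum_n[M(\kappa_n)^T P M(\kappa_n-\kappa_n')+M(\kappa_n-\kappa_n')^T P M(\kappa_n')]$ and applying Cauchy--Schwarz with $\|\kappa\|=\|\kappa'\|=1$, and $\|\kappa\bul_P Y-\kappa'\bul_P Y\|\leq \lambda_1\|\kappa-\kappa'\|\|Y\|$ by the same technique. Together with $\|\kappa\bul_P Y\|\leq \lambda_1\|Y\|$, this yields $|\hat{\phi}(\kappa)-\hat{\phi}(\kappa')|\leq (\lambda_1/\lambda_2)(1+2\lambda_1/\lambda_2)\|Y\|\|\kappa-\kappa'\|$. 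Combining these pieces gives a Lipschitz estimate of the form $|\rho(Y,[\kappa])-\rho(Y,[\kappa'])|\leq \dot{\rho}(Y,P)\,d([\kappa],[\kappa'])$ with $\dot{\rho}(Y,P)$ a polynomial in $\lambda_1,\lambda_2,\lambda_1^{-1},\lambda_2^{-1},N$ times $\|Y\|^2$.

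Finally, $\mathbb{E}[\dot{\rho}(Y,P)]<\infty$ follows again from $\mathbb{E}\|Y\|^2<\infty$, completing verification of Assumption \ref{ass: rho ziezold}. Theorem \ref{theo: Ziezold} then delivers ZC. The main obstacle is bookkeeping of constants: the individual Cauchy--Schwarz and operator-norm estimates are routine, but to hit the stated explicit prefactor one has to be careful at several places to use either $\sum_n|\kappa_n|\leq \sqrt{N}$ or $\sum_n|\kappa_n|^2=1$ as appropriate, which accounts for the $\sqrt{2N}$ and $N$ factors appearing in $\dot{\rho}$; the two-dimensional $M$-conversion also brings in the $\sqrt{2}$ factors. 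The use of optimal representatives is essential so that the Lipschitz estimate is in terms of the intrinsic projective distance rather than any fixed-representative Euclidean distance.
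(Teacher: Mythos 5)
Your strategy is sound and would establish ZC, and at the skeleton level it matches the paper: both reduce the problem to a Lipschitz bound $|\rho(Y,[\kappa])-\rho(Y,[\kappa'])|\le \dot\rho(Y,P)\,d([\kappa],[\kappa'])$ via Cauchy--Schwarz in $\langle\cdot,\cdot\rangle_P$ followed by a perturbation estimate for $\kappa\mapsto\hat\phi(\kappa,P,Y)\kappa$, and both then feed this into Theorem \ref{theo: Ziezold} using $\mathbb{E}\|Y\|^2<\infty$. The technical core differs, though, in two ways. First, the paper exploits the projection identity $\rho(Y,[\kappa])=\langle Y,Y\rangle_P-\langle\hat\phi(\kappa,P,Y)\kappa,Y\rangle_P$ (Lemma \ref{lem: rho formula}), so the difference collapses to the single term $\langle\hat\phi(\kappa',P,Y)\kappa'-\hat\phi(\kappa,P,Y)\kappa,Y\rangle_P$, whereas your difference-of-squares expansion produces the extra factor $2Y-\hat\phi(\kappa)\kappa-\hat\phi(\kappa')\kappa'$, which you must (and correctly do) bound uniformly. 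Second, and more substantially, the paper controls the perturbation through the explicit $2\times2$ adjugate inverse $(\kappa\dia_P\kappa)^{-1}=\det(\kappa\dia_P\kappa)^{-1}(\kappa\dia_{\tilde P}\kappa)$ and a symmetrized splitting $xa-yb=\tfrac12(x+y)(a-b)+\tfrac12(x-y)(a+b)$, with all norms taken as Frobenius norms of stacked $2N\times 2$ and $2N\times 2N$ blocks; your route instead uses the spectral bound $\lambda_2 I_2\preceq\kappa\dia_P\kappa\preceq\lambda_1 I_2$ (a clean consequence of $M(\kappa_\nu)^TM(\kappa_\nu)=|\kappa_\nu|^2 I_2$), the resolvent identity $A^{-1}-B^{-1}=A^{-1}(B-A)B^{-1}$, and operator norms. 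Your version is arguably tidier and yields a dimension-free Lipschitz constant of order $\lambda_1(\lambda_1/\lambda_2)(1+\lambda_1/\lambda_2)^2\|Y\|^2$; the paper's $\sqrt{2N}$ and $N$ factors are artefacts of its Frobenius-norm bookkeeping (e.g. $\|\mathrm{Id}_N\otimes P\|_F=\sqrt{N(\lambda_1^2+\lambda_2^2)}$), not anything intrinsic, so your closing remark that careful use of $\sum_\nu|\kappa_\nu|\le\sqrt N$ would recover those factors is slightly off for your own route. The only real caveat is that your argument therefore proves ZC with a \emph{different} prefactor than the one displayed in the theorem (neither dominates the other for all $\lambda_1,\lambda_2,N$), so it does not literally verify the stated formula for $\dot\rho$; for the substance of the result this is immaterial, since any prefactor with finite expectation suffices for Theorem \ref{theo: Ziezold}.
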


\begin{proof}
Let $[\kappa], [\kappa'] \in \mathfrak{P}$ and $\kappa\in[\kappa], \kappa'\in[\kappa']$ arbitrary. 
Recalling
 \begin{align*}
    \rhofun(Y, [\kappa])=  \spr{Y}{Y}_P  - \spr{\hat{\phi}(\kappa,P, Y) \kappa}{Y}_P\,.
\end{align*}
from Lemma \ref{lem: rho formula} and using the Cauchy–Schwarz inequality we obtain
\begin{align*}
\left|\rhofun(Y, [\kappa]) - \rhofun(Y, [\kappa']) \right| &= \left|\spr{\hat{\phi}(\kappa',P, Y) \kappa'-\hat{\phi}(\kappa,P, Y) \kappa}{Y}_P \right|\\
&\leq \sqrt{\spr{Y}{Y}_P}~d_P\Big(\hat{\phi}(\kappa,P, Y)\kappa, \hat{\phi}(\kappa',P, Y)\kappa'\Big)\,.
\end{align*}
Since $\lambda_1$ is the largest eigenvalue of $P$, by definition of the Mahalanobis inner product in Section \ref{sec: notation}, the first term of the bottom line above is bounded by
\begin{align}\label{eq: ypy}
    \sqrt{\spr{Y}{Y}_P} ~=~ \sqrt{\sum_{\nu=1}^N \vect{Y_\nu}^TR \begin{pmatrix}
\lambda_1& 0\\
0 & \lambda_2
\end{pmatrix}
R^T\vect{Y_\nu}} ~\leq~ \sqrt{\lambda_1}\, \|Y\|\,.
\end{align}
A bound for the second term can be obtained from Lemma \ref{lem: dp xa yb inequality} in conjunction with Lemmata \ref{lem: dp phi plus phi and kappa minus kappa} and \ref{lem: dp phi minus phi and kappa plus kappa},
\begin{align*}
    &d_P\Big(\hat{\phi}(\kappa,P, Y)\kappa, \hat{\phi}(\kappa',P, Y)\kappa'\Big)\\ 
    &\leq \left(\frac{\lambda_1^2+\lambda_2^2}{\lambda_1 \lambda_2}\right)\left(\lambda_1 \sqrt{2N}+8\sqrt{2}N+\frac{32\sqrt{2}N\left(\lambda_1^2+\lambda_2^2\right)}{\lambda_1\lambda_2}+2\sqrt{2N}\right)  \left|\left|Y \right|\right| \left|\left|\kappa -\kappa' \right|\right|\,.
\end{align*}
In consequence, since $\kappa\in[\kappa], \kappa'\in[\kappa']$ have been arbitrary,
\begin{align*}
&\left|\rhofun(Y, [\kappa]) - \rhofun(Y, [\kappa']) \right| \\
&\leq \sqrt{\lambda_1}\left(\frac{\lambda_1^2+\lambda_2^2}{\lambda_1 \lambda_2}\right)\left((\lambda_1 +2)\sqrt{2N}+8\sqrt{2}N+\frac{32\sqrt{2}N\left(\lambda_1^2+\lambda_2^2\right)}{\lambda_1\lambda_2}\right)  \left|\left|Y \right|\right|^2 
d([\kappa], [\kappa'])\,,
\end{align*}
yielding the assertion on $h$ and $\dot \rho$.

Further, since $\mathfrak{P}$ is separable, $\dot{\rho}$ does not depend on $\kappa$, and because of Assumption \ref{ass: drift model} the second moment of $Y$ exists, it follows from Theorem \ref{theo: Ziezold} that ZC holds.
\end{proof}

The stronger BPC hinges on existence and uniqueness of the generalized Fr\'echet population mean. To this end we first decompose and compute the generalized Fr\'echet population function.    
\begin{lemma}\label{lem: A2 is constant}
    For the centered homoscedastic drift model from Definition \ref{ass: drift model} we have 
\begin{align*}
    (i)& \qquad \Ffun([\kappa])= \int \rhofun(\phi \kappa^{(0)}, [\kappa])  \diff\mathbb{P}\left(\phi\right)
    +\int \rhofun(\epsilon, [\kappa]) \diff\mathbb{P}\left(\epsilon\right)
    \,,\\
    (ii)& \qquad\int \rhofun(\epsilon, [\kappa]) \diff\mathbb{P}\left(\epsilon\right) = 2N-2\,,\\
    (iii)& \qquad\int \rhofun(\phi \kappa^{(0)}, [\kappa])  \diff\mathbb{P}\left(\phi\right)= \left(\tilde{\eta}^2-\frac{\tilde{\eta}^4}{4} \right)
    \int \vect{\phi}^T S \vect{\phi}\diff\mathbb{P}\left(\phi\right)\,,
\end{align*} 
 where $\tilde{\eta} = d\left([\kappa], \left[\kappa^{(0)}\right]\right)$ and $S$ is a matrix with eigenvalues greater than or equal to $\lambda_2$.
\end{lemma}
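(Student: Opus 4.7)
The proof splits naturally along (i), (ii), (iii), using throughout the representation $\rho(Y,[\kappa]) = \spr{Y}{Y}_P - \spr{\hat\phi(\kappa,P,Y)\kappa}{Y}_P$ from Lemma \ref{lem: rho formula} and the real-linearity of $Y \mapsto \hat\phi(\kappa,P,Y)$, which is visible directly from the closed form \eqref{eq: phi-hat-asympt-model}. For (i), I would expand each of the two bilinear forms in $\rho(Y,[\kappa])$ under the decomposition $Y = \phi\kappa^{(0)} + \epsilon$ into four bilinear pieces. The two ``diagonal'' pieces reassemble into $\rho(\phi\kappa^{(0)},[\kappa]) + \rho(\epsilon,[\kappa])$. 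Each remaining cross-term has, after rewriting in the real $\vect{\cdot},\,M(\cdot)$ representation, one factor linear in $\vect{\epsilon}$ (directly or via $\hat\phi(\epsilon)$) and one factor depending only on $\phi$. Independence of $\phi$ and $\epsilon$ together with $\E[\epsilon]=0$ (whence $\E[\hat\phi(\epsilon)] = \hat\phi(\E[\epsilon]) = 0$ by linearity) then kills every cross-term expectation, giving (i).

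For (ii), I would split $\E[\rho(\epsilon,[\kappa])]$ into its two terms. Componentwise, $\E[\spr{\epsilon}{\epsilon}_P] = N\,\mathrm{tr}(P\Sigma) = 2N$ since $\Sigma = P^{-1}$. Let $A = \kappa \dia_P \kappa$ and $b = \kappa \bul_P \epsilon$, so that $\hat\phi(\epsilon) = \comp{A^{-1}b}$. Using $\vect{\hat\phi(\epsilon)\kappa_\nu} = M(\kappa_\nu) A^{-1} b$ and summing over $\nu$ collapses $\spr{\hat\phi(\epsilon)\kappa}{\epsilon}_P$ to $b^T A^{-1} b$. Since $\vect{\epsilon_\nu}$ are iid $\mathcal{N}(0,\Sigma)$ and $P\Sigma P = P$, one has $\E[bb^T] = \sum_\nu M(\kappa_\nu)^T P M(\kappa_\nu) = A$, whence $\E[b^T A^{-1} b] = \mathrm{tr}(A^{-1}A) = 2$. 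Subtraction yields (ii).

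The cleanest route to (iii) is to bypass the explicit Schur-type identity $\rho(\phi\kappa^{(0)},[\kappa]) = \vect{\phi}^T (A_0 - C^T A^{-1} C)\vect{\phi}$ (where $A_0 = \kappa^{(0)} \dia_P \kappa^{(0)}$ and $C = \kappa \dia_P \kappa^{(0)}$) and use instead that $\hat\phi$ is the minimizer of $\phi' \mapsto \|Y - \phi'\kappa\|_P^2$, so
\[
\rho(\phi\kappa^{(0)},[\kappa]) \;=\; \min_{\phi'\in\CC}\|\phi\kappa^{(0)} - \phi'\kappa\|_P^2 \;=\; \vect{\phi}^T T(\kappa) \vect{\phi}
\]
for some $2\times 2$ matrix $T(\kappa)$, the quadratic form being automatic since the above minimum is a non-negative quadratic function of $\vect{\phi}$. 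Applying $\|\cdot\|_P^2 \geq \lambda_2 \|\cdot\|^2$ pointwise in $\phi'$ and then minimizing gives
\[
\vect{\phi}^T T(\kappa)\vect{\phi} \;\geq\; \lambda_2 \min_{\phi'\in\CC}\|\phi\kappa^{(0)} - \phi'\kappa\|^2\,.
\]
The unweighted minimum is an elementary complex projection of $\phi\kappa^{(0)}$ onto $\mathrm{span}_{\CC}\kappa$: with $\|\kappa\|=\|\kappa^{(0)}\|=1$ it equals $|\phi|^2\bigl(1 - |\kappa^*\kappa^{(0)}|^2\bigr)$, and the definition of the projective distance gives $|\kappa^*\kappa^{(0)}| = 1 - \tilde\eta^2/2$, so $1 - |\kappa^*\kappa^{(0)}|^2 = \tilde\eta^2 - \tilde\eta^4/4$. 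Setting $S = T(\kappa)/(\tilde\eta^2 - \tilde\eta^4/4)$ for $\tilde\eta > 0$ (the case $\tilde\eta = 0$ makes both sides vanish trivially) produces a matrix with eigenvalues $\geq \lambda_2$; integrating over $\phi$ finishes (iii). The main obstacle throughout is keeping the $\vect{\cdot},\,M(\cdot),\,\spr{\cdot}{\cdot}_P$ bookkeeping consistent, and the variational characterization used in the last step is precisely what permits extraction of the factor $\tilde\eta^2 - \tilde\eta^4/4$ without a direct block-matrix inversion.
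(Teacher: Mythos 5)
Your parts (i) and (ii) follow the paper's proof essentially verbatim: the same splitting of $\hat\phi(\kappa,P,Y)$ by real-linearity, the same vanishing of the cross term via independence and $\E[\epsilon]=0$, and the same two trace computations $\E[\spr{\epsilon}{\epsilon}_P]=2N$ and $\E[b^TA^{-1}b]=\Tr(A^{-1}A)=2$. Part (iii), however, is a genuinely different and considerably shorter argument. The paper writes $\rho(\phi\kappa^{(0)},[\kappa])$ as the explicit Schur-type quadratic form $\vect{\phi}^T\bigl((\kappa^{(0)}\dia_P\kappa^{(0)})-(\kappa^{(0)}\dia_P\kappa)(\kappa\dia_P\kappa)^{-1}(\kappa\dia_P\kappa^{(0)})\bigr)\vect{\phi}$, then constructs four block rotations to reduce $\kappa$ and $\kappa^{(0)}$ to a two-dimensional configuration, extracts the factor $\sin^2(\eta)=\tilde\eta^2-\tilde\eta^4/4$ by explicit cancellation, and proves the eigenvalue bound on the resulting Schur complement $S=Q_{22}-Q_{12}^TQ_{11}^{-1}Q_{12}$ via positive definiteness of a $4\times 4$ principal submatrix. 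You instead exploit that $\hat\phi$ is the minimizer of $\phi'\mapsto\|\phi\kappa^{(0)}-\phi'\kappa\|_P^2$ (immediate from the closed form), so that $\rho(\phi\kappa^{(0)},[\kappa])=\vect{\phi}^TT(\kappa)\vect{\phi}$ is a variational minimum; the pointwise bound $\|\cdot\|_P^2\geq\lambda_2\|\cdot\|^2$ passes through the minimum, and the unweighted minimum is an elementary Hermitian projection equal to $|\phi|^2(1-|\kappa^*\kappa^{(0)}|^2)=|\phi|^2(\tilde\eta^2-\tilde\eta^4/4)$, using $|\kappa^*\kappa^{(0)}|=1-\tilde\eta^2/2$ from the optimal-position computation. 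Defining $S\coloneqq T(\kappa)/(\tilde\eta^2-\tilde\eta^4/4)$ for $\tilde\eta>0$ then satisfies the statement exactly, and this is all that is needed downstream (Theorem \ref{theo: Frechet mean value is unique} and step 5 of Theorem \ref{theo: clt} only use $F([\kappa])\geq 2N-2+(\tilde\eta^2-\tilde\eta^4/4)\lambda_2 c_\phi$). What you lose relative to the paper is the explicit identification of $S$ as a Schur complement of the rotated precision matrix; what you gain is the complete avoidance of the rotation bookkeeping and the block-matrix inversion. Two small points you should make explicit to be fully rigorous: $T(\kappa)$ is a well-defined quadratic form because $\hat\phi(\kappa,P,\phi\kappa^{(0)})$ is real-linear in $\vect{\phi}$ (so the residual is linear and its squared $P$-norm quadratic), and the minimum $\min_{\phi'}\|\phi\kappa^{(0)}-\phi'\kappa\|_P^2$ is invariant under the choice of representatives $\kappa\in[\kappa]$, $\kappa^{(0)}\in[\kappa^{(0)}]$, so working with representatives in optimal position is legitimate.
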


\begin{proof}
To see $(i)$, note that by Definition (\ref{eq: phi-hat-asympt-model}),
$$ \hat\phi(\kappa,P,Y) = \hat\phi(\kappa,P,\phi\kappa^{(0)})+\hat\phi(\kappa,P,\epsilon)\,,$$
whence in conjunction with (\ref{eq: definition rho}),
\begin{align*}
    &\Ffun([\kappa]) = \int \left|\left|\phi \kappa^{(0)}+ \epsilon-  \hat{\phi}(\kappa,P,\phi \kappa^{(0)}+ \epsilon)\kappa\right|\right|_P^2\diff\mathbb{P}\left(\phi,\epsilon\right) \\ 
    &= \int \rhofun(\phi \kappa^{(0)}, [\kappa]) 
    + 2\spr{\phi \kappa^{(0)}- \hat{\phi}(\kappa,P,\phi \kappa^{(0)})\kappa}{\epsilon- \hat{\phi}(\kappa,P,\epsilon)\kappa}_P 
    +\rhofun(\epsilon, [\kappa]) \diff\mathbb{P}\left(\phi, \epsilon\right)
\end{align*}
for any $\kappa\in [\kappa], \kappa^{(0)}\in [\kappa^{(0)}]$.
Since $\phi$ and $\epsilon$ are independent and $\mathbb{E}[\epsilon]=0$ the integral over the mixed term vanishes yielding the first asserted equation.

To see the $(ii)$, use Lemma \ref{lem: rho formula} to obtain 
 \begin{align*}
    \int \rhofun(\epsilon, [\kappa]) \diff\mathbb{P}\left(\epsilon\right) = \int \left(\spr{\epsilon}{\epsilon}_P  - \spr{\hat{\phi}(\kappa,P, \epsilon) \kappa}{\epsilon}_P\right)\diff\mathbb{P}\left(\epsilon\right)
\end{align*}
for any $\kappa \in [\kappa]$ with the MLE from (\ref{eq: phi hat}).
By definition of the Mahalanobis type inner product, independence of the $\epsilon_\nu \sim \mathcal{N}(0,\Sigma)$ ($\nu =1,\ldots,N$) and $P=\Sigma^{-1}$, calculate the first term:
\begin{align*}
     \int \spr{\epsilon}{\epsilon}_P\diff\mathbb{P}\left(\epsilon\right)&= 
     \sum_{\nu=1}^N\Tr\left(P~ \vect{\epsilon_\nu}\vect{\epsilon_\nu}^T \diff\mathbb{P}\left(\epsilon\right)\right) =
     \sum_{\nu=1}^N\Tr(P\Sigma) = \sum_{\nu=1}^N\Tr(\mathrm{Id}_2) = 2N\,.
\end{align*}
Similarly, compute the second term: 
\begin{align*}
   &\int \spr{\hat{\phi}(\kappa,P, \epsilon) \kappa}{\epsilon}_P\diff\mathbb{P}\left(\epsilon\right) 
    = \int \sum_{\nu=1}^N\vect{\epsilon_\nu}^TP M(\kappa_\nu) \vect{\hat{\phi}(\kappa,P, \epsilon)}
    \diff\mathbb{P}\left(\epsilon\right)\\
    &= \int \left(\kappa\bul_P \epsilon\right)^T \left(\kappa \dia_P \kappa  \right)^{-1} \left(\kappa\bul_P \epsilon\right)
    \diff\mathbb{P}\left(\epsilon\right) 
    =\Tr\left( \left(\kappa \dia_P \kappa  \right)^{-1} \int \left(\kappa\bul_P \epsilon\right) \left(\kappa\bul_P \epsilon\right)^T
    \diff\mathbb{P}\left(\epsilon\right)\right)\\
    &=\Tr\left( \left(\kappa \dia_P \kappa  \right)^{-1} \left(\kappa \dia_P \kappa \right)\right) = 2\,,
\end{align*}
since 
\begin{align*}
    &\int \left(\kappa\bul_P \epsilon\right) \left(\kappa\bul_P \epsilon\right)^T
    \diff\mathbb{P}\left(\epsilon\right)
    = \int \left(\sum_{\nu=1}^N M(\kappa_\nu)^T P \vect{\epsilon_\nu} \right) \left(\sum_{\nu=1}^N \vect{\epsilon_\nu}^T P M(\kappa_\nu)\right)
    \diff\mathbb{P}\left(\epsilon\right)\\
    &= \sum_{\nu=1}^N M(\kappa_\nu)^T P \left(\int\vect{\epsilon_\nu}   \vect{\epsilon_\nu}^T \diff\mathbb{P}\left(\epsilon\right) \right)
     P M(\kappa_\nu)= \kappa \dia_P \kappa .
\end{align*}
Subtracting the first term from the second gives the second asserted equation.

Proving $(iii)$ is a more elaborate. We have
\begin{align*}
    &\int \rhofun(\phi \kappa^{(0)}, [\kappa])  \diff\mathbb{P}\\
    &= \int \vect{\phi}^T 
    \left(\left(\kappa^{(0)} \dia_P \kappa^{(0)}\right)-\left(\kappa^{(0)} \dia_P\kappa \right)\left(\kappa \dia_P \kappa\right)^{-1}\left(\kappa \dia_P \kappa^{(0)}\right)\right)\vect{\phi} \diff\mathbb{P}\left(\phi\right)
\end{align*}
where $\kappa \in [\kappa]$.
Without loss of generality, assume that $\kappa$ and $\kappa^{(0)}$ are in optimal position, i.e.  $d\left([\kappa],\left[\kappa^{(0)}\right])\right)= ||\kappa -\kappa^{(0)} ||=: \widetilde{\eta}$.
Therefore,  due to Lemma \ref{lem: opt-pos},
\begin{align*}
    \kappa^*\kappa^{(0)} = \Re(\kappa^*\kappa^{(0)})=\vectn{\kappa}^T\vectn{\kappa^{(0)}} = \cos(\eta) \quad \text{where} \quad \eta = 2 \arcsin(\widetilde{\eta}/2).
\end{align*}
We can rewrite the above formula by using matrix notation
\begin{align*}
\kappa \dia_P \kappa = & \begin{pmatrix} M(\kappa_1)^T & \dots &  M(\kappa_{N})^T \end{pmatrix} \begin{pmatrix} P & 0 \dots & 0\\ 0 & \ddots & 0\\ 0 & \dots 0 & P \end{pmatrix} \begin{pmatrix} M(\kappa_1) \\ \vdots \\  M(\kappa_{N}) \end{pmatrix}.
\end{align*}
Now one can define a matrix containing only rotations on the diagonal, such that all $M(\kappa_\nu)$ become diagonal, i.e. real:
\begin{align*}
  R_1 := \begin{pmatrix} R_{\alpha_1} & 0 \dots & 0\\ 0 & \ddots & 0\\ 0 & \dots 0 & R_{\alpha_{N}} \end{pmatrix}\in\mathbb{R}^{2N\times 2N}
\end{align*}
and then define a matrix $R_2 = (\widetilde{R}_2 \otimes \mathrm{Id}_2)\in\mathbb{R}^{2N\times 2N}$ which rotates these real blocks such that we get $R_2R_1 \vectn{\kappa} =(1,0,\dots,0)^T$.
From the construction of $R_1$ and $R_2$ follows directly
    \begin{align*}
    &\compn{R_2 R_1 \vectn{\kappa}}^* \compn{R_2 R_1 \vectn{\kappa^{(0)}}}
    = \compn{R_2 
    \vectn{\begin{matrix} e^{i\alpha_1}\kappa_1\\ \vdots \\ \ e^{i\alpha_N}\kappa_N\end{matrix}}}^*
    \compn{R_2 
    \vectn{\begin{matrix} e^{i\alpha_1}\kappa^{(0)}_1\\ \vdots \\ \ e^{i\alpha_N}\kappa^{(0)}_N\end{matrix}} }\\
    &=\compn{ 
    \vectn{\begin{matrix} e^{i\alpha_1}\kappa_1\\ \vdots \\ \ e^{i\alpha_N}\kappa_N\end{matrix}}}^*\widetilde{R}_2^*
    \widetilde{R}_2\compn{ 
    \vectn{\begin{matrix} e^{i\alpha_1}\kappa^{(0)}_1\\ \vdots \\ \ e^{i\alpha_N}\kappa^{(0)}_N\end{matrix}} }
    = \kappa^*\kappa^{(0)} =\cos(\eta).
\end{align*}
Thus, it follows that $\compn{R_2R_1 \vectn{\kappa^{(0)}}}_1=\cos(\eta)$.
Next, we define a Matrix $R_3$ which rotates all $M(\kappa^{(0)}_\nu)$ for $\nu \ge 2$ to real numbers and leaves the component $\nu=1$ unchanged (thus leaving $\kappa$ unchanged) and a Matrix $R_4 = (\widetilde{R}_4 \otimes \mathrm{Id}_2)$ which rotates only the components $\nu \ge 2$, such that we get $\kappa^{(0)}_\nu = 0$ for $\nu \ge 2$.
As a trade-off for this simplification, the matrix in the center becomes more complicated:
\begin{align*}
  (\mathrm{Id}_N \otimes P) \quad \to \quad Q := R_4 R_3 R_2 R_1 (\mathrm{Id}_N \otimes P) R_1^T R_2^T R_3^T R_4^T.
\end{align*}
This leads to 
\begin{align*}
    &\int \rhofun(\phi \kappa^{(0)}, [\kappa])  \diff\mathbb{P}\\
    &=\int \vect{\phi}^T 
    \Bigg(\cos^2(\eta)  Q_{11}  + \cos(\eta)\sin(\eta) \left(  Q_{12} + Q_{21}  \right) + \sin^2(\eta) Q_{22} \\
    &- 
    \Big(\cos(\eta) Q_{11}  + \sin(\eta) Q_{21} \Big) 
    Q_{11}^{-1}
    \Big(\cos(\eta) Q_{11}  + \sin(\eta) Q_{12}\Big)\Bigg)
    \vect{\phi}\diff\mathbb{P}\left(\phi\right)\\
    &=\sin^2(\eta)\int \vect{\phi}^T 
    \Bigg(Q_{22} - Q_{12}^T Q_{11}^{-1} Q_{12}\Bigg)
    \vect{\phi}\diff\mathbb{P}\left(\phi\right)\\
     &=\left(\tilde{\eta}^2-\frac{\tilde{\eta}^4}{4} \right)\int \vect{\phi}^T 
    \Bigg(Q_{22} - Q_{12}^T Q_{11}^{-1} Q_{12}\Bigg)
    \vect{\phi}\diff\mathbb{P}\left(\phi\right)
\end{align*}
where $Q_{ij}$ are $2\times 2$ blocks from $Q$. Note furthermore that $Q_{ji}^T = Q_{ij}$ since $Q$ is symmetric.
We define $S\coloneqq Q_{22} - Q_{12}^T Q_{11}^{-1} Q_{12}$.
The matrix
\begin{align*}
\widetilde{Q} = & \begin{pmatrix} Q_{11} & Q_{12} \\ Q_{12}^T & Q_{22} \end{pmatrix} \in \mathbb{R}^{4\times 4}
\end{align*}
is positive definite with eigenvalues in $[\lambda_2, \lambda_1]$ since it is a leading principal minor of $Q$.
It follows for $v\in \mathbb{R}^2\setminus \{0\}$
\begin{align*}
    v^TSv&= v^TQ_{22}v- v^T  Q_{12}^TQ_{11}^{-1}Q_{12}v \\
    &= v^TQ_{12}^TQ_{11}^{-1}Q_{11}Q_{11}^{-1}Q_{12}v-v^TQ_{12}^TQ_{11}^{-1}Q_{12}v-v^TQ_{12}^TQ_{11}^{-1}Q_{12}v + v^TQ_{22}v\\
    &=\begin{pmatrix} -Q_{11}^{-1}Q_{12}v\\ v\end{pmatrix}^T \widetilde{Q} \begin{pmatrix} -Q_{11}^{-1}Q_{12}v\\ v\end{pmatrix}\geq \lambda_2 \begin{pmatrix} -Q_{11}^{-1}Q_{12}v\\ v\end{pmatrix}^T \begin{pmatrix} -Q_{11}^{-1}Q_{12}v\\ v\end{pmatrix}\geq \lambda_2||v||^2.
\end{align*}
Thus, $S$ has only eigenvalues greater than or equal to $\lambda_2$.

\end{proof}

\begin{theorem}[Uniqueness]\label{theo: Frechet mean value is unique} For the centered homoscedastic drift model from Definition \ref{ass: drift model} we have for every $\epsilon>0$ that 
$$\inf_{[\kappa]: d([\kappa],[\kappa^{(0)}])>\epsilon} \Ffun([\kappa]) >\Ffun\left(\left[\kappa^{(0)}\right]\right).$$
    In particular, the Fr\'echet population mean is uniquely  
    $\left[\kappa^{(0)}\right]$.
\end{theorem}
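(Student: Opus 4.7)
The plan is to apply Lemma \ref{lem: A2 is constant} directly: it reduces $\Ffun([\kappa])$ to the sum of a noise part that is the constant $2N-2$ (independent of $[\kappa]$) and a signal part
$\left(\tilde{\eta}^2-\tilde{\eta}^4/4\right)\int\vect{\phi}^T S\vect{\phi}\,\diff\mathbb{P}(\phi)$, where $\tilde{\eta}=d([\kappa],[\kappa^{(0)}])$ and $S=S([\kappa])$ is a $2\times 2$ matrix whose eigenvalues are bounded below by $\lambda_2$. Evaluating at $[\kappa]=[\kappa^{(0)}]$ gives $\tilde{\eta}=0$, so the signal part vanishes and $\Ffun([\kappa^{(0)}])=2N-2$. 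It therefore suffices to show that the signal part is uniformly bounded below by a strictly positive constant on the set $\{[\kappa]:d([\kappa],[\kappa^{(0)}])>\epsilon\}$.

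For the $\phi$-integral I would use the spectral bound on $S$ from Lemma \ref{lem: A2 is constant}:
\begin{align*}
  \int\vect{\phi}^T S\vect{\phi}\,\diff\mathbb{P}(\phi)\;\geq\;\lambda_2\int \vect{\phi}^T\vect{\phi}\,\diff\mathbb{P}(\phi)\;=\;\lambda_2\, c_\phi\;>\;0,
\end{align*}
using that $c_\phi>0$ by Definition \ref{ass: drift model}. For the $\tilde{\eta}$-factor, observe that whenever $\kappa$ and $\kappa^{(0)}$ are in optimal position, $\kappa^*\kappa^{(0)}\in[0,1]$, so $\tilde{\eta}^2=2-2\kappa^*\kappa^{(0)}\in[0,2]$; hence on the full parameter space
\begin{align*}
  \tilde{\eta}^2-\frac{\tilde{\eta}^4}{4}\;=\;\tilde{\eta}^2\!\left(1-\tfrac{\tilde{\eta}^2}{4}\right)\;\geq\;\tfrac{1}{2}\,\tilde{\eta}^2.
\end{align*}
Combining the two bounds, whenever $d([\kappa],[\kappa^{(0)}])>\epsilon$,
\begin{align*}
  \Ffun([\kappa])-\Ffun([\kappa^{(0)}])\;\geq\;\tfrac{1}{2}\,\tilde{\eta}^2\cdot\lambda_2 c_\phi\;>\;\tfrac{1}{2}\,\epsilon^2\lambda_2 c_\phi\;>\;0,
\end{align*}
and taking the infimum over this set preserves the strict inequality, proving the first statement.

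Uniqueness is an immediate consequence: for any $[\kappa]\neq[\kappa^{(0)}]$ we have $\tilde{\eta}>0$, so the displayed bound with $\epsilon=\tilde{\eta}/2$ shows $\Ffun([\kappa])>\Ffun([\kappa^{(0)}])$; hence $[\kappa^{(0)}]$ is the unique minimizer. I expect no serious obstacle here, since the hard analytic work (the decomposition of $\Ffun$ and, in particular, the spectral lower bound on $S$) has already been performed in Lemma \ref{lem: A2 is constant}; the only subtlety is noting that $S$ may depend on $[\kappa]$ but its eigenvalue bound does not, which is exactly what makes the lower bound uniform in $[\kappa]$ and delivers the desired infimum statement.
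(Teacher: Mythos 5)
Your proposal is correct and follows essentially the same route as the paper: the paper's proof simply cites the decomposition $\Ffun([\kappa]) = 2N-2 + (\tilde{\eta}^2 - \tilde{\eta}^4/4)\int \vect{\phi}^T S\,\vect{\phi}\,\diff\mathbb{P}(\phi)$ from Lemma \ref{lem: A2 is constant} and declares the result immediate. You have merely made explicit the two small facts the paper leaves implicit, namely $\int \vect{\phi}^T S\,\vect{\phi}\,\diff\mathbb{P}(\phi) \geq \lambda_2 c_\phi > 0$ and $\tilde{\eta}^2 \in [0,2]$ so that $\tilde{\eta}^2 - \tilde{\eta}^4/4 \geq \tilde{\eta}^2/2$, both of which are valid.
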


\begin{proof}
This follows at once from 
    $$F([\kappa]) = 2N - 2 + \left(\tilde{\eta}^2-\frac{\tilde{\eta}^4}{4} \right)
    \int \vect{\phi}^T S~ \vect{\phi}\,\diff\mathbb{P}\left(\phi\right)
    $$
with $\tilde{\eta} = d\left([\kappa], \left[\kappa^{(0)}\right]\right)$ and $S$ having eigenvalues greater than or equal to $\lambda_2$, due to Lemma \ref{lem: A2 is constant}.  \end{proof}

\begin{corollary}
    Under Assumption \ref{ass: drift model} BPC holds for the centered homoscedastic drift model.
\end{corollary}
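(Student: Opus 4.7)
The plan is to apply Theorem \ref{theo: BPC} directly. That theorem requires Assumption \ref{ass: rho ziezold} together with Assumption \ref{ass: rho BPC rho to C}. The former has already been verified in the proof of Theorem \ref{the: inequality rho} (where an explicit modulus of continuity $h$ and prefactor $\dot\rho$ were exhibited), so the only task is to check Assumption \ref{ass: rho BPC rho to C}.

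For the non-emptiness clause, I would invoke Theorem \ref{theo: Frechet mean value is unique}, which identifies $E^{(\rho)}=\{[\kappa^{(0)}]\}$ and in particular gives $E^{(\rho)}\neq\emptyset$. For the bound in (\ref{eq: F inf condition}), I would observe that the descriptor space $\mathfrak{P}=\mathbb{C}P^{N-1}$ is compact: it is the continuous image of the compact unit sphere $\mathcal{S}^{2N-1}\subset\mathbb{C}^N$ under the quotient map induced by the $S^1$-action $\kappa\mapsto e^{i\lambda}\kappa$. Consequently every sequence in $\mathfrak{P}$ admits an accumulation point, so the class of sequences $\{p_n\}$ without accumulation points in $\mathfrak{P}$ is empty and the inequality $\liminf_{n\to\infty}\rho(X,p_n)\geq C$ is satisfied vacuously for any $C>\ell$. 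This is exactly the situation highlighted by the remark following Theorem \ref{theo: BPC}, where it is noted that compactness of $\mathfrak{P}$ makes BPC an immediate consequence of ZC.

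Combining these observations, Assumption \ref{ass: rho BPC rho to C} holds, Theorem \ref{theo: BPC} applies, and BPC follows for the centered homoscedastic drift model. I do not anticipate any real obstacle here: all of the substantive work has already been absorbed into Theorem \ref{the: inequality rho} (the continuity estimate underpinning ZC) and Theorem \ref{theo: Frechet mean value is unique} (unique existence of the population Fr\'echet mean, which itself relied on the decomposition in Lemma \ref{lem: A2 is constant} and the lower bound $\lambda_2>0$ on the eigenvalues of the Schur-complement matrix $S$). The present corollary is essentially the one-line composition of these results with the compactness of complex projective space.
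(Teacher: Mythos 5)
Your proposal is correct and coincides with the paper's own argument: the paper likewise cites Theorem \ref{theo: Frechet mean value is unique} for $E^{(\rho)}=\{[\kappa^{(0)}]\}\neq\emptyset$ and then invokes Theorem \ref{theo: BPC} together with the compactness of $\mathfrak{P}=\mathbb{C}P^{N-1}$, which (as noted in the remark after Theorem \ref{theo: BPC}) makes Assumption \ref{ass: rho BPC rho to C} hold vacuously. Your write-up merely spells out the same one-line composition in more detail.
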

\begin{proof}
    Since  $E^{\rho}=\left\{\left[\kappa^{(0)}\right]\right\}$ due to Theorem \ref{theo: Frechet mean value is unique}, the BPC follows directly from Theorem \ref{theo: BPC} as $\mathfrak{P}$ is compact.
\end{proof}

\subsection{The CLT for the centered homoscedastic drift model}\label{sec: CLT Drift model}
To prove a central limit theorem for the centered homoscedastic drift model from Definition \ref{ass: drift model}, we apply Theorem 6 of \cite{HuckemannCLT}. For this we need the following additional assumption.
\begin{assumption}\label{ass: phi moment}
The random variable $\phi$ has a finite fourth moment $\mathbb{E}[(\vect{\phi}^T\vect{\phi})^2] < \infty$.
\end{assumption}
\begin{definition}\label{def: chart clt}
For $\kappa^{(0)}\in [\kappa^{(0)}]\in \mathfrak{P}$ define a unitary matrix $R\in \mathbb{C}^{N\times N}$ satisfying $R\kappa^{(0)}=e_N$ where $e_N$ is the $N$-th vector of the standard basis. For any $\kappa \in [\kappa] $ define $\tilde{\kappa}\coloneqq R \kappa$ and $\mathfrak{U} = \{[\kappa] \mid \tilde{\kappa}_N \neq 0\}$ and define the chart 
\begin{align*}
\beta: \mathfrak{U} \rightarrow \mathbb{R}^{2(N-1)},\quad 
[\kappa] \mapsto 
\left(\Re\left(\frac{\tilde{\kappa}_1}{\tilde{\kappa}_N}\right),\Im\left(\frac{\tilde{\kappa}_1}{\tilde{\kappa}_N}\right),\dots,\Re\left(\frac{\tilde{\kappa}_{N-1}}{\tilde{\kappa}_N}\right),\Im\left(\frac{\tilde{\kappa}_{N-1}}{\tilde{\kappa}_N}\right)\right).
\end{align*}
For $x \in \mathbb{R}^{2(N-1)}$ we define 
\begin{align*}
    \tilde{x}=\left(x_1 + ix_2, \dots,  x_{2(N-1)-1} + i x_{2(N-1)}, 1\right)
\end{align*}
and get
\begin{align*}
    \beta^{-1}: \mathbb{R}^{2(N-1)}  \rightarrow \mathfrak{U}, \quad  x \mapsto \left[R^*\frac{\tilde{x}}{||\tilde{x}||}\right].
\end{align*}
\end{definition}
Note that $\beta([\kappa])$ is indeed independent of the choice of representative $\kappa \in [\kappa]\in \mathfrak{U}$ and thus is well-defined. 
In a local chart $(\beta,\mathfrak{U})$ of $\mathfrak{P}$ near $\beta^{-1}(0)$, we denote the gradient of $x\mapsto \rho(Y, \beta^{-1}(x))$ by $\mathrm{grad}_2\rho(Y, [\kappa])$ and by $H_2\rho(Y, [\kappa])$ the corresponding Hesse matrix. 

\begin{theorem}[CLT]\label{theo: clt}
 For the centered homoscedastic drift model from Definition \ref{ass: drift model}, under Assumption \ref{ass: phi moment}, let $[\hat\kappa^{(B)}(\omega)] \in E_B^{(\rho)}(\omega)$ be a measurable selection for all $\omega\in \Omega$, then, omitting $\omega$,
 \begin{align*}
     \sqrt{B}\beta([\hat\kappa^{(B)}]) \stackrel{\mathcal{D}}{\rightarrow} \mathcal{N}\left(0, \left(\mathbb{E}\left[H_2\rho\left(Y, \left[\kappa^{(0)}\right]\right)\right]\right)^{-1} \left( \mathrm{cov}\left[\mathrm{grad}_2\rho\left(Y, \left[\kappa^{(0)}\right]\right)\right] \right) \left(\mathbb{E}\left[H_2\rho\left(Y, \left[\kappa^{(0)}\right]\right)\right]\right)^{-1}\right)
 \end{align*} 
 holds for the chart $\beta$ defined in Definition \ref{def: chart clt}. 
\end{theorem}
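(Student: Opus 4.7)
The plan is to invoke Theorem 6 of \cite{HuckemannCLT}, a general CLT for generalized Fr\'echet means expressed in a chart. Provided strong consistency, uniqueness of the population limit, smoothness of the loss function in the parameter, and the usual moment and positive-definiteness conditions at the mean, that theorem yields $\sqrt{B}$-asymptotic normality with the sandwich covariance stated. Our task is therefore to verify these conditions in the chart $\beta$ of Definition \ref{def: chart clt} at the point $\beta^{-1}(0) = [\kappa^{(0)}]$.

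Strong consistency of $[\hat\kappa^{(B)}]$ toward the unique minimizer $[\kappa^{(0)}]$ is in hand via Theorem \ref{theo: Frechet mean value is unique} and the ensuing Corollary (BPC with singleton limit), so eventually $[\hat\kappa^{(B)}] \in \mathfrak{U}$ almost surely, and we may work inside the chart. Set $f(Y,x) := \rho(Y,\beta^{-1}(x))$. The map $x \mapsto \beta^{-1}(x)$ is real-analytic near $0$ because $\|\tilde x\|^2 = 1 + \|x\|^2 > 0$. By Lemma \ref{lem: rho formula}, $\rho(Y,[\kappa]) = \spr{Y}{Y}_P - \spr{\hat\phi(\kappa,P,Y)\kappa}{Y}_P$, which together with invertibility of $\kappa \dia_P \kappa$ near $\kappa^{(0)}$ (Lemma \ref{lem: det lemma}) shows $\rho$ is smooth in $\kappa$ on a neighborhood of $\kappa^{(0)}$ and quadratic in $Y$. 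Hence $f$ is $C^\infty$ in $x$ on a neighborhood $U$ of $0$, and its first and second partial derivatives in $x$ are, uniformly on $U$, dominated by a quadratic polynomial in $\|Y\|$. Under Assumption \ref{ass: phi moment} with $\epsilon$ Gaussian, $\mathbb{E}\|Y\|^4 < \infty$, so $\mathbb{E}[\|\mathrm{grad}_2\rho(Y,[\kappa^{(0)}])\|^2] < \infty$, $\mathbb{E}[H_2\rho(Y,[\kappa^{(0)}])]$ exists, and the Hessian admits a local integrable envelope that provides the stochastic equicontinuity required by the CLT. The first-order condition $\mathbb{E}[\mathrm{grad}_2\rho(Y,[\kappa^{(0)}])] = 0$ is automatic from minimization.

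It remains to show $H := \mathbb{E}[H_2\rho(Y,[\kappa^{(0)}])]$ is invertible. Since the representatives $\kappa^{(0)} = R^* e_N$ and $\beta^{-1}(x) = [R^* \tilde{x}/\|\tilde x\|]$ are in optimal position with $(\kappa^{(0)})^* R^* \tilde x/\|\tilde x\| = 1/\|\tilde x\| > 0$, a short calculation gives
$$d([\kappa^{(0)}],\beta^{-1}(x))^2 = 2 - 2/\sqrt{1+\|x\|^2} = \|x\|^2 + O(\|x\|^4).$$
Combining Lemma \ref{lem: A2 is constant} with the bound $\int \vect{\phi}^T S \vect{\phi}\,\diff\mathbb{P}(\phi) \geq \lambda_2 c_\phi > 0$ from Assumption \ref{ass: drift model} yields
$$\Ffun(\beta^{-1}(x)) - \Ffun(\beta^{-1}(0)) \geq \lambda_2 c_\phi \|x\|^2 + O(\|x\|^4),$$
so $H$ is bounded below by $2\lambda_2 c_\phi\,\mathrm{Id}$ and is in particular invertible. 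All hypotheses of Theorem 6 of \cite{HuckemannCLT} are thus satisfied, and the CLT follows with the asserted sandwich covariance.

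The main obstacle is the careful bookkeeping of the local expansion of the Fubini--Study squared distance in the chart coordinates together with the matching of the matrix $S$ constructed in the proof of Lemma \ref{lem: A2 is constant} to the leading-order Hessian: once this is accomplished, verification of smoothness, integrability, and the first-order condition reduce to routine consequences of Assumption \ref{ass: phi moment} and the closed form of $\rho$.
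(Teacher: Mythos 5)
Your proposal follows essentially the same route as the paper: both invoke Theorem 6 of \cite{HuckemannCLT} and verify its hypotheses in the chart $\beta$ --- smoothness of $x\mapsto\rho(Y,\beta^{-1}(x))$ via the uniform lower bound on $\det(\kappa\dia_P\kappa)$ from Lemma \ref{lem: det lemma}, existence of the gradient and Hessian moments from Assumption \ref{ass: phi moment} because these derivatives are polynomially dominated in $Y$, and invertibility of the expected Hessian from the quadratic lower bound on $\Ffun$ supplied by Lemma \ref{lem: A2 is constant}. (Incidentally, your expansion $d([\kappa^{(0)}],\beta^{-1}(x))^2 = 2-2/\sqrt{1+\|x\|^2}$ is the correct one; the paper's Lemma \ref{lem: d phi kappa} carries a factor-of-two slip, which is immaterial for the invertibility conclusion.)
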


\begin{proof}
We show in this proof that the following conditions for Theorem 6 of \cite{HuckemannCLT} are satisfied.
\begin{enumerate}
        \item $x \mapsto \rho(Y, \beta^{-1}(x))$ is smooth for $|x|<\epsilon$,
        \item $\mathbb{E}[\mathrm{grad}_2\rho\left(Y, \left[\kappa^{(0)}\right]\right)]$ exists,
        \item $\mathbb{E}[H_2\rho(Y, [\kappa])]$ exists for $\kappa$ near $\kappa^{(0)}$ and is continuous at $\kappa=\kappa^{(0)}$,
        \item $\mathrm{cov}[\mathrm{grad}_2\rho\left(Y, \left[\kappa^{(0)}\right]\right)]$ exists,
        \item $\mathbb{E}[H_2\rho\left(Y, \left[\kappa^{(0)}\right]\right))]$ is invertible.
    \end{enumerate}
First, we rewrite $\rho$, see (\ref{eq: definition rho}), to
\begin{align*}
    \rhofun(Y, [\kappa])&=  \spr{Y}{Y}_P-\sum_{\nu=1}^N\sum_{\tilde{\nu}=1}^N\vect{Y_{\tilde{\nu}}}^Tf_{\tilde{\nu},\nu, P}([\kappa])\vect{Y_{\nu}}
\end{align*}
where
\begin{align*}
    f_{\tilde{\nu},\nu, P}([\kappa]) \coloneqq P M(\kappa_{\tilde{\nu}}) \left(\kappa \dia_P \kappa \right)^{-1}M(\kappa_{\nu})^TP
\end{align*}
for $\kappa \in [\kappa]$.
Using Lemma \ref{lem: inverse lemma} we get
\begin{align*}
    f_{\tilde{\nu},\nu, P}([\kappa]) =
    \frac{P M(\kappa_{\tilde{\nu}}) \left(\kappa \dia_{\tilde{P}} \kappa \right)M(\kappa_{\nu})^TP}{\det\left(\kappa \dia_P \kappa \right)}.
\end{align*}

1.) 
Both $\kappa \mapsto P M(\kappa_{\tilde{\nu}}) \left(\kappa \dia_{\tilde{P}} \kappa \right)M(\kappa_{\nu})^TP$ and $\kappa \mapsto \det\left(\kappa \dia_P \kappa \right)$ are fourth degree polynomials and it follows from Lemma \ref{lem: det lemma} that $\det\left(\kappa \dia_P \kappa \right) \geq \lambda_2\lambda_1$ for $\kappa \in \mathfrak{P}$. Using the chain rule, it follows that the function $x \mapsto f_{\tilde{\nu},\nu, P}(\beta^{-1}(x))$ is smooth. Therefore, it follows that the function $x \mapsto \rho(Y, \beta^{-1}(x))$ is also smooth. 

2.)
We start with
\begin{align*}
    \frac{\partial}{\partial x_i}\rho(Y, \beta^{-1}(x))=-\sum_{\nu=1}^N\sum_{\tilde{\nu}=1}^N\vect{Y_{\tilde{\nu}}}^T\left(\frac{\partial}{\partial x_i}f_{\tilde{\nu},\nu, P}(\beta^{-1}(x))\right)\vect{Y_{\nu}}
\end{align*}
Since $\rho$ is smooth we get 
\begin{align*}
    \mathbb{E}\left[\frac{\partial}{\partial x_i}\rho(Y, \beta^{-1}(x))\right]&=-\mathbb{E}\left[\sum_{\nu=1}^N\sum_{\tilde{\nu}=1}^N\vect{Y_{\tilde{\nu}}}^T\left(\frac{\partial}{\partial x_i}f_{\tilde{\nu},\nu, P}(\beta^{-1}(x))\right)\vect{Y_{\nu}}\right]\\
    &=-\sum_{\nu=1}^N\sum_{\tilde{\nu}=1}^N\Tr\left(\left(\frac{\partial}{\partial x_i}f_{\tilde{\nu},\nu, P}(\beta^{-1}(x))\right)\mathbb{E}\left[\vect{Y_{\tilde{\nu}}}\vect{Y_{\nu}}^T\right]\right).
\end{align*}
Since $\rho$ is smooth with respect to $x$ and we know from Assumption \ref{ass: drift model} that the second moment of $Y$ exists, it follows that $\mathbb{E}[\mathrm{grad}_2\rho\left(Y, \left[\kappa^{(0)}\right]\right)]$ exist. 

3.)
Analogously, we conclude that the following function exists for $x$ near $\beta(\kappa^{(0)})$ and is continuous at $x=\beta(\kappa^{(0)})$
\begin{align*}
    \mathbb{E}\left[\frac{\partial^2}{\partial x_i\partial x_j}\rho(Y, \beta^{-1}(x))\right]
    &=-\sum_{\nu=1}^N\sum_{\tilde{\nu}=1}^N\Tr\left(\left(\frac{\partial^2}{\partial x_i\partial x_j}f_{\tilde{\nu},\nu, P}(\beta^{-1}(x))\right)\mathbb{E}\left[\vect{Y_{\tilde{\nu}}}\vect{Y_{\nu}}^T\right]\right).
\end{align*}
Thus $\mathbb{E}[H_2\rho\left(Y, \left[\kappa^{(0)}\right]\right)]$ exists for $\kappa$ near $\kappa^{(0)}$ and is continuous at $\kappa=\kappa^{(0)}$.

4.)
From the smoothness of $\rho$ with respect to $x$ and the Assumption \ref{ass: phi moment}, the existence of
\begin{align*}
    \mathbb{E}\left[\mathrm{grad}_2\rho(Y, \left[\kappa^{(0)}\right])\mathrm{grad}_2\rho(Y, \left[\kappa^{(0)}\right])^T\right]
\end{align*}
follows, since 
$$\left(\frac{\partial}{\partial x_i}\rho(Y, \beta^{-1}(x))\right) \left(\frac{\partial}{\partial x_j}\rho(Y, \beta^{-1}(x))\right)$$
is a polynomial of fourth degree for all $i,j=1,\dots, N$ with respect to $Y$. Consequently,  $\mathrm{cov}[\mathrm{grad}_2\rho\left(Y, \left[\kappa^{(0)}\right]\right)]$ exists.

5.) 
Inserting the calculations from Lemma \ref{lem: d phi kappa} into the results of 
Lemma \ref{lem: A2 is constant}
yields:
\begin{align*}
    \Ffun(\beta(x)) &= 2N-2 + \left(d\left(\beta(x), \left[\kappa^{(0)}\right]\right)^2-\frac{d\left(\beta(x), \left[\kappa^{(0)}\right]\right))^4}{4} \right)
    \int \vect{\phi}^T S \vect{\phi}\diff\mathbb{P}\left(\phi\right)\\
    &\geq 2N-2 + \left(1-\frac{1}{\sqrt{||x||^2+1}}-\frac{\left(1-\frac{1}{\sqrt{||x||^2+1}}\right)^2}{4} \right)
    \lambda_2 c_\phi\\
    &=2N-2 + \frac{||x||^2}{2}\lambda_2 c_\phi + \mathcal{O}(||x||^3).
\end{align*}
It follows from the Taylor expansion of $\Ffun(\beta(x))$ at $0$:
\begin{align*}
    x^T\mathbb{E}[H_2\rho\left(Y, \left[\kappa^{(0)}\right]\right)]x \geq ||x||^2\lambda_2 c_\phi+ \mathcal{O}(||x||^3).
\end{align*}
Consequently $\mathbb{E}[H_2\rho\left(Y, \left[\kappa^{(0)}\right]\right)]$ is invertible.
\end{proof}

\subsection{The CLT for the spectrum $I$}\label{sec: clt I}
In the ENDOR experiment, one is particularly interested in the spectrum $I$ (see Figure~\ref{fig:DriftModelFit}, panel~A). 
Different rotation methods are possible to extract the estimated spectrum $\hat{I}$ from the maximum likelihood estimator $\hat{\kappa}$. As discussed in Section \ref{sec: maximum likelihood estimators}, in 
this section (as well as throughout the paper) we work with the maximum method and use the notation  $\hat{I}=\Re(e^{i\hat\lambda}\hat{\kappa})$.
Lemma \ref{lem: rotation max method} provides an explicit formula for computing $\hat\lambda$ according to the maximum method, from which we derive the function in equation (\ref{eq: ftilde}) below, which maps $\kappa$ to an optimally rotated $I$. This explicit function is used in Corollary \ref{cor: CLT I}, which provides a central limit theorem for $\hat{I}$.  

In fact, in conjunction with the chart $\beta$ from Definition \ref{def: chart clt} we will construct functions $g,f,f_\pm $ making the diagram below commutative (on the corresponding domains) and smooth outside singularity sets $M_1\cup M_2$ (defined in (\ref{eq: M1 and m2 sets})) in $\mathfrak{P}$:

$$
\begin{array}{rcl}
\CC P^{N-1}=\mathfrak{P}&\stackrel{f}{\to}&\mathfrak{I} = \RR^N/\sim_\pm\\
\beta\downarrow && \downarrow f_\pm\\
\RR^{2N-2}&\stackrel{g}{\to}& \RR^N´
\end{array}
$$

For this purpose, we define $\mathbb{S}^1\coloneqq[0,2\pi]/\sim$ where "$\sim$" denotes 
\begin{align*}
    x_1 \sim x_2 \quad \Leftrightarrow \quad x_1=x_2 \quad \text{or} \quad x_1,x_2\in \{0, 2\pi\}
\end{align*}
From Lemma \ref{lem: rotation max method} in the \appendixname, with the definition for the complex argument from Section \ref{sec: notation} follows
\begin{align*}
        \argmax_{\lambda \in \mathbb{S}^1} \left|\left|\Re(e^{i\lambda }\kappa) \right|\right|^2 = \begin{cases}
        \left\{\pi-\frac{\Arg(\kappa^T\kappa)}{2}, 2\pi-\frac{\Arg(\kappa^T\kappa)}{2}\right\}, & \text{if } \kappa^T\kappa\neq 0\\
        \mathbb{S}^1, & \text{else}
        \end{cases}
    \end{align*}
for $\kappa \in \Scomp$.
Note that $\Re(e^{i(\lambda + \pi)}) = -\Re(e^{i\lambda})$, which leads to the metric space  $(\mathfrak{I}, d_\mathfrak{I})$ given by $\mathfrak{I} \coloneqq \mathbb{R}^N/\sim_\pm$,  where the equivalence relation "$\sim_\pm$" is defined as
\begin{align*}
I \sim_\pm I' \quad \Leftrightarrow \quad I=I' \quad \text{or} \quad I=-I'.
\end{align*}
and 
\begin{align*}
    d_\mathfrak{I}([I]_\pm, [I']_\pm) = \min_{k\in \{0, 1\}} \left|\left|I-(-1)^kI' \right|\right|^2 
\end{align*}
where $I\in [I]_\pm, I'\in[I']_\pm$.
This gives rise to the function \begin{align}\label{eq: ftilde}
    \tilde{f}:\Scomp\rightarrow\mathfrak{I}, \quad
    \kappa \mapsto 
    \begin{cases}
    \left[\Re\left(e^{\frac{-i}{2}\Arg(\kappa^T\kappa) }\kappa\right)\right]_\pm& \text{for } \kappa^T\kappa \neq 0,\\
    [(0,\dots, 0)]_\pm & \text{else}.
    \end{cases}
\end{align}
In Lemma~\ref{lem: f well defined} in the \appendixname, we show that the function $\tilde{f}$ is well-defined for $\mathfrak{P}$, which means that $\tilde{f}(\kappa) = \tilde{f}(\tilde{\kappa})$ for all $\kappa, \tilde{\kappa} \in [\kappa]$.
Therefore, we can define a function
    \begin{align*}
    f : \mathfrak{P} \rightarrow \mathfrak{I}, \quad
    [\kappa] \mapsto \tilde{f}(\kappa).
\end{align*}
To obtain the spectrum $I\in\mathbb{R}^N$ from $f([\kappa])=[I]_\pm\in \mathfrak{I}$, an additional sign flip is performed, if necessary, as one usually wants the peaks to be in the positive direction. This can be uniquely achieved under the condition 
\begin{align*}
    \left|\max_{\nu = 1, \dots N} I_\nu\right| > \left|\min_{\nu = 1, \dots N} I_\nu\right|,
\end{align*}
using the following sign flip function 
\begin{align}\label{eq: function g}
    f_{\pm}: \mathfrak{I} \rightarrow \mathbb{R}^N,\quad [I]_\pm \mapsto \begin{cases}
    I, & \text{if } \left|\max_{\nu = 1, \dots N} I_\nu\right| > \left|\min_{\nu = 1, \dots N} I_\nu\right| \\
    -I, & \text{if } \left|\max_{\nu = 1, \dots N} I_\nu\right| < \left|\min_{\nu = 1, \dots N} I_\nu\right| \\
    0, & \text{else.}
    \end{cases}
\end{align}
Using the map $\beta$ from Definition \ref{def: chart clt} we finally define 
\begin{align*}
    g: \mathbb{R}^{2N-2} \rightarrow \mathbb{R}^N, \quad x \mapsto  f_{\pm}(f(\beta^{-1}(x))).
\end{align*}
Ensuring that the functions $\widetilde{f}$ and $f_\pm$ are smooth we excluded the following two singularity sets 
\begin{equation}
\begin{aligned}
    M_1 \coloneqq&~ \{[\kappa]\in \mathfrak{P} \mid \exists \kappa\in[\kappa] \text{ s.t. } \kappa^T\kappa = 0\},\\
    M_2 \coloneqq&~ \left\{[\kappa]\in \mathfrak{P} \mid \exists \kappa\in[\kappa] \text{ s.t. } \left|\max_{\nu = 1, \dots N} f(\kappa)_\nu\right|  = \left|\min_{\nu = 1, \dots N} f(\kappa)_\nu\right|\right\}. \label{eq: M1 and m2 sets}
\end{aligned}
\end{equation}
Note that the defining relations in the Equation (\ref{eq: M1 and m2 sets}) are independent of representative, i.e. "$\exists$" can be replaced with "$\forall$" without loss of generality. With $R$ from Definition \ref{def: chart clt}, the Jacobian matrix of the function $g$ for $[\kappa^{(0)]} \in \mathfrak{P}\setminus(M_1 \cup M_2)$ at location $x=0$ is computed in Lemma~\ref{lem: jacobian g} in the \appendixname~and is of the following form
\begin{align*}
J_x g(0)&= \pm\Re\Biggl(e^{-\frac{i\alpha}{2}}\Biggl(i\kappa^{0}\Re\left(i\frac{\overline{(\kappa^{(0)})^T\kappa^{(0)}}\left(\kappa^{(0)}\right)^TR^*A}{r^2}\right) + R^*A\Biggl) \Biggl)\in \mathbb{R}^{N\times 2(N-1)}.
\end{align*}
where $\left|(\kappa^{(0)})^T\kappa^{(0)}\right| =: r$, $\alpha \coloneqq \Arg\left((\kappa^{(0)})^T\kappa^{(0)}\right)$ and 
\begin{align}\label{eq: A matrix}
        A \coloneqq \begin{pmatrix}
    1 & i & 0 & 0& 0&\dots & 0& 0 \\
    0 & 0 & 1 & i& 0&\dots & 0& 0 \\
    \vdots & \vdots & \vdots & \vdots&\vdots  & \vdots& \vdots& \vdots \\
     0 & 0 & 0 & 0& 0&\dots & 1& i \\
     0 & 0 & 0 & 0& 0&\dots & 0& 0 \\
    \end{pmatrix}\in \mathbb{C}^{N\times 2(N-1)}.
\end{align}

\begin{rem}
    In our applications, we observed a Jacobi-matrix $J_x g(0)$ of full rank (see plots of the singular values for the different orientations from a chemical sample of the D2-$Y_{122}^{\bullet}$ E.~coli ribonucleotide reductase in Figure \ref{fig: singular values jacobian} in the \appendixname).
    However, this may not be the case in general. For example, for $\kappa^{(0)}=e_N$ we have $R= \mathrm{Id}_N$ so that $J_x g(0)=\pm\Re(A)$, which has rank $N-1$. 
\end{rem}

\begin{corollary}\label{cor: CLT I}
    If $[\kappa^{(0)}]\in \mathfrak{P}\setminus(M_1 \cup M_2)$, in the centered homoscedastic drift model from Definition \ref{ass: drift model}, we have the true spectrum $I^{(0)} = g\circ \beta([\kappa^{(0)}])$ with $\beta$ from Definition \ref{def: chart clt} and under Assumption \ref{ass: phi moment} with the estimator $\hat I^{(B)} = g\circ \beta([\hat\kappa^{(B)}])$ from a measurable selection $[\hat\kappa^{(B)}]\in {E}^{(\rho)}_B$ that
    \begin{align*}
    \sqrt{B}\left( \hat I^{(B)} - I^{(0)}\right)
        \stackrel{\mathcal{D}}{\rightarrow} \mathcal{N}\left(0, \Big(J_x g(0)\Big)\mathfrak{G}_\beta \Big(J_x g(0)\Big)^T\right)
    \end{align*}
    where $\mathfrak{G}_\beta = \left(\mathbb{E}\left[H_2\rho\left(Y, \left[\kappa^{(0)}\right]\right)\right]\right)^{-1} \left( \mathrm{cov}\left[\mathrm{grad}_2\rho\left(Y, \left[\kappa^{(0)}\right]\right)\right] \right) \left(\mathbb{E}\left[H_2\rho\left(Y, \left[\kappa^{(0)}\right]\right)\right]\right)^{-1}$ and $\beta$ is defined as in Definition \ref{def: chart clt}.
\end{corollary}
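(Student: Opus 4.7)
The plan is to apply the multivariate Delta method, using Theorem \ref{theo: clt} as the input CLT and the smoothness of $g$ at the base point as the key analytic ingredient. First I would observe that $\beta([\kappa^{(0)}])=0$: by construction $R\kappa^{(0)}=e_N$, so the representative $\tilde\kappa^{(0)}=R\kappa^{(0)}$ has $\tilde\kappa^{(0)}_j/\tilde\kappa^{(0)}_N = 0$ for $j=1,\dots,N-1$, hence all $2(N-1)$ real coordinates of $\beta$ vanish. In particular, Theorem~\ref{theo: clt} directly gives
$$\sqrt{B}\bigl(\beta([\hat\kappa^{(B)}])-0\bigr)\stackrel{\mathcal{D}}{\to}\mathcal{N}(0,\mathfrak{G}_\beta).$$

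Next I would verify that $g=f_\pm\circ f\circ \beta^{-1}$ is $C^1$ on a neighborhood of $0$. The inverse chart $\beta^{-1}$ is smooth by its explicit formula. The map $f$, given by the explicit expression \eqref{eq: ftilde}, is smooth wherever the representative $\kappa$ satisfies $\kappa^T\kappa\neq 0$, because $\Arg$ is smooth away from the origin in $\mathbb{C}$; hence $f$ is smooth on $\mathfrak{P}\setminus M_1$. The sign-flip map $f_\pm$ is locally constant (being either $I\mapsto I$ or $I\mapsto -I$) on the open set where $|\max_\nu I_\nu|\neq |\min_\nu I_\nu|$, hence smooth on $\mathfrak{I}\setminus f(M_2)$. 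By the hypothesis $[\kappa^{(0)}]\notin M_1\cup M_2$ together with continuity of all three maps, $g$ is indeed smooth on an open neighborhood of $0$, with $g(0)=I^{(0)}$ by definition.

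Finally, applying the multivariate Delta method to the convergence above with the function $g$ at $0$ yields
$$\sqrt{B}\bigl(\hat I^{(B)}-I^{(0)}\bigr)=\sqrt{B}\bigl(g(\beta([\hat\kappa^{(B)}]))-g(0)\bigr)\stackrel{\mathcal{D}}{\to}\mathcal{N}\!\left(0,\,J_x g(0)\,\mathfrak{G}_\beta\,J_x g(0)^T\right),$$
and the explicit form of $J_x g(0)$ is supplied by Lemma \ref{lem: jacobian g}.

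The only non-routine point in this argument is the smoothness check for $g$ at $0$, and I expect the mild subtlety to lie in $f_\pm$: although it is defined piecewise with a discontinuous sign change at $M_2$, it reduces locally to a single linear map as soon as the strict inequality between $|\max_\nu I_\nu|$ and $|\min_\nu I_\nu|$ is available, which is precisely ensured by excluding $M_2$. Once this is in place, the Delta method is a standard black box and the claim follows.
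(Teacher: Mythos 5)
Your proposal is correct and follows essentially the same route as the paper, which simply invokes the delta method applied to Theorem \ref{theo: clt} together with the Jacobian from Lemma \ref{lem: jacobian g}; your additional verification that $\beta([\kappa^{(0)}])=0$ and that $g$ is smooth near $0$ once $M_1\cup M_2$ is excluded fills in details the paper leaves implicit.
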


\begin{proof}
Follows directly from Theorem \ref{theo: clt} and Lemma \ref{lem: jacobian g} using the delta method (see, for example, Section 3 of \cite{vanderVaar2000astat}).
\end{proof}

\newpage
\section{Inconsistency for joint estimation of $\kappa$ and $\Sigma$ in the homoscedastic drift model}\label{sec: Consistency Drift model unknown Sigma} 
In contrast to Section \ref{sec: Drift model: Strong consistency and central limit theorem}, in this section we do not work with the assumption that $\Sigma$ is known, but we investigate the more complicated case that $\Sigma$ and $\kappa$ are estimated simultaneously.
To this end, we reformulate in Definition \ref{ass: drift model sigma} the Centered Homoscedastic Drift Model from Definition \ref{ass: drift model} from Section \ref{sec: Drift model: Strong consistency and central limit theorem} by introducing the true but unknown $\Sigma^{(0)}$. 
Particularly, this section demonstrates that the joint estimation of $\kappa$ and $\Sigma$ is not consistent. 
For ease of notation, as in Section \ref{sec: Drift model: Strong consistency and central limit theorem}, we assume that the original random variable is $N+1$ dimensional, so that $Y$ below is $N$ dimensional and we omit writing the tilde and the superscript $^H$ symbol from Section \ref{sec: maximum likelihood estimators}. 

\begin{definition}[Centered Extended Homoscedastic Drift Model]\label{ass: drift model sigma}
The complex $N$-dimensional random vector $Y$ is given by
\begin{align}\label{eq: y with unknown P}
    Y=\phi\kappa^{(0)} +\epsilon
\end{align}
where $\kappa^{(0)}\in \Scomp \coloneqq \{\kappa \in \mathbb{C}^N: ||\kappa||=1\}$ comprises the true but unknown ENDOR spectrum, $\phi$ is  a complex random value and $\epsilon = (\epsilon_1,\ldots,\epsilon_N)$ is a complex $N$-dimensional random vector independent of $\phi$ 
with 
\begin{align*}
    0 < \mathbb{E}[\vect{\phi}^T\vect{\phi}]=c_\phi < \infty, \qquad \vect{\epsilon_{\nu}} \stackrel{i.i.d.}{\sim} \mathcal{N}(0,\Sigma^{(0)}), 1\leq \nu \leq N .
\end{align*}
Moreover, we assume that the precision matrix $P^{(0)}=\left(\Sigma^{(0)}\right)^{-1}$ has two positive eigenvalues $\lambda_1>\lambda_2>0$.

Thus, $\mathfrak{Q} = \mathbb{C}^N$ is the data space. Since, in contrast to Section \ref{sec: Drift model: Strong consistency and central limit theorem}, we additionally want to estimate the strictly positive definite symmetric matrix $\Sigma$, we obtain the following parameter space
\begin{align*}
    \mathfrak{P}\coloneqq\mathbb{C}P^{N-1} \times \mathrm{SPD}(2)\,,
\end{align*}
and for the loss we choose
\begin{align}\label{eq: definition rho with P}
    \rhofun(Y, ([\kappa],P)) = d_P\left(Y,  \hat{\phi}(\kappa,P,Y)\kappa\right)^2-N\log(\det(P))
\end{align}
with
\begin{align*}
    \hat{\phi}(\kappa,P,Y) &= \comp{\left(\kappa \dia_P \kappa  \right)^{-1} \left(\kappa\bul_P Y\right)} \in \mathbb{C}.
\end{align*}

Thus, for a sample $Y(1), Y(2), \dots \stackrel{i.i.d.}{\sim} Y$ from (\ref{eq: y with unknown P}) we have the following sample and population Fr\'echet functions
\begin{align*}
    \Ffun_B (\omega, ([\kappa], P)) = \frac{1}{B}\sum_{b=1}^B\rhofun(Y(b), [\kappa]), \qquad \Ffun([\kappa], P) = \int\rhofun(Y, [\kappa])\diff\mathbb{P}\left(\phi,\epsilon\right).
\end{align*}
\end{definition}
Analogous to Lemma \ref{lem: A2 is constant}, 
we decompose the first term of $\Ffun([\kappa], P)$ into the $\epsilon$ and $\phi$ parts
\begin{align*}
    &\int d_P\left(Y,  \hat{\phi}(\kappa,P,Y)\kappa\right)^2 
    \diff\mathbb{P}\left(\phi,\epsilon\right) \\ 
    &= \int \Bigg( d_P\left(\phi \kappa^{(0)},  \hat{\phi}(\kappa,P,\phi \kappa^{(0)})\kappa\right)^2 
    + 2\spr{\phi \kappa^{(0)}- \hat{\phi}(\kappa,P,\phi \kappa^{(0)})\kappa}{\epsilon- \hat{\phi}(\kappa,P,\epsilon)\kappa}_P \\
    &\hspace{8ex}+d_P\left(\epsilon,  \hat{\phi}(\kappa,P,\epsilon)\kappa\right)^2\Bigg)\diff\mathbb{P}\left(\phi, \epsilon\right),
\end{align*}
for any $\kappa \in [\kappa]$.
Since $\phi$ and $\epsilon$ are independent and $\mathbb{E}[\epsilon]=0$ the integral over the mixed term vanishes and consequently
\begin{align*}
    &\Ffun([\kappa], P)= \int d_P\left(\phi \kappa^{(0)},  \hat{\phi}(\kappa,P,\phi \kappa^{(0)})\kappa\right)^2  \diff\mathbb{P}\left(\phi\right)
    +\int d_P\left(\epsilon,  \hat{\phi}(\kappa,P,\epsilon)\kappa\right)^2\diff\mathbb{P}\left(\epsilon\right) -N\log(\det(P)).
\end{align*}
In contrast to Section \ref{sec: Strong Consistency drift model}, the expression
\begin{align*}
    \int d_P\left(\epsilon,  \hat{\phi}(\kappa,P,\epsilon)\kappa\right)^2\diff\mathbb{P}\left(\epsilon\right)= N\Tr\left(\Sigma^{(0)}P\right) -\Tr\left( \left(\kappa \dia_P \kappa  \right)^{-1} \left(\kappa \dia_{P\Sigma^{(0)}P} \kappa \right)\right)
\end{align*} 
depends on $\kappa$, see Lemma \ref{lem: A2 is not constant} in the \appendixname.
In Lemma \ref{lem: Partial_derivative P} in the \appendixname
\begin{align*}
    \frac{\partial}{\partial P}\left(N\Tr\left(\Sigma^{(0)}P\right) -\Tr\left( \left(\kappa \dia_P \kappa  \right)^{-1} \left(\kappa \dia_{P\Sigma^{(0)}P} \kappa \right)\right)-N\log(\det(P))\right)
\end{align*}
is calculated and from Lemma \ref{lem: insert in kappa0 P0} in the \appendixname~follows 
\begin{align*}
    &\frac{\partial}{\partial P}\Ffun([\kappa], P)\Big|_{[\kappa]=\left[\kappa^{(0)}\right], P=P^{(0)}}
    = 2\left(\bar{\kappa}^{(0)} \dia_{\left(\kappa^{(0)} \dia_{P^{(0)}} \kappa^{(0)}  \right)^{-1}} \bar{\kappa}^{(0)} \right)  -\mathrm{diag}\left(\bar{\kappa}^{(0)} \dia_{\left(\kappa^{(0)} \dia_{P^{(0)}} \kappa^{(0)}  \right)^{-1}} \bar{\kappa}^{(0)} \right).
\end{align*}
In general, $\bar{\kappa}^{(0)} \dia_{\left(\kappa^{(0)} \dia_{P^{(0)}} \kappa^{(0)}  \right)^{-1}} \bar{\kappa}^{(0)}$ is not  equal to $0$.
Thus, in general, it does not hold that $E=\left\{\left(\left[\kappa^{(0)}\right], P^{(0)}\right)\right\}$.  This shows that the matrix $\Sigma$, which describes the random vector $\epsilon$, cannot be estimated consistently from the profile likelihood. To achieve a jointly consistent estimator for $\kappa$ and $\Sigma$, one would heuristically expect that a proper treatment of the randomness in both $\epsilon$ and $\phi$ is required, which the profile likelihood does not provide for $\phi$. See \autoref{sec: outlook} for a fuller discussion. 

\newpage
\section{Heteroscedastic Drift Model} \label{sec: heteroscedastic drift} 
The homoscedastic drift model has been found to fit spectroscopic data recorded at MW frequency \SI{263}{\GHz} well, across a range of RF frequencies (\SI{20}{\MHz}-\SI{400}{\MHz}) and nuclei ($^1$H, $^2$H, $^{19}$F). \cite{Pokern2021,Hiller2022,Wiechers2023}. However, application to the lower MW frequency of \SI{94}{\GHz}, more commonly encountered in biochemistry groups, reveals very poor fit arising from the noise containing a {\em phase noise} component that is affected by phase drift. This necessitated development of a heteroscedastic drift model that we will detail in this section. It exhibits much improved fit and again results in improved SNR compared to the averaging model.

\subsection{Modelling of the Heteroscedastic Drift Model}
We test the homoscedastic drift model with ENDOR data recorded at a MW frequency of \SI{94}{\GHz} targeting the $^2$H resonance in the twice deuterated $Y_{122}$ Tyrosyl radical \cite{Hiller2022} and using the Mims pulse sequence, see \cite{Mims1965}. This pulse sequence is known for yielding strong EPR echos, so we expect $|\psi_b|$ to be large. As in \autoref{sec: homoscedastic drift}, the goodness of fit was assessed by applying Kolmogorov-Smirnov tests to the real and imaginary parts of the standardized residuals for each of the five datasets (orientations $g_x,g_{xy},g_y,g_{yz},g_z$) yielding the $p-$values reported in \autoref{tab:hom_p_values_94} in \autoref{ssec: heteroscedastic GOF} in the \appendixname. For all orientations except $g_z$, at least one of the two p-values falls far below the Bonferroni-corrected significance level of $0.005$, with some $p-$values of order $10^{-20}$ indicating very poor fit. This lack of fit can also be observed from the kernel density estimates and q-q-plots shown in \autoref{fig: Goodness of Fit hom}.
\begin{figure}[h]
  \centering
  \includegraphics{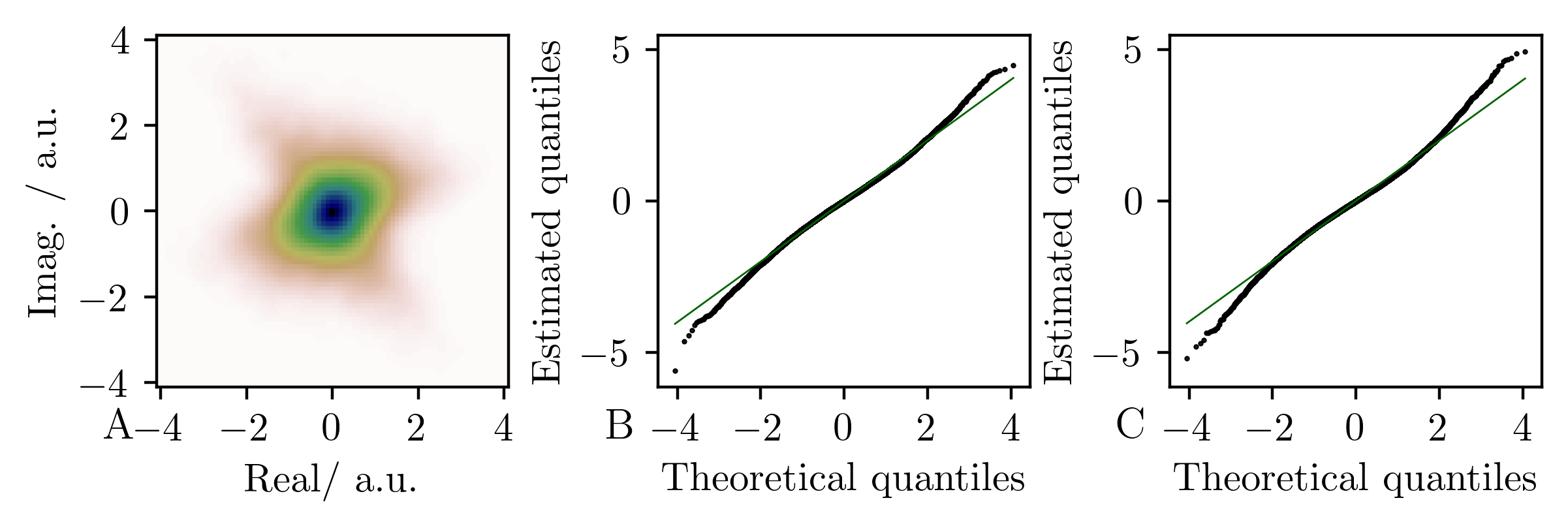}
  \caption{Results of the goodness of fit methods for the homoscedastic drift model applied to \SI{94}{\GHz} Mims data. Panel A displays the kernel density estimator applied to the standardized residuals $\hat{\Sigma}^{-\frac{1}{2}}\hat{\epsilon}_{b,\nu}$ 
    with $\hat{\Sigma}^{-\frac12}$ the inverse of the matrix square root of $\hat{\Sigma}_b$. Panels B and C show q-q-plots for the real part (B) and the imaginary part (C) of the standardized residuals against a standard normal (black) and a reference of perfect fit (green).}
  \label{fig: Goodness of Fit hom}
\end{figure}
Further examination of the residuals shown in Panel A of \autoref{fig: examination heteroscedastic drift model} hints at an underlying heteroscedastic noise structure w.r.t. the batches. 
Taking a general batch-dependent covariance matrix for the noise $\epsilon_{b,\nu}\overset{ind}{\sim}\mathcal{N}(0,\Sigma_b), \Sigma_b\in \mathrm{SPD}(2)$, constitutes a very flexible extension of the homoscedastic drift model which is non-parametric in the sense that the number of parameters $\{\Sigma_b|b\in\{1,\ldots,B\}\}$ increases with the amount of data available. However, the likelihood of this model has boundary maxima which can be obtained by choosing the parameters $\phi,\kappa$ such as to yield zero residuals $\hat{\epsilon}_{b^*,\nu} = 
0 $ for one particular batch $b^*$ and letting  the covariance matrix $\Sigma_{b^{*}}$ tend to zero resulting in $-\frac{1}{2}\log\mathrm{\det}\Sigma_{b^{*}}$ tending to infinity. 
Such an approach therefore needs additional penalization for the $\Sigma_b$ resulting in shrinkage and a parametric extension was pursued instead. Based on the empirical observation presented in Panels B and C of \autoref{fig: examination heteroscedastic drift model} that the batch-wise principal component of the homoscedastic residuals $\hat{\epsilon}_{b,:}$ is rotated by \ang{90} compared to the spectrum mean $\vect{\hat{\psi}_b}$, the noise was modelled as a sum of a homoscedastic noise source and one whose covariance is given as a function of $\vect{\psi}$. This batch-dependent noise is attributed to the phase noise of the EPR echo $\psi_b$ as phase noise is known to be orthogonal in phase and proportional in amplitude to the carrier signal it arises from \cite{hagen_2009}. Both these properties of phase noise are empirically found to apply to the residuals of the homoscedastic drift model arising from our data: see panels A and B of Figure \ref{fig: examination heteroscedastic drift model} for orthogonality and panel C of that figure for amplitude.

The expansion of a phase noise term modeled as a wrapped Gaussian 
\begin{align*}
\Tilde{\psi}_{b,\nu} = \psi_b\exp{i\tilde{\sigma} \varphi_{b,\nu}}\\
\varphi_{b,\nu}\overset{i.i.d}{\sim}\mathcal{N}(0,1)
\end{align*}
in small $\tilde{\sigma} > 0 $ up to linear order gives
\begin{align}
\vect{\Tilde{\psi}_{b,\nu}} = \vect{\psi_b}+\tilde{\sigma}\varphi_{b,\nu}\vect{i\psi_b}+\mathcal{O}_P(\tilde{\sigma}^2).\label{eqn:phasenoise_linear}
\end{align}
This random variable has mean $\vect{\psi_b}+\mathcal{O}_P(\tilde{\sigma}^2)$ and covariance matrix $\tilde{\sigma}^2\vect{i\psi_b}\vect{i\psi_b}^T+\mathcal{O}_P(\tilde{\sigma}^4)$. 
So, this expansion reproduces the homoscedastic drift model mean $\psi_b$ and the empirical orthogonality structure of the residuals to linear and quadratic order in $\tilde{\sigma}$, respectively.
It adds a further dependency of the moments on $\psi_b$ and therefore $\hat{\psi}_b = \frac{1}{N+1}\sum_{\nu=0}^NY_{b,\nu}$ is not the MLE estimate anymore. 
In the homoscedastic case, we needed the condition in \autoref{eq: meanSpecZero} of standardized mean $0$ spectra for identifiability. To retain this standardization for $\kappa$, we introduce an additional parameter, the spectrum mean $c\in\mathbb{C}$. In total, the heteroscedastic drift model hence decomposes the data matrix as follows:
\begin{definition}[Heteroscedastic Drift Model]\label{def: hetero}
  \begin{align*}
  Y_{b,\nu}
  &= \psi_b + \phi_b (\kappa_{\nu}+c) +\epsilon_{b,\nu}\\
  \epsilon_{b,\nu}
  \overset{ind.}&{\sim} \mathcal{N}\qty(0,\Sigma_b)\\
  \Sigma_b
  &= \Sigma_0 + \Tilde{\sigma}^2\vect{i\psi_b}\vect{i\psi_b}^T.
  \end{align*}
\end{definition}
Even though this model also has boundary maxima in the limit when $\Sigma_0$ is rank deficient, see \autoref{lem: boundary max} in the \appendixname, this is easily overcome by specifying lower bounds on the eigenvalues of $\Sigma_0$ that arise from reasonable estimates of minimal MW receiver noise. No penalization is needed in practice to enforce these bounds when starting optimizers from parameter estimates derived from the homoscedastic drift model, see \autoref{ssec: heteroscedastic boundary maximum} in the \appendixname\, for details. 

\begin{figure}[h]
  \centering
  \includegraphics[width=\textwidth]{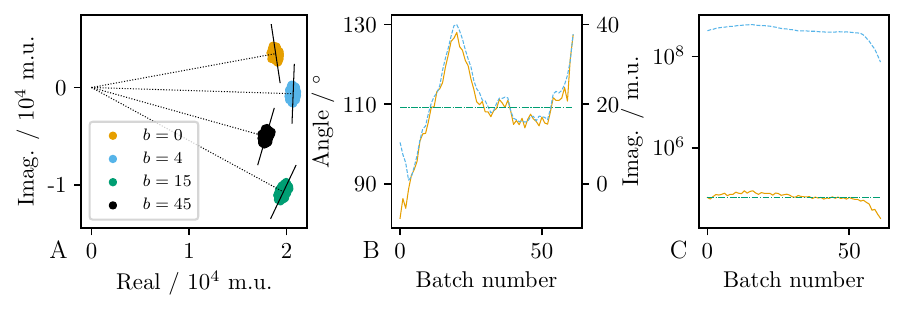}
  \caption{Examination of the $\SI{94}{\GHz}$ data using the results of the homoscedastic drift model. Panel A shows the comparison of the principal component (direction of black line segment) of the data from one batch $Y_{b,:}$. The batch mean ${\hat{\psi}_b}$ is visualized as the dotted line segment $\overline{0\hat{\psi}_b}$. Panel B shows the  angle with the real axis of the principal component of the residuals  per batch (orange) and the eigenvector corresponding to the maximal eigenvalue of $\hat{\Sigma}$  (constant, green dot-dashed) both quantified on the left y-axis and the angle of $\hat{\psi}_b$ (light blue dashed) quantified on the right y axis which is offset by \ang{90}. In Panel C, the magnitude of the largest singular value of the residuals per batch (orange), $\abs{\hat{\psi}_b}^2$ (light blue dashed) and the largest eigenvalue of $\hat{\Sigma}$ (constant green dot-dashed) are depicted.}
  \label{fig: examination heteroscedastic drift model}
\end{figure}
In the heteroscedastic drift model, we truncate the expansion of the phase noise term in $\tilde{\sigma}$ at the linear order for the mean and quadratic order for the variance, respectively. Careful comparison of higher order terms with empirically observed values of $\Sigma_0$, see \autoref{ssec: heteroscedastic truncation} in the \appendixname, reveals that the former are at least two orders of magnitude smaller than the latter which justifies our chosen truncation.

\subsection{Results of the Heteroscedastic Drift Model}
The algorithmic implementation of heteroscedastic drift model estimates a local MLE by iteratively updating the parameters by their conditional MLE. In contrast to the homoscedastic drift model, the conditional MLE's have to be approximated for $\Sigma_0,\Tilde{\sigma}$ and $\psi$. In order to improve convergence properties of the algorithm, we included an additional step wherein we calculate the conditional MLE for the parameter $\Delta_c$ where $\Tilde{\psi} = \psi - \Delta_c\phi$, $\Tilde{c} = c + \Delta_c$, see \autoref{sec: heteroscedastic appendix} of the \appendixname\, for the algorithm and further details.

Finding the optimal rotation and flip of $\hat{\kappa}$ to obtain the final spectrum $\hat{I}$ is done as in the homoscedastic drift model. The algorithm will report if one eigenvalue of $\Sigma_0$ is lower than the empirical cut-off $\delta = \num{1e-20}$.
The results of applying this algorithm to the \SI{94}{\GHz} data can be found in Data Example \ref{dexa: hetero}. 

\begin{dexa}[Heteroscedastic Drift Model for \SI{94}{\GHz} Dataset]\label{dexa: hetero}
  The result of applying \autoref{alg:het} to the \SI{94}{\GHz} dataset are shown in \autoref{fig: results het drift model94}. The goodness of fit methods are applied to the real and imaginary part of the standardized residuals $\Tilde{\epsilon}_{b,\nu} = \hat{\Sigma}_b^{-\frac12}\qty(Y_{b,\nu}-\hat{\psi}_b-\hat{\phi}_b\hat{\breve{\kappa}}_{\nu})$. The results of the kernel-density estimation are in Panel F and the q-q plots in Panel G and H of \autoref{fig: results het drift model94}. The Kolmogorov-Smirnov test is carried out at the Bonferroni-corrected level of $0.005 = 0.05/10$. The p-values can be found in \autoref{tab:heteroscedastic p values}. The model is narrowly rejected by the K-S-test as the p value in orientation $y$ is significant. Still, the graphical goodness of fit results are improved over the results from applying the homoscedastic drift model. The SNR estimated from the heteroscedastic drift model is visibly larger than the one from the averaging model in all orientations as can be seen in \autoref{fig:Signal to noise ratio heterovs average} and \autoref{tab:SNR hetero, avg} in the \appendixname. Thus, while there is potential improvement to be gained by further modelling, the heteroscedastic drift model is already a successful extension of the homoscedastic drift model.
  \begin{figure}[h]
    \centering
    \includegraphics[width=\textwidth]{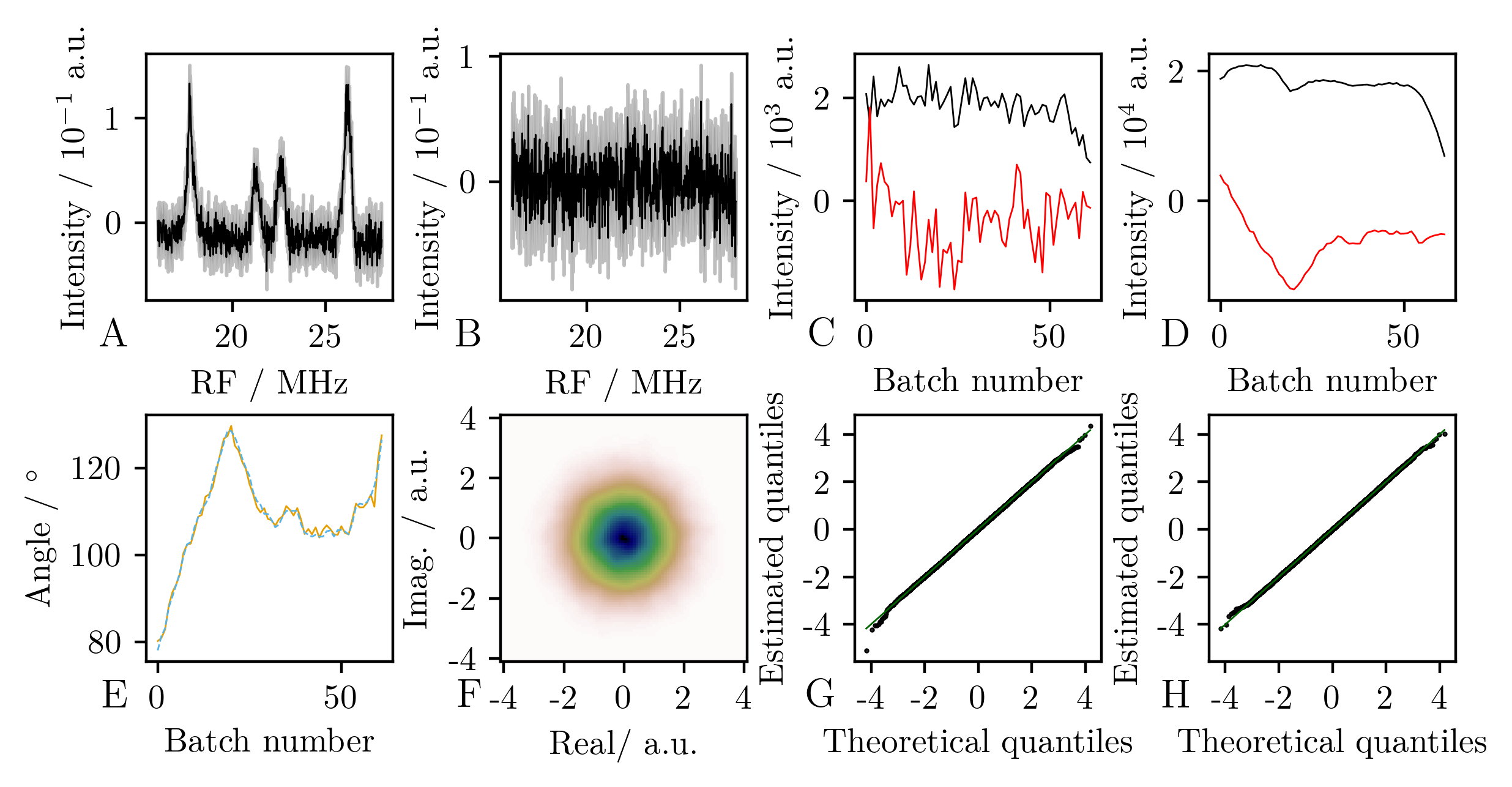}
    \caption{The results of applying the heteroscedastic drift model to the \SI{94}{\GHz} data.
      Panel A displays the estimated spectrum $\hat{I}$, while panel B  displays the component $\hat{\omega}$ that is orthogonal to the estimated spectrum $\hat{I}$. Both panels show Bootstraped confidence intervalls based on 50 Bootstrap samples with out bias reduction.
      C and D show the real (black) and imaginary (red) components of $\exp{i\alpha_{opt}}\hat{\phi}$ and $\hat{\psi}$, respectively, where the $\alpha_{opt}$ was chosen to maximize the correlation between the rotated $\hat{\phi}$ and $\hat{\psi}$. Panel E shows the angle of the principal component of the residuals  per batch (orange) and  $\psi_b$ (light blue dashed) with the real axis. 
      Panel F displays the kernel-density-estimation of the complex residuals standardized residuals $\Tilde{\epsilon}_{b,\nu}$, while panels G and H depict q-q-plots for the real and imaginary components of the residuals, respectively in black with the identity shown in green.}
    \label{fig: results het drift model94}
  \end{figure}
\end{dexa}

\newpage
\section{Outlook}\label{sec: outlook}
For the homoscedastic drift model, asymptotic theory was developed for the case where $\Sigma$ is known (see Section \ref{sec: Drift model: Strong consistency and central limit theorem}). Since the joint estimate of $\kappa$ and $\Sigma$ is not consistent in the profile likelihood model (see Section \ref{sec: Consistency Drift model unknown Sigma}), it would be desirable to obtain a consistent estimate by including the randomness of $\phi$ in the statistical model. Possible approaches are to model the $\{\phi_b\}_{b=1}^B$ as i.i.d. Gaussian or, to reflect the likelihood being invariant under permutations of batches, as  exchangeable random variables or, perhaps most realistically, as a Gaussian process. 
For the latter two approaches, one would need to generalize the theory about generalized strong consistency of generalized Fr\'echet means (see Section \ref{sec: strong consistency}) for random variables that are not i.i.d..
Furthermore, an asymptotic analysis for the heteroscedastic model is future work. Here, particular challenges arise as the mean $\psi_b$ and the variance $\Sigma_b$ are dependent on each other.
In addition, it is challenging to develop drift models for all microwave frequencies and pulse sequences to make them usable for a large audience. Initial work on Davies $\nu_{MW}=94$\,GHz data (a special pulse sequence, see \cite{Davies1974}) shows the heteroscedastic drift model not to fit well in this case, likely due to cancellation of the main echo signal leading to small and noisy $\hat{\psi}_b$. In addition, there are other experiments at $\nu_{MW}=34$\,GHz and $\nu_{MW}=9$\,GHz for which drift models are not yet available. A possible avenue may be separate modelling of mean and variance via $\Sigma_b = \Sigma_0 + \alpha_b\alpha_b^T$ with $\alpha_b\in\mathbb{R}^2$ to be estimated which may subsume homoscedastic and heteroscedastic noise models and would also apply to pulse sequences where $\hat{\psi}$ is afflicted by noise and cancellation effects.

\section{Acknowledgements}
H.W., B.E., S.H., M.B. and Y.P. thank the DFG --- project-ID 432680300 --- CRC 1456 for financial support. M.B. acknowledges the ERC Advanced Grant 101020262 BIO-enMR. 
We thank the Max Planck Society for financial support. S.H. acknowledges the Niedersachsen Vorab of the Volkswagen foundation, DFG-HU 1575/7 and the IMSI workshop on Object Oriented Data Analysis in Health Sciences 2023. Y.P. gratefully acknowledges Royal Society International Exchanges grant IE150666. 
\newpage
\bibliographystyle{alpha}
\bibliography{bibtex}

\newcommand{\etalchar}[1]{$^{#1}$}
\begin{thebibliography}{WKH{\etalchar{+}}23}

\bibitem[Afs11]{Afsari10}
B.~Afsari.
\newblock Riemannian ${L}^p$ center of mass: existence, uniqueness, and
  convexity.
\newblock {\em Proceedings of the American Mathematical Society}, 139:655--773,
  2011.

\bibitem[And03a]{Anderson_drug_design}
Amy~C. Anderson.
\newblock The process of structure-based drug design.
\newblock {\em Chemistry \& Biology}, 10(9):787--797, 2003.

\bibitem[And03b]{Anderson2003}
T.~W. Anderson.
\newblock {\em An Introduction to Multivariate Statistical Analysis}.
\newblock Wiley Interscience, 2003.

\bibitem[BP03]{bhattacharya2003large}
Rabi Bhattacharya and Vic Patrangenaru.
\newblock Large sample theory of intrinsic and extrinsic sample means on
  manifolds.
\newblock {\em The Annals of Statistics}, 31(1):1--29, 2003.

\bibitem[Bul82]{Bultheel1982}
Adhemar Bultheel.
\newblock Inequalities in {H}ilbert modules of matrix-valued functions.
\newblock {\em Proceedings of the American Mathematical Society},
  85(3):369--372, 1982.

\bibitem[Cho54]{choquet1954theory}
Gustave Choquet.
\newblock Theory of capacities.
\newblock In {\em Annales de l'institut Fourier}, volume~5, pages 131--295,
  1954.

\bibitem[Dav74]{Davies1974}
E.~R. Davies.
\newblock A new pulse endor technique.
\newblock {\em Physics Letters A}, 47(1):1--2, 1974.

\bibitem[DM98]{dryden1998statistical}
I.L. Dryden and K.V. Mardia.
\newblock {\em Statistical Shape Analysis}.
\newblock Wiley Series in Probability and Statistics. Wiley, 1998.

\bibitem[EABG03]{Epel}
B.~Epel, D~Arieli, D~Baute, and D.~Goldfarb.
\newblock Improving w-band pulsed endor sensitivity—random acquisition and
  pulsed special triple.
\newblock 164:78--83, 2003.

\bibitem[EHHS22]{Eltzner2022}
Benjamin Eltzner, Pernille Hansen, Stephan~F. Huckemann, and Stefan Sommer.
\newblock Diffusion means in geometric spaces.
\newblock 2022.

\bibitem[EJ20]{evans2020strong}
Steven~N. Evans and Adam~Q. Jaffe.
\newblock Strong laws of large numbers for {F}r\'echet means.
\newblock {\em arXiv preprint arXiv:2012.12859}, 2020.

\bibitem[Feh56]{Feher}
G.~Feher.
\newblock Observation of nuclear magnetic resonances via the electron spin
  resonance.
\newblock {\em Phys. Rev.}, 103, 1956.

\bibitem[For07]{Ford2007}
Kevin Ford.
\newblock {\em From Kolmogorov’s theorem on empirical distribution to number
  theory}.
\newblock Springer, 2007.

\bibitem[Fr{\'e}48]{frechet1948elements}
Maurice Fr{\'e}chet.
\newblock Les {\'e}l{\'e}ments al{\'e}atoires de nature quelconque dans un
  espace distanci{\'e}.
\newblock In {\em Annales de l'institut Henri Poincar{\'e}}, volume~10, pages
  215--310, 1948.

\bibitem[FVJ08]{fletch9}
P.T. Fletcher, S.~Venkatasubramanian, and S.C. Joshi.
\newblock {Robust statistics on Riemannian manifolds via the geometric median}.
\newblock In {\em Computer Vision and Pattern Recognition, 2008. CVPR 2008.
  IEEE Conference on}, pages 1--8. IEEE, 2008.

\bibitem[GS91]{Gemperle}
Claudius. Gemperle and Arthur. Schweiger.
\newblock Pulsed electron-nuclear double resonance methodology.
\newblock {\em Chemical Reviews}, 91(7):1481--1505, 1991.

\bibitem[Hag09]{hagen_2009}
Jon~B. Hagen.
\newblock {\em Radio-Frequency Electronics: Circuits and Applications}.
\newblock Cambridge University Press, 2nd edition, 2009.

\bibitem[Har16]{Harmer}
Jeffrey~R. Harmer.
\newblock Hyperfine spectroscopy -- endor.
\newblock {\em eMagRes}, 5, 2016.

\bibitem[HL98]{HL98}
H.~Hendriks and Z.~Landsman.
\newblock Mean location and sample mean location on manifolds: asymptotics,
  tests, confidence regions.
\newblock {\em Journal of Multivariate Analysis}, 67:227--243, 1998.

\bibitem[HTW{\etalchar{+}}22]{Hiller2022}
Markus Hiller, Igor Tkach, Henrik Wiechers, Benjamin Eltzner, Stephan
  Huckemann, Yvo Pokern, and Marina Bennati.
\newblock Distribution of {H}-$\beta$ hyperfine couplings in a tyrosyl radical
  revealed by 263 {G}hz endor spectroscopy.
\newblock {\em Applied Magnetic Resonance}, 53:1015–1030, 2022.

\bibitem[Huc11a]{HuckemannCLT}
Stephan Huckemann.
\newblock Inference on 3d {P}rocrustes means: Tree bole growth, rank deficient
  diffusion tensors and perturbation models.
\newblock {\em Scandinavian Journal of Statistics}, 38(3):424--446, 2011.

\bibitem[Huc11b]{Huckemann2011}
Stephan~F. Huckemann.
\newblock {Intrinsic inference on the mean geodesic of planar shapes and tree
  discrimination by leaf growth}.
\newblock {\em The Annals of Statistics}, 39(2):1098 -- 1124, 2011.

\bibitem[Huc12]{H_meansmeans_12}
Stephan Huckemann.
\newblock On the meaning of mean shape: Manifold stability, locus and the two
  sample test.
\newblock {\em Annals of the Institute of Statistical Mathematics},
  64(6):1227--1259, 2012.

\bibitem[Kar14]{karcher2014riemannian}
Hermann Karcher.
\newblock Riemannian center of mass and so called {K}archer mean.
\newblock {\em arXiv preprint arXiv:1407.2087}, 2014.

\bibitem[Ken74]{kendall1974foundations}
David~G Kendall.
\newblock Foundations of a theory of random sets, stochastic geometry
  ({H}arding {E}.{F}. and {K}endall {D}.{G}., eds.), 1974.

\bibitem[Mat74]{matheron1974random}
Georges Matheron.
\newblock {\em Random sets and integral geometry}.
\newblock John Wiley \& Sons, 1974.

\bibitem[MDD{\etalchar{+}}20]{Meyer2020}
Andreas Meyer, Sebastian Dechert, Surjendu Dey, Claudia Höbartner, and Marina
  Bennati.
\newblock {Measurement of Angstrom to Nanometer Molecular Distances with
  \textsuperscript{19}F Nuclear Spins by EPR/ENDOR Spectroscopy}.
\newblock {\em Angewandte Chemie International Edition}, 59(1):373--379, 2020.

\bibitem[Mim65]{Mims1965}
W.~B. Mims.
\newblock {Pulsed Endor Experiments}.
\newblock {\em Proceedings of the Royal Society of London Series A ---
  Mathematical and Physical Sciences}, 283(1395):452--457, 1965.

\bibitem[Mol05]{Molch05}
Ilya Molchanov.
\newblock {\em Theory of random sets}.
\newblock Probability and Its Applications, Springer, 2005.

\bibitem[PEH{\etalchar{+}}21]{Pokern2021}
Yvo Pokern, Benjamin Eltzner, Stephan~F. Huckemann, Clemens Beeken, JoAnne
  Stubbe, Igor Tkach, Marina Bennati, and Markus Hiller.
\newblock Statistical analysis of {ENDOR} spectra.
\newblock {\em Proceedings of the National Academy of Sciences}, 118(27), 2021.

\bibitem[RB14]{Rizzato2014}
Roberto Rizzato and Marina Bennati.
\newblock Enhanced sensitivity of electron-nuclear double resonance ({ENDOR})
  by cross polarisation and relaxation.
\newblock {\em Phys. Chem. Chem. Phys.}, 16:7681--7685, 2014.

\bibitem[Sch22]{schotz2022strong}
Christof Sch{\"o}tz.
\newblock Strong laws of large numbers for generalizations of {F}r{\'e}chet
  mean sets.
\newblock {\em Statistics}, 56(1):34--52, 2022.

\bibitem[TI97]{Trefethen1997}
L.~N. Trefethen and D.~Bau {III}.
\newblock {\em Numerical Linear Algebra}.
\newblock SIAM, 1997.

\bibitem[vdV00]{vanderVaar2000astat}
A.W. van~der Vaart.
\newblock {\em {Asymptotic statistics}}.
\newblock Cambridge Univ. Press, 2000.

\bibitem[WKH{\etalchar{+}}23]{Wiechers2023}
Henrik Wiechers, Annemarie Kehl, Markus Hiller, Benjamin Eltzner, Stephan
  Huckemann, Andreas Meyer, Igor Tkach, Marina Bennati, and Yvo Pokern.
\newblock {B}ayesian optimization to estimate hyperfine couplings from
  $^{19}${F} {ENDOR} spectra.
\newblock {\em Journal of Magnetic Resonance}, 2023.

\bibitem[Zie77]{Ziezold1977}
Herbert Ziezold.
\newblock {\em On Expected Figures and a Strong Law of Large Numbers for Random
  Elements in Quasi-Metric Spaces}, pages 591--602.
\newblock Springer Netherlands, Dordrecht, 1977.

\end{thebibliography}
\newpage
\begin{appendices}
  \section{Homoscedastic drift model}\label{sec: drift model plots}
  
  The real part and imaginary part of the raw data matrix $Y$ for orientation $g_y$ from a chemical sample of D2-$Y_{122}^{\bullet}$ are presented in Figure \ref{fig: raw data}.
  \begin{figure}[ht!]
    \centering
    \includegraphics[width = 0.49\textwidth]{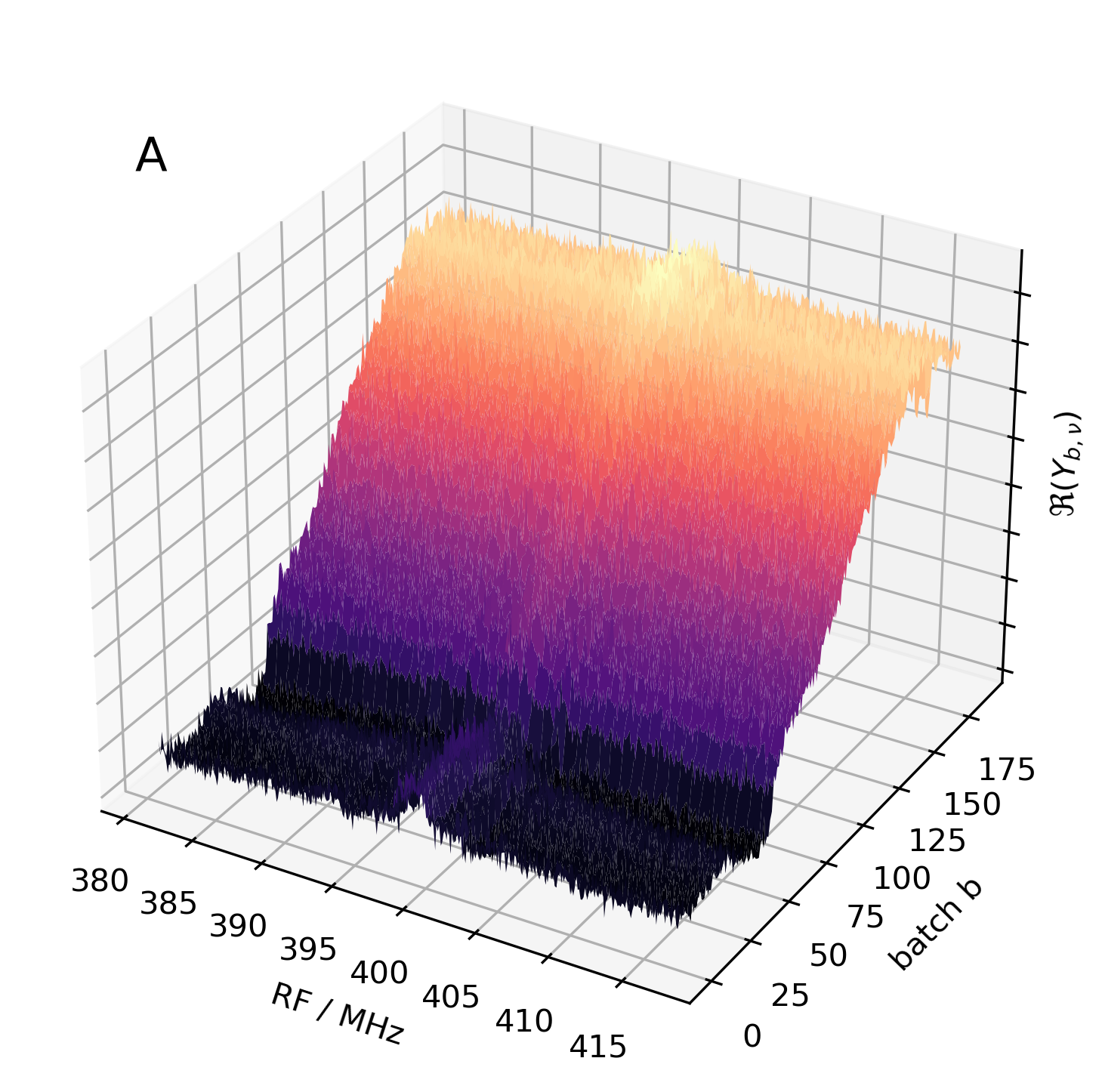}
    \includegraphics[width = 0.49\textwidth]{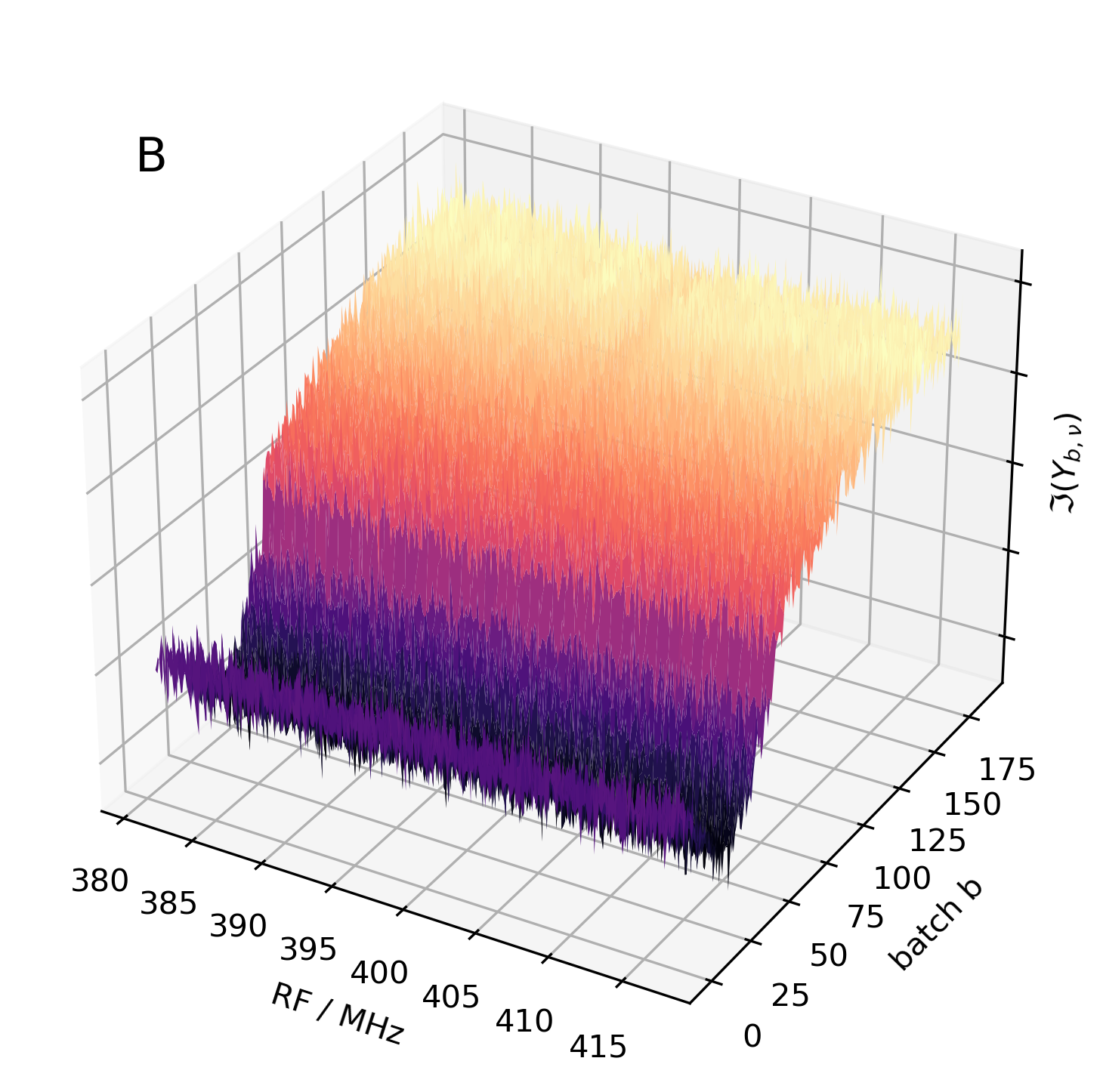}
    \caption{Panel A shows the real component of the raw data matrix $Y$ for orientation $g_y$ from a chemical sample of D2-$Y_{122}^{\bullet}$, while panel B displays the imaginary component of the same data matrix.}
    \label{fig: raw data}
  \end{figure}
  
  The algorithm used to fit the homoscedastic drift model is given in \autoref{alg:hom}.
  
  \begin{algorithm}
    \caption{Homoscedastic drift model MLE}
    \label{alg:hom}
    \begin{algorithmic}
      \State \Load $\bm y$
      \State $\widehat{\bm \psi} \gets \widehat{\bm \psi}_{hom}(\bm y)$
      \State $\bm{\Tilde{y}} \gets \bm y - \widehat{\bm \psi}$
      \State $\bm u,\eta,\overline{\bm v} \gets $ SVD$(\Tilde{\bm y}$,1st component)
      \State $\widehat{\bm{\phi}}^{(0)} \gets \bm u \eta$
      \State $\widehat{\bm{\kappa}}^{(0)} \gets \overline{\bm v}$
      \State $k \gets 0$
      \While{$k \leq maxiter=200$}
      \State $\widehat{\bm{\Sigma}}^{(k)} \gets  \widehat{\bm{\Sigma}}_{hom}(\widehat{\bm{\phi}}^{(k)},\widehat{\bm{\kappa}}^{(k)},\bm{\Tilde{y}})$
      \State $\widehat{\bm{\phi}}^{(k+1)} \gets  \widehat{\bm{\phi}}_{hom}(\widehat{\bm{\kappa}}^{(k)},(\widehat{\bm{\Sigma}}^{(k)})^{-1},\bm{\Tilde{y}})$
      \State $\widehat{\bm{\kappa}}^{(k+1)} \gets  \widehat{\bm{\kappa}}_{hom}(\widehat{\bm{\phi}}^{(k+1)},(\widehat{\bm{\Sigma}}^{(k)})^{-1},\bm{\Tilde{y}})$
      \State$ \widehat{\bm\kappa}^{(k+1)},\widehat{\bm\phi}^{(k+1)} \gets\frac{\widehat{\bm{\kappa}}^{(k+1)}}{\norm{\widehat{\bm{\kappa}}^{(k+1)}}},\norm{\widehat{\bm{\kappa}}^{(k+1)}}\widehat{\bm{\phi}}^{(k+1)}$
      \State $\ell^{(k)} \gets \ell(\bm{\Tilde{y}},\widehat{\bm{\phi}}^{(k+1)},\widehat{\bm{\kappa}}^{(k+1)},\widehat{\bm{\Sigma}}^{(k)})$
      \If{$k>0$}
      \If{$\ell^{(k)}-\ell^{(k-1)} < min\_delta\_loglik =10^{-4} $}
      \State \textbf{break}
      \EndIf
      \EndIf
      \State $k\gets k+1$
      \EndWhile
      
      \State \Return $\widehat{\bm\psi},\widehat{\bm\phi}^{(k)},\widehat{\bm\kappa}^{(k)},\widehat{\bm\Sigma}^{(k-1)}$
    \end{algorithmic}
  \end{algorithm}

  In Figure \ref{fig: average vs drift} we compare the SNR of the averaging model with SNR of the homoscedastic drift model. 
  For this purpose, the spectrum is extracted from the data matrices of the different orientations of the measurements of the chemical sample D2-$Y_{122}^{\bullet}$ using both the averaging model (plotted in green) and the homoscedastic drift model (plotted in black). In both models, the maximum method is used for phase correction. 
  The regions of RF frequencies where the true ENDOR spectrum is judged to be constant, referred to as \emph{flat frequency regions} defined in \cite{Pokern2021}, are plotted in the right panel and the standard deviations of the spectrum in the flat frequency regions are listed in Table \ref{tab: drift vs average}. 
  In four out of five orientations, the homoscedastic drift model provides an improved SNR. Only at orientation $g_x$ is the SNR of the averaging model slightly better than that of the homoscedastic drift model, which can be explained by the fact that at orientation $g_x$ the least phase drift of $\hat{\phi}$ is observed (see \autoref{fig: angle phi}).
  \begin{figure}[ht!]
    \centering
    \includegraphics[width = 1\textwidth]{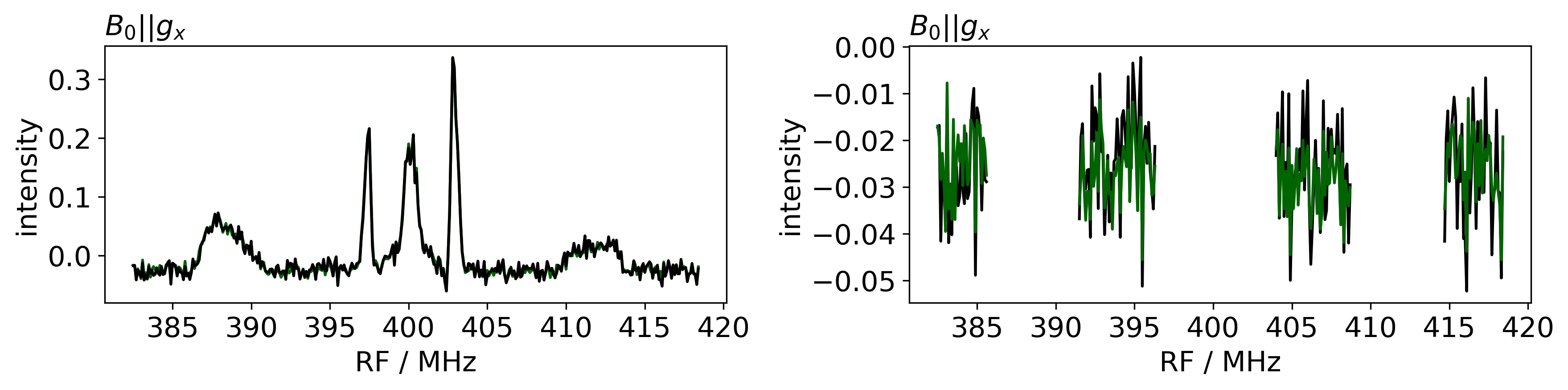}
    \includegraphics[width = 1\textwidth]{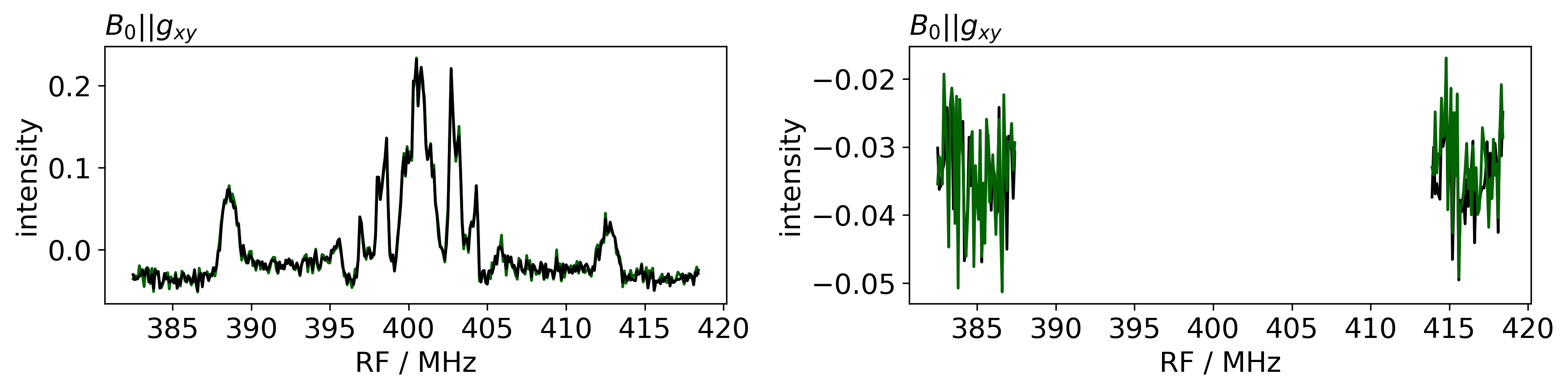}
    \includegraphics[width = 1\textwidth]{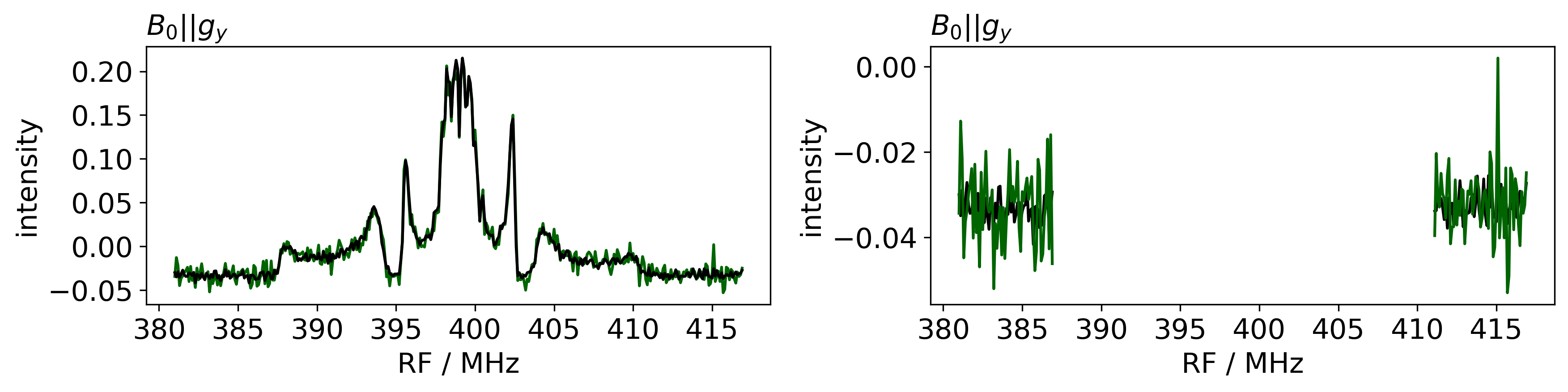}
    \includegraphics[width = 1\textwidth]{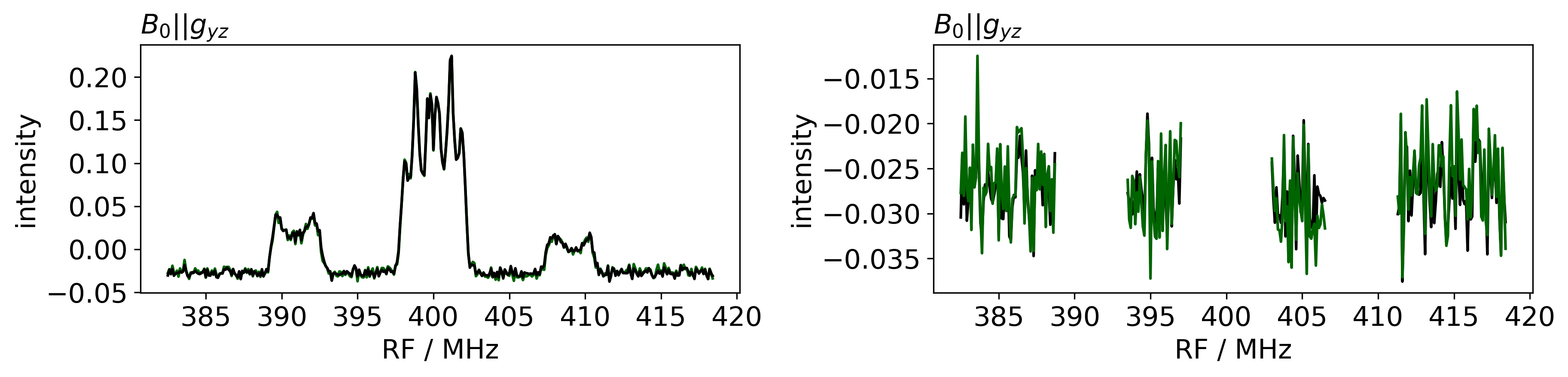}
    \includegraphics[width = 1\textwidth]{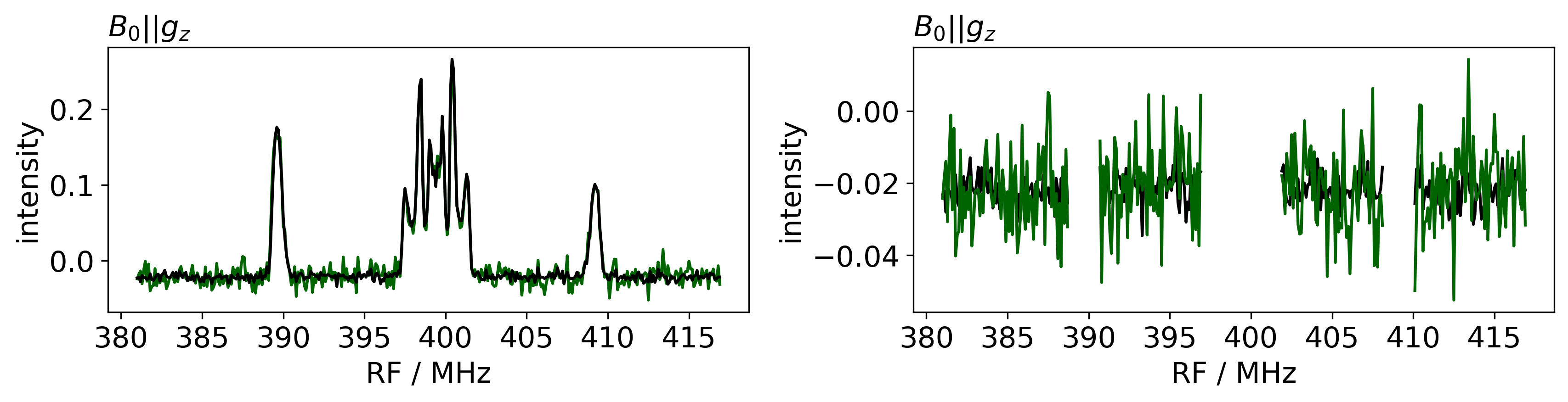}
    \caption{The estimated spectra $\hat{I}$ for all different orientations, using the homoscedastic drift model (black) and the averaging model (green) for all frequencies (left) and only for the flat frequency regions (right). The standard deviations of the spectra across the flat frequency regions from the right panel is given in Table \ref{tab: drift vs average}.
    }
    \label{fig: average vs drift}
  \end{figure}

  \begin{table}[ht!]
    \centering
    \begin{tabular}{c|c c}
      \hline
      Orientation & Averaging model & Homoscedastic drift model\\
      \hline
      $g_x$ & 0.0074 & 0.0107  \\
      $g_{xy}$ & 0.0072 & 0.0061 \\
      $g_y$ & 0.0085 & 0.0033 \\
      $g_{yz}$ & 0.0045 & 0.0035  \\
      $g_z$ & 0.0111 &0.0042  \\
    \end{tabular}
    \caption{The standard deviation of the spectrum across the flat frequency regions shown in Panel B of Figure \ref{fig: average vs drift}, computed for both the averaging model and the homoscedastic drift model.}
    \label{tab: drift vs average}
  \end{table}
  
  \begin{table}[ht!]
    \centering
    \begin{tabular}{c|c c}
      \hline
      Orientation & Real & Imag \\
      \hline
      $g_x$ & 0.098 &  0.220 \\
      $g_{xy}$ & 0.023 & 0.237 \\
      $g_y$ & 0.736 &  0.938 \\
      $g_{yz}$ & 0.373 & 0.271\\
      $g_z$ & 0.022 & 0.374\\
    \end{tabular}
    \caption{Results of Kolmogorov–Smirnov tests for Gaussianity applied to the real and imaginary parts of the residuals $\hat{\epsilon}_{b,\nu}=Y_{b,\nu}-\hat{\psi}_b - \hat{\phi}_b\hat{\kappa}_\nu$, pooled over $b$ and $\nu$, obtained from the homoscedastic drift model applied to all measurements.}
    \label{tab: pvalues drift model}
  \end{table}
  
  \begin{figure}[ht!]
    \centering
    \includegraphics[width = 1\textwidth]{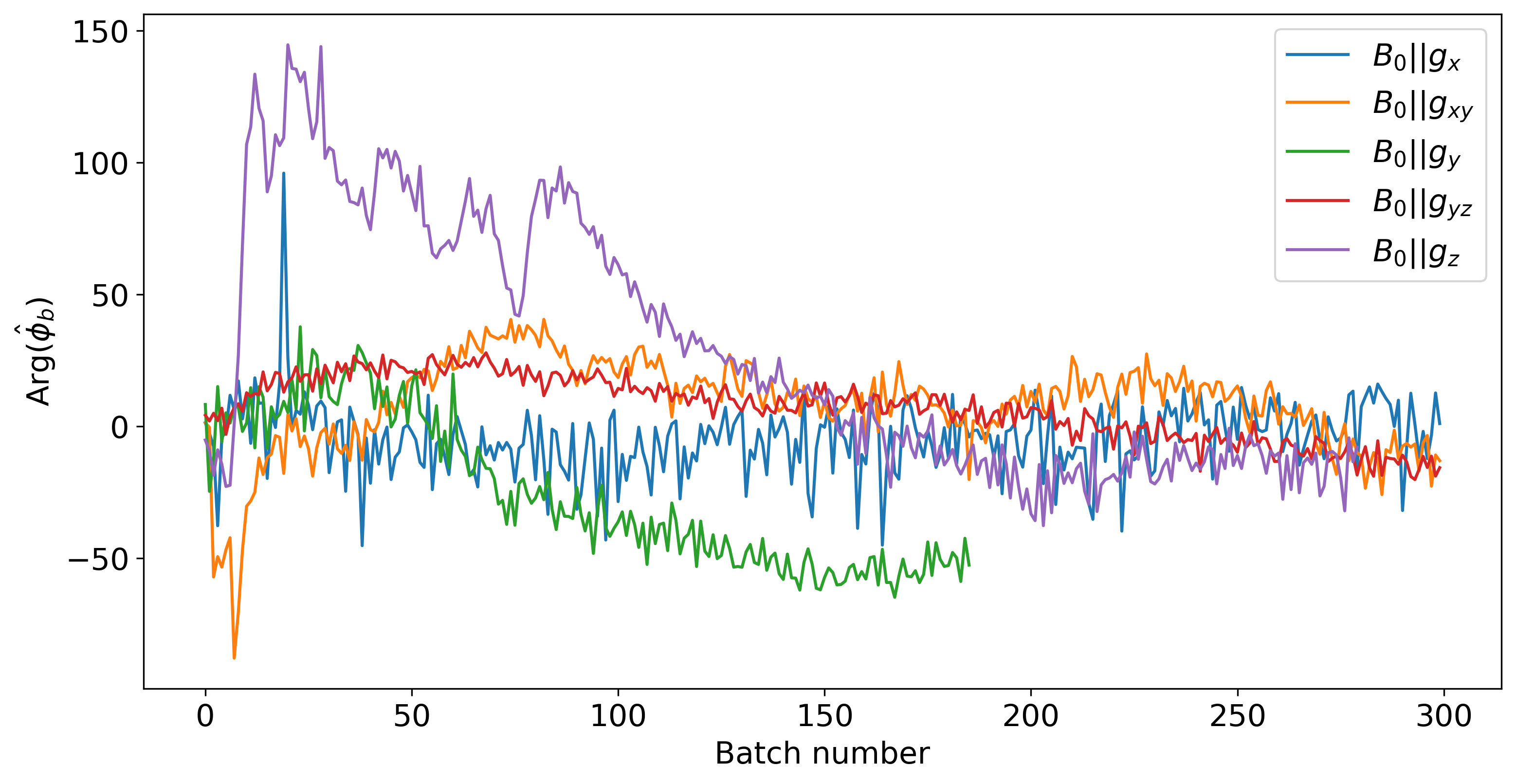}
    \caption{The angle of $\hat{\phi}$ for all five orientations of the chemical sample D2-$Y_{122}^{\bullet}$ at \SI{263}{\GHz}.}
    \label{fig: angle phi}
  \end{figure}

  \FloatBarrier        
  \newpage
  \section{Lemma for the Helmert Matrix }\label{sec: Technical Theorems and Lemmas}
  We prove in the following a lemma related to Section \ref{sec: maximum likelihood estimators} from the main text and consequently have the same notation. In particular, the vectors $h_1,\ldots,h_N$, the $\tilde{\epsilon}_{b, \nu}^H$ and $\vect{\tilde{\epsilon}_{b, \nu}^H}$ are defined as in Section \ref{sec: maximum likelihood estimators} in the main text.  
  \begin{lemma}\label{lem: epsilon dist helmert}
    In the new basis $h_1,\ldots,h_N$, the $\tilde{\epsilon}_{b, \nu}^H$ are i.i.d distributed and $\vect{\tilde{\epsilon}_{b, \nu}^H}\sim \mathcal{N}(0, \Sigma)$ for all $b=1\dots, B$ and $\nu = 1,\dots, N$.
  \end{lemma}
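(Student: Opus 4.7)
The plan is to reduce the claim to standard properties of linear combinations of independent Gaussian vectors. First I would observe that the centering step $\epsilon \mapsto \tilde\epsilon$ is nothing but the orthogonal projection of each row $\epsilon_{b,:}\in\mathbb{C}^{N+1}$ onto the hyperplane $\{v:\sum_\nu v_\nu=0\}$; indeed, writing $\mathbf{1}=(1,\dots,1)^T\in\mathbb{R}^{N+1}$ one has
\[
\tilde\epsilon_{b,\nu} \;=\; \epsilon_{b,\nu} - \frac{1}{N+1}\sum_{\tilde\nu=0}^N \epsilon_{b,\tilde\nu} \;=\; \Big(\big(I - \tfrac{1}{N+1}\mathbf{1}\mathbf{1}^T\big)\epsilon_{b,:}\Big)_\nu .
\]
Since the Helmert rows $h_1,\dots,h_N$ are orthonormal and orthogonal to $\mathbf{1}$, the Helmert sub-matrix $H$ annihilates the component along $\mathbf{1}$, so $H\big(I-\tfrac{1}{N+1}\mathbf{1}\mathbf{1}^T\big)=H$. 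Consequently
\[
\tilde\epsilon^H_{b,:} \;=\; H\tilde\epsilon_{b,:} \;=\; H\epsilon_{b,:} ,
\]
and the centering step drops out of the analysis entirely.

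Next I would push the real linear map $H$ through the $\vf(\cdot)$ operator, which commutes with real-linear combinations, to obtain
\[
\vect{\tilde\epsilon^H_{b,\nu}} \;=\; \sum_{\mu=0}^N H_{\nu\mu}\,\vect{\epsilon_{b,\mu}}\,,
\]
so each $\vect{\tilde\epsilon^H_{b,\nu}}$ is a real linear combination of the independent Gaussian vectors $\vect{\epsilon_{b,\mu}}\sim \mathcal{N}(0,\Sigma)$, and is therefore jointly Gaussian across $b$ and $\nu$ with mean zero. Independence across $b$ is immediate from independence of the original $\epsilon_{b,:}$ across batches, so only the cross-frequency covariance structure within a single batch needs to be computed.

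That computation is the core (and only nontrivial) step, but it is short: using $\vect{\epsilon_{b,\mu}}\sim \mathcal{N}(0,\Sigma)$ i.i.d.\ across $\mu$,
\begin{align*}
\mathrm{Cov}\!\left(\vect{\tilde\epsilon^H_{b,\nu}},\vect{\tilde\epsilon^H_{b,\nu'}}\right)
&= \sum_{\mu,\mu'=0}^N H_{\nu\mu}H_{\nu'\mu'}\,\mathrm{Cov}\!\left(\vect{\epsilon_{b,\mu}},\vect{\epsilon_{b,\mu'}}\right)\\
&= \Sigma \sum_{\mu=0}^N H_{\nu\mu}H_{\nu'\mu}
 \;=\; (HH^T)_{\nu\nu'}\,\Sigma \;=\; \delta_{\nu\nu'}\Sigma ,
\end{align*}
by orthonormality of the Helmert rows. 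Combined with joint Gaussianity and zero mean, this yields $\vect{\tilde\epsilon^H_{b,\nu}}\stackrel{i.i.d.}{\sim}\mathcal{N}(0,\Sigma)$, proving the lemma. The main (minor) obstacle is purely bookkeeping: keeping track of the fact that $\vf$ is a real-linear but not $\mathbb{C}$-linear operation, so that only real scalars (such as the $H_{\nu\mu}$) may be pulled out across it --- which is exactly what makes the Helmert construction, whose entries are real, compatible with the real-covariance-matrix formulation.
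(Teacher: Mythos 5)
Your proof is correct, but it takes a genuinely different route from the paper's. The paper proves the lemma by brute force: it writes out $\tilde{\epsilon}^H_{b,\nu}=h_\nu^T\tilde{\epsilon}_b$ using the explicit coefficients $\frac{1}{\sqrt{\nu(\nu+1)}}$ of the Helmert vectors, expands the resulting double sums of second moments of the \emph{centered} errors $\tilde{\epsilon}_{b,k}$ (whose cross-covariances are nonzero), and verifies term by term that the variance collapses to $\Sigma$ and the cross-covariance for $\nu_1<\nu_2$ collapses to $0$. You instead observe two structural facts that make the computation trivial: the centering is the orthogonal projection $I-\tfrac{1}{N+1}\mathbf{1}\mathbf{1}^T$, and since each $h_j$ has zero entry sum, $H$ annihilates the $\mathbf{1}$-direction, so $\tilde{\epsilon}^H_{b,:}=H\epsilon_{b,:}$ and the awkward covariance structure of $\tilde{\epsilon}$ never enters; orthonormality $HH^T=I_N$ then gives $\mathrm{Cov}\bigl(\vect{\tilde{\epsilon}^H_{b,\nu}},\vect{\tilde{\epsilon}^H_{b,\nu'}}\bigr)=\delta_{\nu\nu'}\Sigma$ in one line. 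Your version is shorter, explains \emph{why} the lemma holds (the Helmert rows form an orthonormal basis of the centering hyperplane), and sidesteps the $N$-versus-$N{+}1$ bookkeeping that clutters the paper's expansion; the paper's version buys nothing beyond being self-contained at the level of elementary sums. Both arguments share the same final step, passing from zero cross-covariance to independence via joint Gaussianity of the collection $\{\vect{\tilde{\epsilon}^H_{b,\nu}}\}$ --- you state this explicitly, the paper leaves it implicit. Your remark that $\vf$ is only real-linear, so that only the real scalars $H_{\nu\mu}$ may be pulled through it, is exactly the right point of care.
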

  
  \begin{proof}
    It holds $\mathbb{E}\left[ \vect{ \tilde{\epsilon}_{b,\nu}}\right]=0$ for all $b=1,\dots,B$ and $\nu=1,\dots,N$. 
    For all $b=1,\dots,B$ and $\nu=1,\dots,N$ we get
    \begin{align*}
    &\mathbb{E}\left[ \vect{ \tilde{\epsilon}^H_{b,\nu}}\vect{\tilde{\epsilon}^H_{b,\nu}}^T\right]=\mathbb{E}\left[ \vect{h_\nu^T \tilde{\epsilon}_b}\vect{h_\nu^T \tilde{\epsilon}_b}^T\right]\\
    &=\frac{1}{\nu(\nu+1)}\mathbb{E}\left[\left(\sum_{k=1}^\nu \vect{ \tilde{\epsilon}_{b,k}}-\nu\vect{ \tilde{\epsilon}_{b,\nu+1}}\right)\left(\sum_{k=1}^\nu \vect{ \tilde{\epsilon}_{b,k}}-\nu\vect{ \tilde{\epsilon}_{b,\nu+1}}\right)^T\right]\\
    &=\frac{1}{\nu(\nu+1)}\mathbb{E}\Bigg[\left(\sum_{k=1}^\nu \vect{ \tilde{\epsilon}_{b,k}}\right)\left(\sum_{k=1}^\nu \vect{ \tilde{\epsilon}_{b,k}}\right)^T-\nu\vect{ \tilde{\epsilon}_{b,\nu+1}}\left(\sum_{k=1}^\nu \vect{ \tilde{\epsilon}_{b,k}}\right)^T \\
    & \qquad \qquad \qquad -\nu \left(\sum_{k=1}^\nu \vect{ \tilde{\epsilon}_{b,k}}\right)
    \vect{ \tilde{\epsilon}_{b,\nu+1}} ^T + \nu\vect{ \tilde{\epsilon}_{b,\nu+1}}\nu\vect{ \tilde{\epsilon}_{b,\nu+1}}^T\Bigg]\\
    &=\frac{1}{\nu(\nu+1)}\left(\nu\left(-\frac{\nu}{N}\Sigma + \Sigma  \right) + \frac{2\nu^2}{N}\Sigma + \nu^2 \left(1-\frac{1}{N}\right)\Sigma\right)=\frac{1}{\nu+1}\left(\nu +\nu^2 \right)\Sigma=\Sigma.
    \end{align*}
    For $\nu_1<\nu_2$ we get
    \begin{align*}
    &\mathbb{E}\left[ \vect{ \tilde{\epsilon}^H_{b,\nu_1}}\vect{\tilde{\epsilon}^H_{b,\nu_2}}^T\right]=\mathbb{E}\left[ \vect{h_{\nu_1}^T \tilde{\epsilon}_b}\vect{h_{\nu_2}^T \tilde{\epsilon}_b}^T\right]\\
    &=\frac{1}{\sqrt{\nu_1(\nu_1+1)\nu_2(\nu_2+1)}}\mathbb{E}\left[\left(\sum_{k=1}^{\nu_1} \vect{ \tilde{\epsilon}_{b,k}}-{\nu_1}\vect{ \tilde{\epsilon}_{b,{\nu_1}+1}}\right)\left(\sum_{k=1}^{\nu_2} \vect{ \tilde{\epsilon}_{b,k}}-{\nu_2}\vect{ \tilde{\epsilon}_{b,{\nu_2}+1}}\right)^T\right]\\
    &=\frac{1}{\sqrt{\nu_1(\nu_1+1)\nu_2(\nu_2+1)}}\mathbb{E}\left[\left(\sum_{k=1}^{\nu_1} \vect{ \tilde{\epsilon}_{b,k}}\right)\left(\sum_{k=1}^{\nu_2} \vect{ \tilde{\epsilon}_{b,k}}\right)^T-\nu_1\vect{ \tilde{\epsilon}_{b,{\nu_1}+1}}\left(\sum_{k=1}^{\nu_2} \vect{ \tilde{\epsilon}_{b,k}}\right)^T \right]\\
    &+ \frac{1}{\sqrt{\nu_1(\nu_1+1)\nu_2(\nu_2+1)}}\mathbb{E}\left[-\nu_2\vect{ \tilde{\epsilon}_{b,{\nu_2}+1}}\left(\sum_{k=1}^{\nu_1} \vect{ \tilde{\epsilon}_{b,k}}\right)^T+ 
    \nu_1\vect{ \tilde{\epsilon}_{b,\nu_1+1}}\nu_2\vect{ \tilde{\epsilon}_{b,\nu_2+1}}^T\right]\\
    &=\frac{1}{\sqrt{\nu_1(\nu_1+1)\nu_2(\nu_2+1)}}\left( \nu_1\left(1 -\frac{\nu_2}{N}\right)\Sigma - \nu_1\left(1-\frac{\nu_2}{N}\right)\Sigma+ \frac{\nu_1\nu_2}{N}\Sigma-\frac{\nu_1\nu_2}{N}\Sigma\right)=0.
    \end{align*}
    The independence of $\tilde{\epsilon}^H_{b_1, \nu_1}$ from $\tilde{\epsilon}^H_{b_2, \nu_2}$ for $b_1 \neq b_2$ and $\nu_1,\nu_2 = 1,\dots, N$ follows directly from the independence of $\tilde{\epsilon}_{b_1, \nu_1}$ from $\tilde{\epsilon}_{b_2, \nu_2}$.
  \end{proof}
  \newpage
  
  \section{Example Strong Consistency}\label{sec: example strong Consistency}
  We show that using the theory developed in Section \ref{sec: strong consistency} in the main text we can prove strong consistency for the simultaneous estimation of $\mu$ and $\sigma$ in the univariate normal distribution. 
  \begin{assumption}\label{ass: norm dist}
    The random variable $X$ has distribution $X \sim \mathcal{N}\left(\mu^{(0)}, \left(\sigma^{(0)}\right)^2\right)$ where $\mu^{(0)}$ and $\left(\sigma^{(0)}\right)^2$ are the true but unknown parameters of the normal distribution.
  \end{assumption}
  For observations $x_1,\dots,x_n$ we get the following log-likelihood function
  \begin{align*}
  \ell_{x} (\mu, \sigma) = -\frac{n}{2}\ln( \sigma^2)-\frac{1}{2\sigma^2}\sum_{i=1}^n(x_i-\mu)^2= \sum_{i=1}^n\left(-\ln(\sigma)-\frac{1}{2\sigma^2} (x_i-\mu)^2\right).
  \end{align*}
  Since our theory was developed for minimization we have to change the sign and get
  \begin{align*}
  \rho (x, (\mu, \sigma)) = \ln(\sigma)+\frac{1}{2\sigma^2} (x-\mu)^2,
  \end{align*}
  the data space $\mathfrak{Q}=\mathbb{R}$ and the parameter space $\mathfrak{P}\coloneqq \mathbb{R} \times \mathbb{R}_{>0}$ with the metric 
  \begin{align*}
  d((\mu, \sigma), (\tilde{\mu}, \tilde{\sigma})) = \left|\mu - \tilde{\mu} \right| 
  + \left|\ln(\sigma) - \ln(\tilde{\sigma}) \right|
  +\left|\frac{1}{\sigma^2}- \frac{1}{\tilde{\sigma}^2} \right|.
  \end{align*}
  \begin{rem}
    We cannot use \cite{evans2020strong}, because there $\mathfrak{Q}=\mathfrak{P}$ is required and we cannot use \cite{Huckemann2011}, because there $\rho \geq 0$ is required.
    \cite{schotz2022strong} requires
    \begin{align*}
    \mathbb{E}\left(\inf_{(\mu, \sigma)\in \mathbb{R}\times \mathbb{R}_{+}}\rho (x, (\mu, \sigma))\right)>- \infty.
    \end{align*}
    However, one sees directly, if one inserts $x=\mu$  
    \begin{align*}
    \rho (x, (x, \sigma)) = \ln(\sigma) \overset{\sigma \rightarrow 0}{\longrightarrow} - \infty.
    \end{align*}
  \end{rem}

  \begin{theorem} 
    Under Assumption \ref{ass: norm dist} ZC holds for the normal distribution. 
  \end{theorem}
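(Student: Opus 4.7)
The plan is to invoke Theorem \ref{theo: Ziezold} on the parameter space $\mathfrak{P} = \mathbb{R} \times \mathbb{R}_{>0}$, which is separable under the given metric since the two coordinate maps $\sigma \mapsto \ln\sigma$ and $\sigma \mapsto 1/\sigma^2$ are continuous and strictly monotone, yielding the usual product topology. I would verify Assumption \ref{ass: rho ziezold} by constructing $\dot\rho$, $h$ and $\delta > 0$ explicitly, and then check the integrability of $\rho(X,p)$, which is immediate because $\rho(X,(\mu,\sigma)) = \ln\sigma + \frac{1}{2\sigma^2}(X-\mu)^2$ has finite expectation whenever $X$ has a finite second moment, as is the case under Assumption \ref{ass: norm dist}.

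First I would fix $\delta \in (0,1]$ and take any $p=(\mu,\sigma)$, $p'=(\tilde\mu,\tilde\sigma)$ with $d(p,p')<\delta$, and then expand
\begin{align*}
\rho(x,p') - \rho(x,p) = [\ln\tilde\sigma - \ln\sigma] + \tfrac{1}{2}\!\left[\tfrac{1}{\tilde\sigma^2} - \tfrac{1}{\sigma^2}\right](x-\mu)^2 + \tfrac{1}{2\tilde\sigma^2}\bigl[(x-\tilde\mu)^2 - (x-\mu)^2\bigr].
\end{align*}
Each of $|\ln\sigma - \ln\tilde\sigma|$, $|1/\sigma^2 - 1/\tilde\sigma^2|$ and $|\mu-\tilde\mu|$ is bounded by $d(p,p')$, while the last bracket factors as $(\tilde\mu-\mu)(2x-\mu-\tilde\mu)$. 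Replacing $|\tilde\mu| \le |\mu|+\delta$ and $1/\tilde\sigma^2 \le 1/\sigma^2 + \delta$ (both consequences of $d(p,p')<\delta$) gives
\begin{align*}
|\rho(x,p) - \rho(x,p')| \leq \dot\rho(x,p)\, d(p,p'),
\end{align*}
with
\begin{align*}
\dot\rho(x,(\mu,\sigma)) := 1 + \tfrac{1}{2}(x-\mu)^2 + \tfrac{1}{2}\!\left(\tfrac{1}{\sigma^2}+\delta\right)\!\bigl(2|x|+2|\mu|+\delta\bigr),
\end{align*}
so I would set $h(t)=t$. For fixed $p$, $\dot\rho(X,p)$ is a polynomial of degree two in $X$, hence $\mathbb{E}[\dot\rho(X,p)]<\infty$; continuity in $p$ and measurability in $x$ are evident.

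The only mildly delicate point, and the main obstacle, is that Assumption \ref{ass: rho ziezold} requires the bound on the $\rho$-increment to factor as $\dot\rho(x,p)\, h(d(p,p'))$ with $\dot\rho$ depending on $p$ alone. The naive bound produces a $1/\tilde\sigma^2$ and a $|\tilde\mu|$ from the cross-term; this is exactly what the trade-offs $1/\tilde\sigma^2 \le 1/\sigma^2+\delta$ and $|\tilde\mu|\le|\mu|+\delta$ resolve, using that $d(p,p')<\delta$ controls both the $\mu$- and the $1/\sigma^2$-coordinates. Once this bookkeeping is in place, $h$ is continuous with $h(0)=0$, Assumption \ref{ass: rho ziezold} is satisfied, and Theorem \ref{theo: Ziezold} yields ZC for the simultaneous MLE of $(\mu,\sigma)$ in the univariate normal model at once.
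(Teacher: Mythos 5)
Your proposal is correct and follows essentially the same route as the paper: verify Assumption \ref{ass: rho ziezold} with $h(t)=t$ and an explicit quadratic-in-$x$ prefactor $\dot\rho$ whose integrability follows from the finite second moment, then invoke Theorem \ref{theo: Ziezold}. The only difference is cosmetic bookkeeping: you split the cross term asymmetrically as $a(b-d)+(a-c)d$ and convert $1/\tilde\sigma^2$ and $|\tilde\mu|$ back to $p$-quantities via $d(p,p')<\delta$, whereas the paper uses the symmetric factorization $ab-cd=\tfrac12(a+c)(b-d)+\tfrac12(a-c)(b+d)$ with $\delta=1$; both yield a valid $\dot\rho$ depending on $(x,\mu,\sigma)$ alone.
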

  \begin{proof}
    Let $(\mu, \sigma), (\tilde{\mu}, \tilde{\sigma}) \in \mathfrak{P}$ with $d((\mu, \sigma), (\tilde{\mu}, \tilde{\sigma})) < 1$ 
    \begin{align}\label{eq: normaldist lipschitz}
    &\left|\rho (x, (\mu, \sigma)) - \rho (x, (\tilde{\mu}, \tilde{\sigma})) \right|\leq \left|\ln(\sigma) - \ln(\tilde{\sigma}) \right| + \frac{1}{2}\left|\frac{1}{\sigma^2}\left(x-\mu\right)^2 - \frac{1}{\tilde{\sigma}^2}\left(x-\tilde{\mu}\right)^2\right|\\
    & \leq \left|\ln(\sigma) - \ln(\tilde{\sigma}) \right| + \frac{1}{4} \left|\frac{1}{\sigma^2}-\frac{1}{\tilde{\sigma}^2} \right| \left|(x-\mu)^2 +(x-\tilde{\mu})^2  \right|+ \frac{1}{4}\left|\frac{1}{\sigma^2}+\frac{1}{\tilde{\sigma}^2} \right|\left|(x-\mu)^2 -(x-\tilde{\mu})^2  \right|.\nonumber
    \end{align}
    It follows directly  
    \begin{align*}
    \left|\ln(\sigma) - \ln(\tilde{\sigma}) \right|\leq d((\mu, \sigma), (\tilde{\mu}, \tilde{\sigma})), \qquad \left|\frac{1}{\sigma^2}-\frac{1}{\tilde{\sigma}^2} \right|\leq d((\mu, \sigma), (\tilde{\mu}, \tilde{\sigma})).
    \end{align*}
    We get 
    \begin{align*}
    \left|(x-\mu)^2 +(x-\tilde{\mu})^2  \right|\leq 2x^2 + \left|2x\right|\left|\mu + \tilde{\mu}\right| + \left|\mu^2 + \tilde{\mu}^2\right|
    \leq 2x^2 +2\left|x\right|(2\left|\mu \right| + 1)+ 2\mu^2 + 2\left|\mu\right| + 1.
    \end{align*}
    We get for the last part of (\ref{eq: normaldist lipschitz})
    \begin{align*}
    \left|\frac{1}{\sigma^2}+\frac{1}{\tilde{\sigma}^2} \right|\leq 2\left|\frac{1}{\sigma^2} \right| + \left|\frac{1}{\sigma^2}-\frac{1}{\tilde{\sigma}^2} \right| 
    \leq 2\left|\frac{1}{\sigma^2} \right| + 1
    \end{align*}
    and 
    \begin{align*}
    \left|(x-\mu)^2 -(x-\tilde{\mu})^2  \right|&\leq 2\left|x\right|\left|\mu-\tilde{\mu}\right|+2\left|\mu\right|d((\mu, \sigma), (\tilde{\mu}, \tilde{\sigma}))+d((\mu, \sigma), (\tilde{\mu}, \tilde{\sigma}))^2\\
    &\leq d((\mu, \sigma), (\tilde{\mu}, \tilde{\sigma}))\Big(2\left|x\right|  +  2\left|\mu\right| + 1\Big).
    \end{align*}
    Substituting the inequalities into (\ref{eq: normaldist lipschitz}) results in
    \begin{align*}
    &\left|\rho (x, (\mu, \sigma)) - \rho (x, (\tilde{\mu}, \tilde{\sigma})) \right|\leq d((\mu, \sigma), (\tilde{\mu}, \tilde{\sigma}))
    \dot{\rho}(x, (\mu, \sigma))
    \end{align*}
    where $\dot{\rho}(x, (\mu, \sigma))$ is defined as
    \begin{align*}
    \dot{\rho}(x, (\mu, \sigma))\coloneqq 1
    + \frac{1}{4}\left(2x^2 +2\left|x\right|(2\left|\mu \right| + 1)+ 2\mu^2 + 2\left|\mu\right| + 1\right)
    +\frac{1}{4}\left(\frac{2}{\sigma^2}  + 1\right)\left(
    2\left|x\right|  +  2\left|\mu\right| + 1\right).
    \end{align*}
    Therefore we get
    \begin{align*}
    \dot{\Ffun}(\mu, \sigma) = 1
    + \frac{1}{4}\left(2\mathbb{E}\left(x^2\right) +2\mathbb{E}\left(\left|x\right|\right)(2\left|\mu \right| + 1)+ 2\mu^2 + 2\left|\mu\right| + 1\right)
    +\frac{1}{4}\left(\frac{2}{\sigma^2}  + 1\right)\left(
    2\mathbb{E}(\left|x\right|)  +  2\left|\mu\right| + 1\right),
    \end{align*}
    which is smaller than infinity and continuous.
  \end{proof}
  
  \begin{theorem} 
    Under Assumption \ref{ass: norm dist} BPC holds for the normal distribution. 
  \end{theorem}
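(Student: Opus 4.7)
The approach is to apply Theorem~\ref{theo: BPC}. Since Assumption~\ref{ass: rho ziezold} was already verified in the preceding ZC result, only Assumption~\ref{ass: rho BPC rho to C} remains: non-emptiness of $E^{(\rho)}$, and existence of $C>\ell$ with $\liminf_n \rho(X,p_n) \geq C$ almost surely for every random sequence $(p_n)=((\mu_n,\sigma_n))$ without accumulation points in $\mathfrak{P}$.

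For non-emptiness, I would compute the population Fr\'echet function in closed form,
\[ \Ffun(\mu,\sigma) \,=\, \ln\sigma + \frac{(\mu-\mu^{(0)})^2 + (\sigma^{(0)})^2}{2\sigma^2}\,, \]
and observe that setting its partial derivatives to zero forces $(\mu,\sigma)=(\mu^{(0)},\sigma^{(0)})$, which a second-order check confirms as the unique global minimizer with value $\ell = \ln\sigma^{(0)} + \tfrac12$. Hence $E^{(\rho)}=\{(\mu^{(0)},\sigma^{(0)})\}$, and in what follows we aim for $C=+\infty$.

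For the coercivity condition I would first verify that the metric $d$ renders $\mathfrak{P}$ Heine--Borel: the closed ball of radius $M$ around $(0,1)$ confines $|\mu|\leq M$, $|\ln\sigma|\leq M$ and $|\sigma^{-2}-1|\leq M$, so $\sigma$ is bounded and bounded away from zero. Consequently a sequence $(p_n)$ lacks accumulation points in $\mathfrak{P}$ iff, along some subsequence, one of $\sigma_n\to\infty$, $\sigma_n\to 0$, or $|\mu_n|\to\infty$ occurs. A case analysis then yields divergence: (a) $\sigma_n\to\infty$ gives $\rho(X,p_n)\geq \ln\sigma_n \to\infty$; (b) $|\mu_n|\to\infty$ with $\sigma_n$ bounded above gives $(X-\mu_n)^2/(2\sigma_n^2)\to\infty$ almost surely while $\ln\sigma_n$ stays bounded below; (c) $\sigma_n\to 0$ with $\mu_n$ bounded yields $(X-\mu_n)^2/(2\sigma_n^2)\to\infty$, dominating $\ln\sigma_n\to-\infty$, provided $X$ stays bounded away from the accumulation points of $\mu_n$.

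The expected main obstacle is case (c): the assumption quantifies over \emph{all} random sequences, and an adversarial choice such as $\mu_n=X$, $\sigma_n=1/n$ would drive $\rho(X,p_n)\to-\infty$, so the naive pointwise verification is not tight. To circumvent this I would only apply the assumption to the sequences $\tilde p_k$ actually constructed inside the BPC proof---measurable selections from $E^{(\rho)}_{n_k}$---and replace the pointwise lower bound by the empirical identity $\Ffun_n(\mu,\sigma) = \ln\sigma + (S_n^2 + (\bar X_n - \mu)^2)/(2\sigma^2)$, where $S_n^2 = \tfrac{1}{n}\sum_i(X_i-\bar X_n)^2 \to (\sigma^{(0)})^2 > 0$ almost surely by the SLLN. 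Each of the three escape scenarios then forces $\Ffun_n(\hat p_n)\to\infty$, contradicting the upper bound $\Ffun_n(\hat p_n)\leq \Ffun_n(\mu^{(0)},\sigma^{(0)})\to\ell$, giving precompactness of $(\hat p_n)$ almost surely; combined with ZC, this places every cluster point at $(\mu^{(0)},\sigma^{(0)})$ and yields BPC.
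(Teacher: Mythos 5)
Your proposal reaches the correct conclusion and starts from the same high-level strategy as the paper (uniqueness of the population minimizer plus a coercivity argument feeding into Theorem \ref{theo: BPC}), but it is substantially more careful, and the extra care is not cosmetic. The paper's proof is three lines: it asserts $E^{(\rho)}=\{(\mu^{(0)},\sigma^{(0)})\}$, asserts that any sequence without accumulation points satisfies $\liminf_n\rho(X,(\mu_n,\sigma_n))=\infty$ a.s., and invokes Theorem \ref{theo: BPC}. Your closed-form computation of $\Ffun$ and of $\ell=\ln\sigma^{(0)}+\tfrac12$ supplies the omitted uniqueness argument. More importantly, your case (c) objection is correct and exposes a genuine defect in the paper's coercivity claim: Assumption \ref{ass: rho BPC rho to C} quantifies over \emph{random} sequences, and the sequence $\mu_n=X$, $\sigma_n=1/n$ has no accumulation points in $\mathfrak{P}$ yet gives $\rho(X,p_n)=-\ln n\to-\infty$, so the assumption as literally stated fails for this model; the paper's assertion is only valid for sequences not allowed to track $X$ (e.g.\ deterministic ones, where $X\neq\lim\mu_n$ a.s.). Your repair --- applying the escape argument only to the empirical minimizers via $\Ffun_n(\mu,\sigma)=\ln\sigma+\bigl(S_n^2+(\bar X_n-\mu)^2\bigr)/(2\sigma^2)$ together with $S_n^2\to(\sigma^{(0)})^2>0$, which yields $\inf_{(\mu,\sigma)}\Ffun_n=\tfrac12\ln S_n^2+\tfrac12$ and forces $\Ffun_n$ to blow up along any escaping subsequence, contradicting $\Ffun_n(\hat p_n)\le\Ffun_n(\mu^{(0)},\sigma^{(0)})\to\ell$ --- is sound and gives a.s.\ precompactness of the minimizers, after which ZC and uniqueness of $E^{(\rho)}$ finish the argument exactly as in the proof of Theorem \ref{theo: BPC}. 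What your route buys is an argument that actually verifies; what it costs is that Theorem \ref{theo: BPC} can no longer be cited as a black box, since its hypothesis is violated --- you are in effect re-running its proof with the sample Fr\'echet function in place of the pointwise loss, which is the correct thing to do here.
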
 
  \begin{proof}
    It holds 
    \begin{align*}
    \left\{\left(\mu^{(0)}, \sigma^{(0)}\right)\right\} = E^{(\rho)}.
    \end{align*}
    If $(\mu_n, \sigma_n^2)_n^\infty\subset \mathfrak{P}$ is without accumulation points then a.s. $\liminf \rhofun(X, (\mu_n, \sigma_n)) \rightarrow \infty$.
    Thus BPC follows immediately.
  \end{proof}
  \newpage
  \section{Technical Theorems and Lemmas for the homoscedastic drift model}\label{sec: Strong Consistency: Technical Theorems and Lemmas}
  In this section, we prove technical theorems and lemmas needed for the strong consistency and central limit theorem in Section~\ref{sec: Drift model: Strong consistency and central limit theorem} in the main text. In particular, we use the definitions of $\rho,\mathfrak{Q}$ and $\mathfrak{P}$ from Section~\ref{sec: Drift model: Strong consistency and central limit theorem}. 
  \begin{lemma}\label{lem: rho formula}
    For $\rho:  \mathfrak{Q} \times \mathfrak{P} \mapsto \mathbb{R}$ that is defined, as in (\ref{eq: definition rho}) holds 
    \begin{align*}
    \rhofun(Y, [\kappa]) = \spr{Y}{Y}_P  - \spr{\hat{\phi}(\kappa,P, Y) \kappa}{Y}_P
    \end{align*}
    for all $\kappa\in [\kappa]$.
  \end{lemma}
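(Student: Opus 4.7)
The plan is to expand the squared Mahalanobis distance and show that the cross term equals the quadratic term, which is exactly the normal-equation characterization of the least-squares fit $\hat\phi(\kappa,P,Y)\kappa$. Concretely, for any fixed representative $\kappa\in[\kappa]$, start from
\begin{align*}
\rho(Y,[\kappa]) \;=\; \left\|Y-\hat\phi(\kappa,P,Y)\kappa\right\|_P^{2}
\;=\; \langle Y,Y\rangle_P \;-\; 2\,\langle \hat\phi(\kappa,P,Y)\kappa,\,Y\rangle_P \;+\; \langle \hat\phi(\kappa,P,Y)\kappa,\,\hat\phi(\kappa,P,Y)\kappa\rangle_P,
\end{align*}
where symmetry of the $P$-inner product has been used to combine the cross terms. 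The claim will therefore follow as soon as the last two terms agree, namely
\begin{align*}
\langle \hat\phi(\kappa,P,Y)\kappa,\,\hat\phi(\kappa,P,Y)\kappa\rangle_P \;=\; \langle \hat\phi(\kappa,P,Y)\kappa,\,Y\rangle_P.
\end{align*}

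To see this, translate both sides into the $\diamond_P$/$\bullet_P$ notation. Writing $v\coloneqq\mathrm{vec}(\hat\phi(\kappa,P,Y))$ and using $\mathrm{vec}(\hat\phi\kappa_\nu)=M(\kappa_\nu)\,v$, a direct expansion of the sums in the definition of $\langle\cdot,\cdot\rangle_P$ yields
\begin{align*}
\langle \hat\phi\kappa,Y\rangle_P &= v^{T}\,(\kappa\bullet_P Y), \\
\langle \hat\phi\kappa,\hat\phi\kappa\rangle_P &= v^{T}\,(\kappa\diamond_P\kappa)\,v.
\end{align*}
By the explicit formula \eqref{eq: phi-hat-asympt-model} we have $v=(\kappa\diamond_P\kappa)^{-1}(\kappa\bullet_P Y)$, and $\kappa\diamond_P\kappa$ is symmetric because each summand $M(\kappa_\nu)^{T}P\,M(\kappa_\nu)$ inherits symmetry from $P$. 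Substituting,
\begin{align*}
v^{T}(\kappa\diamond_P\kappa)\,v \;=\; (\kappa\bullet_P Y)^{T}(\kappa\diamond_P\kappa)^{-1}(\kappa\diamond_P\kappa)(\kappa\diamond_P\kappa)^{-1}(\kappa\bullet_P Y) \;=\; v^{T}(\kappa\bullet_P Y),
\end{align*}
which is the desired equality. Plugging it back into the expansion of $\rho$ collapses one of the two copies of $\langle \hat\phi\kappa,Y\rangle_P$ and gives the asserted formula. The representative-independence assertion (validity ``for all $\kappa\in[\kappa]$'') needs no extra argument because the whole identity is a consequence of the defining formula for $\hat\phi$, and $\hat\phi(\kappa,P,Y)\kappa$ was already shown in the Remark after Definition \ref{ass: drift model} to be invariant under the equivalence $\kappa\mapsto e^{i\lambda}\kappa$.

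There is no real obstacle here; the only point worth checking carefully is that $\kappa\diamond_P\kappa$ is indeed invertible so that the defining equation for $\hat\phi$ and the above cancellation make sense. This is assumed throughout (and guaranteed by the non-degeneracy statement in Lemma \ref{lem: det lemma} invoked in Definition \ref{ass: drift model}), so no additional hypothesis is needed.
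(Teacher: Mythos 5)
Your proposal is correct and follows essentially the same route as the paper: expand the squared $P$-norm, rewrite the quadratic term as $\vect{\hat\phi}^T(\kappa\dia_P\kappa)\vect{\hat\phi}$, and use the defining formula $\vect{\hat\phi}=(\kappa\dia_P\kappa)^{-1}(\kappa\bul_P Y)$ to collapse it onto $\spr{\hat\phi\kappa}{Y}_P$. The only cosmetic difference is that you invoke symmetry of $\kappa\dia_P\kappa$ to transpose $v$, while the paper cancels $(\kappa\dia_P\kappa)(\kappa\dia_P\kappa)^{-1}$ directly; both are the same normal-equation computation.
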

  
  \begin{proof}
    We start with the Definition of $\rho$
    \begin{align*}
    \rhofun(Y, [\kappa]) &= \sum_{\nu=1}^N\normp{\vect{Y_\nu}-M(\kappa_\nu)\vect{\hat{\phi}(\kappa,P, Y)}}^2\\
    &=\spr{Y}{Y}_P - 2\spr{\hat{\phi}(\kappa,P, Y) \kappa}{Y}_P+\spr{\hat{\phi}(\kappa,P, Y) \kappa}{\hat{\phi}(\kappa,P, Y) \kappa}_P.
    \end{align*}
    We get for the last part of the equation
    \begin{align*}
    &\spr{\hat{\phi}(\kappa,P, Y) \kappa}{\hat{\phi}(\kappa,P, Y) \kappa}_P=\sum_{\nu=1}^N\vect{\hat{\phi}(\kappa,P, Y)}^TM(\kappa_\nu)^TPM(\kappa_\nu)\vect{\hat{\phi}(\kappa,P, Y)}\\
    &=\vect{\hat{\phi}(\kappa,P, Y)}^T\left(\kappa \dia_P \kappa \right)\vect{\hat{\phi}(\kappa,P, Y)}
    =\vect{\hat{\phi}(\kappa,P, Y)}^T\left(\kappa \dia_P \kappa \right)\left(\kappa \dia_P \kappa \right)^{-1} \left(\kappa\bul_P Y\right)\\
    &= \vect{\hat{\phi}(\kappa,P, Y)}^T \left(\kappa\bul_P Y\right).
    \end{align*}
    It follows directly $\vect{\hat{\phi}(\kappa,P, Y)}^T \left(\kappa\bul_P Y\right)=\spr{\hat{\phi}(\kappa,P, Y) \kappa}{Y}_P$ and therefore
    \begin{align*}
    \rhofun(Y, [\kappa]) &=  \spr{Y}{Y}_P  - \spr{\hat{\phi}(\kappa,P, Y) \kappa}{Y}_P .
    \end{align*}
  \end{proof}
  \begin{definition}\label{def: ptilde}
    For $P = R \begin{pmatrix}
    \lambda_1& 0\\\
    0 & \lambda_2
    \end{pmatrix}
    R^T\in \mathrm{SPD}(2)$ we define
    $
    \tilde{P} = R \begin{pmatrix}
    \lambda_2& 0\\
    0 & \lambda_1
    \end{pmatrix}
    R^T
    $
    where $\lambda_1 \ge \lambda_2 >0$ and $R$ is a rotation matrix.
  \end{definition}
  
  For the rest of this section, we simplify the notation of sums. Every sum symbol without bounds, $\sum$, is to be understood as a sum $\sum_{\nu =1}^N$.
  
  \begin{lemma}\label{lem: inverse lemma}
    For $\kappa \in \Scomp$ and $P, \tilde{P}\in \mathrm{SPD}(2)$ as defined in Definition \ref{def: ptilde} holds
    \begin{align*}
    \left(\kappa \dia_P \kappa \right)^{-1}=\frac{1}{\det\left( \kappa \dia_P \kappa \right) } \left(\kappa \dia_{\tilde{P}} \kappa \right).
    \end{align*}
  \end{lemma}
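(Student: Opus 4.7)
The plan is to recognise that for any $2\times 2$ matrix $M$ one has $M^{-1}=\det(M)^{-1}\operatorname{adj}(M)$, so it suffices to show the adjugate identity
\[
\operatorname{adj}\bigl(\kappa\dia_P\kappa\bigr)\;=\;\kappa\dia_{\tilde P}\kappa.
\]
Once this is established, dividing by $\det(\kappa\dia_P\kappa)$ yields the claim. The advantage is that the adjugate is a polynomial operation, so no invertibility of $\kappa\dia_P\kappa$ needs to be assumed in the intermediate computation.

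The key observation that makes the manipulation clean is that $J:=M(i)=\begin{pmatrix}0&-1\\1&0\end{pmatrix}$ can be used to express the adjugate of an arbitrary $2\times 2$ matrix $A$ as $\operatorname{adj}(A)=-JAJ$ (one checks this from the definition, using $J^{2}=-\mathrm{Id}_2$). The point is that $J=M(i)$ commutes with every $M(\kappa_i)$, because the multiplicativity rule $M(z_1)M(z_2)=M(z_1z_2)$ from the basic properties lemma and commutativity of complex multiplication give $M(i)M(\kappa_i)=M(i\kappa_i)=M(\kappa_i i)=M(\kappa_i)M(i)$, and transposing gives $JM(\kappa_i)^T=M(\kappa_i)^TJ$ as well.

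Using this, I would compute
\[
\operatorname{adj}(\kappa\dia_P\kappa)\;=\;-J\Bigl(\sum_{i=1}^N M(\kappa_i)^T P\,M(\kappa_i)\Bigr)J\;=\;\sum_{i=1}^N M(\kappa_i)^T(-JPJ)\,M(\kappa_i)\;=\;\kappa\dia_{\operatorname{adj}(P)}\kappa,
\]
where the middle equality moves both copies of $J$ past the $M(\kappa_i)^T$ and $M(\kappa_i)$ factors using the commutation just noted, and the final equality uses $\operatorname{adj}(P)=-JPJ$ again. It then remains to identify $\operatorname{adj}(P)$ with $\tilde P$, which is immediate from the spectral decomposition $P=R\,\mathrm{diag}(\lambda_1,\lambda_2)\,R^T$ together with $\operatorname{adj}(ABC)=\operatorname{adj}(C)\operatorname{adj}(B)\operatorname{adj}(A)$ and $\operatorname{adj}(R)=R^T$ for the rotation $R$: this produces $\operatorname{adj}(P)=R\,\mathrm{diag}(\lambda_2,\lambda_1)\,R^T=\tilde P$, exactly matching Definition~\ref{def: ptilde}.

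I do not anticipate a serious obstacle; the calculation is essentially bookkeeping. The only subtlety to take care of is that the commutation $JM(\kappa_i)=M(\kappa_i)J$ must be invoked on both sides of $P$ simultaneously, which relies on $M(i)$ lying in the image of the embedding $M:\mathbb{C}\to\mathbb{R}^{2\times 2}$; if one tried to replace $P$ by a non-symmetric matrix the identity $-JPJ=\operatorname{adj}(P)$ would still hold, so symmetry of $P$ is not actually used. Alternatively, one could bypass the adjugate detour and verify the identity by a direct blockwise computation with $\kappa_i=x_i+iy_i$ and a general symmetric $P$, but the argument above is cleaner and makes transparent why the swap $\lambda_1\leftrightarrow\lambda_2$ arises.
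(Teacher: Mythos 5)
Your proposal is correct, but it takes a genuinely different route from the paper. The paper proves the identity by brute force: it sets $M(\tilde{\kappa}_\nu) \coloneqq R M(\kappa_\nu)$ to diagonalize $P$, writes out $\kappa\dia_P\kappa$ as an explicit $2\times 2$ matrix in terms of $\sum_\nu\Re(\tilde{\kappa}_\nu)^2$, $\sum_\nu\Im(\tilde{\kappa}_\nu)^2$ and $\sum_\nu\Re(\tilde{\kappa}_\nu)\Im(\tilde{\kappa}_\nu)$, and then applies the standard $2\times 2$ inverse formula, observing that the resulting adjugate is exactly $\kappa\dia_{\tilde P}\kappa$. You instead obtain $\operatorname{adj}(\kappa\dia_P\kappa)=\kappa\dia_{\operatorname{adj}(P)}\kappa$ structurally, by writing the adjugate of a symmetric $2\times 2$ matrix as conjugation by $J=M(i)$ and using that $J$ commutes with every $M(\kappa_\nu)$ and $M(\kappa_\nu)^T$; this is cleaner, avoids the entry-wise bookkeeping, and makes transparent why the eigenvalue swap $\lambda_1\leftrightarrow\lambda_2$ appears. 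All the steps check out: $\kappa\dia_P\kappa$ is symmetric (since $P$ is), so $\operatorname{adj}(A)=-JAJ$ applies to it, and $\operatorname{adj}(P)=R\,\mathrm{diag}(\lambda_2,\lambda_1)\,R^T=\tilde P$ as claimed. One small inaccuracy in your closing remark: for a general (non-symmetric) $2\times 2$ matrix one has $-JAJ=\operatorname{adj}(A^T)=\operatorname{adj}(A)^T$, which equals $\operatorname{adj}(A)$ only in the symmetric case, so symmetry of $P$ \emph{is} used — harmlessly, since $P\in\mathrm{SPD}(2)$ by hypothesis. You should also cite Lemma \ref{lem: det lemma} (or positive definiteness) to justify dividing by $\det(\kappa\dia_P\kappa)$, though the paper's own proof is equally silent on this point.
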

  \begin{proof}
    First, we define $M(\tilde{\kappa}_\nu) \coloneqq R M(\kappa_\nu)$ for all $\nu =1,\dots, N$.
    Therefore,
    \begin{align*}
    \kappa \dia_P \kappa = \tilde{\kappa} \dia_{\mathrm{diag}(\lambda_1, \lambda_2)}\tilde{\kappa} =\begin{pmatrix} 
    \lambda_1\sum\Re(\tilde{\kappa}_{\nu})^2 + \lambda_2\sum\Im(\tilde{\kappa}_{\nu})^2& (\lambda_2 -\lambda_1)\sum\Re(\tilde{\kappa}_{\nu})\Im(\tilde{\kappa}_{\nu}) \\
    (\lambda_2 -\lambda_1)\sum\Re(\tilde{\kappa}_{\nu})\Im(\tilde{\kappa}_{\nu}) &  \lambda_2\sum\Re(\tilde{\kappa}_{\nu})^2 + \lambda_1\sum\Im(\tilde{\kappa}_{\nu})^2  \end{pmatrix}.
    \end{align*}
    Using the standard rule for calculating a $2\times 2$ inverse matrix, we get the desired result
    \begin{align*}
    \left(\kappa \dia_P \kappa \right)^{-1}&= \frac{1}{\det\left( \kappa \dia_P \kappa \right) }
    \begin{pmatrix} 
    \lambda_2\sum\Re(\tilde{\kappa}_{\nu})^2 + \lambda_1\sum\Im(\tilde{\kappa}_{\nu})^2
    & (\lambda_1 -\lambda_2)\sum\Re(\tilde{\kappa}_{\nu})\Im(\tilde{\kappa}_{\nu}) \\
    (\lambda_1 -\lambda_2)\sum\Re(\tilde{\kappa}_{\nu})\Im(\tilde{\kappa}_{\nu}) &  \lambda_1\sum\Re(\tilde{\kappa}_{\nu})^2 + \lambda_2\sum\Im(\tilde{\kappa}_{\nu})^2 \end{pmatrix}\\ 
    &= \frac{1}{\det\left( \kappa \dia_P \kappa \right)}\left(\kappa \dia_{\tilde{P}} \kappa  \right).
    \end{align*}
  \end{proof}
  \begin{lemma}\label{lem: det lemma}
    For all $\kappa \in \Scomp$ and $P\in \mathrm{SPD}(2)$ as defined in Definition \ref{def: ptilde} holds
    \begin{align*}
    \det\left(\kappa \dia_P \kappa \right) \geq \lambda_1 \lambda_2.
    \end{align*}
  \end{lemma}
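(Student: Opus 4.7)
The plan is to diagonalize $P$ exactly as in the proof of the previous lemma, then compute the determinant explicitly and bound the cross term via Cauchy--Schwarz.

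First, write $P = R\,\mathrm{diag}(\lambda_1,\lambda_2)\,R^T$ and, following Lemma \ref{lem: inverse lemma}, introduce $\tilde\kappa \in \mathbb{C}^N$ via $M(\tilde\kappa_\nu) = R\,M(\kappa_\nu)$. Since $R$ is orthogonal, we have $\|\tilde\kappa\| = \|\kappa\| = 1$. Abbreviating
\[
a \coloneqq \sum_{\nu=1}^N \Re(\tilde\kappa_\nu)^2,\qquad b \coloneqq \sum_{\nu=1}^N \Im(\tilde\kappa_\nu)^2,\qquad c \coloneqq \sum_{\nu=1}^N \Re(\tilde\kappa_\nu)\Im(\tilde\kappa_\nu),
\]
the formula for $\kappa\dia_P\kappa$ displayed in the proof of Lemma \ref{lem: inverse lemma} gives
\[
\kappa\dia_P\kappa = \begin{pmatrix} \lambda_1 a + \lambda_2 b & (\lambda_2-\lambda_1) c \\ (\lambda_2-\lambda_1) c & \lambda_2 a + \lambda_1 b\end{pmatrix}.
\]

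Next I would expand the determinant directly. Using $a+b = \|\tilde\kappa\|^2 = 1$ and the identity $(\lambda_1-\lambda_2)^2 = \lambda_1^2+\lambda_2^2 - 2\lambda_1\lambda_2$, a short calculation yields
\[
\det(\kappa\dia_P\kappa) = \lambda_1\lambda_2(a+b)^2 + (\lambda_1-\lambda_2)^2\bigl(ab - c^2\bigr) = \lambda_1\lambda_2 + (\lambda_1-\lambda_2)^2(ab-c^2).
\]

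Finally, by the Cauchy--Schwarz inequality applied to the vectors $(\Re\tilde\kappa_\nu)_\nu$ and $(\Im\tilde\kappa_\nu)_\nu$ in $\mathbb{R}^N$,
\[
c^2 = \Bigl(\sum_\nu \Re(\tilde\kappa_\nu)\Im(\tilde\kappa_\nu)\Bigr)^2 \leq ab,
\]
so $ab - c^2 \geq 0$. Since $(\lambda_1-\lambda_2)^2 \geq 0$, the asserted bound $\det(\kappa\dia_P\kappa) \geq \lambda_1\lambda_2$ follows. There is no real obstacle here: the only mild point is spotting that the determinant collapses to $\lambda_1\lambda_2 + (\lambda_1-\lambda_2)^2(ab-c^2)$ after using $a+b=1$, after which Cauchy--Schwarz finishes the argument immediately.
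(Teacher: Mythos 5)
Your proof is correct and follows essentially the same route as the paper: diagonalize $P$ via the rotation from Lemma \ref{lem: inverse lemma}, expand the $2\times 2$ determinant in terms of $a$, $b$, $c$, and control the off-diagonal contribution with Cauchy--Schwarz together with $a+b=\|\kappa\|^2=1$. The only difference is cosmetic --- you isolate the exact identity $\det(\kappa\dia_P\kappa)=\lambda_1\lambda_2(a+b)^2+(\lambda_1-\lambda_2)^2(ab-c^2)$ before invoking Cauchy--Schwarz, whereas the paper applies the inequality first and then simplifies.
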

  \begin{proof}
    We define $\tilde{\kappa}_\nu$ in the same way as in the proof of Lemma \ref{lem: inverse lemma}.
    We get using the Cauchy-Schwarz inequality
    \begin{align*}
    &\det\left(\kappa \dia_P \kappa \right)=\det\begin{pmatrix} 
    \lambda_1\sum\Re(\tilde{\kappa}_{\nu})^2 + \lambda_2\sum\Im(\tilde{\kappa}_{\nu})^2& (\lambda_2 -\lambda_1)\sum\Re(\tilde{\kappa}_{\nu})\Im(\tilde{\kappa}_{\nu}) \\
    (\lambda_2 -\lambda_1)\sum\Re(\tilde{\kappa}_{\nu})\Im(\tilde{\kappa}_{\nu}) &  \lambda_2\sum\Re(\tilde{\kappa}_{\nu})^2 + \lambda_1\sum\Im(\tilde{\kappa}_{\nu})^2  \end{pmatrix} \\
    &=\left(\lambda_1\sum\Re(\tilde{\kappa}_{\nu})^2 + \lambda_2\sum\Im(\tilde{\kappa}_{\nu})^2\right)\left( \lambda_2\sum\Re(\tilde{\kappa}_{\nu})^2 + \lambda_1\sum\Im(\tilde{\kappa}_{\nu})^2\right)\\
    &-\left( (\lambda_2 -\lambda_1)\sum\Re(\tilde{\kappa}_{\nu})\Im(\tilde{\kappa}_{\nu})\right)^2\\
    &\geq\left(\lambda_1\sum\Re(\tilde{\kappa}_{\nu})^2 + \lambda_2\sum\Im(\tilde{\kappa}_{\nu})^2\right)\left( \lambda_2\sum\Re(\tilde{\kappa}_{\nu})^2 + \lambda_1\sum\Im(\tilde{\kappa}_{\nu})^2\right)\\
    &- (\lambda_2 -\lambda_1)^2\left(\sum\Re(\tilde{\kappa}_{\nu})^2\right)\left(\sum\Im(\tilde{\kappa}_{\nu})^2\right)\\
    &= \lambda_1\lambda_2\left(\sum\Re(\tilde{\kappa}_{\nu})^2 \right)^2 + \lambda_1\lambda_2\left(\sum\Im(\tilde{\kappa}_{\nu})^2 \right)^2
    + 2\lambda_1\lambda_2\left(\sum\Re(\tilde{\kappa}_{\nu})^2\right)\left(\sum\Im(\tilde{\kappa}_{\nu})^2\right)\\
    &= \lambda_1\lambda_2\left(\sum\Re(\tilde{\kappa}_{\nu})^2 + \sum\Im(\tilde{\kappa}_{\nu})^2 \right)^2= \lambda_1\lambda_2.
    \end{align*}
  \end{proof}
  
  \subsection{Calculating the modulus of continuity along with its prefactor}
  This section contains all the calculations needed for the Theorem \ref{the: inequality rho} from the main text. 
  \ref{the: inequality rho}
  
  \begin{lemma}\label{lem: dp phi plus phi and kappa minus kappa}
    For $\kappa, \kappa' \in \Scomp, Y\in \mathbb{C}^N$ and $P\in \mathrm{SPD}(2)$ as defined in Definition \ref{def: ptilde}  holds
    \begin{align*}
    \frac{1}{2} d_P\Bigg(\left(\hat{\phi}(\kappa,P, Y) +\hat{\phi}(\kappa',P, Y)\right)(\kappa - \kappa'), 0\Bigg)\leq \lambda_1 \sqrt{2N}\frac{\lambda_1^2 + \lambda_2^2}{\lambda_1 \lambda_2}\left|\left|Y \right|\right|||\kappa - \kappa'||.
    \end{align*}
  \end{lemma}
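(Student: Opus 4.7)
The plan is to factor the complex scalar $\phi \coloneqq \hat\phi(\kappa,P,Y) + \hat\phi(\kappa',P,Y)$ out of the $d_P$-norm on the left, bound $|\phi|$ via the triangle inequality, and estimate each $|\hat\phi(\cdot,P,Y)|$ uniformly in $\kappa\in\Scomp$ using the closed-form expression for $\hat\phi$.

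For the factoring step, I would use $\mathrm{vec}(\phi w) = M(\phi)\vect{w}$ (from the basic lemma in Section~\ref{sec: notation}) to write $d_P(\phi(\kappa-\kappa'),0)^2 = \sum_{i=1}^N \vect{\kappa_i-\kappa_i'}^T M(\phi)^T P\, M(\phi)\vect{\kappa_i-\kappa_i'}$. Since $M(\phi)^T M(\phi) = |\phi|^2 \mathrm{Id}_2$, the $2\times 2$ matrix $M(\phi)^T P M(\phi)$ is conjugate by a rotation to $|\phi|^2 P$ and therefore has spectral radius at most $\lambda_1|\phi|^2$. Summing over $i$ and taking square roots yields $d_P(\phi(\kappa-\kappa'),0) \leq \sqrt{\lambda_1}\,|\phi|\,\|\kappa-\kappa'\|$, which already contributes the $\sqrt{\lambda_1}$ and $\|\kappa-\kappa'\|$ factors on the right of the claim.

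It then remains to establish a uniform bound of the form $|\hat\phi(\kappa,P,Y)| \leq \sqrt{2N}\,\frac{\lambda_1^2+\lambda_2^2}{\lambda_1\lambda_2}\|Y\|$ valid for every $\kappa\in\Scomp$; the triangle inequality $|\phi|\leq|\hat\phi(\kappa,P,Y)|+|\hat\phi(\kappa',P,Y)|$ combined with the factor $\tfrac12$ on the left then absorbs the factor $2$ and delivers the claim. To prove this uniform estimate I would apply Lemma~\ref{lem: inverse lemma} to rewrite $\hat\phi(\kappa,P,Y) = \comp{\det(\kappa\dia_P\kappa)^{-1}\,(\kappa\dia_{\tilde P}\kappa)\,(\kappa\bul_P Y)}$, bound the determinant below by $\lambda_1\lambda_2$ via Lemma~\ref{lem: det lemma}, and bound the numerator by estimating $\kappa\dia_{\tilde P}\kappa$ through the Frobenius norm $\|\tilde P\|_F = \sqrt{\lambda_1^2+\lambda_2^2}$ together with $\|\kappa\|=1$, and by estimating $\kappa\bul_P Y$ through a componentwise Cauchy--Schwarz over the $N$ frequency indices (contributing a factor $\sqrt{N}$) combined with a factor $\sqrt{2}$ that accrues from passing between complex-scalar and real-vector representations.

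The main obstacle will be bookkeeping the constants exactly as stated. A purely operator-norm argument would give the sharper bound $|\hat\phi|\leq \sqrt{\lambda_1/\lambda_2}\|Y\|$, but the prefactor in the lemma is of the coarser form $\sqrt{2N}\tfrac{\lambda_1^2+\lambda_2^2}{\lambda_1\lambda_2}$, so the proof must commit to the specific Frobenius-norm and componentwise Cauchy--Schwarz steps sketched above in order to recover precisely the constants asserted, rather than shortcutting to the sharper spectral estimate.
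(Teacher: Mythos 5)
Your proposal is correct and follows essentially the same route as the paper: the paper likewise splits the estimate into a scalar-factoring bound $d_P(xa,0)\le\sqrt{\lambda_1}\,|x|\,\|a\|$ (its Lemma~\ref{lem: dp xa}) and a uniform bound $|\hat\phi(\kappa,P,Y)|\le\sqrt{2N}\,\tfrac{\lambda_1^2+\lambda_2^2}{\lambda_1\lambda_2}\,\|Y\|$ (its Lemma~\ref{lem: abs of phi}), the latter assembled exactly as you describe from the inverse formula, the determinant lower bound and the Frobenius-norm/Cauchy--Schwarz estimates of Lemmata~\ref{lem: inverse lemma}, \ref{lem: det lemma}, \ref{lem: norm kappa dia P} and \ref{lem: norm bul Y}. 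The one point to note is that this route (yours and the paper's alike) actually yields the prefactor $\sqrt{\lambda_1}$ rather than the $\lambda_1$ in the statement --- the paper's own proof silently passes from $\sqrt{\lambda_1}/2$ to $\lambda_1/2$, which is only an upper bound when $\lambda_1\ge 1$ --- so your sharper constant is the honest output of the argument.
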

  
  \begin{proof}
    Using first Lemma  \ref{lem: dp xa} and then Lemma \ref{lem: abs of phi} yields
    \begin{align*}
    & \frac{1}{2} d_P\Bigg(\left(\hat{\phi}(\kappa,P, Y) +\hat{\phi}(\kappa',P, Y)\right)(\kappa - \kappa'), 0\Bigg)\\
    &\leq \frac{\lambda_1}{2} \left| \hat{\phi}(\kappa,P, Y) +\hat{\phi}(\kappa',P, Y)\right| ||\kappa - \kappa'|| \leq
    \lambda_1 \sqrt{2N}\frac{\lambda_1^2 + \lambda_2^2}{\lambda_1 \lambda_2}\left|\left|Y \right|\right|||\kappa - \kappa'||.
    \end{align*}
  \end{proof}

  \begin{lemma}\label{lem: dp xa}
    For $x \in \mathbb{C}, a\in \mathbb{C}^N$ and $P\in \mathrm{SPD}(2)$ as defined in Definition \ref{def: ptilde} with $\lambda_1\geq \lambda_2$ holds
    \begin{align*}
    d_P(xa, 0) \leq\sqrt{\lambda_1} |x|\cdot ||a||.
    \end{align*}
  \end{lemma}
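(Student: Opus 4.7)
The plan is to unwind the definition of $d_P$ and the Mahalanobis norm into a componentwise quadratic form, and then bound it using the largest eigenvalue $\lambda_1$ of $P$, exploiting the fact that the complex multiplication matrix $M(x)$ is a scaled rotation.

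First, by definition of the Mahalanobis distance and norm from Section \ref{sec: notation},
$$ d_P(xa, 0)^2 = \|xa\|_P^2 = \sum_{i=1}^N \vf(xa_i)^T P \, \vf(xa_i)\,. $$
Next I would apply the identity $\vf(xa_i) = M(x)\vf(a_i)$ from the basic rules lemma, so that each summand equals $\vf(a_i)^T M(x)^T P M(x) \vf(a_i)$. The key observation is that for $x\in\CC$ with $\theta = \Arg(x)$ we have the factorisation $M(x) = |x|\, R_\theta$ with $R_\theta\in\mathrm{SO}(2)$ a rotation matrix (trivial if $x=0$). Consequently $M(x)^T P M(x) = |x|^2 R_\theta^T P R_\theta$ is congruent to $P$ via a rotation, hence has the same eigenvalues $\lambda_1\geq \lambda_2>0$, in particular operator norm $\lambda_1$.

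Using this, I would bound each term by
$$ \vf(a_i)^T M(x)^T P M(x) \vf(a_i) \;\leq\; \lambda_1\, |x|^2\, \|\vf(a_i)\|^2 \;=\; \lambda_1\,|x|^2\,|a_i|^2\,, $$
since $\|\vf(a_i)\|^2 = |a_i|^2$ by construction of $\vf$. Summing over $i=1,\ldots,N$ gives
$$ d_P(xa,0)^2 \;\leq\; \lambda_1\,|x|^2 \sum_{i=1}^N |a_i|^2 \;=\; \lambda_1\,|x|^2\,\|a\|^2\,, $$
and taking square roots yields the claim. There is no real obstacle: the only care needed is the trivial case $x=0$ (where both sides vanish) and observing that rotational congruence preserves the spectrum of $P$, which is immediate.
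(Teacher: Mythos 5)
Your proof is correct and rests on the same two ingredients as the paper's: the observation that complex multiplication acts as a scaled rotation on the $\vf$-components (the paper diagonalises $P$ and absorbs the rotation into a phase $e^{i\alpha}$, you factor $M(x)=|x|R_\theta$ and use congruence invariance of the spectrum), followed by the bound of the quadratic form by the top eigenvalue $\lambda_1$. No substantive difference.
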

  \begin{proof}
    Since $P\in \mathrm{SPD}(2)$ we can write $P=R^T \mathrm{diag}(\lambda_1, \lambda_2) R$ where $R=\begin{pmatrix}
    \cos(\alpha) & - \sin(\alpha)\\
    \sin(\alpha) & \cos(\alpha)
    \end{pmatrix}$ is a rotation matrix. Therefore
    \begin{align*}
    d_P(xa, 0) &= \sqrt{\spr{xa}{xa}_P}=\sqrt{\spr{e^{i\alpha}xa}{{e^{i\alpha}xa}}_{\mathrm{diag}(\lambda_1, \lambda_2)}}\leq \sqrt{\lambda_1\spr{e^{i\alpha}xa}{{e^{i\alpha}xa}}_{\mathrm{Id}_2}}=\sqrt{\lambda_1} |x|\cdot ||a||.
    \end{align*}
  \end{proof}
  
  \begin{lemma}\label{lem: norm kappa dia P}
    For $\kappa\in\Scomp$ and $P\in \mathrm{SPD}(2)$ as defined in Definition \ref{def: ptilde} holds
    \begin{align*}
    ||\kappa \dia_P \kappa|| \leq  \sqrt{\lambda_1^2 + \lambda_2^2}.
    \end{align*}
  \end{lemma}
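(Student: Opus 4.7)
The natural approach is to reuse the normal form for $\kappa \dia_P \kappa$ that was derived in the proof of Lemma~\ref{lem: inverse lemma}. Using the rotation $R$ that diagonalises $P$, one obtains
\begin{align*}
\kappa \dia_P \kappa \;=\;
\begin{pmatrix}
\lambda_1 A + \lambda_2 B & (\lambda_2-\lambda_1)\,C\\
(\lambda_2-\lambda_1)\,C & \lambda_2 A + \lambda_1 B
\end{pmatrix},
\end{align*}
where $A \coloneqq \sum_{\nu=1}^N \Re(\tilde{\kappa}_\nu)^2$, $B \coloneqq \sum_{\nu=1}^N \Im(\tilde{\kappa}_\nu)^2$, $C \coloneqq \sum_{\nu=1}^N \Re(\tilde{\kappa}_\nu)\Im(\tilde{\kappa}_\nu)$, and $\tilde{\kappa}_\nu$ has the same modulus as $\kappa_\nu$. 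Since $\kappa \in \Scomp$, the normalisation immediately gives the identity $A+B = 1$.

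From here, I would compute the (squared Frobenius) norm directly:
\begin{align*}
\|\kappa \dia_P \kappa\|^2 \;=\; (\lambda_1 A + \lambda_2 B)^2 + (\lambda_2 A + \lambda_1 B)^2 + 2(\lambda_2-\lambda_1)^2 C^2\,.
\end{align*}
Expanding the two squared diagonal terms yields $(\lambda_1^2+\lambda_2^2)(A^2+B^2) + 4\lambda_1\lambda_2\,AB$, and the off-diagonal contribution is controlled by the Cauchy--Schwarz inequality $C^2 \leq AB$, which was already used in the proof of Lemma~\ref{lem: det lemma}. Replacing $2(\lambda_2-\lambda_1)^2 C^2$ by $2(\lambda_1-\lambda_2)^2 AB$, the cross terms $\pm 4\lambda_1\lambda_2 AB$ cancel and one is left with $(\lambda_1^2+\lambda_2^2)(A+B)^2$. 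Invoking $A+B=1$ completes the proof.

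There is no real obstacle; the only point requiring care is to note that passing from $\kappa$ to the rotated $\tilde{\kappa}$ preserves the constraint $\sum |\kappa_\nu|^2 = 1$, which is what delivers $A+B=1$ and makes the bound tight (equality is attained e.g.\ when $\tilde{\kappa}$ is concentrated in a single real coordinate, giving the matrix $\operatorname{diag}(\lambda_1,\lambda_2)$).
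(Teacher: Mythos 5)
Your proposal is correct and follows essentially the same route as the paper: the same rotated normal form for $\kappa \dia_P \kappa$, the same Frobenius-norm expansion, and the same Cauchy--Schwarz bound $C^2 \leq AB$ leading to $(\lambda_1^2+\lambda_2^2)(A+B)^2$ with $A+B=1$. Your added remark on tightness of the bound is a nice observation not present in the paper but changes nothing in the argument.
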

  
  \begin{proof}
    We define $\tilde{\kappa}_\nu$ in the same way as in the proof of Lemma \ref{lem: inverse lemma}. Using Cauchy–Schwarz we get 
    \begin{align*}
    ||\kappa \dia_P \kappa||^2 &= \left|\left| \begin{pmatrix} 
    \lambda_1\sum\Re(\tilde{\kappa}_{\nu})^2 + \lambda_2\sum\Im(\tilde{\kappa}_{\nu})^2& (\lambda_2 -\lambda_1)\sum\Re(\tilde{\kappa}_{\nu})\Im(\tilde{\kappa}_{\nu}) \\
    (\lambda_2 -\lambda_1)\sum\Re(\tilde{\kappa}_{\nu})\Im(\tilde{\kappa}_{\nu}) &  \lambda_2\sum\Re(\tilde{\kappa}_{\nu})^2 + \lambda_1\sum\Im(\tilde{\kappa}_{\nu})^2  \end{pmatrix}\right|\right|^2\\
    &=\left(\lambda_1\sum\Re(\tilde{\kappa}_{\nu})^2 + \lambda_2\sum\Im(\tilde{\kappa}_{\nu})^2 \right)^2 + 2\left((\lambda_2 -\lambda_1)\sum\Re(\tilde{\kappa}_{\nu})\Im(\tilde{\kappa}_{\nu})\right)^2 \\
    &+ \left(\lambda_2\sum\Re(\tilde{\kappa}_{\nu})^2 + \lambda_1\sum\Im(\tilde{\kappa}_{\nu})^2 \right)^2\\
    &\leq \left(\lambda_1\sum\Re(\tilde{\kappa}_{\nu})^2 + \lambda_2\sum\Im(\tilde{\kappa}_{\nu})^2 \right)^2 
    + 2(\lambda_2 -\lambda_1)^2\left(\sum\Re(\tilde{\kappa}_{\nu})^2\right)\left(\sum\Im(\tilde{\kappa}_{\nu})^2\right)\\
    &+ \left(\lambda_2\sum\Re(\tilde{\kappa}_{\nu})^2 + \lambda_1\sum\Im(\tilde{\kappa}_{\nu})^2 \right)^2\\
    &= \lambda_1^2\left(\sum\Re(\tilde{\kappa}_{\nu})^2 + \sum\Im(\tilde{\kappa}_{\nu})^2\right)^2 + \lambda_2^2\left(\sum\Re(\tilde{\kappa}_{\nu})^2 + \sum\Im(\tilde{\kappa}_{\nu})^2\right)^2 = \lambda_1^2 + \lambda_2^2.
    \end{align*}
  \end{proof}

  \begin{lemma}\label{lem: norm of inverse sum}
    For $\kappa\in \Scomp$ and $P\in \mathrm{SPD}(2)$ as defined in Definition \ref{def: ptilde} holds
    \begin{align*}
    \left|\left|\left(\kappa \dia_P \kappa\right)^{-1}\right|\right| \leq \frac{\sqrt{\lambda_1^2 + \lambda_2^2}}{\lambda_1\lambda_2}.\\
    \end{align*}
  \end{lemma}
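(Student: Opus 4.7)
The plan is to combine three earlier lemmas in a single short computation, observing that $\tilde{P}$ (as introduced in Definition \ref{def: ptilde}) has the same eigenvalues as $P$, merely swapped along the diagonal in the rotated basis. First I would invoke Lemma \ref{lem: inverse lemma} to rewrite the inverse as
\begin{align*}
\left(\kappa \dia_P \kappa\right)^{-1} = \frac{1}{\det\left(\kappa \dia_P \kappa\right)} \left(\kappa \dia_{\tilde{P}} \kappa\right).
\end{align*}

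Next, I would bound the numerator and the denominator separately. For the denominator, Lemma \ref{lem: det lemma} gives the lower bound $\det(\kappa \dia_P \kappa) \geq \lambda_1 \lambda_2$, which translates into an upper bound on its reciprocal. For the numerator, I would apply Lemma \ref{lem: norm kappa dia P} to the matrix $\tilde{P}$ rather than $P$; this is legitimate because $\tilde{P} \in \mathrm{SPD}(2)$ shares the eigenvalues $\lambda_1, \lambda_2$ with $P$, and the norm bound in that lemma only depends on the eigenvalues. This yields $\|\kappa \dia_{\tilde{P}} \kappa\| \leq \sqrt{\lambda_1^2 + \lambda_2^2}$.

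Chaining these two bounds with the identity from Lemma \ref{lem: inverse lemma} and using that the operator/Frobenius norm of a scalar multiple is the absolute value of the scalar times the norm, I obtain
\begin{align*}
\left\|\left(\kappa \dia_P \kappa\right)^{-1}\right\| \leq \frac{\sqrt{\lambda_1^2 + \lambda_2^2}}{\lambda_1 \lambda_2},
\end{align*}
which is the claim. There is no substantive obstacle here; the only point that warrants explicit mention is the observation that the hypothesis on $\tilde{P}$ in Lemma \ref{lem: norm kappa dia P} is satisfied, since this is the only nontrivial ingredient needed beyond the three directly cited results.
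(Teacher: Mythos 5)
Your proposal is correct and follows exactly the paper's own argument: rewrite the inverse via Lemma \ref{lem: inverse lemma}, bound the determinant from below by $\lambda_1\lambda_2$ via Lemma \ref{lem: det lemma}, and bound $\|\kappa \dia_{\tilde{P}} \kappa\|$ by $\sqrt{\lambda_1^2+\lambda_2^2}$ via Lemma \ref{lem: norm kappa dia P} applied to $\tilde{P}$. Your explicit remark that $\tilde{P}$ satisfies the hypotheses of Lemma \ref{lem: norm kappa dia P} because it shares the eigenvalues of $P$ is a point the paper leaves implicit, but there is no substantive difference.
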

  \begin{proof}
    We get by using Lemma \ref{lem: inverse lemma},  Lemma \ref{lem: det lemma} and Lemma \ref{lem: norm kappa dia P}
    \begin{align*}
    \left|\left|\left(\kappa \dia_P \kappa\right)^{-1}\right|\right| &= \left|\left|  \frac{1}{\det\left(\kappa \dia_P \kappa\right)}\left(\kappa \dia_{\tilde{P}} \kappa \right) \right|\right| \leq \frac{\sqrt{\lambda_1^2 + \lambda_2^2}}{\lambda_1\lambda_2}. \\ 
    \end{align*}
  \end{proof}

  \begin{lemma}\label{lem: norm bul Y}
    For $\kappa\in \Scomp, Y\in \mathbb{C}^N$ and $P\in \mathrm{SPD}(2)$ as defined in Definition \ref{def: ptilde} holds
    \begin{align*}
    ||\kappa \bul_P Y|| \leq  \sqrt{2N}\sqrt{\lambda_1^2+\lambda_2^2}\left|\left|Y \right|\right|.
    \end{align*}
  \end{lemma}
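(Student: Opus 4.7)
The plan is to bound $\|\kappa \bul_P Y\|$ by reducing to the defining sum and applying submultiplicativity of norms term-by-term, then using $\|\kappa\|=1$ together with Cauchy--Schwarz on the indexing sum to extract the factor $\sqrt{N}$.

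First, I would apply the triangle inequality to obtain
$$ \|\kappa \bul_P Y\| \;\leq\; \sum_{n=1}^N \bigl\| M(\kappa_n)^T\, P\, \vf(Y_n)\bigr\|. $$
Next, bound each summand through the operator-norm estimate $\|M(\kappa_n)^T P\, \vf(Y_n)\|\leq \|M(\kappa_n)^T P\|_{op}\,|Y_n|$ and then dominate the operator norm by the Frobenius norm, which is submultiplicative: $\|M(\kappa_n)^T P\|_{op} \leq \|M(\kappa_n)^T P\|_F \leq \|M(\kappa_n)\|_F \cdot \|P\|_F$. A direct computation, using the explicit form of $M(\cdot)$ from Section~\ref{sec: notation}, gives $\|M(\kappa_n)\|_F = \sqrt{2}\,|\kappa_n|$, while $\|P\|_F = \sqrt{\lambda_1^2+\lambda_2^2}$ follows from the eigenvalue decomposition $P = R\,\mathrm{diag}(\lambda_1,\lambda_2)\,R^T$ used in Definition~\ref{def: ptilde}.

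Combining these, each summand is bounded by $\sqrt{2}\,|\kappa_n|\,\sqrt{\lambda_1^2+\lambda_2^2}\,|Y_n|$. Finally, since $\kappa\in \Scomp$ implies $|\kappa_n|\leq \|\kappa\| = 1$, and Cauchy--Schwarz gives $\sum_{n=1}^N |Y_n| \leq \sqrt{N}\,\|Y\|$, summing over $n$ yields
$$ \|\kappa \bul_P Y\| \;\leq\; \sqrt{2}\,\sqrt{\lambda_1^2+\lambda_2^2}\,\sum_{n=1}^N |Y_n| \;\leq\; \sqrt{2N}\,\sqrt{\lambda_1^2+\lambda_2^2}\,\|Y\|, $$
as claimed.

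There is no real obstacle here; the only thing requiring a bit of care is matching the constants, in particular correctly computing $\|M(\kappa_n)\|_F^2 = 2|\kappa_n|^2$ and $\|P\|_F^2 = \lambda_1^2+\lambda_2^2$, and ensuring the chain $\|\cdot\|_{op} \leq \|\cdot\|_F \leq \|\cdot\|_F\,\|\cdot\|_F$ is applied in a way that produces exactly the constants in the statement. Note that sharper Cauchy--Schwarz on $\sum_n|\kappa_n||Y_n|$ would yield the strictly better bound $\sqrt{2(\lambda_1^2+\lambda_2^2)}\|Y\|$, but the looser inequality above suffices and matches the form required by the subsequent lemmas.
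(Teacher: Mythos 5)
Your proof is correct. It arrives at the paper's bound by a different decomposition: the paper writes $\kappa\bul_P Y$ as the single block-matrix product $\begin{pmatrix} M(\kappa_1)^T & \dots & M(\kappa_N)^T\end{pmatrix}\,(\mathrm{Id}_N\otimes P)\,\vect{Y}$ and applies Frobenius submultiplicativity once, so that the factor $\sqrt{2}$ comes from $\sum_{\nu}\|M(\kappa_\nu)\|_F^2=2\|\kappa\|^2=2$ (using the normalization of $\kappa$ exactly) and the factor $\sqrt{N}$ from $\|\mathrm{Id}_N\otimes P\|_F=\sqrt{N}\,\sqrt{\lambda_1^2+\lambda_2^2}$. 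You instead split the sum via the triangle inequality, bound each term by $\sqrt{2}\,|\kappa_n|\sqrt{\lambda_1^2+\lambda_2^2}\,|Y_n|$, discard the normalization down to $|\kappa_n|\le 1$, and recover $\sqrt{N}$ from Cauchy--Schwarz applied to $\sum_n|Y_n|$. Both computations are equally elementary and land on exactly the same constant; the difference is only where the $\sqrt{N}$ and the use of $\|\kappa\|=1$ are booked. Your closing observation is also correct and worth noting: keeping $|\kappa_n|$ and applying Cauchy--Schwarz to $\sum_n|\kappa_n|\,|Y_n|\le\|\kappa\|\,\|Y\|$ improves the bound to $\sqrt{2(\lambda_1^2+\lambda_2^2)}\,\|Y\|$, i.e.\ the stated constant is loose by a factor of $\sqrt{N}$ --- but since this lemma only feeds into the (non-optimized) prefactor $\dot\rho$ in Theorem \ref{the: inequality rho}, the looser form is all that is required.
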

  
  \begin{proof}
    We define $\tilde{\kappa}_\nu$ in the same way as in the proof of Lemma \ref{lem: inverse lemma}. We calculate 
    \begin{align*}
    &||\kappa \bul_P Y|| = \left|\left|\begin{pmatrix} M(\kappa_1)^T & \dots &  M(\kappa_{N})^T \end{pmatrix} (\mathrm{Id}_N \otimes P) \begin{pmatrix}
    \vect{Y_1}\\ \vdots \\\vect{Y_N}
    \end{pmatrix} \right|\right|\\
    &\leq \left|\left|\begin{pmatrix} M(\kappa_1)^T & \dots &  M(\kappa_{N})^T \end{pmatrix}\right|\right|\left|\left|  \left(\mathrm{Id}_N \otimes P\right)\right|\right| \left|\left|Y \right|\right|
    =\sqrt{2N}\sqrt{\lambda_1^2+\lambda_2^2}\left|\left|Y \right|\right|.
    \end{align*}
  \end{proof}
  
  \begin{lemma}\label{lem: abs of phi}
    For $\kappa\in \Scomp, Y\in \mathbb{C}^N$ and $P\in \mathrm{SPD}(2)$ as defined in Definition \ref{def: ptilde} holds
    \begin{align*}
    \left|\hat{\phi}(\kappa,P, Y)\right| \leq  \sqrt{2N}\frac{\lambda_1^2 + \lambda_2^2}{\lambda_1 \lambda_2}\left|\left|Y \right|\right|.
    \end{align*}
  \end{lemma}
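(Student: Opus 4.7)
The plan is to observe that $|\hat{\phi}(\kappa, P, Y)|$ is, by the definition of $\mathfrak{c}$, equal to the Euclidean norm of the real $2$-vector $\left(\kappa \dia_P \kappa\right)^{-1}\left(\kappa \bul_P Y\right) \in \mathbb{R}^2$, since $|\mathfrak{c}(v)| = \|v\|$ for $v \in \mathbb{R}^2$. So the bound reduces to estimating a norm of a matrix-vector product.

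First, I would apply sub-multiplicativity of the operator norm, bounding
\begin{align*}
|\hat{\phi}(\kappa, P, Y)| = \left\|\left(\kappa \dia_P \kappa\right)^{-1}\left(\kappa \bul_P Y\right)\right\| \leq \left\|\left(\kappa \dia_P \kappa\right)^{-1}\right\| \cdot \left\|\kappa \bul_P Y\right\|.
\end{align*}
Then both factors have already been estimated in the preceding lemmas. Specifically, Lemma \ref{lem: norm of inverse sum} gives $\left\|\left(\kappa \dia_P \kappa\right)^{-1}\right\| \leq \frac{\sqrt{\lambda_1^2+\lambda_2^2}}{\lambda_1 \lambda_2}$ and Lemma \ref{lem: norm bul Y} gives $\|\kappa \bul_P Y\| \leq \sqrt{2N}\sqrt{\lambda_1^2+\lambda_2^2}\,\|Y\|$. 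Multiplying these two bounds yields exactly the claimed $\sqrt{2N}\,\frac{\lambda_1^2+\lambda_2^2}{\lambda_1 \lambda_2}\,\|Y\|$.

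There is no serious obstacle here; the work has been front-loaded into the two preceding lemmas on $\|(\kappa \dia_P \kappa)^{-1}\|$ and $\|\kappa \bul_P Y\|$. The only small point worth verifying carefully is the identity $|\mathfrak{c}(v)| = \|v\|$ for $v \in \mathbb{R}^2$, which follows directly from the definition in Section \ref{sec: notation} since $\mathfrak{c}$ simply identifies a real 2-vector with a complex scalar by interpreting its components as real and imaginary parts.
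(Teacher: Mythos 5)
Your proposal is correct and follows exactly the paper's own argument: the identity $|\hat{\phi}(\kappa,P,Y)| = \|(\kappa \dia_P \kappa)^{-1}(\kappa \bul_P Y)\|$, sub-multiplicativity, and then the bounds from Lemma \ref{lem: norm of inverse sum} and Lemma \ref{lem: norm bul Y}. The only cosmetic difference is that the paper's matrix bound is stated for the Frobenius norm rather than the operator norm, which still suffices since the former dominates the latter.
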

  \begin{proof}
    Using Lemma \ref{lem: norm of inverse sum} and Lemma \ref{lem: norm bul Y} we get
    \begin{align*}
    \left|\hat{\phi}(\kappa,P, Y)\right| &= \left|\left|\left(\kappa \dia_P \kappa \right)^{-1} \left(\kappa \bul_P Y\right)\right|\right|\leq \left|\left|\left(\kappa \dia_P \kappa \right)^{-1}\right|\right| \left|\left|\left(\kappa \bul_P Y\right)\right|\right| \leq \sqrt{2N}\frac{\lambda_1^2 + \lambda_2^2}{\lambda_1 \lambda_2}\left|\left|Y \right|\right|.
    \end{align*}
  \end{proof}

  \begin{lemma}\label{lem: dp phi minus phi and kappa plus kappa}
    For $\kappa, \kappa' \in \Scomp$ and $Y\in \mathbb{C}^N$ and $P\in \mathrm{SPD}(2)$ as defined in Definition \ref{def: ptilde} holds
    \begin{align*}
    &\frac{1}{2} d_P\Bigg(\left(\hat{\phi}(\kappa,P, Y) -\hat{\phi}(\kappa',P, Y)\right)(\kappa + \kappa'), 0\Bigg)\\
    &\leq \left(\frac{\lambda_1^2+\lambda_2^2}{\lambda_1 \lambda_2}\right)\left(8\sqrt{2}N+\frac{32\sqrt{2}N\left(\lambda_1^2+\lambda_2^2\right)}{\lambda_1\lambda_2}+2\sqrt{2N}\right)  \left|\left|Y \right|\right| \left|\left|\kappa -\kappa' \right|\right|.
    \end{align*}
  \end{lemma}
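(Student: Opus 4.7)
The argument mirrors the structure of the preceding Lemma~\ref{lem: dp phi plus phi and kappa minus kappa}. First, apply Lemma~\ref{lem: dp xa} with $x = \hat\phi(\kappa,P,Y) - \hat\phi(\kappa',P,Y)$ and $a = \kappa + \kappa'$ to obtain
\[
\frac{1}{2}\, d_P\bigl((\hat\phi(\kappa,P,Y) - \hat\phi(\kappa',P,Y))(\kappa + \kappa'),\,0\bigr) \;\le\; \frac{1}{2}\,\sqrt{\lambda_1}\,\bigl|\hat\phi(\kappa,P,Y) - \hat\phi(\kappa',P,Y)\bigr|\,\|\kappa + \kappa'\|,
\]
and use the triangle inequality $\|\kappa + \kappa'\| \le \|\kappa\| + \|\kappa'\| = 2$. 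This reduces the task to bounding the scalar difference $|\hat\phi(\kappa,P,Y) - \hat\phi(\kappa',P,Y)|$ by a constant times $\|Y\|\,\|\kappa - \kappa'\|$.

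Write $A_\kappa \coloneqq \kappa \dia_P \kappa$ and $v_\kappa \coloneqq \kappa \bul_P Y$, so that $\hat\phi(\kappa,P,Y) = \comp{A_\kappa^{-1} v_\kappa}$, and apply the standard perturbation identity
\[
A_\kappa^{-1} v_\kappa - A_{\kappa'}^{-1} v_{\kappa'} = A_\kappa^{-1}(v_\kappa - v_{\kappa'}) + A_\kappa^{-1}(A_{\kappa'} - A_\kappa)\,A_{\kappa'}^{-1} v_{\kappa'}.
\]
Taking norms and applying the triangle inequality splits the estimate into four factors. Two of them are already available: $\|A_\kappa^{-1}\|,\|A_{\kappa'}^{-1}\| \le \sqrt{\lambda_1^2+\lambda_2^2}/(\lambda_1\lambda_2)$ by Lemma~\ref{lem: norm of inverse sum}, which is the origin of the outer prefactor $(\lambda_1^2+\lambda_2^2)/(\lambda_1\lambda_2)$ in the claim, and $\|v_{\kappa'}\| \le \sqrt{2N}\sqrt{\lambda_1^2+\lambda_2^2}\,\|Y\|$ by Lemma~\ref{lem: norm bul Y}.

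The two remaining factors require new but routine block-matrix estimates. Since $v_\kappa - v_{\kappa'} = (\kappa - \kappa') \bul_P Y$, the exact argument of Lemma~\ref{lem: norm bul Y} with $\kappa$ replaced by $\kappa - \kappa'$ gives $\|v_\kappa - v_{\kappa'}\| \le \sqrt{2N}\sqrt{\lambda_1^2+\lambda_2^2}\,\|Y\|\,\|\kappa - \kappa'\|$; combined with $\|A_\kappa^{-1}\|$ and $\sqrt{\lambda_1}\cdot 2$ this contributes the trailing $2\sqrt{2N}$ summand. For $\|A_\kappa - A_{\kappa'}\|$, expand by bilinearity
\[
A_\kappa - A_{\kappa'} \;=\; \sum_{\nu=1}^{N} M(\kappa_\nu-\kappa'_\nu)^T P\, M(\kappa_\nu) \;+\; \sum_{\nu=1}^{N} M(\kappa'_\nu)^T P\, M(\kappa_\nu-\kappa'_\nu),
\]
and bound each sum by the same block-matrix operator-norm factorization used in Lemmas~\ref{lem: norm kappa dia P} and~\ref{lem: norm bul Y}, producing $\|A_\kappa - A_{\kappa'}\| = O\bigl(N\sqrt{\lambda_1^2+\lambda_2^2}\,\|\kappa-\kappa'\|\bigr)$. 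Substituting into the second term of the perturbation identity yields, after combining with $\|A_\kappa^{-1}\|$, $\|A_{\kappa'}^{-1}\|$ and $\|v_{\kappa'}\|$, the two remaining summands $8\sqrt{2}N$ and $32\sqrt{2}N(\lambda_1^2+\lambda_2^2)/(\lambda_1\lambda_2)$ inside the bracket.

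The main obstacle is not analytic but arithmetic: tracking the precise constants through the block-matrix bound for $\|A_\kappa - A_{\kappa'}\|$ and the resulting product $\|A_\kappa^{-1}\|\|A_\kappa - A_{\kappa'}\|\|A_{\kappa'}^{-1}\|\|v_{\kappa'}\|$, so that every contribution is absorbed into one of the three summands listed in the statement. Beyond the triangle inequality, Cauchy–Schwarz and the four norm bounds enumerated above, no further analytic ingredient is required.
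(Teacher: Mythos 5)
Your proposal is correct in substance and reaches the stated inequality, but by a genuinely different decomposition from the paper's. After the identical first reduction (Lemma~\ref{lem: dp xa} plus $\|\kappa+\kappa'\|\le 2$), the paper splits $A_\kappa^{-1}v_\kappa - A_{\kappa'}^{-1}v_{\kappa'}$ \emph{symmetrically} as $\tfrac12(A_\kappa^{-1}-A_{\kappa'}^{-1})(v_\kappa+v_{\kappa'}) + \tfrac12(A_\kappa^{-1}+A_{\kappa'}^{-1})(v_\kappa-v_{\kappa'})$, and then controls $\|A_\kappa^{-1}-A_{\kappa'}^{-1}\|$ not via a resolvent identity but through the explicit $2\times2$ inverse $A_\kappa^{-1}=(\kappa\dia_{\tilde P}\kappa)/\det(A_\kappa)$ from Lemma~\ref{lem: inverse lemma}, bounding the numerator difference (Lemma~\ref{lem: kappa dia P - kappa' dia P}) and the reciprocal-determinant difference (Lemmata~\ref{lem: inequality_det} and~\ref{lem: 1/det - 1/det}) separately; those two contributions are exactly where the $8\sqrt{2}N$ and $32\sqrt{2}N(\lambda_1^2+\lambda_2^2)/(\lambda_1\lambda_2)$ summands come from, while the $2\sqrt{2N}$ term arises as in your sketch. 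Your asymmetric identity $A_\kappa^{-1}v_\kappa - A_{\kappa'}^{-1}v_{\kappa'} = A_\kappa^{-1}(v_\kappa-v_{\kappa'}) + A_\kappa^{-1}(A_{\kappa'}-A_\kappa)A_{\kappa'}^{-1}v_{\kappa'}$, combined with the direct bilinear bound $\|A_\kappa-A_{\kappa'}\|\le 4\sqrt{N}\sqrt{\lambda_1^2+\lambda_2^2}\,\|\kappa-\kappa'\|$, is more elementary (it bypasses the determinant estimates entirely) and in fact yields the \emph{sharper} bracket $\sqrt{2N}+4\sqrt{2}N(\lambda_1^2+\lambda_2^2)/(\lambda_1\lambda_2)$; so you will not recover the three displayed summands term by term, and you should close the argument by noting that your bound is dominated by the stated one (using $\lambda_1^2+\lambda_2^2\ge\lambda_1\lambda_2$) rather than expecting the constants to match. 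One wrinkle you share with the paper's own proof: the prefactor $\sqrt{\lambda_1}/2$ from Lemma~\ref{lem: dp xa} is carried into the estimate $\sqrt{\lambda_1}\,|\hat{\phi}(\kappa,P,Y)-\hat{\phi}(\kappa',P,Y)|$ but is absent from the stated bound --- the paper silently drops it in its final display --- so matching the statement verbatim requires either retaining that factor in the conclusion or flagging the discrepancy.
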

  
  \begin{proof}
    First, we use Lemma \ref{lem: dp xa}
    \begin{align*}
    &\frac{1}{2} d_P\Bigg(\left(\hat{\phi}(\kappa,P, Y) -\hat{\phi}(\kappa',P, Y)\right)(\kappa + \kappa'), 0\Bigg)\leq \frac{\sqrt{\lambda_1}}{2} \left| \hat{\phi}(\kappa,P, Y) -\hat{\phi}(\kappa',P, Y)\right|\cdot ||\kappa + \kappa'||\\
    &\leq \sqrt{\lambda_1} \left| \hat{\phi}(\kappa,P, Y) -\hat{\phi}(\kappa',P, Y)\right|
    =\sqrt{\lambda_1} \left| \left|\left(\kappa \dia_P \kappa \right)^{-1}\left(\kappa \bul_P Y \right)-\left(\kappa' \dia_P \kappa' \right)^{-1}\left(\kappa' \bul_P Y \right)\right|\right|\\
    &\leq \frac{\sqrt{\lambda_1}}{2}\left|\left|\left(\kappa \dia_P \kappa \right)^{-1}-\left(\kappa' \dia_P \kappa' \right)^{-1} \right|\right| \left|\left| \left(\kappa \bul_P Y \right)+\left(\kappa' \bul_P Y \right) \right|\right|\\
    &+ \frac{\sqrt{\lambda_1}}{2}\left|\left|\left(\kappa \dia_P \kappa \right)^{-1}+\left(\kappa' \dia_P \kappa' \right)^{-1}\right|\right| \left|\left| \left(\kappa \bul_P Y \right)-\left(\kappa' \bul_P Y \right) \right|\right|.
    \end{align*}
    From Lemma \ref{lem: norm of inverse sum}, Lemma \ref{lem: norm bul Y}, Lemma \ref{lem: kappa bul Y- kappa'bul Y} and Lemma \ref{lem: inv(kappa dia P) - inv(kappa' dia P) } it follows that 
    \begin{align*}
    &\frac{1}{2} d_P\Bigg(\left(\hat{\phi}(\kappa,P, Y) -\hat{\phi}(\kappa',P, Y)\right)(\kappa + \kappa'), 0\Bigg)\\
    &\leq
    \left(\left(\frac{4\sqrt{N}\sqrt{\lambda_1^2+\lambda_2^2}}{\lambda_1\lambda_2}+\frac{16\sqrt{N}(\lambda_1^2+\lambda_2^2)^{3/2}}{(\lambda_1\lambda_2)^2}\right)\left|\left|\kappa-\kappa'\right|\right| \right) 
    \left(2\sqrt{2N}\sqrt{\lambda_1^2+\lambda_2^2}\left|\left|Y \right|\right| \right) \\
    &+ 
    \left(2 \frac{\sqrt{\lambda_1^2 + \lambda_2^2}}{\lambda_1\lambda_2}\right) 
    \left(\sqrt{2N}\sqrt{\lambda_1^2 + \lambda_2^2}\left|\left|Y \right|\right|\left|\left|\kappa -\kappa' \right|\right| \right)\\
    &=\left(\frac{\lambda_1^2+\lambda_2^2}{\lambda_1 \lambda_2}\right)\left(8\sqrt{2}N+\frac{32\sqrt{2}N\left(\lambda_1^2+\lambda_2^2\right)}{\lambda_1\lambda_2}+2\sqrt{2N}\right)  \left|\left|Y \right|\right| \left|\left|\kappa -\kappa' \right|\right|.
    \end{align*}
  \end{proof}
  
  \begin{lemma}\label{lem: kappa bul Y- kappa'bul Y}
    For $\kappa,\kappa'\in \Scomp, Y\in \mathbb{C}^N$ and $P\in \mathrm{SPD}(2)$ as defined in Definition \ref{def: ptilde} holds
    \begin{align*}
    \left|\left| \left(\kappa \bul_P Y \right)-\left(\kappa' \bul_P Y \right) \right|\right|\leq \sqrt{2N}\sqrt{\lambda_1^2 + \lambda_2^2}\left|\left|Y \right|\right|\left|\left|\kappa -\kappa' \right|\right|.
    \end{align*}
  \end{lemma}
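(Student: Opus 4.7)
The plan is to mimic the argument of Lemma \ref{lem: norm bul Y}, exploiting the $\mathbb{R}$-linearity of the matrix representation $M$ to reduce the difference $\kappa \bul_P Y - \kappa' \bul_P Y$ to a single expression involving $\kappa-\kappa'$. Since $M(z) = \begin{pmatrix} x & -y \\ y & x\end{pmatrix}$ depends $\mathbb{R}$-linearly on $z = x+iy$, we have $M(\kappa_\nu) - M(\kappa'_\nu) = M(\kappa_\nu - \kappa'_\nu)$ for every $\nu$. Applying this entry-wise to the definition of $\bul_P$ gives
\begin{align*}
\kappa \bul_P Y - \kappa' \bul_P Y \;=\; \sum_{\nu=1}^N M(\kappa_\nu - \kappa'_\nu)^T\, P\, \vect{Y_\nu}.
\end{align*}

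Next I would rewrite this in the same block form used in Lemma \ref{lem: norm bul Y}, namely as the matrix-vector product
\begin{align*}
\kappa \bul_P Y - \kappa' \bul_P Y \;=\; \begin{pmatrix} M(\kappa_1-\kappa'_1)^T & \cdots & M(\kappa_N-\kappa'_N)^T\end{pmatrix}\,(\mathrm{Id}_N \otimes P)\,\begin{pmatrix}\vect{Y_1}\\ \vdots\\ \vect{Y_N}\end{pmatrix},
\end{align*}
and then apply the same Frobenius-norm-based bound $\|ABv\|\leq \|A\|_F\|B\|_F\|v\|$ that underlies Lemma \ref{lem: norm bul Y}. A direct computation, using that $\|M(z)^T\|_F^2 = 2|z|^2$, yields
\begin{align*}
\left\|\begin{pmatrix} M(\kappa_1-\kappa'_1)^T & \cdots & M(\kappa_N-\kappa'_N)^T\end{pmatrix}\right\|_F \;=\; \sqrt{2\sum_{\nu=1}^N|\kappa_\nu-\kappa'_\nu|^2}\;=\;\sqrt{2}\,\|\kappa - \kappa'\|,
\end{align*}
while $\|\mathrm{Id}_N \otimes P\|_F = \sqrt{N(\lambda_1^2 + \lambda_2^2)}$ because the block diagonal contains $N$ copies of $P$. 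Multiplying these bounds together with $\|Y\|$ gives exactly the claimed $\sqrt{2N}\sqrt{\lambda_1^2+\lambda_2^2}\|Y\|\|\kappa-\kappa'\|$.

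The proof is essentially routine bookkeeping, so there is no real obstacle; the only point to check carefully is the $\mathbb{R}$-linearity of $M$, which is immediate from its definition, and that the Frobenius-norm estimates specialize correctly to $\kappa - \kappa'$ rather than to a unit vector. In particular, no use of $\|\kappa - \kappa'\| = 1$ or optimal position is required, which is why the inequality is stated for arbitrary $\kappa, \kappa' \in \Scomp$.
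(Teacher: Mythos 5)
Your proposal is correct and follows essentially the same route as the paper: the paper likewise uses the $\mathbb{R}$-linearity of $M$ to write the difference as a single block product $\begin{pmatrix} M(\kappa_1-\kappa_1')^T & \dots & M(\kappa_N-\kappa_N')^T\end{pmatrix}(\mathrm{Id}_N\otimes P)\,\vect{Y}$ and then bounds it by the product of Frobenius norms, exactly as in Lemma \ref{lem: norm bul Y}. The norm evaluations $\sqrt{2}\,\|\kappa-\kappa'\|$ and $\sqrt{N(\lambda_1^2+\lambda_2^2)}$ are also the ones implicit in the paper's one-line computation.
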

  \begin{proof}
    Analogous to the proof from Lemma \ref{lem: norm bul Y} it follows directly
    \begin{align*}
    &\left|\left| \left(\kappa \bul_P Y \right)-\left(\kappa' \bul_P Y \right) \right|\right|
    = \left|\left|\begin{pmatrix} M(\kappa_1-\kappa_1')^T & \dots &  M(\kappa_{N}-\kappa_N')^T \end{pmatrix} (\mathrm{Id}_N \otimes P) \begin{pmatrix}
    \vect{Y_1}\\ \vdots \\\vect{Y_N}
    \end{pmatrix} \right|\right|\\
    &\leq \left|\left|\begin{pmatrix} M(\kappa_1-\kappa1')^T & \dots &  M(\kappa_{N}-\kappa_N')^T \end{pmatrix}\right|\right|\left|\left|  \left(\mathrm{Id}_N \otimes P\right)\right|\right| \left|\left|Y \right|\right| = \sqrt{2N}\sqrt{\lambda_1^2 + \lambda_2^2}\left|\left|Y \right|\right|\left|\left|\kappa -\kappa' \right|\right|.
    \end{align*}
  \end{proof}
  \begin{lemma}\label{lem: inv(kappa dia P) - inv(kappa' dia P) }
    For $\kappa, \kappa' \in \Scomp$ and $P\in \mathrm{SPD}(2)$ as defined in Definition \ref{def: ptilde} holds
    \begin{align*}
    &\left|\left|\left(\kappa \dia_P \kappa \right)^{-1} - \left(\kappa' \dia_P \kappa' \right)^{-1}\right|\right|
    \leq\left(\frac{4\sqrt{N}\sqrt{\lambda_1^2+\lambda_2^2}}{\lambda_1\lambda_2}+\frac{16\sqrt{N}(\lambda_1^2+\lambda_2^2)\sqrt{\lambda_1^2 + \lambda_2^2}}{(\lambda_1\lambda_2)^2}\right)\left|\left|\kappa-\kappa'\right|\right|.
    \end{align*}
  \end{lemma}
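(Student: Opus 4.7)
The plan is to apply the adjugate representation of the $2\times 2$ inverse, provided by Lemma~\ref{lem: inverse lemma}, and write
\begin{align*}
(\kappa\dia_P\kappa)^{-1}-(\kappa'\dia_P\kappa')^{-1}
&= \frac{\kappa\dia_{\tilde P}\kappa-\kappa'\dia_{\tilde P}\kappa'}{\det(\kappa'\dia_P\kappa')}\\
&\quad+ \frac{(\kappa\dia_{\tilde P}\kappa)\,\bigl(\det(\kappa'\dia_P\kappa')-\det(\kappa\dia_P\kappa)\bigr)}{\det(\kappa\dia_P\kappa)\,\det(\kappa'\dia_P\kappa')}.
\end{align*}
Taking Frobenius norms, the denominators are bounded below by $\lambda_1\lambda_2$ via Lemma~\ref{lem: det lemma}, and $\|\kappa\dia_{\tilde P}\kappa\|\leq\sqrt{\lambda_1^2+\lambda_2^2}$ by Lemma~\ref{lem: norm kappa dia P} (which applies equally to $\tilde P$ since $P$ and $\tilde P$ share eigenvalues). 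The whole problem is thereby reduced to two Lipschitz-type estimates in $\kappa$, namely linear bounds on $\|\kappa\dia_{\tilde P}\kappa-\kappa'\dia_{\tilde P}\kappa'\|$ and on $|\det(\kappa\dia_P\kappa)-\det(\kappa'\dia_P\kappa')|$ in terms of $\|\kappa-\kappa'\|$.

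For the first of these I would set $\delta\coloneqq\kappa-\kappa'$ and expand bilinearly $\kappa\dia_{\tilde P}\kappa-\kappa'\dia_{\tilde P}\kappa' = \delta\dia_{\tilde P}\kappa' + \kappa'\dia_{\tilde P}\delta + \delta\dia_{\tilde P}\delta$. Each summand can then be handled by exactly the same Frobenius-norm bookkeeping that underlies Lemmas~\ref{lem: norm kappa dia P} and \ref{lem: norm bul Y}: writing $a\dia_{\tilde P}b=\bigl(M(a_1)^T,\ldots,M(a_N)^T\bigr)(\mathrm{Id}_N\otimes\tilde P)\bigl(M(b_1);\ldots;M(b_N)\bigr)$ and using submultiplicativity together with $\|\mathrm{Id}_N\otimes\tilde P\|_F=\sqrt{N(\lambda_1^2+\lambda_2^2)}$ yields a generic cross-term bound proportional to $\sqrt{N}\sqrt{\lambda_1^2+\lambda_2^2}\,\|a\|\,\|b\|$. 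Because $\kappa'\in\Scomp$ has $\|\kappa'\|=1$, the two linear summands contribute $2\sqrt{N}\sqrt{\lambda_1^2+\lambda_2^2}\,\|\delta\|$, while the quadratic summand is linearized using $\|\delta\|\leq 2$ (which follows from $\kappa,\kappa'\in\Scomp$), so that $\|\delta\|^2\leq 2\|\delta\|$. The combined constant matches the prefactor $4\sqrt{N}\sqrt{\lambda_1^2+\lambda_2^2}$ in the first term of the claimed bound.

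For the determinant difference I would use the exact $2\times 2$ identity
\[
\det(A+\Delta)-\det(A)=\mathrm{tr}\bigl(\mathrm{adj}(A)\,\Delta\bigr)+\det(\Delta),
\]
with $A=\kappa'\dia_P\kappa'$ and $\Delta=\kappa\dia_P\kappa-\kappa'\dia_P\kappa'$. The previous paragraph's argument applied to $P$ in place of $\tilde P$ bounds $\|\Delta\|_F$ linearly in $\|\delta\|$; by Lemma~\ref{lem: inverse lemma} one has $\mathrm{adj}(A)=\kappa'\dia_{\tilde P}\kappa'$, whose Frobenius norm is again at most $\sqrt{\lambda_1^2+\lambda_2^2}$ by Lemma~\ref{lem: norm kappa dia P}; and the universal estimate $|\det(\Delta)|\leq\tfrac12\|\Delta\|_F^2$ for any $2\times 2$ matrix, combined once more with $\|\delta\|\leq 2$, linearizes the last term. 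Substituting back into the opening display then gives the lemma. The main subtle step is this linearization of the quadratic contributions from $\delta\dia_{\tilde P}\delta$ and $\det(\Delta)$: since $\kappa\mapsto\kappa\dia_P\kappa$ is genuinely bilinear in $\kappa$ these terms cannot be dropped, but the sphere constraint $\kappa,\kappa'\in\Scomp$ makes $\|\delta\|\leq 2$ available, which is precisely what converts an $\mathcal{O}(\|\delta\|^2)$ error into the $\mathcal{O}(\|\delta\|)$ control demanded by the stated bound.
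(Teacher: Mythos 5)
Your overall strategy is sound and runs largely parallel to the paper's: both proofs reduce the claim, via the adjugate representation of Lemma~\ref{lem: inverse lemma} and the determinant lower bound of Lemma~\ref{lem: det lemma}, to a Lipschitz estimate for $\kappa\mapsto\kappa\dia_{\tilde P}\kappa$ and one for $\kappa\mapsto\det(\kappa\dia_P\kappa)$. Your asymmetric splitting $v(A-B)+(u-v)A$ yields the same two numerator/denominator bounds as the paper's symmetrized splitting, and your adjugate--trace identity $\det(A+\Delta)-\det(A)=\mathrm{tr}(\mathrm{adj}(A)\Delta)+\det(\Delta)$ together with $\mathrm{adj}(\kappa'\dia_P\kappa')=\kappa'\dia_{\tilde P}\kappa'$ is a cleaner, and in fact numerically sharper, route to the determinant difference than the paper's elementary inequality $|\det A-\det B|\le 2\|A+B\|\,\|A-B\|$ (Lemma~\ref{lem: inequality_det}); that part of your argument closes with room to spare.

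The gap is in the constant accounting for $\|\kappa\dia_{\tilde P}\kappa-\kappa'\dia_{\tilde P}\kappa'\|$. The submultiplicative bound you invoke evaluates to
\begin{align*}
\left\|\big(M(a_1)^T,\ldots,M(a_N)^T\big)\right\|_F\,\big\|\mathrm{Id}_N\otimes\tilde P\big\|_F\,\left\|\big(M(b_1);\ldots;M(b_N)\big)\right\|_F
=\sqrt2\,\|a\|\cdot\sqrt{N(\lambda_1^2+\lambda_2^2)}\cdot\sqrt2\,\|b\|
=2\sqrt N\sqrt{\lambda_1^2+\lambda_2^2}\,\|a\|\,\|b\|,
\end{align*}
i.e.\ the coefficient is $2\sqrt N$, not $\sqrt N$. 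Feeding this into your expansion $\delta\dia_{\tilde P}\kappa'+\kappa'\dia_{\tilde P}\delta+\delta\dia_{\tilde P}\delta$ with $\|\delta\|^2\le 2\|\delta\|$ gives $(2+2+4)\sqrt N\sqrt{\lambda_1^2+\lambda_2^2}\,\|\delta\|=8\sqrt N\sqrt{\lambda_1^2+\lambda_2^2}\,\|\delta\|$, twice the $4\sqrt N\sqrt{\lambda_1^2+\lambda_2^2}\,\|\delta\|$ required to reproduce the first term of the stated bound, so your assertion that ``the combined constant matches'' is not established and the lemma as stated is not proved. The paper avoids this in Lemma~\ref{lem: kappa dia P - kappa' dia P} by the polarization identity $\kappa\dia_{\tilde P}\kappa-\kappa'\dia_{\tilde P}\kappa'=\tfrac12\big[(\kappa-\kappa')\dia_{\tilde P}(\kappa+\kappa')+(\kappa+\kappa')\dia_{\tilde P}(\kappa-\kappa')\big]$, whose two summands are mutual transposes; this absorbs the quadratic term entirely and, with $\|\kappa+\kappa'\|\le2$, lands exactly on $4\sqrt N\sqrt{\lambda_1^2+\lambda_2^2}\,\|\delta\|$. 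Your route can be salvaged without the polarization trick by the sharper per-term estimate $\|a\dia_{\tilde P}b\|\le\sum_{\nu}\|M(\bar a_\nu)\tilde P M(b_\nu)\|_F=\sqrt{\lambda_1^2+\lambda_2^2}\sum_\nu|a_\nu||b_\nu|\le\sqrt{\lambda_1^2+\lambda_2^2}\,\|a\|\,\|b\|$ (using that $M(z)/|z|$ is a rotation and the Frobenius norm is rotation invariant), which turns your three-term total into $4\sqrt{\lambda_1^2+\lambda_2^2}\,\|\delta\|\le4\sqrt N\sqrt{\lambda_1^2+\lambda_2^2}\,\|\delta\|$; but as written the bookkeeping does not close.
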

  
  \begin{proof}
    We get by using Lemma \ref{lem: inverse lemma}
    \begin{align*}
    &\left|\left|\left(\kappa \dia_P \kappa \right)^{-1} - \left(\kappa' \dia_P \kappa' \right)^{-1}\right|\right| = \left|\left| \frac{1}{\det\left(\kappa \dia_P \kappa \right)}\left(\kappa \dia_{\tilde{P}} \kappa  \right)- \frac{1}{\det\left(\kappa' \dia_P \kappa' \right)}\left(\kappa' \dia_{\tilde{P}} \kappa' \right)\right|\right|\\
    &\leq \frac{1}{2}\left|\frac{1}{\det\left(\kappa \dia_P \kappa \right)}+\frac{1}{\det\left(\kappa' \dia_P \kappa' \right)}\right|
    \left|\left| \left(\kappa \dia_{\tilde{P}} \kappa  \right)- \left(\kappa' \dia_{\tilde{P}} \kappa' \right)\right|\right|\\
    &+\frac{1}{2}\left|\frac{1}{\det\left(\kappa \dia_P \kappa \right)}-\frac{1}{\det\left(\kappa' \dia_P \kappa' \right)}\right|
    \left|\left| \left(\kappa \dia_{\tilde{P}} \kappa  \right)+ \left(\kappa' \dia_{\tilde{P}} \kappa' \right)\right|\right|.
    \end{align*}
    From Lemma \ref{lem: det lemma}, Lemma \ref{lem: norm kappa dia P}, Lemma \ref{lem: kappa dia P - kappa' dia P} and Lemma \ref{lem: 1/det - 1/det} it follows that 
    
    \begin{align*}
    &\left|\left|\left(\kappa \dia_P \kappa \right)^{-1} - \left(\kappa' \dia_P \kappa' \right)^{-1}\right|\right|\\
    &\leq \frac{1}{2}\left(\frac{2}{\lambda_1\lambda_2}\right)\left(4\sqrt{N}\sqrt{\lambda_1^2+\lambda_2^2}\left|\left|\kappa-\kappa'\right|\right|\right)+\frac{1}{2}\left(\frac{16\sqrt{N}(\lambda_1^2+\lambda_2^2)}{(\lambda_1\lambda_2)^2}\left|\left|\kappa-\kappa'\right|\right|\right) \left(2\sqrt{\lambda_1^2 + \lambda_2^2}\right)\\
    &\leq\left(\frac{4\sqrt{N}\sqrt{\lambda_1^2+\lambda_2^2}}{\lambda_1\lambda_2}+\frac{16\sqrt{N}(\lambda_1^2+\lambda_2^2)\sqrt{\lambda_1^2 + \lambda_2^2}}{(\lambda_1\lambda_2)^2}\right)\left|\left|\kappa-\kappa'\right|\right|.
    \end{align*}
  \end{proof}
  
  \begin{lemma}\label{lem: kappa dia P - kappa' dia P}
    For $\kappa, \kappa' \in \Scomp$ and $P\in \mathrm{SPD}(2)$ as defined in Definition \ref{def: ptilde} holds
    \begin{align*}
    \left|\left| \left(\kappa \dia_{\tilde{P}} \kappa  \right)- \left(\kappa' \dia_{\tilde{P}} \kappa' \right)\right|\right|\leq 4\sqrt{N}\sqrt{\lambda_1^2+\lambda_2^2}\left|\left|\kappa-\kappa'\right|\right|.
    \end{align*}
  \end{lemma}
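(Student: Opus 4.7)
The plan is to expand the operator $\dia_{\tilde P}$ as a bilinear form in $\kappa$ and exploit that bilinearity to write the difference as a sum of two terms, each linear in $\kappa-\kappa'$, to which standard matrix norm submultiplicativity applies. Concretely, from the definition
\[
\kappa \dia_{\tilde P}\kappa = \sum_{i=1}^N M(\kappa_i)^T \tilde P\, M(\kappa_i),
\]
I will insert the telescoping identity
\[
M(\kappa_i)^T \tilde P\, M(\kappa_i) - M(\kappa'_i)^T \tilde P\, M(\kappa'_i)
= M(\kappa_i-\kappa'_i)^T \tilde P\, M(\kappa_i) + M(\kappa'_i)^T \tilde P\, M(\kappa_i-\kappa'_i)
\]
and take Frobenius norms term by term.

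For the per-summand bounds I will use that $M(z)^T M(z) = |z|^2 I_2$, which gives $\|M(z)\|_F=\sqrt{2}\,|z|$, together with $\|\tilde P\|_F = \sqrt{\lambda_1^2+\lambda_2^2}$ (since $\tilde P$ is the symmetric matrix with eigenvalues $\lambda_1,\lambda_2$ by construction in Definition \ref{def: ptilde}). Applying Frobenius submultiplicativity $\|ABC\|_F \leq \|A\|_F\|B\|_F\|C\|_F$ to each triple product then yields, for every $i$,
\[
\bigl\|M(\kappa_i-\kappa'_i)^T \tilde P\, M(\kappa_i)\bigr\|_F + \bigl\|M(\kappa'_i)^T \tilde P\, M(\kappa_i-\kappa'_i)\bigr\|_F \leq 2\sqrt{\lambda_1^2+\lambda_2^2}\,|\kappa_i-\kappa'_i|\bigl(|\kappa_i|+|\kappa'_i|\bigr).
\]

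Summing over $i$ and invoking the triangle inequality for the Frobenius norm yields an overall factor of $\sum_i |\kappa_i-\kappa'_i|(|\kappa_i|+|\kappa'_i|)$. I will dispatch this by the crude pointwise bound $|\kappa_i|,|\kappa'_i|\leq \|\kappa\|=\|\kappa'\|=1$, followed by Cauchy--Schwarz on the remaining sum $\sum_i |\kappa_i-\kappa'_i|\leq \sqrt{N}\,\|\kappa-\kappa'\|$. Combining the two $\cdot\,2$ factors (from the two telescoping terms and from $|\kappa_i|+|\kappa'_i|\leq 2$) produces exactly the claimed constant $4\sqrt{N}\sqrt{\lambda_1^2+\lambda_2^2}$.

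I do not anticipate a real obstacle here: the whole argument is a routine application of the triangle inequality, Frobenius submultiplicativity, and Cauchy--Schwarz once the telescoping is in place. The only bit of care needed is to keep the matrix-norm choices consistent and to remember that $\|\kappa\|=\|\kappa'\|=1$, which is what makes the crude bound $|\kappa_i|\leq 1$ both legitimate and necessary to produce the $\sqrt{N}$ factor (a tighter Cauchy--Schwarz on $\sum_i |\kappa_i||\kappa_i-\kappa'_i|$ would give an even sharper constant, but the form stated in the lemma is exactly what the subsequent estimates in Lemma \ref{lem: inv(kappa dia P) - inv(kappa' dia P) } require).
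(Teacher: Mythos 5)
Your proof is correct and arrives at exactly the stated constant; it is essentially the paper's argument, which likewise exploits bilinearity of $\dia_{\tilde P}$ to extract a factor of $\kappa-\kappa'$ (via the symmetric identity $\kappa \dia_{\tilde P}\kappa-\kappa'\dia_{\tilde P}\kappa'=\tfrac12\big[(\kappa-\kappa')\dia_{\tilde P}(\kappa+\kappa')+(\kappa+\kappa')\dia_{\tilde P}(\kappa-\kappa')\big]$ rather than your telescoping) and then applies Frobenius submultiplicativity. The only difference is bookkeeping: the paper bounds a single block product of a $2\times 2N$ row of $M(\kappa_\nu+\kappa'_\nu)^T$, the matrix $\mathrm{Id}_N\otimes\tilde P$, and a $2N\times 2$ column of $M(\kappa_\nu-\kappa'_\nu)$, whereas you bound summand-by-summand and finish with Cauchy--Schwarz; both routes give $4\sqrt{N}\sqrt{\lambda_1^2+\lambda_2^2}\,\|\kappa-\kappa'\|$.
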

  
  \begin{proof}
    First, we use the triangle inequality and the sub-multiplicative property of the Frobenius norm: 
    \begin{align*}
    & \left|\left| \left(\kappa \dia_{\tilde{P}} \kappa  \right)- \left(\kappa' \dia_{\tilde{P}} \kappa' \right)\right|\right|=
    \frac{1}{2}\Big|\Big| \big( (\kappa-\kappa') \dia_{\tilde{P}} (\kappa+\kappa')  \big)+ \big((\kappa+\kappa') \dia_{\tilde{P}} (\kappa-\kappa') \big)\Big|\Big|\\
    &\leq \Big|\Big| \big((\kappa+\kappa') \dia_{\tilde{P}} (\kappa-\kappa') \big)\Big|\Big|\\
    &\leq \left|\left|\begin{pmatrix} M(\kappa_1+\kappa_1')^T & \dots &  M(\kappa_{N}+\kappa_N')^T \end{pmatrix}\right|\right|\left|\left| (\mathrm{Id}_N \otimes \tilde{P}) 
    \right|\right|
    \left|\left|\begin{pmatrix} M(\kappa_1-\kappa_1') \\ \vdots \\  M(\kappa_{N}-\kappa_N') \end{pmatrix}\right|\right|.
    \end{align*}
    Analogously to Proof of Lemma \ref{lem: norm bul Y} we get 
    \begin{align*}
    \left|\left|\begin{pmatrix} M(\kappa_1+\kappa_1')^T & \dots &  M(\kappa_{N}+\kappa_N')^T \end{pmatrix}\right|\right|\left|\left| (\mathrm{Id}_N \otimes \tilde{P}) 
    \right|\right|\leq 2\sqrt{2N}\sqrt{\lambda_1^2+\lambda_2^2}.
    \end{align*}
    Thus, we get the desired result
    \begin{align*}
    \left|\left| \left(\kappa \dia_{\tilde{P}} \kappa  \right)- \left(\kappa' \dia_{\tilde{P}} \kappa' \right)\right|\right|\leq 4\sqrt{N}\sqrt{\lambda_1^2+\lambda_2^2}\left|\left|\kappa-\kappa'\right|\right|.
    \end{align*}
  \end{proof}
  
  \begin{lemma}\label{lem: inequality_det}
    For $A, B \in \mathbb{R}^{2\times 2}$ we have
    \begin{align*}
    \Big| \det(A)- \det(B) \Big| \leq 2|| A+B|||| A-B||.
    \end{align*}
  \end{lemma}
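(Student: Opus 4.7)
The plan is to reduce the difference of determinants to a bilinear form in $A+B$ and $A-B$, then apply Cauchy--Schwarz. For $2\times 2$ matrices $X,Y$ a direct expansion yields
\begin{align*}
\det(X+Y) &= \det(X) + \det(Y) + B(X,Y),
\end{align*}
where $B(X,Y) := x_{11}y_{22} + y_{11}x_{22} - x_{12}y_{21} - y_{12}x_{21}$ is symmetric and bilinear. Since $\det(-Y)=\det(Y)$ in dimension two, subtracting the analogous expansion for $\det(X-Y)$ gives the polarization identity $\det(X+Y)-\det(X-Y) = 2B(X,Y)$.

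Applying this with $X = (A+B)/2$ and $Y = (A-B)/2$, so that $X+Y = A$ and $X-Y = B$, and using bilinearity of $B$, I obtain
\begin{align*}
\det(A) - \det(B) = \tfrac{1}{2} B(A+B,\,A-B).
\end{align*}
The remaining step is to bound $|B(\cdot,\cdot)|$ in terms of the Frobenius norm used throughout this section. Writing $B(U,V) = u_{11}v_{22} + u_{22}v_{11} - u_{12}v_{21} - u_{21}v_{12}$, this is the standard Euclidean inner product in $\mathbb{R}^4$ between $(u_{11}, u_{22}, u_{12}, u_{21})$ and $(v_{22}, v_{11}, -v_{21}, -v_{12})$, whose Euclidean norms coincide with $\|U\|$ and $\|V\|$ respectively. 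Cauchy--Schwarz therefore yields $|B(U,V)| \le \|U\|\,\|V\|$, and substituting $U = A+B$, $V = A-B$ gives
\begin{align*}
|\det(A)-\det(B)| \;\le\; \tfrac{1}{2}\,\|A+B\|\,\|A-B\|,
\end{align*}
which is in fact a factor of four stronger than the stated bound and thus implies it.

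There is no substantive obstacle here: the polarization identity is a one-line calculation specific to the $2\times 2$ case (it fails in higher dimension, where $\det$ is no longer quadratic), and the norm bound is an immediate Cauchy--Schwarz once $B$ is recognized as a Euclidean inner product. The only minor care is to use the Frobenius norm that is consistent with its usage in the neighbouring lemmas such as Lemma \ref{lem: norm kappa dia P}, which is already implicit in the statement.
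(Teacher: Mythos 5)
Your proof is correct and in fact sharper than the stated bound, but it takes a genuinely different route from the paper. The paper expands both determinants entrywise, splits each product difference $a_{11}a_{22}-b_{11}b_{22}$ via the symmetrization $\tfrac{1}{2}(a_{11}+b_{11})(a_{22}-b_{22})+\tfrac{1}{2}(a_{11}-b_{11})(a_{22}+b_{22})$, and then bounds every single entry by the full Frobenius norm, $|a_{ij}\pm b_{ij}|\leq \|A\pm B\|$; summing the four resulting terms gives the constant $2$. You instead exploit that $\det$ is a quadratic form on $\mathbb{R}^{2\times 2}$, so that the polarization identity gives the exact algebraic identity $\det(A)-\det(B)=\tfrac{1}{2}B(A+B,A-B)$, and then a single application of Cauchy--Schwarz (after recognizing $B(U,V)$ as a Euclidean inner product between permuted-and-signed coordinate vectors of $U$ and $V$, each of Frobenius norm $\|U\|$ resp.\ $\|V\|$) yields the constant $\tfrac{1}{2}$. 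Your identity and the norm computation both check out, so your bound is a factor of four stronger; the improvement is immaterial for the paper's purposes, since Lemma \ref{lem: inequality_det} only feeds into the explicit prefactor $\dot\rho$ in Theorem \ref{the: inequality rho}, where constants are not optimized, but your argument is cleaner and avoids the term-by-term bookkeeping. The one structural caveat you already flag correctly: the polarization step is special to $n=2$, exactly as the paper's entrywise expansion is, so neither approach generalizes as stated.
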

  \begin{proof}
    We directly calculate the determinant of the $2\times 2$ matrix and use the triangle inequality
    \begin{align*}
    &\Big| \det(A)- \det(B) \Big| 
    = \Big| a_{11}a_{22}-a_{12}a_{21} -( b_{11}b_{22}-b_{12}b_{21}) \Big|
    \leq \Big| a_{11}a_{22}-b_{11}b_{22}  \Big| + \Big| a_{12}a_{21} -b_{12}b_{21}\Big|\\
    &= \frac{1}{2}\Big| (a_{11}+b_{11})(a_{22}-b_{22})+  (a_{11}-b_{11})(a_{22}+b_{22})\Big| \\
    &+ \frac{1}{2}\Big| (a_{12}+b_{12})(a_{21}-b_{21})+  (a_{12}-b_{12})(a_{21}+b_{21})\Big|\\
    &\leq \frac{1}{2}\Big|a_{11}+b_{11}\Big|\Big|a_{22}-b_{22}\Big|+  \frac{1}{2}\Big|a_{11}-b_{11}\Big|\Big|a_{22}+b_{22}\Big| \\
    &+ \frac{1}{2}\Big| a_{12}+b_{12}\Big|\Big|a_{21}-b_{21}\Big|+  \frac{1}{2}\Big|a_{12}-b_{12}\Big|\Big|a_{21}+b_{21})\Big|.
    \end{align*}
    Using that $\Big| a_{ij}+b_{ij}\Big| \leq ||A+B||$ and  $\Big| a_{ij}-b_{ij}\Big| \leq ||A-B||$ for all $i,j=1,2$ we get 
    \begin{align*}
    \Big| \det(A)- \det(B) \Big| \leq 2|| A+B|||| A-B||.
    \end{align*}
  \end{proof}
  
  \begin{lemma}\label{lem: 1/det - 1/det}
    For $\kappa, \kappa' \in \Scomp$ and $P\in \mathrm{SPD}(2)$ as defined in Definition \ref{def: ptilde} holds
    \begin{align*}
    \left|\frac{1}{\det\left(\kappa \dia_P \kappa \right)}-\frac{1}{\det\left(\kappa' \dia_P \kappa' \right)}\right|\leq \frac{16\sqrt{N}(\lambda_1^2+\lambda_2^2)}{(\lambda_1\lambda_2)^2}\left|\left|\kappa-\kappa'\right|\right|.
    \end{align*}
  \end{lemma}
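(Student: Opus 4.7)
The plan is to rewrite the difference of reciprocals over a common denominator:
\begin{align*}
\left|\frac{1}{\det\left(\kappa \dia_P \kappa \right)}-\frac{1}{\det\left(\kappa' \dia_P \kappa' \right)}\right| = \frac{\left|\det\left(\kappa' \dia_P \kappa' \right) - \det\left(\kappa \dia_P \kappa \right)\right|}{\det\left(\kappa \dia_P \kappa \right)\,\det\left(\kappa' \dia_P \kappa' \right)}\,,
\end{align*}
and then bound the numerator and denominator separately using earlier results. This is a straightforward assembly — no new ideas are needed.

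For the denominator, I would invoke Lemma \ref{lem: det lemma} twice to obtain the uniform lower bound $\det(\kappa \dia_P \kappa)\det(\kappa' \dia_P \kappa') \geq (\lambda_1\lambda_2)^2$.

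For the numerator, I would apply Lemma \ref{lem: inequality_det} with $A = \kappa \dia_P \kappa$ and $B = \kappa' \dia_P \kappa'$ to get
\begin{align*}
\left|\det\left(\kappa \dia_P \kappa \right) - \det\left(\kappa' \dia_P \kappa' \right)\right| \leq 2\,\|A+B\|\,\|A-B\|\,.
\end{align*}
The term $\|A+B\|$ is controlled by $2\sqrt{\lambda_1^2+\lambda_2^2}$ via the triangle inequality and Lemma \ref{lem: norm kappa dia P}. For the term $\|A-B\|$, the bound $4\sqrt{N}\sqrt{\lambda_1^2+\lambda_2^2}\|\kappa-\kappa'\|$ is established in Lemma \ref{lem: kappa dia P - kappa' dia P}; note that the latter is stated for $\tilde P$, but the proof only uses that $\tilde P$ has the same spectral radius as $P$, so the identical bound applies verbatim to $\dia_P$.

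Multiplying these estimates yields the numerator bound $16\sqrt{N}(\lambda_1^2+\lambda_2^2)\|\kappa-\kappa'\|$, and dividing by $(\lambda_1\lambda_2)^2$ gives exactly the claimed inequality. There is no substantive obstacle here; the only thing to verify carefully is that the estimates of Lemmas \ref{lem: norm kappa dia P} and \ref{lem: kappa dia P - kappa' dia P} transfer between $\dia_P$ and $\dia_{\tilde P}$, which is immediate since $P$ and $\tilde P$ share the eigenvalues $\lambda_1,\lambda_2$.
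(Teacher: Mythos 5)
Your proposal is correct and follows essentially the same route as the paper's proof: common denominator, the lower bound $(\lambda_1\lambda_2)^2$ from Lemma \ref{lem: det lemma}, the determinant-difference bound from Lemma \ref{lem: inequality_det}, and the estimates from Lemmata \ref{lem: norm kappa dia P} and \ref{lem: kappa dia P - kappa' dia P}, with identical constants. Your explicit remark that the bound of Lemma \ref{lem: kappa dia P - kappa' dia P} transfers from $\dia_{\tilde P}$ to $\dia_P$ because $P$ and $\tilde P$ share the eigenvalues $\lambda_1,\lambda_2$ is a point the paper uses silently, so it is a welcome clarification rather than a deviation.
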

  \begin{proof}
    It follows from Lemma \ref{lem: det lemma} and Lemma \ref{lem: inequality_det}.
    \begin{align*}
    &\left|\frac{1}{\det\left(\kappa \dia_P \kappa \right)}-\frac{1}{\det\left(\kappa' \dia_P \kappa' \right)}\right|
    \leq\frac{1}{(\lambda_1\lambda_2)^2}\left|\det\left(\kappa' \dia_P \kappa' \right)-\det\left(\kappa \dia_P \kappa \right)\right|\\
    &\leq\frac{2}{(\lambda_1\lambda_2)^2}\left|\left|(\kappa' \dia_P \kappa') +(\kappa \dia_P \kappa )\right|\right|\left|\left|(\kappa' \dia_P \kappa') -(\kappa \dia_P \kappa )\right|\right|.
    \end{align*}
    Consequently, from Lemma \ref{lem: norm kappa dia P} and Lemma \ref{lem: kappa dia P - kappa' dia P} follows the desired result:
    \begin{align*}
    \left|\frac{1}{\det\left(\kappa \dia_P \kappa \right)}-\frac{1}{\det\left(\kappa' \dia_P \kappa' \right)}\right|&\leq \frac{2}{(\lambda_1\lambda_2)^2}\left(2\sqrt{\lambda_1^2+\lambda_2^2}\right)\left(4\sqrt{N}\sqrt{\lambda_1^2+\lambda_2^2}\left|\left|\kappa-\kappa'\right|\right|\right) \\
    &= \frac{16\sqrt{N}(\lambda_1^2+\lambda_2^2)}{(\lambda_1\lambda_2)^2}\left|\left|\kappa-\kappa'\right|\right|.
    \end{align*}
  \end{proof}
  
  \begin{lemma}\label{lem: dp xa yb inequality}
    Let $x, y\in \mathbb{C}$ and $a, b \in \mathbb{C}^N$. It holds
    \begin{align}
    d_P(xa, yb) \leq \frac{1}{2}  d_P\Bigg(\left(x + y\right)(a - b), 0\Bigg) +  \frac{1}{2}d_P\Bigg(\left(x - y\right)(a + b), 0\Bigg).
    \end{align}
  \end{lemma}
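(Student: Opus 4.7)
The plan is to use a parallelogram-style algebraic decomposition together with the triangle inequality for the norm $\|\cdot\|_P$ induced by the Mahalanobis inner product.

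First, I would observe the simple algebraic identity
\begin{align*}
xa - yb \;=\; \tfrac{1}{2}(x+y)(a-b) + \tfrac{1}{2}(x-y)(a+b),
\end{align*}
which is immediate by expanding the right-hand side and cancelling the cross-terms $\pm xb, \pm ya$. Since $\rho\,\dia_P,\,\bul_P$ and $\spr{\cdot}{\cdot}_P$ are defined componentwise via the real embedding $\mathrm{vec}$ and the fixed SPD matrix $A=P$, the map $z \mapsto \|z\|_P$ is genuinely a norm on $\mathbb{C}^N$ (equivalently, the pullback of the Euclidean Mahalanobis norm on $\mathbb{R}^{2N}$ via $\mathrm{vec}$). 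In particular $d_P(u,v)=\|u-v\|_P$ satisfies the triangle inequality.

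Next, I would rewrite the left-hand side as $d_P(xa,yb)=\|xa-yb\|_P = d_P(xa-yb, 0)$ and apply the triangle inequality to the decomposition above:
\begin{align*}
d_P(xa,yb)
&= \left\|\tfrac{1}{2}(x+y)(a-b) + \tfrac{1}{2}(x-y)(a+b)\right\|_P \\
&\leq \tfrac{1}{2}\|(x+y)(a-b)\|_P + \tfrac{1}{2}\|(x-y)(a+b)\|_P \\
&= \tfrac{1}{2} d_P\bigl((x+y)(a-b),\,0\bigr) + \tfrac{1}{2} d_P\bigl((x-y)(a+b),\,0\bigr),
\end{align*}
which is the desired inequality. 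No further obstacle is expected: the only mildly nontrivial point is checking that $\|\cdot\|_P$ really is a norm on $\mathbb{C}^N$, but this follows immediately from positive-definiteness of $P\in\mathrm{SPD}(2)$ and the block-diagonal structure of the inner product defined in Section \ref{sec: notation}. Thus the lemma reduces to a one-line algebraic identity followed by the triangle inequality.
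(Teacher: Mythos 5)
Your proof is correct and follows exactly the same route as the paper's: the identity $xa-yb=\tfrac{1}{2}(x+y)(a-b)+\tfrac{1}{2}(x-y)(a+b)$ followed by the triangle inequality for $\|\cdot\|_P$. The paper's proof is the same one-liner, only omitting your (valid) remark that $\|\cdot\|_P$ is a genuine norm by positive definiteness of $P$.
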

  \begin{proof}
    Using the Triangle inequality we get
    \begin{align*}
    d_P(xa, yb) &=  d_P\left(\frac{1}{2}(x+y)(a-b) + \frac{1}{2}(x-y)(a+b), 0\right)\\
    &\leq \frac{1}{2}  d_P\Bigg(\left(x + y\right)(a - b), 0\Bigg) +  \frac{1}{2}d_P\Bigg(\left(x - y\right)(a + b), 0\Bigg).
    \end{align*}
  \end{proof}
  \subsection{CLT}
  In this section we have the same notation as in the Section \ref{sec: CLT Drift model} in the main text. 
  \begin{lemma}\label{lem: d phi kappa}
    For $\beta^{-1}$ from Definition \ref{def: chart clt} in the main text holds  
    \begin{align*}
    d(\left[\beta^{-1}(x)\right],\left[\kappa^{(0)}\right] )^2= 1-\frac{1}{\sqrt{||x||^2+1}}.
    \end{align*}
  \end{lemma}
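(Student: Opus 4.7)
The plan is to exploit the fact that the chart $\beta^{-1}$ in Definition \ref{def: chart clt} furnishes an explicit unit representative of $[\beta^{-1}(x)]$, reducing the projective distance computation to a single Hermitian inner product.

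First I would fix the canonical unit representatives $\kappa^{(0)} \in [\kappa^{(0)}]$ and $\tilde{\kappa} := R^{*}\tilde{x}/\|\tilde{x}\| \in [\beta^{-1}(x)]$ in $\mathbb{C}^{N}$. The reduction carried out in the proof of Lemma \ref{lem: opt-pos} collapses the minimization over the phase $\lambda$ in the definition of $d$ to the modulus $|\kappa^{(0)*}\tilde{\kappa}|$, so it suffices to compute this quantity and translate it into $d^{2}$ via the inner-product form of the projective distance.

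Next, using unitarity of $R$ together with the defining property $R\kappa^{(0)} = e_{N}$, I would simplify
\begin{align*}
\kappa^{(0)*}\tilde{\kappa}
= (R\kappa^{(0)})^{*}\,\tilde{x}/\|\tilde{x}\|
= e_{N}^{*}\,\tilde{x}/\|\tilde{x}\|,
\end{align*}
and since $\tilde{x}_{N} = 1$ by construction, this collapses to $1/\|\tilde{x}\|$. A direct expansion gives
\begin{align*}
\|\tilde{x}\|^{2} = \sum_{k=1}^{N-1}\!\left(x_{2k-1}^{2} + x_{2k}^{2}\right) + 1 = \|x\|^{2} + 1,
\end{align*}
so that $|\kappa^{(0)*}\tilde{\kappa}| = (\|x\|^{2}+1)^{-1/2}$.

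Substituting into the inner-product representation of $d^{2}$ supplied by Lemma \ref{lem: opt-pos} then yields $d([\beta^{-1}(x)],[\kappa^{(0)}])^{2} = 1 - (\|x\|^{2}+1)^{-1/2}$, as claimed. The computation is routine; the only point requiring care is to track the exact projective-distance convention used elsewhere in the paper and to align it with the reduction from Lemma \ref{lem: opt-pos}, so that the minimized quantity is expressed as $1 - |\kappa^{(0)*}\tilde{\kappa}|$ rather than as the raw chordal squared norm. No substantive obstacle arises beyond this bookkeeping.
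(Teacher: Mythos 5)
Your proof is correct and follows essentially the same route as the paper's: both pass to the canonical representative $R^{*}\tilde{x}/\|\tilde{x}\|$, use the optimal-position reduction of Lemma \ref{lem: opt-pos} together with $R\kappa^{(0)}=e_{N}$ and $\tilde{x}_{N}=1$ to collapse the distance computation to the inner product $1/\|\tilde{x}\|$, and conclude via $\|\tilde{x}\|^{2}=\|x\|^{2}+1$. The convention ``bookkeeping'' you flag (the raw chordal square would give $2\bigl(1-1/\|\tilde{x}\|\bigr)$ rather than $1-1/\|\tilde{x}\|$) is equally present in the paper's own proof, which silently equates $\bigl\|R^{*}\tilde{x}/\|\tilde{x}\|-\kappa^{(0)}\bigr\|^{2}$ with $1-1/\|\tilde{x}\|$, so you and the paper agree on both the method and the stated normalization.
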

  
  \begin{proof}
    For $[\kappa], [\tilde{\kappa}]\in \mathfrak{P}$ it holds
    \begin{align*}
    d\left([\kappa], [\tilde{\kappa}] \right)^2 = \min_{\lambda \in \mathbb{R}} ||\kappa-e^{i\lambda}\tilde{\kappa} ||^2  = \min_{\lambda \in \mathbb{R}}2\left(1- \Re(e^{i\lambda}\kappa^*\kappa)\right).
    \end{align*}
    Consequently, if $\kappa^*\tilde{\kappa} \in \mathbb{R}_{>0}$, then $\kappa\in[\kappa], \tilde{\kappa}\in [\tilde{\kappa}]$ are in optimal position and it holds $ d\left([\kappa], [\tilde{\kappa}] \right)^2=||\kappa-\tilde{\kappa} ||^2$. 
    For $\left(R^*\frac{\tilde{x}}{||\tilde{x}||}\right)\in \beta^{-1}(x)$ from Definition \ref{def: chart clt} in the main text, 
    \begin{align*}
    \left(R^*\frac{\tilde{x}}{||\tilde{x}||}\right)^T\kappa^{(0)}=\frac{1}{||\tilde{x}||}\in \mathbb{R}_{>0}
    \end{align*} holds and thus 
    \begin{align*}
    d(\beta^{-1}(x),[\kappa^{(0)}] )^2 = \left|\left|\left(R^*\frac{\tilde{x}}{||\tilde{x}||}\right)-\kappa^{(0)}\right|\right|^2=1-\frac{1}{||\tilde{x}||} = 1-\frac{1}{\sqrt{||x||^2+1}}.
    \end{align*}
  \end{proof}
  
  \subsection{Auxiliary calculations for Section \ref{sec: clt I}}
  In this section we have the same notation as in the Section \ref{sec: clt I} in the main text. 
  \begin{lemma}\label{lem: rotation max method}
    For $\kappa \in \Scomp$ holds 
    \begin{align*}
    \argmax_{\lambda \in \mathbb{S}^1} \left|\left|\Re(e^{i\lambda }\kappa) \right|\right|^2 = \begin{cases}
    \left\{\pi-\frac{\Arg(\kappa^T\kappa)}{2}, 2\pi-\frac{\Arg(\kappa^T\kappa)}{2}\right\}, & \text{if } \kappa^T\kappa\neq 0\\
    \mathbb{S}^1, & \text{else.}
    \end{cases}
    \end{align*}
  \end{lemma}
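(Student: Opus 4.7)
The plan is to reduce $\|\Re(e^{i\lambda}\kappa)\|^2$ to a single scalar cosine in $\lambda$ and then read off the maximisers by inspection. The key identity I would first establish is that for any $z \in \mathbb{C}^N$,
\[
\|\Re(z)\|^2 \;=\; \tfrac{1}{2}\|z\|^2 + \tfrac{1}{2}\Re(z^T z),
\]
which follows by summing the componentwise expansion $\Re(z_j)^2 = \bigl(\tfrac{z_j+\bar z_j}{2}\bigr)^2 = \tfrac{1}{2}|z_j|^2 + \tfrac{1}{2}\Re(z_j^2)$.

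Next I would substitute $z = e^{i\lambda}\kappa$. Two observations are crucial here: since $\kappa \in \Scomp$ we have $\|e^{i\lambda}\kappa\|^2 = \|\kappa\|^2 = 1$, and since the map $z \mapsto z^Tz$ is bilinear (no complex conjugation), $(e^{i\lambda}\kappa)^T(e^{i\lambda}\kappa) = e^{2i\lambda}\kappa^T\kappa$. Substituting yields
\[
\|\Re(e^{i\lambda}\kappa)\|^2 \;=\; \tfrac{1}{2} + \tfrac{1}{2}\Re\bigl(e^{2i\lambda}\kappa^T\kappa\bigr),
\]
so the argmax depends only on the behaviour of $\lambda \mapsto \Re(e^{2i\lambda}\kappa^T\kappa)$.

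It then remains to maximise this second term over $\mathbb{S}^1$. In the degenerate case $\kappa^T\kappa = 0$ the function is identically zero, so every $\lambda$ is a maximiser and $\argmax = \mathbb{S}^1$, which gives the second branch of the claim. Otherwise, with $r \coloneqq |\kappa^T\kappa| > 0$ and $\alpha \coloneqq \Arg(\kappa^T\kappa) \in [0,2\pi)$, the function equals $r\cos(2\lambda+\alpha)$, whose maximum $r$ is attained exactly when $2\lambda + \alpha \equiv 0 \pmod{2\pi}$. Reducing this condition to the fundamental domain $[0,2\pi)$ produces precisely the two solutions $\lambda = \pi - \alpha/2$ and $\lambda = 2\pi - \alpha/2$, matching the stated first branch.

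No real obstacle arises: the proof is essentially the trigonometric identity above plus elementary bookkeeping. The only subtleties are ensuring the bilinear (rather than sesquilinear) form $\kappa^T\kappa$ is used when pulling out the factor $e^{2i\lambda}$, and being careful to list both representatives modulo $\pi$ in $[0,2\pi)$ when reading off the maximisers in the non-degenerate case.
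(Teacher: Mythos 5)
Your proposal is correct and follows essentially the same route as the paper: both reduce $\|\Re(e^{i\lambda}\kappa)\|^2$ via $\Re(z)=\tfrac{1}{2}(z+\bar z)$ to a constant plus $\Re\bigl(e^{2i\lambda}\kappa^T\kappa\bigr)=r\cos(2\lambda+\alpha)$ and then read off the maximisers $2\lambda+\alpha\equiv 0 \pmod{2\pi}$, treating the degenerate case $\kappa^T\kappa=0$ separately. The only cosmetic difference is that you package the expansion as the clean identity $\|\Re(z)\|^2=\tfrac12\|z\|^2+\tfrac12\Re(z^Tz)$, whereas the paper expands $\|e^{i\lambda}\kappa+e^{-i\lambda}\bar\kappa\|^2$ directly inside the argmax.
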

  
  \begin{proof}
    Euler's formula gives us
    \begin{align*}
    &\argmax_{\lambda \in \mathbb{S}^1} \left|\left|\Re(e^{i\lambda }\kappa) \right|\right|^2
    =\argmax_{\lambda \in \mathbb{S}^1}
    \left|\left|e^{i\lambda}\kappa + e^{-i\lambda}\bar{\kappa} \right|\right|^2
    = \argmax_{\lambda \in \mathbb{S}^1}
    \left(\left|\left|\kappa\right|\right|^2 + e^{2i\lambda}\kappa^T\kappa + \overline{e^{2i\lambda}\kappa^T\kappa} \right).
    \end{align*}
    If $\kappa^T\kappa=0$, then all $\lambda\in \mathbb{S}^1$ maximize the expression.
    If $\kappa^T\kappa\neq 0$, then 
    there is exactly one $\alpha\in \mathbb{S}^1$ with $\alpha = \Arg(\kappa^T\kappa)$ and we get $\kappa^T\kappa = re^{i\alpha}$, where $r = |\kappa^T\kappa|>0$. 
    Substituting $\kappa^T\kappa = re^{i\alpha}$ and using the angle addition and subtraction theorems gives us: 
    \begin{align*}
    &\argmax_{\lambda \in \mathbb{S}^1} \left|\left|\Re(e^{i\lambda }\kappa) \right|\right|^2=\argmax_{\lambda \in \mathbb{S}^1} \left(r\cos(2\lambda + \alpha)\right).
    \end{align*}
    The expression $\cos(2\lambda + \alpha)$ is maximized exactly when $2\lambda + \alpha=0 \mod 2\pi$ holds. Therefore
    \begin{align*}
    \argmax_{\lambda \in \mathbb{S}^1} \left|\left|\Re(e^{i\lambda }\kappa) \right|\right|^2 = \left\{\pi-\frac{\alpha}{2}, 2\pi-\frac{\alpha}{2}\right\}.
    \end{align*}
  \end{proof}
  
  \begin{lemma}\label{lem: f well defined}
    Let $[\kappa]\in \mathfrak{P}$ then it holds for all $\kappa, \tilde{\kappa} \in [\kappa]$ that $\tilde{f}(\kappa) = \tilde{f}(\tilde{\kappa})$, where $\tilde{f}$ is defined as in Equation (\ref{eq: ftilde}) in the main text.
  \end{lemma}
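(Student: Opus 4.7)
The plan is to reduce to comparing $\tilde{f}(\kappa)$ and $\tilde{f}(e^{i\lambda}\kappa)$ for an arbitrary $\lambda\in\mathbb{R}$, since any two representatives of $[\kappa]$ differ by such a unit modulus factor. The key observation is the transformation rule
$$ (e^{i\lambda}\kappa)^T(e^{i\lambda}\kappa) \;=\; e^{2i\lambda}\,\kappa^T\kappa,$$
which immediately settles the degenerate case: if $\kappa^T\kappa = 0$, then also $(e^{i\lambda}\kappa)^T(e^{i\lambda}\kappa)=0$ and both sides of the asserted identity equal $[(0,\ldots,0)]_\pm$ by definition of $\tilde{f}$.

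In the nontrivial case $\kappa^T\kappa\neq 0$, the above identity yields
$$ \Arg\bigl((e^{i\lambda}\kappa)^T(e^{i\lambda}\kappa)\bigr) \;\equiv\; 2\lambda + \Arg(\kappa^T\kappa) \pmod{2\pi}. $$
Since $\Arg$ takes values in $[0,2\pi)$, halving this relation introduces an ambiguity by a multiple of $\pi$: there exists $k\in\mathbb{Z}$ with
$$ -\tfrac{i}{2}\Arg\bigl((e^{i\lambda}\kappa)^T(e^{i\lambda}\kappa)\bigr) \;=\; -i\lambda - \tfrac{i}{2}\Arg(\kappa^T\kappa) - i\pi k. $$
Substituting into the defining formula for $\tilde{f}$ and using $e^{-i\pi k}=(-1)^k$, we obtain
$$ e^{-\tfrac{i}{2}\Arg((e^{i\lambda}\kappa)^T(e^{i\lambda}\kappa))}\,(e^{i\lambda}\kappa) \;=\; (-1)^k\,e^{-\tfrac{i}{2}\Arg(\kappa^T\kappa)}\,\kappa.$$
Taking real parts yields $\pm\Re\bigl(e^{-\tfrac{i}{2}\Arg(\kappa^T\kappa)}\kappa\bigr)$, and passing to the quotient $\mathfrak{I}=\mathbb{R}^N/\!\sim_{\pm}$ identifies the two signs, giving $\tilde{f}(e^{i\lambda}\kappa)=\tilde{f}(\kappa)$.

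The only real subtlety is the $2\pi$-ambiguity of $\Arg$ that gets halved into a $\pi$-ambiguity, producing the sign factor $(-1)^k$. The equivalence relation $\sim_\pm$ on $\mathfrak{I}$ was introduced in exactly the preceding paragraphs of Section~\ref{sec: clt I} to absorb this ambiguity, which motivates the choice of codomain for $\tilde{f}$ and makes the well-definedness essentially automatic once the computation above is carried out. No further machinery is needed.
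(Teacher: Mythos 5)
Your proof is correct and follows essentially the same route as the paper: reduce to $\tilde{\kappa}=e^{i\lambda}\kappa$, use $(e^{i\lambda}\kappa)^T(e^{i\lambda}\kappa)=e^{2i\lambda}\kappa^T\kappa$ to handle the degenerate case and to compute $\Arg(\tilde{\kappa}^T\tilde{\kappa})=2\lambda+\Arg(\kappa^T\kappa)\bmod 2\pi$, then substitute into the definition of $\tilde{f}$ and let $\sim_\pm$ absorb the residual sign. You are in fact slightly more explicit than the paper about the $\pi$-ambiguity produced by halving the argument (your $(-1)^k$ factor), which the paper's one-line final display leaves implicit.
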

  \begin{proof}
    For $\kappa \in \Scomp$ with $\kappa^T\kappa= 0$ the proposition is trivially satisfied. 
    Let $\kappa \in \Scomp$ with $\left|\kappa^T\kappa\right|= r > 0$ and let $\tilde{\kappa} \in [\kappa]$ then there is a $\lambda \in \mathbb{S}^1$ with $\tilde{\kappa} = e^{i\lambda}\kappa$.
    It follows 
    \begin{align*}
    \Arg\left(\tilde{\kappa}^T\tilde{\kappa}\right) = \Arg\left(\left(e^{i\lambda}\kappa\right)^T\left(e^{i\lambda}\kappa\right)\right)=\Arg\left(r e^{i\left(2\lambda+ \Arg\left(\kappa^T\kappa\right)\right)}\right)=2\lambda+ \Arg\left(\kappa^T\kappa\right)\mod 2\pi.
    \end{align*}
    Thus it follows 
    \begin{align*}
    \tilde{f}(\tilde{\kappa}) = \left[\Re\left(e^{\frac{-i}{2}\left(2\lambda + \Arg\left(\kappa^T\kappa\right) \right)}\left(e^{i\lambda}\kappa\right)\right)\right]_\pm=\tilde{f}(\kappa).
    \end{align*}
  \end{proof}
  
  \begin{lemma}\label{lem: jacobian g}
    Let $g$ be defined as in Equation (\ref{eq: function g}) and let $\kappa^{(0)} \in \mathfrak{P}\setminus(M_1 \cup M_2)$ then the Jacobian matrix at  $x=0$ is given by
    \begin{align*}
    J_x g(0)&= \pm\Re\Biggl(e^{-\frac{i\alpha}{2}}\Biggl(i\kappa^{0}\Re\left(i\frac{\overline{(\kappa^{(0)})^T\kappa^{(0)}}\left(\kappa^{(0)}\right)^TR^*A}{r^2}\right) + R^*A\Biggl) \Biggl).
    \end{align*}
    where $\left|(\kappa^{(0)})^T\kappa^{(0)}\right| =: r$ and $\alpha \coloneqq \Arg\left((\kappa^{(0)})^T\kappa^{(0)}\right)$.
  \end{lemma}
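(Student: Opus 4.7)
The plan is to differentiate the explicit expression obtained by unwinding the definitions of $g$, $f_\pm$, $f$, $\tilde f$ and $\beta^{-1}$. Observe that $\beta^{-1}(x)$ has the distinguished representative $\kappa(x)\coloneqq R^*\tilde x/\|\tilde x\|$, where, using the matrix $A$ from (\ref{eq: A matrix}), we may write compactly $\tilde x = Ax + e_N$. Since $[\kappa^{(0)}]\notin M_1\cup M_2$ and both $M_1,M_2$ are closed, $\tilde f$ is smooth near $\kappa^{(0)}$ (because $h(x)\coloneqq\kappa(x)^T\kappa(x)$ avoids $0$ on a neighborhood, so $\Arg\circ h$ is smooth there) and $f_\pm$ acts as a locally constant sign $\pm 1$. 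Hence on a neighborhood of $x=0$ one has the genuinely smooth expression
\begin{equation*}
g(x) = \pm\,\Re\!\left(e^{-\frac{i}{2}\Arg\bigl(h(x)\bigr)}\,\kappa(x)\right),
\end{equation*}
and the problem reduces to differentiating this.

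First I would compute $\partial_{x_i}\kappa(x)\bigr|_{x=0}$. Because $\|\tilde x\|^2=1+\|x\|^2$, one has $\partial_{x_i}\|\tilde x\|\bigr|_{0}=0$, so the quotient rule gives simply $\partial_{x_i}\kappa\bigr|_{0}=R^*A_{:,i}$ (the $i$-th column of $R^*A$). Next I would compute $\partial_{x_i}\Arg\!\bigl(h(x)\bigr)\bigr|_{0}$ using the identity $\partial_{x_i}\Arg(h)=\Im\bigl(\bar h\,\partial_{x_i}h\bigr)/|h|^2$ together with $\partial_{x_i}h=2\kappa^T\partial_{x_i}\kappa$. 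Evaluating at $x=0$ (using $\kappa(0)=\kappa^{(0)}$ and $|h(0)|=r$) yields
\begin{equation*}
\partial_{x_i}\Arg\!\bigl(h(x)\bigr)\Bigr|_{0}=\frac{2}{r^2}\,\Im\!\left(\overline{(\kappa^{(0)})^T\kappa^{(0)}}\,(\kappa^{(0)})^T R^*A_{:,i}\right).
\end{equation*}

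Applying the product rule to the expression for $g$, the $i$-th column of $J_xg(0)$ becomes
\begin{equation*}
\pm\,\Re\!\left(e^{-i\alpha/2}\!\left(-\tfrac{i}{2}\partial_{x_i}\Arg(h)\bigr|_{0}\,\kappa^{(0)}+R^*A_{:,i}\right)\right).
\end{equation*}
Using $-\tfrac{1}{2}\Im(w)=\Re(iw)/1\cdot(-1)\cdot\ldots$, more cleanly $\Im(w)=-\Re(iw)$, one rewrites $-\tfrac{1}{2}\partial_{x_i}\Arg(h)\bigr|_{0}=\Re\!\bigl(i\,\overline{(\kappa^{(0)})^T\kappa^{(0)}}\,(\kappa^{(0)})^T R^*A_{:,i}/r^2\bigr)$, which is real. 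Stacking the columns in $i$ replaces $A_{:,i}$ by $A$ and yields precisely the asserted formula for $J_xg(0)$.

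The arguments are essentially mechanical chain-rule computations; the only subtle points are checking that at $x=0$ the derivative of the normalization $\|\tilde x\|^{-1}$ vanishes (so that the seemingly messy quotient collapses to $R^*A$), and verifying that on $\mathfrak{P}\setminus(M_1\cup M_2)$ the map $g$ really is smooth so that classical differentiation is valid — a point that needs the explicit local constancy of the sign flip and the non-vanishing of $\kappa^T\kappa$. These technical checks are straightforward once the singular sets $M_1,M_2$ have been excluded, so I expect no genuine obstacle beyond careful bookkeeping of $\Re$, $\Im$ and the factor of $i$.
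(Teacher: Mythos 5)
Your proposal is correct and follows essentially the same route as the paper's proof: both reduce to differentiating $\pm\Re\bigl(e^{-\frac{i}{2}\Arg(h(x))}\kappa(x)\bigr)$ with $\kappa(x)=R^*\tilde x/\|\tilde x\|$, observe that the derivative of the normalization vanishes at $x=0$ so $J_x\kappa(0)=R^*A$, and compute $J_x\Arg(h)$ via the quotient of real and imaginary parts (your identity $\partial\Arg(h)=\Im(\bar h\,\partial h)/|h|^2$ is just a tidier packaging of the paper's computation). The handling of the singular sets $M_1,M_2$ and of the locally constant sign from $f_\pm$ also matches the paper.
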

  
  \begin{proof}
    As $\kappa^{(0)} \notin M_1$, it follows $\left|(\kappa^{(0)})^T\kappa^{(0)}\right| =: r > 0$ and we can write $(\kappa^{(0)})^T\kappa^{(0)} \coloneqq r e^{i \alpha}$, where $\alpha \coloneqq \Arg\left((\kappa^{(0)})^T\kappa^{(0)}\right)$.
    Since $\kappa \notin M_2$, the outer function $f_{\pm}$ is the identity or minus the identity, only the sign of the Jacobian matrix is determined by this function. 
    We therefore get
    \begin{align*}
    g: \mathbb{R}^{2N-2} \rightarrow \mathbb{R}^N, \quad x \mapsto  \pm f(\beta^{-1}(x))= \pm \Re\left(e^{\frac{-i}{2}\Arg\left(\left(R^*\frac{\tilde{x}}{||\tilde{x}||}\right)^T\left(R^*\frac{\tilde{x}}{||\tilde{x}||}\right)\right) }\left(R^*\frac{\tilde{x}}{||\tilde{x}||}\right)\right)
    \end{align*}
    where $\tilde{x}$ and $R$ are defined as in Definition \ref{def: chart clt} in the main text.
    We first calculate 
    \begin{align}\label{eq: jac arg}
    &J_x\Arg\left(\left(R^*\frac{\tilde{x}}{||\tilde{x}||}\right)^T\left(R^*\frac{\tilde{x}}{||\tilde{x}||}\right)\right)=J_x \Arg\left(\left(R^*\tilde{x}\right)^T\left(R^*\tilde{x}\right)\right)\\
    & = \frac{\Re\left((R^*\tilde{x})^T(R^*\tilde{x})\right) J_x\left( \Im\left((R^*\tilde{x})^T(R^*\tilde{x})\right) \right)-\Im\left((R^*\tilde{x})^T(R^*\tilde{x})\right) J_x\left( \Re\left((R^*\tilde{x})^T(R^*\tilde{x})\right) \right) }{\Re\left((R^*\tilde{x})^T(R^*\tilde{x})\right)^2 + \Im\left((R^*\tilde{x})^T(R^*\tilde{x})\right)^2}\nonumber
    \end{align}
    where
    \begin{align*}
    J_x\Re\left((R^*\tilde{x})^T(R^*\tilde{x})\right) = \frac{1}{2}\left(J_x\left( (R^*\tilde{x})^T(R^*\tilde{x})\right)+ \overline{J_x(R^*\tilde{x})^T(R^*\tilde{x})}\right),\\
    J_x\Im\left((R^*\tilde{x})^T(R^*\tilde{x})\right) = \frac{-i}{2}\left(J_x\left( (R^*\tilde{x})^T(R^*\tilde{x})\right)- \overline{J_x(R^*\tilde{x})^T(R^*\tilde{x})}\right).
    \end{align*}
    Substitution of this into (\ref{eq: jac arg}) results in 
    \begin{align*}
    &J_x \Arg\left(\left(R^*\tilde{x}\right)^T\left(R^*\tilde{x}\right)\right)  \\
    & = \frac{-\frac{1}{2}\left(\Im\left((R^*\tilde{x})^T(R^*\tilde{x})\right) + \Re\left((R^*\tilde{x})^T(R^*\tilde{x})\right) i  \right)\left(J_x (R^*\tilde{x})^T(R^*\tilde{x})\right)}{\Re\left((R^*\tilde{x})^T(R^*\tilde{x})\right)^2 + \Im\left((R^*\tilde{x})^T(R^*\tilde{x})\right)^2}\\
    &-  \frac{\frac{1}{2}\left(\Im\left((R^*\tilde{x})^T(R^*\tilde{x})\right) - \Re\left((R^*\tilde{x})^T(R^*\tilde{x})\right) i  \right)\left(\overline{J_x(R^*\tilde{x})^T(R^*\tilde{x})}\right)
    }{\Re\left((R^*\tilde{x})^T(R^*\tilde{x})\right)^2 + \Im\left((R^*\tilde{x})^T(R^*\tilde{x})\right)^2}\\
    & =  -\Re \left( \frac{\left(\Im\left((R^*\tilde{x})^T(R^*\tilde{x})\right) + \Re\left((R^*\tilde{x})^T(R^*\tilde{x})\right) i  \right)J_x\left( (R^*\tilde{x})^T(R^*\tilde{x})\right)}{\Re\left((R^*\tilde{x})^T(R^*\tilde{x})\right)^2 + \Im\left((R^*\tilde{x})^T(R^*\tilde{x})\right)^2}\right)
    \end{align*}
    where 
    \begin{align*}
    J_x (R^*\tilde{x})^T(R^*\tilde{x}) = 2\tilde{x}^T\left(R^*\right)^TR^*A, \quad \text{where} \quad         A \coloneqq \begin{pmatrix}
    1 & i & 0 & 0& 0&\dots & 0& 0 \\
    0 & 0 & 1 & i& 0&\dots & 0& 0 \\
    \vdots & \vdots & \vdots & \vdots&\vdots  & \vdots& \vdots& \vdots \\
    0 & 0 & 0 & 0& 0&\dots & 1& i \\
    0 & 0 & 0 & 0& 0&\dots & 0& 0 \\
    \end{pmatrix}\in \mathbb{C}^{N\times 2(N-1)}.
    \end{align*}
    Using that same matrix $A$ we get
    \begin{align}\label{eq: Jac general}
    J_x R^*\frac{\tilde{x}}{||\tilde{x}||} 
    = R^*\left(\tilde{x}\left(J_x\frac{1}{||\tilde{x}||}\right)+\frac{1}{||\tilde{x}||}\left(J_x\tilde{x}\right) \right)
    =R^*\left(-\frac{1}{||\tilde{x}||^{3/2}}\tilde{x}x^T+\frac{1}{||\tilde{x}||}A\right).
    \end{align}
    Consequently, we get 
    \begin{align*}
    J_x g(x)= \pm \Re\Biggl(&e^{\frac{-i}{2}\Arg\left(\left(R^*\tilde{x}\right)^T\left(R^*\tilde{x}\right)\right)} \Biggl(-\frac{i}{2}\left(R^*\frac{\tilde{x}}{||\tilde{x}||}\right)J_x\left( \Arg\left(\left(R^*\tilde{x}\right)^T\left(R^*\tilde{x}\right)\right)\right) + J_x R^*\frac{\tilde{x}}{||\tilde{x}||}\Biggl) \Biggl).
    \end{align*}
    Inserting $x=0$ gives us 
    \begin{align*}
    e^{\frac{-i}{2}\Arg\left(\left(R^*\tilde{x}\right)^T\left(R^*\tilde{x}\right)\right)}\Biggl|_{x=0} &= \pm \exp\left(\frac{-i\alpha}{2}\right)\\
    J_x R^*\frac{\tilde{x}}{||\tilde{x}||}\Biggl|_{x=0} &= R^*A \\
    -\frac{i}{2}\left(R^*\frac{\tilde{x}}{||\tilde{x}||}\right)J_x\left( \Arg\left(\left(R^*\tilde{x}\right)^T\left(R^*\tilde{x}\right)\right)\right)\Biggl|_{x=0} &= i\kappa^{0}\Re\left(i\frac{\overline{(\kappa^{(0)})^T\kappa^{(0)}}\left(\kappa^{(0)}\right)^TR^*A}{r^2}\right).
    \end{align*}
    By substituting into Equation (\ref{eq: Jac general}) we get
    \begin{align*}
    J_x g(0)&= \pm\Re\Biggl(e^{-\frac{i\alpha}{2}}\Biggl(i\kappa^{0}\Re\left(i\frac{\overline{(\kappa^{(0)})^T\kappa^{(0)}}\left(\kappa^{(0)}\right)^TR^*A}{r^2}\right) + R^*A\Biggl) \Biggl).
    \end{align*}
  \end{proof}
  \FloatBarrier
  \begin{figure}[ht!]
    \centering
    \includegraphics[width=0.335\textwidth]{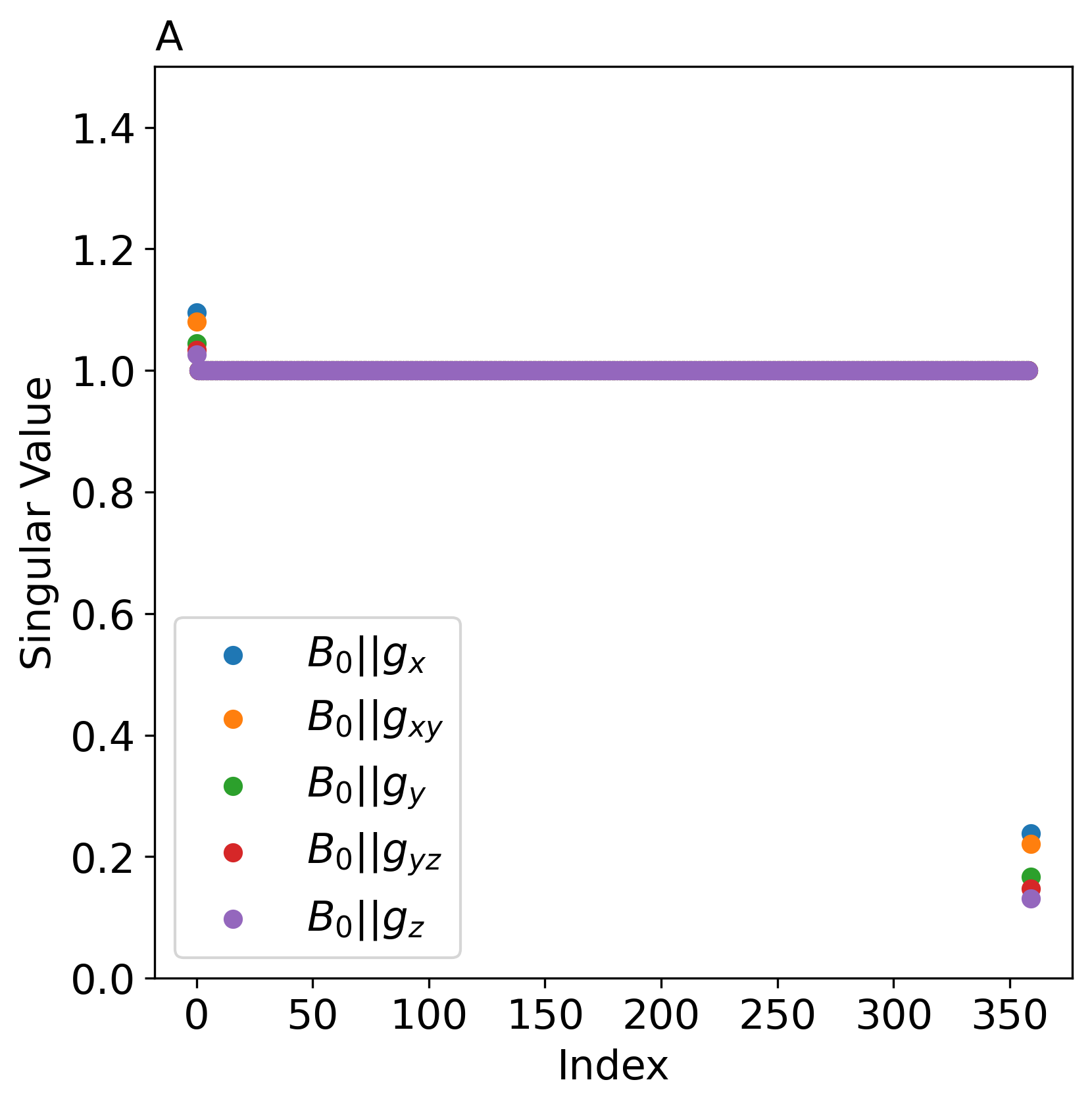}
    \includegraphics[width=0.31\textwidth]{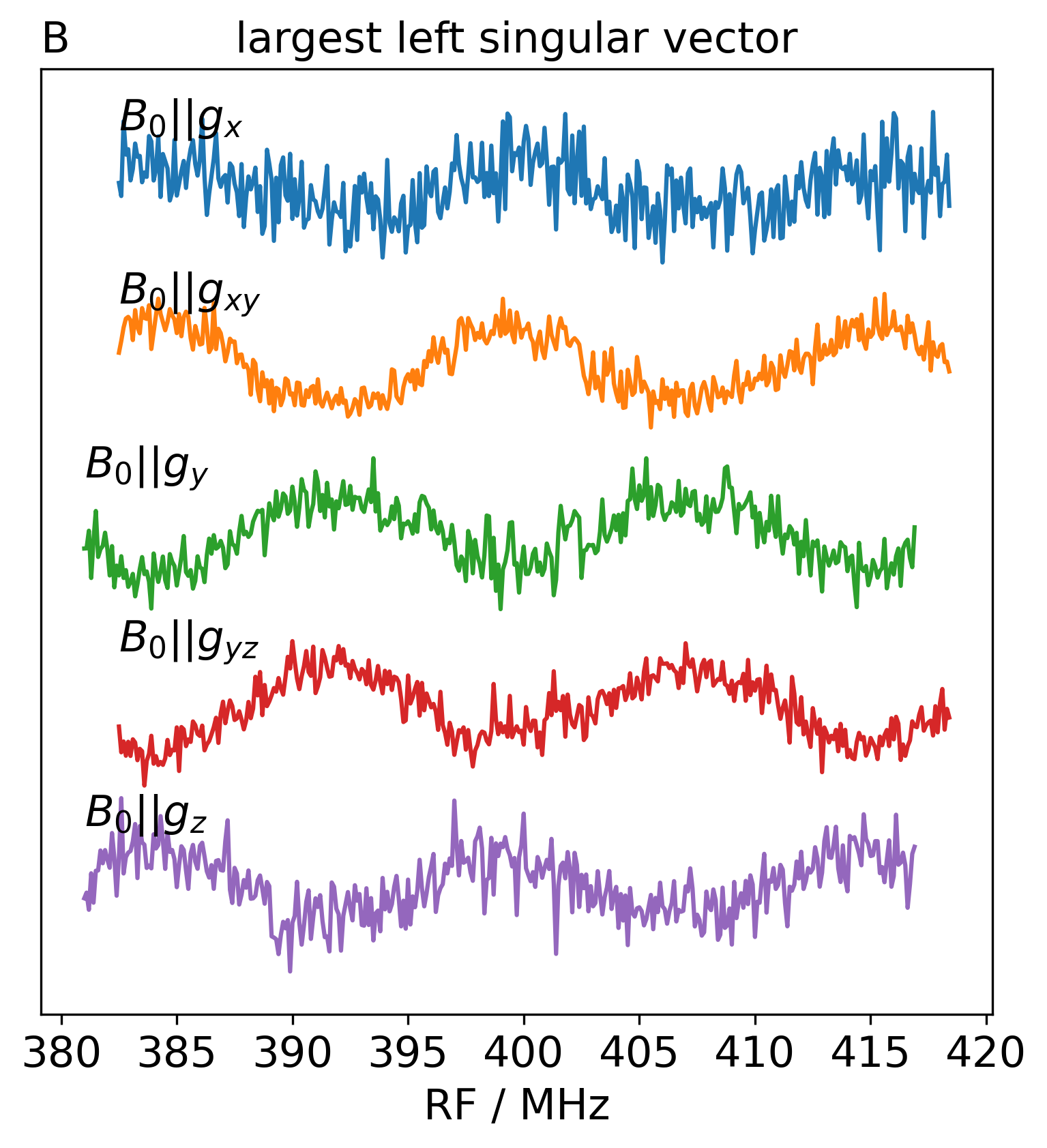}
    \includegraphics[width=0.31\textwidth]{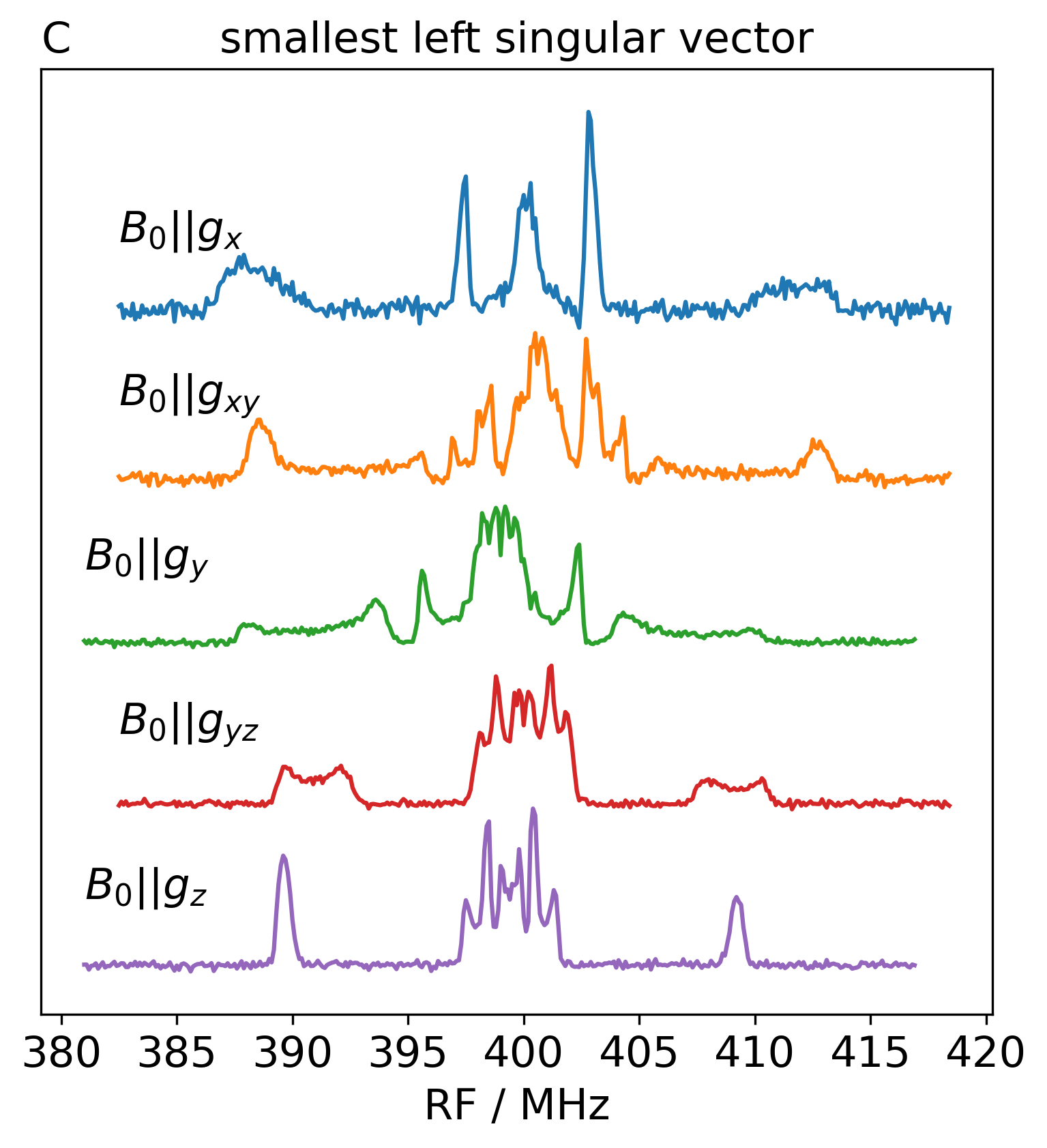}
    \caption{The singular values of $J_x g(0)$ for the different orientations from a chemical sample of the D2-$Y_{122}^{\bullet}$ E.~coli ribonucleotide reductase. Remarkably, almost all singular values are equal to $1$, with one value being slightly larger and one value being markedly smaller (but clearly separated from zero). The variation of the function $g$ stems from two sources. First, changes in $\kappa$ are directly translated into changes in the spectrum, which account for the flat eigenvalue spectrum. Second, the complex rotation by $\lambda$, which depends on $\kappa$, changes the spectrum. The eigenvector to the smallest eigenvalue therefore corresponds very closely to the spectrum itself since we evaluate the Jacobian at this point and thus, when varying $\kappa$ and hence $\lambda$, the change is mostly tangential to that direction. The eigenvector to the largest eigenvalue corresponds closely to the ``imaginary part of the spectrum'' which is projected out, so when varying $\kappa$ the corresponding variation in $\lambda$, which mixes more or less of the wave into the spectrum, compounds the change in this direction, leading to an increased eigenvalue.}
    \label{fig: singular values jacobian}
  \end{figure}
  
  \newpage
  \section{Technical Theorems and Lemmas for Section \ref{sec: Consistency Drift model unknown Sigma} in the main text}
  In this section, we prove technical lemmas for Section \ref{sec: Consistency Drift model unknown Sigma} in the main text. Consequently, we have the same notation, in particular for $\rho$, $\mathfrak{Q}$ and $\mathfrak{P}$.
  \begin{lemma}\label{lem: A2 is not constant}
    Under Definiton \ref{ass: drift model sigma},  it holds for $([\kappa], P) \in \mathfrak{P}$ 
    \begin{align*}
    \int d_P\left(\epsilon,  \hat{\phi}(\kappa,P,\epsilon)\kappa\right)^2\diff\mathbb{P}\left(\epsilon\right)= N\Tr\left(\Sigma^{(0)}P\right) -\Tr\left( \left(\kappa \dia_P \kappa  \right)^{-1} \left(\kappa \dia_{P\Sigma^{(0)}P} \kappa \right)\right)
    \end{align*} 
    where $\kappa \in [\kappa]$
  \end{lemma}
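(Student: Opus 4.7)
The plan is to mirror the proof of Lemma~\ref{lem: A2 is constant}(ii) step for step, replacing the simplification $P\Sigma = \mathrm{Id}_2$ that occurred there (under the implicit identification $P = \Sigma^{-1}$) with the genuine product $P\Sigma^{(0)}P$ that arises when the weighting matrix $P$ in the loss $\rho$ need not coincide with the inverse covariance of $\epsilon$. Note that $P\Sigma^{(0)}P$ is still symmetric positive definite since $P$ and $\Sigma^{(0)}$ are, so $\kappa \dia_{P\Sigma^{(0)}P} \kappa$ is well defined in the sense of Section~\ref{sec: notation}.

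First, I would apply Lemma~\ref{lem: rho formula} to $Y = \epsilon$ (for any fixed representative $\kappa \in [\kappa]$) to split the integrand as
\begin{align*}
d_P\!\left(\epsilon, \hat\phi(\kappa,P,\epsilon)\kappa\right)^2 \;=\; \spr{\epsilon}{\epsilon}_P \,-\, \spr{\hat\phi(\kappa,P,\epsilon)\kappa}{\epsilon}_P.
\end{align*}

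Second, I would compute the two terms separately. For the first, by linearity of the trace and $\vect{\epsilon_\nu}\vect{\epsilon_\nu}^T$ having expectation $\Sigma^{(0)}$,
\begin{align*}
\int \spr{\epsilon}{\epsilon}_P \,\diff\mathbb{P}(\epsilon) \;=\; \sum_{\nu=1}^N \Tr\!\left(P\,\mathbb{E}\!\left[\vect{\epsilon_\nu}\vect{\epsilon_\nu}^T\right]\right) \;=\; N\,\Tr(\Sigma^{(0)}P).
\end{align*}
For the second term, substituting the MLE formula (\ref{eq: phi-hat-asympt-model}) for $\hat\phi$ and using cyclicity of the trace reduces the task to evaluating
\begin{align*}
\int \spr{\hat\phi(\kappa,P,\epsilon)\kappa}{\epsilon}_P \,\diff\mathbb{P}(\epsilon) \;=\; \Tr\!\left( \left(\kappa\dia_P\kappa\right)^{-1} \int \left(\kappa\bul_P\epsilon\right)\left(\kappa\bul_P\epsilon\right)^T\,\diff\mathbb{P}(\epsilon) \right),
\end{align*}
exactly as in the proof of Lemma~\ref{lem: A2 is constant}(ii). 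I would then expand the outer product, use independence of the $\epsilon_\nu$ and $\mathbb{E}[\vect{\epsilon_\nu}\vect{\epsilon_\nu}^T] = \Sigma^{(0)}$ to obtain
\begin{align*}
\int \left(\kappa\bul_P\epsilon\right)\left(\kappa\bul_P\epsilon\right)^T\,\diff\mathbb{P}(\epsilon) \;=\; \sum_{\nu=1}^N M(\kappa_\nu)^T\, P\Sigma^{(0)}P \,M(\kappa_\nu) \;=\; \kappa\dia_{P\Sigma^{(0)}P}\kappa.
\end{align*}

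Third, I would subtract the two contributions to recover the stated identity. Well-definedness on the equivalence class $[\kappa]$ is automatic, since the right-hand side is invariant under $\kappa \mapsto e^{i\lambda}\kappa$ because $M(e^{i\lambda}\kappa_\nu) = M(e^{i\lambda})M(\kappa_\nu)$ with $M(e^{i\lambda})$ orthogonal, so both $\dia$-terms transform by the same conjugation which cancels inside the trace. There is no real obstacle: the proof is a direct and slightly generalized rerun of the computation already carried out in Lemma~\ref{lem: A2 is constant}(ii), and the only point requiring a moment's thought is keeping careful track that $P$ and $(\Sigma^{(0)})^{-1}$ are now distinct matrices, so the simplification $P\Sigma = \mathrm{Id}_2$ is no longer available and the factor $P\Sigma^{(0)}P$ must be carried through.
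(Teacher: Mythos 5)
Your proposal is correct and follows essentially the same route as the paper's proof: split via Lemma \ref{lem: rho formula}, evaluate the first term as $N\Tr(\Sigma^{(0)}P)$ by the trace identity, and evaluate the second term by substituting the formula for $\hat{\phi}$, applying cyclicity of the trace, and computing $\int (\kappa\bul_P\epsilon)(\kappa\bul_P\epsilon)^T\,\diff\mathbb{P}(\epsilon)=\kappa\dia_{P\Sigma^{(0)}P}\kappa$. The added remark on invariance under $\kappa\mapsto e^{i\lambda}\kappa$ is a harmless extra that the paper leaves implicit.
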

  
  \begin{proof}
    Using Lemma \ref{lem: rho formula} we get 
    \begin{align*}
    \int d_P\left(\epsilon,  \hat{\phi}(\kappa,P,\epsilon)\kappa\right)^2\diff\mathbb{P}\left(\epsilon\right) = \int \spr{\epsilon}{\epsilon}_P  - \spr{\hat{\phi}(\kappa,P, \epsilon) \kappa}{\epsilon}_P\diff\mathbb{P}\left(\epsilon\right)
    \end{align*}
    First, we calculate 
    \begin{align*}
    \int \spr{\epsilon}{\epsilon}_P\diff\mathbb{P}\left(\epsilon\right)&= \sum_{\nu=1}^N\Tr\left(\mathbb{V}\left(\sqrt{P}\vect{\epsilon_\nu}\right)\right)=\sum_{\nu=1}^N\Tr\left(\sqrt{P}\Sigma^{(0)}\sqrt{P}\right)=N\Tr\left(\Sigma^{(0)}P\right).
    \end{align*}
    By using the linearity and the cyclic property of the trace operator we get
    \begin{align*}
    &\int \spr{\hat{\phi}(\kappa,P, \epsilon) \kappa}{\epsilon}_P\diff\mathbb{P}\left(\epsilon\right) 
    = \int \sum_{\nu=1}^N\vect{\epsilon_\nu}P M(\kappa_\nu) \vect{\hat{\phi}(\kappa,P, \epsilon)}
    \diff\mathbb{P}\left(\epsilon\right)\\
    &= \int \left(\kappa\bul_P \epsilon\right)^T \left(\kappa \dia_P \kappa  \right)^{-1} \left(\kappa\bul_P \epsilon\right)
    \diff\mathbb{P}\left(\epsilon\right) 
    =\Tr\left( \left(\kappa \dia_P \kappa  \right)^{-1} \int \left(\kappa\bul_P \epsilon\right) \left(\kappa\bul_P \epsilon\right)^T
    \diff\mathbb{P}\left(\epsilon\right)\right).
    \end{align*}
    Since $\epsilon_1,\dots, \epsilon_N\sim \mathcal{N}(0,\Sigma^{(0)})$ are i.i.id random variables, it holds that 
    \begin{align*}
    &\int \left(\kappa\bul_P \epsilon\right) \left(\kappa\bul_P \epsilon\right)^T
    \diff\mathbb{P}\left(\epsilon\right)
    = \int \left(\sum_{\nu=1}^N M(\kappa_\nu)^T P \vect{\epsilon_\nu} \right) \left(\sum_{\nu=1}^N \vect{\epsilon_\nu}^T P M(\kappa_\nu)\right)
    \diff\mathbb{P}\left(\epsilon\right)\\
    &= \sum_{\nu=1}^N M(\kappa_\nu)^T P \left(\int\vect{\epsilon_\nu}   \vect{\epsilon_\nu}^T \diff\mathbb{P}\left(\epsilon\right) \right)
    P M(\kappa_\nu)= \kappa \dia_{P\Sigma^{(0)}P} \kappa .
    \end{align*}
    Consequently,
    \begin{align*}
    &\int \spr{\hat{\phi}(\kappa,P, \epsilon) \kappa}{\epsilon}_P\diff\mathbb{P}\left(\epsilon\right) =\Tr\left( \left(\kappa \dia_P \kappa  \right)^{-1} \left(\kappa \dia_{P\Sigma^{(0)}P} \kappa \right)\right).
    \end{align*}
    and therefore
    \begin{align*}
    \int d_P\left(\epsilon,  \hat{\phi}(\kappa,P,\epsilon)\kappa\right)^2\diff\mathbb{P}\left(\epsilon\right) = N\Tr\left(\Sigma^{(0)}P\right) -\Tr\left( \left(\kappa \dia_P \kappa  \right)^{-1} \left(\kappa \dia_{P\Sigma^{(0)}P} \kappa \right)\right).
    \end{align*}
  \end{proof}

  \begin{lemma}\label{lem: Partial_derivative P help calc}
    For $([\kappa], P) \in \mathfrak{P}$, $\kappa \in [\kappa]$ and $\Sigma^{(0)}\in \mathrm{SPD}(2)$ we obtain
    \begin{align*}
    (i)\quad &\Tr\left( \left(\kappa \dia_P \kappa  \right)^{-1}\left(\kappa \dia_{\left(\frac{\partial P}{\partial  p_{ij}}\right) \Sigma^{(0)}P}\kappa \right) \right) = \Tr\left( \Sigma^{(0)}P\left( \bar{\kappa} \dia_{\left(\kappa \dia_P \kappa  \right)^{-1}} \bar{\kappa} \right) \left( \frac{\partial P}{\partial  p_{ij}}\right) \right),\\
    (ii)\quad&\Tr\left(\left(\kappa \dia_P \kappa  \right)^{-1}\left(\kappa \dia_{P \Sigma^{(0)}\left(\frac{\partial P}{\partial  p_{ij}}\right)}\kappa \right)  \right) = \Tr\left(\left( \bar{\kappa} \dia_{\left(\kappa \dia_P \kappa  \right)^{-1}} \bar{\kappa} \right)P\Sigma^{(0)}\left(\frac{\partial P}{\partial  p_{ij}}\right) \right),\\
    (iii)\quad&\Tr\left(\left(\kappa \dia_P \kappa  \right)^{-1}\left(\kappa \dia_{\frac{\partial P}{\partial  p_{ij}}} \kappa  \right)\left(\kappa \dia_P \kappa  \right)^{-1} \left(\kappa \dia_{P\Sigma^{(0)}P} \kappa \right) \right) \\
    &= \Tr\left(\left(\bar{\kappa} \dia_{\left(\kappa \dia_P \kappa  \right)^{-1} \left(\kappa \dia_{P\Sigma^{(0)}P} \kappa \right)\left(\kappa \dia_P \kappa  \right)^{-1}} \bar{\kappa} \right)\left(\frac{\partial P}{\partial  p_{ij}}\right) \right).
    \end{align*}
  \end{lemma}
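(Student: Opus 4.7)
The plan is to prove all three identities by the same two-step manipulation. First, expand the $\dia$-block that carries the factor $\frac{\partial P}{\partial p_{ij}}$ using its definition,
$$\kappa \dia_B \kappa \;=\; \sum_{\nu=1}^N M(\kappa_\nu)^T B\, M(\kappa_\nu),$$
substitute it inside the trace, and invoke cyclic invariance of $\Tr$ to move $\frac{\partial P}{\partial p_{ij}}$ to the desired side. Second, recollect the leftover sum $\sum_\nu M(\kappa_\nu) A\, M(\kappa_\nu)^T$ into a single $\dia$-block: since $M(\kappa_\nu)^T = M(\bar\kappa_\nu)$ from the first Lemma of the paper, one obtains
$$\sum_{\nu=1}^N M(\kappa_\nu) A\, M(\kappa_\nu)^T \;=\; \sum_{\nu=1}^N M(\bar\kappa_\nu)^T A\, M(\bar\kappa_\nu) \;=\; \bar\kappa \dia_A \bar\kappa.$$
These two moves convert each left-hand side into its right-hand side.

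For $(i)$, I would set $A = (\kappa \dia_P \kappa)^{-1}$ and $B = \frac{\partial P}{\partial p_{ij}}\Sigma^{(0)} P$. After expansion and one cyclic rotation that places $B$ on the left of $\sum_\nu M(\kappa_\nu) A\, M(\kappa_\nu)^T$, the second step identifies that sum with $\bar\kappa \dia_{(\kappa\dia_P\kappa)^{-1}} \bar\kappa$, and a final cyclic rotation of $\frac{\partial P}{\partial p_{ij}}$ to the right-most position matches the claimed right-hand side. Identity $(ii)$ follows by the same argument; the only change is that with $B = P\Sigma^{(0)}\frac{\partial P}{\partial p_{ij}}$ the factor $\frac{\partial P}{\partial p_{ij}}$ already sits rightmost after the first cyclic rotation, so no further rotation is needed.

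For $(iii)$ the same recipe is applied to the \emph{middle} $\dia$-block: expand $\kappa \dia_{\partial P/\partial p_{ij}} \kappa = \sum_\mu M(\kappa_\mu)^T \frac{\partial P}{\partial p_{ij}} M(\kappa_\mu)$ inside the quadruple-product trace, cycle $\frac{\partial P}{\partial p_{ij}}$ out, and consolidate the remaining sum $\sum_\mu M(\bar\kappa_\mu)^T A\, M(\bar\kappa_\mu)$, where $A = (\kappa \dia_P\kappa)^{-1}(\kappa \dia_{P\Sigma^{(0)}P}\kappa)(\kappa \dia_P\kappa)^{-1}$, into $\bar\kappa \dia_A \bar\kappa$.

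The main point to watch is index bookkeeping: each rotation past a weight matrix converts an $M^T$ into an $M$, and the consolidation step uses the transpose identity $M(\kappa)^T = M(\bar\kappa)$ exactly once per identity. No structural obstacle arises; the symmetry of $\Tr$ and the bilinearity of $\dia$ do all the remaining work, so the proof amounts to careful tracking of conjugations and transposes.
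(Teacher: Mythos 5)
Your proposal is correct and follows essentially the same route as the paper's proof: expand the $\dia$-block carrying $\frac{\partial P}{\partial p_{ij}}$ via its definition, cycle the trace, and reassemble $\sum_\nu M(\kappa_\nu)\,A\,M(\kappa_\nu)^T$ into $\bar\kappa \dia_A \bar\kappa$ using $M(\kappa)^T=M(\bar\kappa)$. The weight matrices you identify in each of $(i)$--$(iii)$ match those in the paper, so no further changes are needed.
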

  \begin{proof}
    We start with $(i)$. From the cyclic property of the trace operator we obtain
    \begin{align*}
    &\Tr\left((\left(\kappa \dia_P \kappa  \right)^{-1}\left(\kappa \dia_{\left(\frac{\partial P}{\partial  p_{ij}}\right) \Sigma^{(0)}P}\kappa \right)\right)
    =\Tr\left(\left(\kappa \dia_P \kappa  \right)^{-1}
    \left(\sum_{\nu=1}^N M(\kappa_\nu)^T\left(\frac{\partial P}{\partial  p_{ij}}\right) \Sigma^{(0)}P M(\kappa_\nu)\right)\right)\\
    &=\Tr\left(\Sigma^{(0)}P\left(\sum_{\nu=1}^N M(\kappa_\nu)\left(\kappa \dia_P \kappa  \right)^{-1}
    M(\kappa_\nu)^T\right)\left(\frac{\partial P}{\partial  p_{ij}}\right) \right)\\
    &=\Tr\left( \Sigma^{(0)}P\left( \bar{\kappa} \dia_{\left(\kappa \dia_P \kappa  \right)^{-1}} \bar{\kappa} \right) \left( \frac{\partial P}{\partial  p_{ij}}\right) \right).
    \end{align*}
    Analogously, we obtain for $(ii)$
    \begin{align*}
    &\Tr\left((\left(\kappa \dia_P \kappa  \right)^{-1}\left(\kappa \dia_{P \Sigma^{(0)}\left(\frac{\partial P}{\partial  p_{ij}}\right)}\kappa \right)\right)
    =\Tr\left(\left(\kappa \dia_P \kappa  \right)^{-1}
    \left(\sum_{\nu=1}^N M(\kappa_\nu)^TP \Sigma^{(0)}\left(\frac{\partial P}{\partial  p_{ij}}\right) M(\kappa_\nu)\right)\right)\\
    &=\Tr\left(\left(\sum_{\nu=1}^N M(\kappa_\nu)\left(\kappa \dia_P \kappa  \right)^{-1}
    M(\kappa_\nu)^T\right)\Sigma^{(0)}P\left(\frac{\partial P}{\partial  p_{ij}}\right) \right)\\
    &=\Tr\left(\left(\sum_{\nu=1}^N M(\kappa_\nu)\left(\kappa \dia_P \kappa  \right)^{-1}
    M(\kappa_\nu)^T\right)\Sigma^{(0)}P\left(\frac{\partial P}{\partial  p_{ij}}\right) \right)\\
    &=\Tr\left( \left( \bar{\kappa} \dia_{\left(\kappa \dia_P \kappa  \right)^{-1}} \bar{\kappa} \right) \Sigma^{(0)}P \left( \frac{\partial P}{\partial  p_{ij}}\right) \right).
    \end{align*}
    For $(iii)$ we use the cyclic property of the trace operator
    \begin{align*}
    &\Tr\left(\left(\kappa \dia_P \kappa  \right)^{-1}\left(\kappa \dia_{\frac{\partial P}{\partial  p_{ij}}} \kappa  \right)\left(\kappa \dia_P \kappa  \right)^{-1} \left(\kappa \dia_{P\Sigma^{(0)}P} \kappa \right) \right)\\
    &\Tr\left(\left(\kappa \dia_P \kappa  \right)^{-1} \left(\kappa \dia_{P\Sigma^{(0)}P} \kappa \right)\left(\kappa \dia_P \kappa  \right)^{-1}\left(\kappa \dia_{\frac{\partial P}{\partial  p_{ij}}} \kappa  \right) \right)\\
    &\Tr\left(\left(\kappa \dia_P \kappa  \right)^{-1} \left(\kappa \dia_{P\Sigma^{(0)}P} \kappa \right)\left(\kappa \dia_P \kappa  \right)^{-1}\left(\sum_{\nu=1}^N M(\kappa_\nu)^T\left(\frac{\partial P}{\partial  p_{ij}}\right) M(\kappa_\nu)\right) \right)\\
    &\Tr\left(\sum_{\nu=1}^N \left( M(\kappa_\nu)\left(\kappa \dia_P \kappa  \right)^{-1} \left(\kappa \dia_{P\Sigma^{(0)}P} \kappa \right)\left(\kappa \dia_P \kappa  \right)^{-1}M(\kappa_\nu)^T\right)\left(\frac{\partial P}{\partial  p_{ij}}\right) \right)\\
    &= \Tr\left(\left(\bar{\kappa} \dia_{\left(\kappa \dia_P \kappa  \right)^{-1} \left(\kappa \dia_{P\Sigma^{(0)}P} \kappa \right)\left(\kappa \dia_P \kappa  \right)^{-1}} \bar{\kappa} \right)\left(\frac{\partial P}{\partial  p_{ij}}\right) \right).
    \end{align*}
  \end{proof}
  
  \begin{lemma}\label{lem: Partial_derivative P help 2}
    Let $f:\mathrm{SPD}(2) \rightarrow \mathbb{R}$ be a differentiable function with the property 
    \begin{align*}
    \frac{\partial f(P)}{\partial p_{ij}} = \Tr \left(A  \left(\frac{\partial P}{\partial p_{ij}}\right)\right)
    \end{align*}
    where $P=\begin{pmatrix}
    p_{11}& p_{12}\\
    p_{12}& p_{22}
    \end{pmatrix} \in \mathrm{SPD}(2)$ and $A$ is any symmetric matrix $A = \begin{pmatrix}
    a_{11}& a_{12}\\
    a_{12}& a_{22}
    \end{pmatrix} \in \mathbb{R}^{2\times 2}$.
    Then holds
    \begin{align*}
    \frac{\partial f(P)}{\partial P} = 2A - \mathrm{diag}(A).
    \end{align*}
  \end{lemma}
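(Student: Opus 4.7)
The lemma is essentially a bookkeeping exercise, with the one genuine subtlety being that $P$ is symmetric, so the off-diagonal entry $p_{12} = p_{21}$ is a single scalar variable rather than two independent ones. The plan is to compute $\partial f/\partial p_{ij}$ for each of the three independent entries $p_{11}$, $p_{22}$, $p_{12}$ of $P$ using the hypothesis, and then identify the resulting $2\times 2$ matrix with $2A - \mathrm{diag}(A)$.

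First I would use the fact that $P \in \mathrm{SPD}(2)$ has only three independent entries to write down the derivatives of $P$ with respect to each of them: $\partial P/\partial p_{11} = E_{11}$, $\partial P/\partial p_{22} = E_{22}$, and crucially $\partial P/\partial p_{12} = E_{12}+E_{21}$, since a change in $p_{12}$ simultaneously perturbs both off-diagonal positions of $P$. Here $E_{ij}$ denotes the standard matrix unit.

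Next I would plug these into the hypothesis and use symmetry of $A$ (so $a_{21}=a_{12}$) to obtain
\begin{align*}
\frac{\partial f(P)}{\partial p_{11}} &= \Tr(A E_{11}) = a_{11},\\
\frac{\partial f(P)}{\partial p_{22}} &= \Tr(A E_{22}) = a_{22},\\
\frac{\partial f(P)}{\partial p_{12}} &= \Tr(A(E_{12}+E_{21})) = a_{21}+a_{12} = 2\,a_{12}.
\end{align*}

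Finally, assembling the matrix $\partial f/\partial P$ entrywise as $(\partial f/\partial P)_{ij} = \partial f/\partial p_{ij}$, with the understanding that $p_{12}=p_{21}$ so the value $2a_{12}$ is placed in both off-diagonal slots, yields
\begin{align*}
\frac{\partial f(P)}{\partial P} = \begin{pmatrix} a_{11} & 2a_{12} \\ 2a_{12} & a_{22}\end{pmatrix} = 2A - \mathrm{diag}(A),
\end{align*}
which is the claim. There is no real obstacle here beyond carefully tracking the factor of two that arises from the symmetry constraint on $P$; this factor is the reason the naive answer $A$ must be corrected to $2A-\mathrm{diag}(A)$, in order to account for off-diagonal entries being counted twice while diagonal entries are counted once.
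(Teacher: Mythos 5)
Your proof is correct and follows essentially the same route as the paper's: compute the three traces to get $a_{11}$, $a_{22}$, and $2a_{12}$ (the factor of two coming from $\partial P/\partial p_{12}=E_{12}+E_{21}$ under the symmetry constraint), then assemble these into $2A-\mathrm{diag}(A)$. Your version merely makes explicit the matrix units $E_{ij}$ that the paper leaves implicit.
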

  
  \begin{proof}
    It holds
    \begin{align*}
    \Tr\left( A \left(\frac{\partial P}{\partial  p_{11}}\right)  \right) = a_{11}, \quad \Tr\left( A \left(\frac{\partial P}{\partial  p_{22}}\right)  \right) = a_{22}, \quad \Tr\left( A \left(\frac{\partial P}{\partial  p_{12}}\right)  \right) = 2a_{12} 
    \end{align*}
    and therefore
    \begin{align*}
    \frac{\partial f(P)}{\partial P} = 2A - \mathrm{diag}(A).
    \end{align*}
  \end{proof}
  
  \begin{lemma}\label{lem: Partial_derivative P}
    For $([\kappa], P) \in \mathfrak{P}$, $\kappa \in [\kappa]$ and $\Sigma^{(0)}\in \mathrm{SPD}(2)$ we obtain
    \begin{align*}
    &\frac{\partial}{\partial P}\left(N\Tr\left(\Sigma^{(0)}P\right) -\Tr\left( \left(\kappa \dia_P \kappa  \right)^{-1} \left(\kappa \dia_{P\Sigma^{(0)}P} \kappa \right)\right)-N\log(\det(P))\right) \\
    &= 2\left(\left( \bar{\kappa} \dia_{\left(\kappa \dia_P \kappa  \right)^{-1}} \bar{\kappa} \right)P\Sigma^{(0)} 
    +\Sigma^{(0)}P\left( \bar{\kappa} \dia_{\left(\kappa \dia_P \kappa  \right)^{-1}} \bar{\kappa} \right)
    - \left(\bar{\kappa} \dia_{\left(\kappa \dia_P \kappa  \right)^{-1} \left(\kappa \dia_{P\Sigma^{(0)}P} \kappa \right)\left(\kappa \dia_P \kappa  \right)^{-1}} \bar{\kappa} \right)\right)\\
    &-\mathrm{diag}\left(\left( \bar{\kappa} \dia_{\left(\kappa \dia_P \kappa  \right)^{-1}} \bar{\kappa} \right)P\Sigma^{(0)} 
    +\Sigma^{(0)}P\left( \bar{\kappa} \dia_{\left(\kappa \dia_P \kappa  \right)^{-1}} \bar{\kappa} \right)
    - \left(\bar{\kappa} \dia_{\left(\kappa \dia_P \kappa  \right)^{-1} \left(\kappa \dia_{P\Sigma^{(0)}P} \kappa \right)\left(\kappa \dia_P \kappa  \right)^{-1}} \bar{\kappa} \right)\right)\\
    &+N\left(2\Sigma^{(0)}-\mathrm{diag}(\Sigma^{(0)})\right)-N\left(2P^{-1}-\mathrm{diag}\left((2P^{-1}\right)\right).
    \end{align*}
  \end{lemma}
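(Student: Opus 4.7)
The plan is to differentiate each of the three summands separately with respect to the entries $p_{ij}$ of $P$, rewrite the resulting derivative as a trace of the form $\Tr(A_{ij}\,\partial P/\partial p_{ij})$ with a symmetric matrix $A_{ij}$, and then invoke Lemma \ref{lem: Partial_derivative P help 2} to pass from $\partial/\partial p_{ij}$ to $\partial/\partial P$. The first and third summands are standard: linearity of the trace gives $\partial_{p_{ij}}\Tr(\Sigma^{(0)}P)=\Tr(\Sigma^{(0)}\,\partial P/\partial p_{ij})$, which via Lemma \ref{lem: Partial_derivative P help 2} contributes $N(2\Sigma^{(0)}-\mathrm{diag}(\Sigma^{(0)}))$, while the classical identity $\partial_{p_{ij}}\log\det P=\Tr(P^{-1}\,\partial P/\partial p_{ij})$ contributes $-N(2P^{-1}-\mathrm{diag}(P^{-1}))$.

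The real work lies in the middle term. I would apply the product rule using the standard matrix identity $\partial_{p_{ij}}A^{-1}=-A^{-1}(\partial_{p_{ij}}A)A^{-1}$ together with the elementary observations $\partial_{p_{ij}}(\kappa\dia_P\kappa)=\kappa\dia_{\partial P/\partial p_{ij}}\kappa$ and $\partial_{p_{ij}}(\kappa\dia_{P\Sigma^{(0)}P}\kappa)=\kappa\dia_{(\partial P/\partial p_{ij})\Sigma^{(0)}P+P\Sigma^{(0)}(\partial P/\partial p_{ij})}\kappa$. This yields three traces,
\begin{align*}
&-\Tr\!\big((\kappa\dia_P\kappa)^{-1}(\kappa\dia_{\partial P/\partial p_{ij}}\kappa)(\kappa\dia_P\kappa)^{-1}(\kappa\dia_{P\Sigma^{(0)}P}\kappa)\big),\\
&+\Tr\!\big((\kappa\dia_P\kappa)^{-1}(\kappa\dia_{(\partial P/\partial p_{ij})\Sigma^{(0)}P}\kappa)\big),\\
&+\Tr\!\big((\kappa\dia_P\kappa)^{-1}(\kappa\dia_{P\Sigma^{(0)}(\partial P/\partial p_{ij})}\kappa)\big),
\end{align*}
which are precisely the expressions handled by parts (iii), (i), and (ii) of Lemma \ref{lem: Partial_derivative P help calc}, respectively. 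Applying that lemma recasts each of them in the form $\Tr(A_{ij}\,\partial P/\partial p_{ij})$, with $A_{ij}$ equal to the corresponding $\bar\kappa\dia_{\cdot}\bar\kappa$ expression, possibly multiplied on one side by $\Sigma^{(0)}P$ or $P\Sigma^{(0)}$.

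Summing all contributions and using linearity of the map $A\mapsto 2A-\mathrm{diag}(A)$ from Lemma \ref{lem: Partial_derivative P help 2} then produces the asserted identity. The main obstacle is bookkeeping: one must verify that each matrix $A_{ij}$ arising in the intermediate step is symmetric, since Lemma \ref{lem: Partial_derivative P help 2} requires this. Symmetry of $\bar\kappa\dia_M\bar\kappa$ for symmetric $M$ is immediate from the definition, which handles the contribution from part (iii). The asymmetric pieces $\Sigma^{(0)}P(\bar\kappa\dia_{(\kappa\dia_P\kappa)^{-1}}\bar\kappa)$ and $(\bar\kappa\dia_{(\kappa\dia_P\kappa)^{-1}}\bar\kappa)P\Sigma^{(0)}$ produced by parts (i) and (ii) are mutual transposes (since $P$, $\Sigma^{(0)}$ and $\bar\kappa\dia_{(\kappa\dia_P\kappa)^{-1}}\bar\kappa$ are all symmetric), so they automatically add up to a symmetric matrix, which explains the factor of $2$ and the symmetrised shape of the first bracket in the claim. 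With symmetry confirmed, a final application of Lemma \ref{lem: Partial_derivative P help 2} closes the argument.
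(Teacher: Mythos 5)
Your proposal is correct and follows essentially the same route as the paper's proof: the same term-by-term differentiation, the same identities for $\partial_{p_{ij}}\left(\kappa \dia_P \kappa\right)^{-1}$ and $\partial_{p_{ij}}\left(\kappa \dia_{P\Sigma^{(0)}P} \kappa\right)$, and the same appeals to parts (i)--(iii) of Lemma \ref{lem: Partial_derivative P help calc} followed by Lemma \ref{lem: Partial_derivative P help 2}. Your explicit verification that the combined prefactor matrix is symmetric (the two asymmetric pieces being mutual transposes) is a detail the paper leaves implicit, but it does not alter the argument.
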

  
  \begin{proof}
    Since $P, \Sigma^{(0)}\in \mathrm{SPD}(2)$ we can write $P=\begin{pmatrix} p_{11}& p_{12}\\ p_{12} & p_{22} \end{pmatrix}$ and $\Sigma^{(0)}=\begin{pmatrix} \sigma^{(0)}_{11}& \sigma^{(0)}_{12}\\ \sigma^{(0)}_{12} & \sigma^{(0)}_{22} \end{pmatrix}$.
    For the first term in the sum we get
    \begin{align*}
    \frac{\partial \Tr\left(\Sigma^{(0)}P\right)}{\partial p_{11}}  = \sigma^{(0)}_{11}, \quad \frac{\partial \Tr\left(\Sigma^{(0)}P\right)}{\partial p_{22}}  =  \sigma^{(0)}_{22}, \quad
    \frac{\partial \Tr\left(\Sigma^{(0)}P\right)}{\partial p_{12}} =  2\sigma^{(0)}_{12}
    \end{align*}
    and for the third term
    \begin{align*}
    \frac{\partial \log(\det(P))}{\partial p_{11}}  = \frac{p_{22}}{\det(P)}, \quad 
    \frac{\partial \log(\det(P))}{\partial p_{22}}  =  \frac{p_{11}}{\det(P)}, \quad
    \frac{\partial \log(\det(P))}{\partial p_{12}} =  \frac{-2p_{12}}{\det(P)}.
    \end{align*}
    Thus we get 
    \begin{align}\label{eq: partial P: first and third part}
    &\frac{\partial}{\partial P} \left(N\Tr\left(\Sigma^{(0)}P\right)-N\log(\det(P))\right)\nonumber\\
    &= N\left(2\Sigma^{(0)}-\mathrm{diag}(\Sigma^{(0)})\right)-N\left(2P^{-1}-\mathrm{diag}\left((2P^{-1}\right)\right).
    \end{align}
    For the second term, we calculate the partial derivatives. For this purpose, we first consider the following auxiliary calculations
    \begin{align*}
    0 = \frac{\partial}{\partial  p_{ij}}\Big(\left(\kappa \dia_P \kappa  \right)^{-1}\left(\kappa \dia_P \kappa  \right)\Big)
    =\left(\frac{\partial}{\partial  p_{ij}}\left(\kappa \dia_P \kappa  \right)^{-1} \right)\left(\kappa \dia_P \kappa  \right)
    + \left(\kappa \dia_P \kappa  \right)^{-1}\left(\frac{\partial}{\partial  p_{ij}}\left(\kappa \dia_P \kappa  \right)\right)
    \end{align*}
    and therefore
    \begin{align}\label{eq: partial derivative kappa dia_P kappa inverse}
    \frac{\partial \left(\kappa \dia_P \kappa  \right)^{-1}}{\partial p_{ij}}
    =-\left(\kappa \dia_P \kappa  \right)^{-1}\left(\kappa \dia_{\frac{\partial P}{\partial  p_{ij}}} \kappa  \right)\left(\kappa \dia_P \kappa  \right)^{-1}.
    \end{align}
    We also calculate 
    \begin{align}\label{eq: partial derivative kappa dia_(PSigma0P) kappa}
    \frac{\partial \left(\kappa \dia_{P\Sigma^{(0)}P} \kappa \right)}{\partial p_{ij}}
    = \left(\kappa \dia_{\left(\frac{\partial P}{\partial  p_{ij}}\right) \Sigma^{(0)}P}\kappa \right)
    +\left(\kappa \dia_{P \Sigma^{(0)}\left(\frac{\partial P}{\partial  p_{ij}}\right)}\kappa \right).
    \end{align}
    By utilizing equations (\ref{eq: partial derivative kappa dia_P kappa inverse}) and (\ref{eq: partial derivative kappa dia_(PSigma0P) kappa}) and Lemma \ref{lem: Partial_derivative P help calc}, we can deduce that
    \begin{align}\label{eq: split der}
    \frac{\partial}{\partial  p_{ij}} \Tr&\left( \left(\kappa \dia_P \kappa  \right)^{-1} \left(\kappa \dia_{P\Sigma^{(0)}P} \kappa \right)\right)\nonumber\\
    =\Tr&\Big(\left(\kappa \dia_P \kappa  \right)^{-1}\left(\kappa \dia_{\left(\frac{\partial P}{\partial  p_{ij}}\right) \Sigma^{(0)}P}\kappa \right)
    +\left(\kappa \dia_P \kappa  \right)^{-1}\left(\kappa \dia_{P \Sigma^{(0)}\left(\frac{\partial P}{\partial  p_{ij}}\right)}\kappa \right)\nonumber  
    \\
    &-\left(\kappa \dia_P \kappa  \right)^{-1}\left(\kappa \dia_{\frac{\partial P}{\partial  p_{ij}}} \kappa  \right)\left(\kappa \dia_P \kappa  \right)^{-1} \left(\kappa \dia_{P\Sigma^{(0)}P} \kappa \right)\Big)\nonumber\\
    = \Tr&\Bigg(\Sigma^{(0)}P\left( \bar{\kappa} \dia_{\left(\kappa \dia_P \kappa  \right)^{-1}} \bar{\kappa} \right)\left( \frac{\partial P}{\partial  p_{ij}}\right)
    +\left( \bar{\kappa} \dia_{\left(\kappa \dia_P \kappa  \right)^{-1}} \bar{\kappa} \right)P\Sigma^{(0)}\left(\frac{\partial P}{\partial  p_{ij}}\right) \\
    &-\left(\bar{\kappa} \dia_{\left(\kappa \dia_P \kappa  \right)^{-1} \left(\kappa \dia_{P\Sigma^{(0)}P} \kappa \right)\left(\kappa \dia_P \kappa  \right)^{-1}} \bar{\kappa} \right)\left(\frac{\partial P}{\partial  p_{ij}}\right)\Bigg)\nonumber.
    \end{align}
    We obtain from (\ref{eq: split der}) and Lemma \ref{lem: Partial_derivative P help 2}
    \begin{align*}
    &\frac{\partial}{\partial  P} \Tr\left( \left(\kappa \dia_P \kappa  \right)^{-1} \left(\kappa \dia_{P\Sigma^{(0)}P} \kappa \right)\right) \\
    &= 2\left(\left( \bar{\kappa} \dia_{\left(\kappa \dia_P \kappa  \right)^{-1}} \bar{\kappa} \right)P\Sigma^{(0)} 
    +\Sigma^{(0)}P\left( \bar{\kappa} \dia_{\left(\kappa \dia_P \kappa  \right)^{-1}} \bar{\kappa} \right)
    - \left(\bar{\kappa} \dia_{\left(\kappa \dia_P \kappa  \right)^{-1} \left(\kappa \dia_{P\Sigma^{(0)}P} \kappa \right)\left(\kappa \dia_P \kappa  \right)^{-1}} \bar{\kappa} \right)\right)\\
    &-\mathrm{diag}\left(\left( \bar{\kappa} \dia_{\left(\kappa \dia_P \kappa  \right)^{-1}} \bar{\kappa} \right)P\Sigma^{(0)} 
    +\Sigma^{(0)}P\left( \bar{\kappa} \dia_{\left(\kappa \dia_P \kappa  \right)^{-1}} \bar{\kappa} \right)
    - \left(\bar{\kappa} \dia_{\left(\kappa \dia_P \kappa  \right)^{-1} \left(\kappa \dia_{P\Sigma^{(0)}P} \kappa \right)\left(\kappa \dia_P \kappa  \right)^{-1}} \bar{\kappa} \right)\right).
    \end{align*}
    Using (\ref{eq: partial P: first and third part}) we get the desired result
    \begin{align*}
    &\frac{\partial}{\partial P}\left(N\Tr\left(\Sigma^{(0)}P\right) -\Tr\left( \left(\kappa \dia_P \kappa  \right)^{-1} \left(\kappa \dia_{P\Sigma^{(0)}P} \kappa \right)\right)-N\log(\det(P))\right) \\
    &= 2\left(\left( \bar{\kappa} \dia_{\left(\kappa \dia_P \kappa  \right)^{-1}} \bar{\kappa} \right)P\Sigma^{(0)}  +\Sigma^{(0)}P\left( \bar{\kappa} \dia_{\left(\kappa \dia_P \kappa  \right)^{-1}} \bar{\kappa} \right)
    - \left(\bar{\kappa} \dia_{\left(\kappa \dia_P \kappa  \right)^{-1} \left(\kappa \dia_{P\Sigma^{(0)}P} \kappa \right)\left(\kappa \dia_P \kappa  \right)^{-1}} \bar{\kappa} \right)\right)\\
    &-\mathrm{diag}\left(\left( \bar{\kappa} \dia_{\left(\kappa \dia_P \kappa  \right)^{-1}} \bar{\kappa} \right)P\Sigma^{(0)} 
    +\Sigma^{(0)}P\left( \bar{\kappa} \dia_{\left(\kappa \dia_P \kappa  \right)^{-1}} \bar{\kappa} \right)
    - \left(\bar{\kappa} \dia_{\left(\kappa \dia_P \kappa  \right)^{-1} \left(\kappa \dia_{P\Sigma^{(0)}P} \kappa \right)\left(\kappa \dia_P \kappa  \right)^{-1}} \bar{\kappa} \right)\right)\\
    &+N\left(2\Sigma^{(0)}-\mathrm{diag}(\Sigma^{(0)})\right)-N\left(2P^{-1}-\mathrm{diag}\left((2P^{-1}\right)\right).
    \end{align*}
  \end{proof}
  
  \begin{lemma}\label{lem: insert in kappa0 P0}
    For $([\kappa], P) \in \mathfrak{P}$, $\kappa \in [\kappa]$ and $P^{(0)}, \Sigma^{(0)}\in \mathrm{SPD}(2)$ with $P^{(0)}= \left(\Sigma^{(0)}\right)^{-1}$ we obtain
    \begin{align*}
    &\frac{\partial}{\partial P}\Ffun([\kappa], P)\Big|_{[\kappa]=\left[\kappa^{(0)}\right], P=P^{(0)}}
    = 2\left(\bar{\kappa}^{(0)} \dia_{\left(\kappa^{(0)} \dia_{P^{(0)}} \kappa^{(0)}  \right)^{-1}} \bar{\kappa}^{(0)} \right)  -\mathrm{diag}\left(\bar{\kappa}^{(0)} \dia_{\left(\kappa^{(0)} \dia_{P^{(0)}} \kappa^{(0)}  \right)^{-1}} \bar{\kappa}^{(0)} \right).
    \end{align*}
  \end{lemma}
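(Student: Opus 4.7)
The strategy is to combine the additive decomposition of $\Ffun([\kappa],P)$ displayed just above Lemma \ref{lem: A2 is not constant} with the derivative formula of Lemma \ref{lem: Partial_derivative P}, and then to exploit the algebraic simplifications that arise when $P\Sigma^{(0)}=\mathrm{Id}_2$.

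First I would observe that the $\phi$-integral in the decomposition of $\Ffun([\kappa],P)$ vanishes identically as a function of $P$ when $[\kappa]=[\kappa^{(0)}]$. Indeed, a direct calculation using $\vect{\phi\kappa^{(0)}_\nu}=M(\kappa^{(0)}_\nu)\vect{\phi}$ gives $\kappa^{(0)}\bul_P(\phi\kappa^{(0)})=(\kappa^{(0)}\dia_P\kappa^{(0)})\vect{\phi}$ for every admissible $P$; hence $\hat\phi(\kappa^{(0)},P,\phi\kappa^{(0)})=\phi$ and the integrand $d_P(\phi\kappa^{(0)},\phi\kappa^{(0)})^2$ is zero. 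Consequently this part of $\Ffun$ contributes nothing to the $P$-derivative at $([\kappa^{(0)}],P^{(0)})$.

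The remaining $\epsilon$-integral together with $-N\log\det P$ is already in closed form via Lemma \ref{lem: A2 is not constant}, and its $P$-derivative is supplied explicitly by Lemma \ref{lem: Partial_derivative P}. It then suffices to substitute $[\kappa]=[\kappa^{(0)}]$ and $P=P^{(0)}=(\Sigma^{(0)})^{-1}$ into that derivative. Under this substitution $\Sigma^{(0)}P^{(0)}=P^{(0)}\Sigma^{(0)}=\mathrm{Id}_2$ and $P^{(0)}\Sigma^{(0)}P^{(0)}=P^{(0)}$, so writing
\[
M\;\coloneqq\;\bar\kappa^{(0)}\dia_{(\kappa^{(0)}\dia_{P^{(0)}}\kappa^{(0)})^{-1}}\bar\kappa^{(0)},
\]
each of the three bracketed matrix summands in Lemma \ref{lem: Partial_derivative P} collapses to $M$; the third summand uses that $(\kappa^{(0)}\dia_{P^{(0)}}\kappa^{(0)})^{-1}(\kappa^{(0)}\dia_{P^{(0)}}\kappa^{(0)})(\kappa^{(0)}\dia_{P^{(0)}}\kappa^{(0)})^{-1}$ reduces to $(\kappa^{(0)}\dia_{P^{(0)}}\kappa^{(0)})^{-1}$. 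The signed sum is therefore $M+M-M=M$. Simultaneously, the scalar-derived contributions $N(2\Sigma^{(0)}-\mathrm{diag}\,\Sigma^{(0)})$ and $-N(2P^{-1}-\mathrm{diag}\,P^{-1})$ cancel at $P=P^{(0)}$ because $P^{-1}=\Sigma^{(0)}$ there. What is left is precisely $2M-\mathrm{diag}(M)$, which is the stated identity.

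The main obstacle is purely algebraic bookkeeping: one must track the three $\dia$-bilinear matrix summands appearing in Lemma \ref{lem: Partial_derivative P} and verify that, under the single relation $\Sigma^{(0)}P^{(0)}=\mathrm{Id}_2$, they uniformly collapse to the same matrix $M$, while the residual trace and log-determinant corrections cancel. No new analytic, geometric or probabilistic input is required beyond the previously established lemmata.
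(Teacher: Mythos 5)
Your proposal is correct and follows essentially the same route as the paper: decompose $\Ffun$ into the $\phi$-part and the $\epsilon$-part plus $-N\log\det(P)$, show the former contributes nothing to the $P$-derivative at $[\kappa^{(0)}]$, and substitute $P=P^{(0)}$ into Lemma \ref{lem: Partial_derivative P} so that the three bilinear summands collapse and the trace/log-determinant terms cancel. The only (harmless) difference is that you justify the vanishing of the $\phi$-part by the direct computation $\hat\phi(\kappa^{(0)},P,\phi\kappa^{(0)})=\phi$, whereas the paper invokes Theorem \ref{theo: Frechet mean value is unique} to note that this term is identically zero in $P$ at $\kappa=\kappa^{(0)}$; both are valid.
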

  
  \begin{proof}
    Analogous to Section \ref{sec: Consistency Drift model unknown Sigma} in the main text, we decompose $\Ffun$ as follows
    \begin{align}\label{eq: partial Ffun partial P}
    &\frac{\partial}{\partial P}\Ffun([\kappa], P)\Big|_{[\kappa]=\left[\kappa^{(0)}\right], P=P^{(0)}}\\
    &= \frac{\partial}{\partial P} \int d_P\left(Y,  \hat{\phi}(\kappa,P,Y)\kappa\right)^2  \diff\mathbb{P}\left(\phi\right)\Big|_{\kappa=\kappa^{(0)}, P=P^{(0)}}\nonumber\\
    &+\frac{\partial}{\partial P} \int d_P\left(\epsilon,  \hat{\phi}(\kappa,P,\epsilon)\kappa\right)^2 \diff\mathbb{P}\left(\epsilon\right) -N\log(\det(P))\Big|_{\kappa=\kappa^{(0)}, P=P^{(0)}}.\nonumber
    \end{align}
    From Theorem \ref{theo: Frechet mean value is unique} in the main text, it follows that all $\kappa \in [\kappa^{(0)}]$ minimize the expression 
    \begin{align*}
    \int d_P\left(Y,  \hat{\phi}(\kappa,P,Y)\kappa\right)^2  \diff\mathbb{P}\left(\phi\right)
    \end{align*} 
    for any $P\in \mathrm{SPD}(2)$.
    Consequently,
    \begin{align*}
    \frac{\partial}{\partial P} \int d_P\left(Y,  \hat{\phi}(\kappa,P,Y)\kappa\right)^2 \diff\mathbb{P}\left(\phi\right)\Big|_{\kappa=\kappa^{(0)}, P=P^{(0)}} = 0.
    \end{align*}
    We utilize Lemma \ref{lem: A2 is not constant} and Lemma \ref{lem: Partial_derivative P} for the second part of (\ref{eq: partial Ffun partial P})
    \begin{align*}
    &\frac{\partial}{\partial P} \int d_P\left(\epsilon,  \hat{\phi}(\kappa,P,\epsilon)\kappa\right)^2 \diff\mathbb{P}\left(\epsilon\right) -N\log(\det(P))\Big|_{\kappa=\kappa^{(0)}, P=P^{(0)}}\\
    &= 2\left(\bar{\kappa}^{(0)} \dia_{\left(\kappa^{(0)} \dia_{P^{(0)}} \kappa^{(0)}  \right)^{-1}} \bar{\kappa}^{(0)} \right)  -\mathrm{diag}\left(\bar{\kappa}^{(0)} \dia_{\left(\kappa^{(0)} \dia_{P^{(0)}} \kappa^{(0)}  \right)^{-1}} \bar{\kappa}^{(0)} \right).
    \end{align*}
  \end{proof}

  \FloatBarrier
  \newpage
  \section{Heteroscedastic Drift Model}\label{sec: heteroscedastic appendix}
  \subsection{Goodness of Fit and Standard Deviations}\label{ssec: heteroscedastic GOF}
  \begin{table}[ht!]
    \centering
    \caption{Results of Kolmogorov–Smirnov tests for Gaussianity applied to the real and imaginary parts of the residuals $\hat{\epsilon}_{b,\nu}=Y_{b,\nu}-\hat{\psi}_b - \hat{\phi}_b\hat{\kappa}_\nu$, pooled over $b$ and $\nu$, obtained from the homoscedastic drift model applied to the \SI{94}{\GHz} data.}
    \begin{tabular}{c|c c}
      \hline
      Orientation & Real & Imaginary \\
      \hline
      $x$ &\num{1.13e-5} & \num{4.19e-6}\\
      $xy$& \num{2.08e-8} &\num{0.929}\\
      $y$& \num{2.71e-4} &\num{0.262}\\
      $yz$ & \num{2.54e-10} &\num{5.83e-20}\\
      $z$& \num{0.173} &\num{3.53e-2}\\
    \end{tabular}
    \label{tab:hom_p_values_94}
  \end{table}
  
  \begin{table}[ht!]
    \centering
    \caption{The standard deviation of the flat regions shown in Panel B of Figure \ref{fig:Signal to noise ratio heterovs average}, computed for both the averaging model and the heteroscedastic drift model.}
    \begin{tabular}{c|c|c}
      \hline
      Orientation & heteroscedastic drift model & averaging model\\
      \hline
      x & \num{4.2e-03} & \num{9.2e-03}\\
      xy & \num{2.6e-03} & \num{3.1e-03}\\
      y & \num{2.9e-03} & \num{4.4e-03}\\
      yz & \num{3.9e-03} & \num{9.0e-03}\\
      z & \num{9.3e-03} & \num{1.2e-02}
    \end{tabular}
    
    \label{tab:SNR hetero, avg}
  \end{table}
  
  \begin{figure}[ht!]
    \centering
    \includegraphics[width=\textwidth]{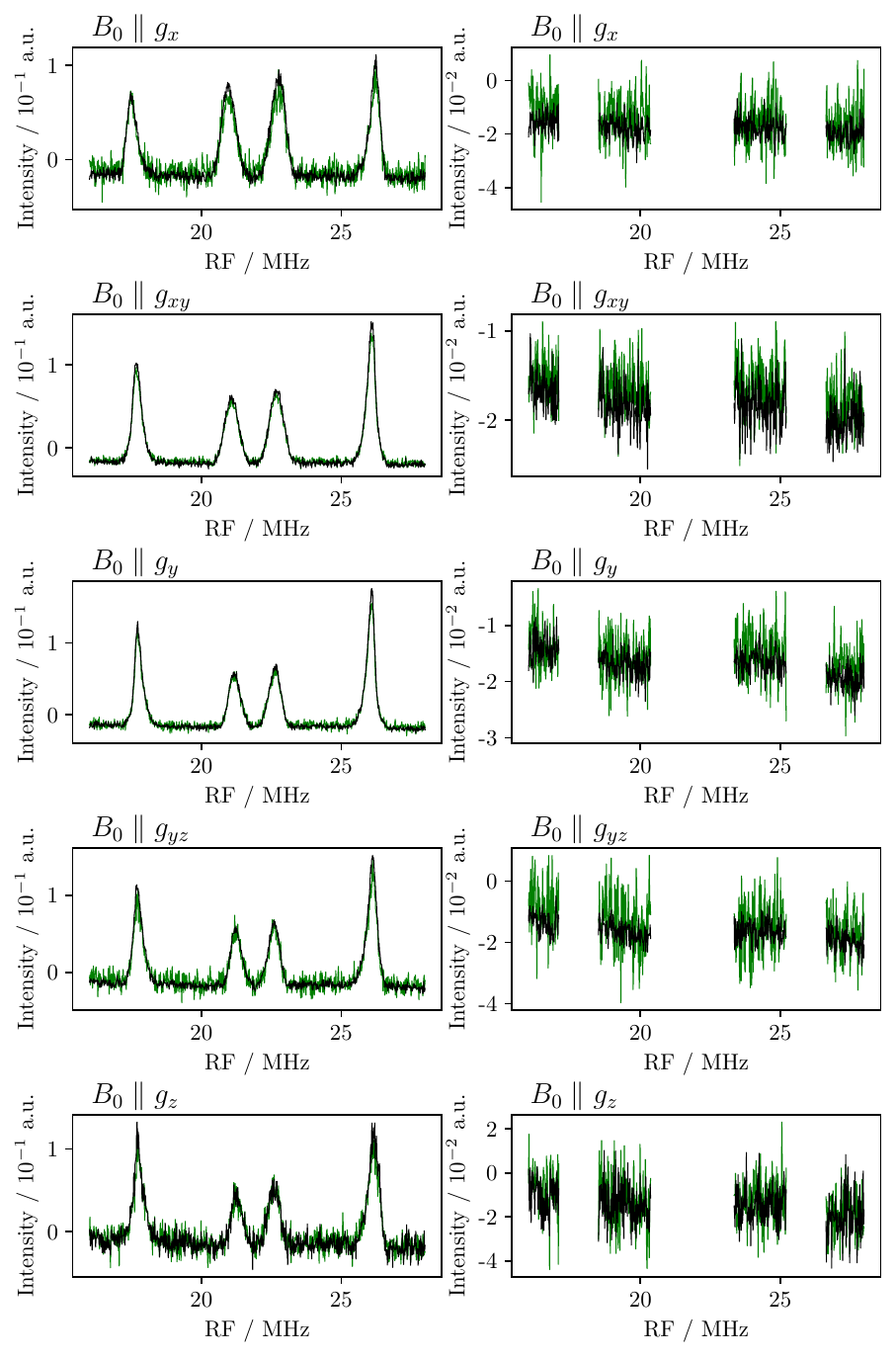}
    \caption{Comparison of the averaging model (green) and the heteroscedastic drift model (black).
    }
    \label{fig:Signal to noise ratio heterovs average}
  \end{figure}

  \begin{table}[ht!]
    \centering
    \caption{$p$-values from applying the heteroscedastic drift model to the \SI{94}{\GHz} data.}
    \begin{tabular}{c|c c}
      \hline
      orientation & $p_\real$ & $p_\imaginary$ \\
      \hline
      $x$ &$0.827$ &$0.321$\\
      $xy$& $1.11\times 10^{-3}$ &$0.984$\\
      $y$& $0.0294$ &$0.587$\\
      $yz$ &$0.269$ &$0.253$\\
      $z$& $0.755$ &$0.889$\\
    \end{tabular}
    
    \label{tab:heteroscedastic p values}
  \end{table}

\FloatBarrier
  
  \subsection{Boundary Maxima in the Heteroscedastic Drift Model}\label{ssec: heteroscedastic boundary maximum}
  The heteroscedastic drift model exhibits boundary maxima as $\Sigma_0$ tends to a rank-deficient matrix. A detailed example exhibiting these is given in \autoref{lem: boundary max}. The iterative \autoref{alg:het} fitting the above model did not find these boundary maxima when initialized from the homoscedastic drift model on the tested datasets. From the log likelihood values resulting from these fits, we looked at the upper bound for the minimal eigenvalue of $\Sigma_0$ for which these log likelihood values can be obtained by the parameter sequence constructed in \autoref{lem: boundary max}. These are reported in \autoref{tab:cut_of_real_data} and compared with the minimal eigenvalues of the estimated $\hat{\Sigma_0}$. From the differences, which are about 200 orders of magnitude, we concluded that the algorithm did indeed not find the boundary global maxima but found some local MLE. In practice, we did not actually need to restrict the parameter space for $\Sigma_0$ to impose lower bounds on its eigenvalues, even though this would reasonably represent minimum receiver noise.
  
  \begin{lemma}
    \label{lem: boundary max}
    The heteroscedastic drift model has boundary maxima.
  \end{lemma}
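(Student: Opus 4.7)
The plan is to exhibit an explicit sequence of parameters that approaches the boundary of the parameter space, namely with $\Sigma_0$ tending to a rank-deficient matrix, and along which the log-likelihood diverges to $+\infty$. Since $\Sigma_b=\Sigma_0+\tilde{\sigma}^2\vect{i\psi_b}\vect{i\psi_b}^T$ is a rank-one perturbation of the shared $\Sigma_0$, the key observation is that the perturbation can be made to vanish for one distinguished batch while simultaneously lifting the singularity of $\Sigma_0$ for every other batch; this is exactly what the extra parameter $\psi_b$ enables, and it is what separates this construction from the non-parametric case already discussed in the main text.

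Concretely, fix the observed data $Y$ and a batch index $b^*\in\{1,\ldots,B\}$ such that $Y_{b^*,:}$ is not constant in $\nu$ (such a $b^*$ exists for generic data, otherwise pick another). Set $\psi_{b^*}=0$, so that $\vect{i\psi_{b^*}}=0$ and $\Sigma_{b^*}=\Sigma_0$ exactly. Then solve $Y_{b^*,\nu}=\phi_{b^*}(\kappa_\nu+c)$: writing $\bar Y_{b^*}=\tfrac1N\sum_\nu Y_{b^*,\nu}$, set $c=\bar Y_{b^*}/\phi_{b^*}$ and $\kappa_\nu=(Y_{b^*,\nu}-\bar Y_{b^*})/\phi_{b^*}$, and fix $|\phi_{b^*}|^2=\sum_\nu|Y_{b^*,\nu}-\bar Y_{b^*}|^2>0$ to enforce $\|\kappa\|=1$. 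By construction, $\sum_\nu\kappa_\nu=0$ and the residuals for batch $b^*$ all vanish. For every $b\neq b^*$, set $\psi_b=i$, so that $\vect{i\psi_b}=(-1,0)^T$, and choose $\phi_b$ to minimize the quadratic form $\sum_\nu\vect{\epsilon_{b,\nu}}^T\Sigma_b^{-1}\vect{\epsilon_{b,\nu}}$ given the other parameters (a finite quadratic in $\vect{\phi_b}$ with bounded minimum, assuming $\kappa+c\not\equiv 0$).

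Now take the sequence $\Sigma_0^{(k)}=\mathrm{diag}(1/k,\lambda)$ for a fixed $\lambda>0$ and any fixed $\tilde\sigma>0$. Then $\Sigma_{b^*}^{(k)}=\mathrm{diag}(1/k,\lambda)$ becomes singular in the limit, so $-\tfrac{N}{2}\log\det\Sigma_{b^*}^{(k)}=-\tfrac{N}{2}\log(\lambda/k)\to+\infty$, while its quadratic-form contribution is identically zero since residuals for batch $b^*$ vanish. For $b\neq b^*$, $\Sigma_b^{(k)}=\mathrm{diag}(1/k+\tilde\sigma^2,\lambda)\to\mathrm{diag}(\tilde\sigma^2,\lambda)\succ 0$, so both $-\tfrac{N}{2}\log\det\Sigma_b^{(k)}$ and the associated quadratic form stay uniformly bounded in $k$. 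Summing, the log-likelihood diverges to $+\infty$ along this sequence, which proves the existence of boundary maxima.

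The main (and only) obstacle is really a bookkeeping one: making sure that the constructed $\kappa$ lies in $\mathbb{C}P^{N-1}$ with the correct constraints, and that the quadratic-form contributions for $b\neq b^*$ are uniformly bounded as $k\to\infty$. Both follow from standard quadratic minimization once one observes that $\Sigma_b^{(k)}$ is bounded away from singularity uniformly in $k$ for $b\neq b^*$, which is exactly the payoff of the choice $\psi_b=i$ placing the rank-one lift in the direction of the vanishing eigenvalue of $\Sigma_0^{(k)}$.
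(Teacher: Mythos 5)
Your proposal is correct and follows essentially the same strategy as the paper's proof: drive the residuals of one distinguished batch to zero exactly, let $\Sigma_0$ degenerate along one eigendirection so that $-\log\det\Sigma_{b^*}\to+\infty$ for that batch, and use the phase-noise term $\tilde\sigma^2\vect{i\psi_b}\vect{i\psi_b}^T$ to keep $\Sigma_b$ uniformly nondegenerate (hence the remaining log-likelihood contributions bounded) for all other batches. The only cosmetic difference is that you set $\psi_{b^*}=0$ and absorb the row mean into $c$, whereas the paper keeps $\psi_1$ equal to the row mean and instead aligns the shrinking eigendirection of $\Sigma_0^{(k)}$ orthogonally to $\vect{i\psi_1}$; both choices accomplish the same thing.
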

  \begin{proof}
    Let $Y_{b,\nu}\in\mathbb{C}$ be arbitrary. WLOG $Y\neq 0, \sum_{\nu = 0}^N Y_{1,\nu}\neq 0$. And choose $\psi_1 = \frac{1}{N+1} \sum_{\nu = 0}^N Y_{1,\nu}$. Then we can chose $\kappa_\nu$ such that the residuals $R_{1,\nu}=0$ for all $\nu = 0,\dots ,N$ by just using the averaging model estimator applied to the first batch
    \begin{align*}
    \kappa_{\nu}
    &= \frac{Y_{1,\nu}-\psi_1}{\sqrt{\sum_{\nu'=0}^N\abs{Y_{1,\nu'}-\psi_1}^2}}\\
    c
    &= 0\\
    \phi_1
    &= \sqrt{\sum_{\nu'=0}^N\abs{Y_{1,\nu'}-\psi_1}^2}\\
    \Rightarrow R_{1,\nu}
    &= Y_{1,\nu}-\psi_1-\phi_1\kappa_{\nu}
    = Y_{1,\nu} -  \frac{1}{N+1}\sum_{\nu'=0}^N Y_{1,\nu'} - \qty( Y_{1,\nu} -  \frac{1}{N+1}\sum_{\nu'=0}^N Y_{1,\nu'} )
    = 0
    \end{align*}
    We then choose a sequence $\Sigma_0^{(k)},\Tilde{\sigma}$ such that the likelihood diverges to $+\infty$ as $k\rightarrow \infty$. For notational convenience, we express all matrices in the basis $\qty{\vect{\frac{\psi_1}{\abs{\psi_1}}},\vect{i\frac{\psi_1}{\abs{\psi_1}}}}$.
    \begin{align*}
    \Sigma_0^{(k)}
    &=  \frac1k \vect{\psi_1}\vect{\psi_1}^T+ \vect{i\psi_1}\vect{i\psi_1}^T
    = \abs{\psi_1}^2\begin{pmatrix}
    \frac1k & 0 \\
    0 & 1 
    \end{pmatrix}\\
    \Tilde{\sigma} 
    &= 1\\
    \Rightarrow\Sigma_1^{(k)}
    &= \Sigma_0^{(k)} +\Tilde{\sigma}\vect{i\psi_1}\vect{i\psi_1}^T
    =  \abs{\psi_1}^2\qty(\begin{pmatrix}
    \frac1k & 0\\
    0 & 1
    \end{pmatrix} + 
    \begin{pmatrix}
    0 & 0\\
    0 & 1
    \end{pmatrix})
    = \abs{\psi_1}^2\begin{pmatrix}
    \frac1k & 0\\
    0 & 2
    \end{pmatrix}
    \end{align*}
    We first focus on the log likelihood contribution $ \ell_{Y_{1,:}}^{{(k)}}$ associated with batch $b=1$ and consider the remaining contributions later.
    \begin{align*}
    \ell_{y_{1,:}}^{{(k)}}
    &= -\frac12\left(\sum_{\nu = 0}^{N}\qty(\vect{R_{1,\nu}}^T P^{(k)}_1\vect{R_{1,\nu}}) + ({N+1})\qty(\log(\det(\Sigma_1^{(k)}))+\log((2\pi)^2))\right)\\
    &= -\frac {N+1}2\qty(
    \log({\det(\Sigma_1^{(k)})})+\log((2\pi)^2))
    = \frac {N+1}2
    \log(\frac{1}{\det(\Sigma_1^{(k)})})-({N+1})\log(2\pi)\\
    &= \frac {N+1}2
    \log(\frac{k}{2\abs{\psi_1}^4})-({N+1})\log(2\pi)\\
    \Rightarrow \lim_{k\to\infty} \ell_{Y_{1,:}}^{{(k)}}
    &=  + \infty
    \end{align*}
    We now choose $\psi_b^{(k)}$ for $b\neq1$ such that $\Sigma_b^{(k)}$ is constant in k. So let $b$ not equal to 1
    \begin{align*}
    \psi_b^{(k)}
    &= -i\sqrt{1-\frac1k}\psi_1\\
    \vect{i\psi_b^{(k)}}\vect{i\psi_b^{(k)}}^T
    &=   \qty(1-\frac1k) \vect{\psi_1}\vect{\psi_1}^T
    =\abs{\psi_1}^2\begin{pmatrix}
    1-\frac{1}{k} & 0 \\
    0 & 0 
    \end{pmatrix}\\
    \Rightarrow \Sigma^{(k)}_b
    &= \Sigma_0^{(k)} +\Tilde{\sigma}\vect{i\psi_b^{(k)}}\vect{i\psi_b^{(k)}}^T
    = \abs{\psi_1}^2 \qty( \begin{pmatrix}
    \frac1k & 0 \\
    0 & 1 
    \end{pmatrix} +
    \begin{pmatrix}
    1-\frac{1}{k} & 0 \\
    0 & 0
    \end{pmatrix})\\
    &= \abs{\psi_1}^2\begin{pmatrix}
    1 & 0 \\
    0 & 1 
    \end{pmatrix}\\
    \Rightarrow P_b^{(k)}
    &= \abs{\psi_1}^{-2} \begin{pmatrix}
    1 & 0 \\
    0 & 1 
    \end{pmatrix}
    \end{align*}
    But then the only dependency on $k$ for $b\neq 1$ in the likelihood is in the residuals. (We choose $\phi_b=-i\phi_1$ for convenience so $\phi_b\kappa_{\nu} =- i\qty(Y_{1,\nu} - \frac{1}{{N+1}}\sum_{\nu'=0}^N Y_{1,\nu'}) = -i\qty(Y_{1,\nu}-\psi_1)$)
    \begin{align*}
    \Rightarrow R_{b,\nu}^{(k)}
    &= Y_{b,\nu}- \psi_b^{(k)}-\phi_b\kappa_\nu
    =  Y_{b,\nu} + i \sqrt{1-\frac1n}\psi_1 + i\qty(Y_{1,\nu} - \psi_1)\\
    \Rightarrow \lim_{k\to\infty} R_{b,\nu}^{(k)}
    &=  Y_{b,\nu} + i Y_{1,\nu}
    \eqdef R_{b,\nu}\\
    \Rightarrow \ell_{Y_{b,:}}^{(k)}
    &= -\frac12\left(\sum_{\nu = 0}^{N}\qty(\vect{R_{b,\nu}^{(k)}}^T P^{(k)}_b\vect{R_{b,\nu}^{(k)}}) + ({N+1})\qty(\log(\det(\bm\Sigma_b^{(k)}))+\log((2\pi)^2))\right)\\
    &= -\frac12\left(\sum_{\nu = 0}^{N}\frac{\abs{R_{b,\nu}^{(k)}}^2}{\abs{\psi_1}^2} +({N+1})\qty(\log(\abs{\psi_1}^4)+\log((2\pi)^2))\right)\\
    \Rightarrow \lim_{k\to\infty} \ell_{y_{b,:}}^{(k)}
    &= -\frac12\left(\sum_{\nu = 0}^{N}\frac{\abs{R_{b,\nu}}^2}{\abs{\psi_1}^2} +({N+1})\qty(\log(\abs{\psi_1}^4)+\log((2\pi)^2))\right)
    \end{align*}
    So the log likelihood of the residuals for $b\neq 1$ does not diverge to $-\infty$ but instead converges to a finite value. But by design the log likelihood of the first batch diverges like $\log(k)$. Therefore,
    \begin{align*}
    &\lim_{k\to\infty}\ell_Y^{(k)}
    = \lim_{k\to\infty} \qty( \ell_{Y_{1,:}}^{(k)} + \sum_{b=2}^B \ell_{Y_{b,:}}^{(k)})\\
    &= \lim_{k\to\infty} \left( \frac {N+1}2
    \log(\frac{k}{2\abs{\psi_1}^4})-(N+1)\log(2\pi) \vphantom{\sum_{\nu = 0}^{N}\frac{\abs{R_{b,\nu}^{(k)}}^2}{\abs{\psi_1}^2}}\right.\\
    &\left.\vspace{2cm}+ \sum_{b= 2}^B\qty(-\frac12\left(\sum_{\nu = 0}^{N}\frac{\abs{R_{b,\nu}^{(k)}}^2}{\abs{\psi_1}^2} +(N+1)\qty(\log(\abs{\psi_1}^4)+\log((2\pi)^2))\right))\right)\numberthis\label{eqn:convergence}\\
    &= + \infty  -\frac12\left(\qty(\sum_{b= 2}^B\sum_{\nu = 0}^{N}\frac{\abs{R_{b,\nu}}^2}{\abs{\psi_1}^2} )+(N+1)(B-1)\log(\abs{\psi_1}^4)+(N+1)B\log((2\pi)^2)\right) 				 = +\infty
    \end{align*}
  \end{proof}

  \begin{table}[]
    \centering
    \caption{Applying the example of a parameter sequence with divergent log likelihood from \autoref{lem: boundary max} to the datasets, we can compare the algorithmic fit with these parameters by focusing on the sequence index $k^*$ for which the log likelihood of the fit is first reached by the sequence.}
    \begin{tabular}{c|c|c|c|c}
      \hline
      orientation & k* & smallest eigenvalue $\Sigma_0^{(k*)}$  & smallest eigenvalue of $\widehat{\Sigma_0}$ & $\widehat{\Tilde{\sigma}}$\\\hline
      x & \num{e271} & \num{e-262} & \num{e3} & \num{1.73e-4}\\
      xy & \num{e339} & \num{e-330} & \num{e3}& \num{1.69e-4} \\
      y & \num{e468} & \num{e-460} & \num{e3}& \num{1.72e-4}\\
      yz & \num{e316} & \num{e-307} & \num{e-2}& \num{1.51e-4} \\
      z & \num{e467} & \num{e-459} & \num{e4}& \num{1.79e-4}\\
    \end{tabular}
    \label{tab:cut_of_real_data}
  \end{table}
  
  \subsection{Phase Noise Truncation}\label{ssec: heteroscedastic truncation}
  Looking at the mean and variance of the wrapped Gaussian
  \begin{align*}
  \mathbb{E}\qty[\Tilde{\psi}_{b,\nu}]
  =&\psi_b\exp(-\frac{\tilde{\sigma}^2}{2})
  = \psi_b\qty(1-\frac{\tilde{\sigma}^2}{2})+\mathcal{O}(\tilde{\sigma}^4)\\
  \Var{\Tilde{\psi}_{b,\nu}}
  \defeq&\Covmat{\vect{\Tilde{\psi}_{b,\nu}}}{\vect{\Tilde{\psi}_{b,\nu}}}
  = M(\psi_b)\begin{pmatrix}\frac{1+e^{-2\tilde{\sigma}^2}-2e^{-\tilde{\sigma}^2}}{2} &0\\
  0& \frac{1-e^{-2\tilde{\sigma}^2}}{2}\\
  \end{pmatrix} M(\psi_b)^T\\
  =& M(\psi_b)\begin{pmatrix}0 +\mathcal{O}(\tilde{\sigma}^4)&0\\
  0& \tilde{\sigma}^2+\mathcal{O}(\tilde{\sigma}^4)\\
  \end{pmatrix} M(\psi_b)^T.
  \end{align*}
  we see that the expansion of the mean to higher than linear order is not consistent with the mean of Definition \ref{def: hetero} 
  due to the correction in $\tilde{\sigma}^2$ which comes from the quadratic term $-\frac{\psi_b\tilde{\sigma}^2}{2}\varphi_{b,\nu}^2$. Replacing $\psi_b$ by $\breve{\psi}_b= \psi_b\frac{2-\tilde{\sigma}^2}{2}$ in Definition \ref{def: hetero} 
  on the other hand, leads to a different noise scale parameter $\sigma^2 = \left(\frac{2}{2-\tilde{\sigma}^2}\right)^2\tilde{\sigma}^2$ as
  \begin{align*}
  \vect{\breve{\psi_b}}\sigma^2\vect{\breve{\psi_b}}^T = \vect{\breve{\psi_b}}\qty(\frac{2}{2-\tilde{\sigma}^2})^2\tilde{\sigma}^2\vect{\breve{\psi_b}}^T \eqdef \vect{\psi_b}\tilde{\sigma}^2\vect{\psi_b}^T.
  \end{align*}
  This second parametrization was used in the heteroscedastic drift model. As $\sigma^2 = \tilde{\sigma}^2 +\mathcal{O}(\tilde{\sigma}^4)$, the validity of an expansion to linear order in $\sigma^2$ is equivalent to one in $\tilde{\sigma}^2$. 
  
  The next term in the expansion of $\tilde{\psi}_{b,\nu}=\psi_b\exp{i\tilde{\sigma}\varphi_{b,\nu}}$ not modeled is the quadratic term. The variance contribution of this term is
  \begin{align*}
  \Var{-\vect{\psi_b}\frac{\tilde{\sigma}^2}{2} \varphi_{b,\nu}^2}
  = \vect{\frac{\psi_b}{\abs{\psi_b}}}\frac{\abs{\psi_b}^2\tilde{\sigma}^4}{2}\vect{\frac{\psi_b}{\abs{\psi_b}}}^T.
  \end{align*}
  Given our data, when calculated based on the MLE estimators for $\psi_b$ and $\tilde{\sigma}^2$, this is dominated by the marginal variance of $\hat{\Sigma}_0$ in the subspace spanned by $\vect{\hat{\psi}_b}$ given by $\norm{\frac{\vect{\hat{\psi}_b}}{\abs{\hat{\psi_b}}}}_{\hat{\Sigma_0}}$ justifying the truncation. Even when minimizing this comparison over the batch parameter independently, the marginal variance is still larger by 2 orders of magnitude as reported in \autoref{tab:comparison Sigma_0 sigma_tilde}. Based on this, explicit modelling of the quadratic term was deemed unnecessary.
  
  \begin{table}[ht!]
    \centering
    \caption{Comparison of the contribution of $\Sigma_0$ in the direction $\psi_b$ (minimized over the batches) with the maximal contribution of $\frac{\sigma^4}{2}\abs{\psi_b}^2$ (maximized over the batches) in the heteroscedastic drift model. Noise contributions from the quadratic order term in the wrapped Gaussian expansion for the phase noise are at least two orders of magnitude smaller than those of $\Sigma_0$.}
    \begin{tabular}{c|c c}
      \hline
      orientation & $\min\limits_{b\in B}\vect{\frac{\psi_b}{\abs{\psi_b}}}^T\Sigma_0\vect{\frac{\psi_b}{\abs{\psi_b}}}$ & $\max\limits_{b\in B}\frac{\abs{\psi_b}^2\sigma^4}{2}$ \\
      \hline
      x & \num{8.0e+3} & \num{2.6e+1}\\
      xy & \num{3.9e+3} & \num{2.3e+1}\\
      y & \num{2.8e+3} & \num{1.5e+1}\\
      yz & \num{2.4e+3} & \num{8.4e+0}\\
      z & \num{1.7e+3} & \num{8.1e+0}
    \end{tabular}
    \label{tab:comparison Sigma_0 sigma_tilde}
  \end{table}
  
  \subsection{Algorithm}
  We included the update step $\Tilde{\psi} = \psi - \Delta_c\phi$, $\Tilde{c} = c + \Delta_c$ for a numerically optimized value of $\Delta_c$ in the optimizer in order to improve convergence properties. It does not change the residuals but only the covariance matrix. Without it, the log likelihood improvements stagnate. 
  Adding this update from the beginning led to unstable trajectories of $\hat{c}$ over the iterations, so the algorithm we used starts this additional update after the 25th iteration. 
  
  The initialization of $\Sigma_0$ and $\Tilde{\sigma}$ is done by regressing the matrices $\vect{i\hat{\psi_b}_{hom}}\vect{i\hat{\psi_b}_{hom}}^T$, which are obtained from the residuals arising from fitting the homoscedastic drift model, onto the sample covariance matrix of the homoscedastic drift model. The intercept is taken as an initial value for $\Sigma_0$ and the slope initializes $\Tilde{\sigma}$.
  
  The full algorithm is given in \autoref{alg:het}.
  
  \begin{algorithm}
    \caption{Heteroscedastic drift model MLE}\label{alg:het}
    \begin{algorithmic}
      \State \Load $\bm y$
      \State $\widehat{\bm \psi}^{(0)},\widehat{\bm\phi}^{(0)},\widehat{\bm{\breve{\kappa}}}^{(0)},\widehat{\bm\Sigma}_{hom}\gets \autoref{alg:hom}(\bm y)$
      \State $R\gets \vect{y-\hat{\psi}^{(0)}(\bm1_{N+1})^T-\hat{\phi}^{(0)}(\hat{\breve{\kappa}}^{(0)})^T}$
      \State $\hat\Psi,\hat S\gets \vect{i\hat{\psi}^{(0)}}\vect{i\hat{\psi}^{(0)}}^T, \frac{1}{N+1}\sum_{\nu =0}^NR_{:,\nu}R_{:,\nu}^T$
      \State $\widehat{\Tilde{\sigma}}^{(0)},\widehat{\bm{\Sigma_0}}^{(0)}\gets LinReg\qty(\hat{\Psi},\hat{S})$
      \State $k \gets 0$
      \While{$k \leq maxiter = 200$}
      \State $\ell^{(k)} \gets \ell\qty(\bm{y},\widehat{\bm{\psi}}^{(k)},\widehat{\bm{\phi}}^{(k)},\widehat{\bm{\breve{\kappa}}}^{(k)},\widehat{\Tilde{\sigma}}^{(k)},\widehat{\bm{\Sigma_0}}^{(k)})$
      \If{$k>0$}
      \If{$\ell^{(k)}-\ell^{(k-1)} < min\_delta\_loglik= 10^{-4}$}
      \State \Break
      \EndIf
      \EndIf
      \State $\widehat{\Tilde{\sigma}}^{(k+1)},\widehat{\bm{\Sigma_0}}^{(k+1)} \gets  L-BFGS-B\qty(x_0=(\widehat{\Tilde{\sigma}}^{(k)},\widehat{\bm{\Sigma_0}}^{(k)}),func = \ell^{\widehat{\bm \psi}^{(k)},  \widehat{\bm\phi}^{(k)},\widehat{\bm{\breve{\kappa}}}^{(k)}}_{\bm y},jac =D\ell^{\widehat{\bm \psi}^{(k)},\widehat{\bm\phi}^{(k)},\widehat{\bm{\breve{\kappa}}}^{(k)}}_{\bm y})$
      \State $\widehat{\bm{\phi}}^{(k+1)} \gets	\widehat{\bm{\phi}}\qty(\bm{y},\widehat{\bm{\psi}}^{(k)},\widehat{\bm{\breve{\kappa}}}^{(k)},\widehat{\Tilde{\sigma}}^{(k+1)},\widehat{\bm{\Sigma_0}}^{(k+1)})$
      
      \State $\widehat{\bm{\breve{\kappa}}}^{(k+1)} \gets  \widehat{\bm{\breve{\kappa}}}\qty(\bm{y},\widehat{\bm{\psi}}^{(k)}\widehat{\bm{\phi}}^{(k+1)},\widehat{\Tilde{\sigma}}^{(k+1)},\widehat{\bm{\Sigma_0}}^{(k+1)})$
      \State $\hat{r}\gets \norm{\hat{\breve{\bm\kappa}}^{(k+1)}-\frac1{N+1}\sum\limits_{\nu = 0}^{N}\widehat{\breve{\kappa}}_{\nu}^{(k+1)}}$
      \State $\widehat{\bm\phi}^{(k+1)}, \widehat{\bm{\breve{\kappa}}}^{(k+1)}\gets \widehat{r}\widehat{\bm{\phi}}^{(k+1)},\frac{\widehat{\bm{\breve{\kappa}}}^{(k+1)}}{\widehat{r}}$
      \State $\widehat{\bm{\psi}}^{(k+1)} \gets  Nelder-Mead\qty(x_0 = \widehat{\bm{\psi}}^{(k)},func = \ell^{\widehat{\bm \phi}^{(k+1)},\widehat{\bm{\breve{\kappa}}}^{(k+1)},\widehat{\Tilde{\sigma}}^{(k+1)},\widehat{\bm{\Sigma_0}}^{(k+1)}}_{\bm y})$
      \If{$k\geq start\_c\_opt= 25$}
      \State $\Delta_{c}\gets Nelder-Mead\qty(x_0 =  0, func = \ell^{\widehat{\bm\theta_{c}}}_{\bm y})$
      \State $\widehat{\bm{\psi}}^{(k+1)},\widehat{\bm{\breve{\kappa}}}^{(k+1)}\gets\widehat{\bm{\psi}}^{(k+1)}-\widehat{\bm{\phi}}^{(k+1)}\Delta_{c},\widehat{\bm{\breve{\kappa}}}^{(k+1)}+\Delta_{c}\bm1_{N+1}$
      \EndIf
      
      \State $k\gets k+1$
      \EndWhile
      \State$\hat{c}, \widehat{\bm{\kappa}}\gets \frac1{N+1}\sum\limits_{\nu = 0}^{N}\widehat{\breve{\kappa}}_{\nu}^{(k)},\qty(\widehat{\bm{\breve{\kappa}}}^{(k)}-\hat{c}\bm 1_{N+1})$
      \State \Return $\widehat{\bm\psi},\widehat{\bm\phi}^{(k)},\widehat{\bm\kappa},\hat{c},\widehat{\Tilde{\sigma}},\widehat{\bm\Sigma_0}^{(k)}$
    \end{algorithmic}
  \end{algorithm}

\end{appendices}

\end{document}